\DeclareFontFamily{OT1}{pzc}{}
\DeclareFontShape{OT1}{pzc}{m}{it}{<-> s * [1.10] pzcmi7t}{}
\DeclareMathAlphabet{\mathpzc}{OT1}{pzc}{m}{it}
\definecolor{Red}{cmyk}{0,1,1,0.2}
\newcommand{\N}{\mathbb N}
\newcommand{\R}{\mathbb R}
\def\R{\mathbb R}
\def\N{\mathbb N}
\def\E{\mathbb E}
\def\ep{\epsilon}
\def\lg{\langle} 
\def\m{\noalign{\medskip}}
\def\dw{{\bf d}_2}
\def\dk{{\bf d}_1}
\newcommand{\be}{\begin{equation}}
\newcommand{\ee}{\end{equation}}
\def\1{{\bf 1}}
\def\dive{{\rm div}}
\def\Pw{{\mathcal P}_2}
\def\ds{\displaystyle}
\newtheorem{Theorem}{Theorem}[section]
\newtheorem{Definition}[Theorem]{Definition}
\newtheorem{Proposition}[Theorem]{Proposition}
\newtheorem{Lemma}[Theorem]{Lemma}
\newtheorem{Corollary}[Theorem]{Corollary}
\newtheorem{Remark}[Theorem]{Remark}
\begin{document}


\title{Splitting methods and short time existence for the master equations in mean field games}
\author{Pierre Cardaliaguet\thanks{Universit\'e Paris-Dauphine, PSL Research University, CNRS, Ceremade, 75016 Paris, France. cardaliaguet@ceremade.dauphine.fr} \and Marco Cirant\thanks{Universit\`a di Parma, Parco Area delle Scienze 53/a, 43124 Parma, Italy. marcoalessandro.cirant@unipr.it} \and Alessio Porretta\thanks{Universit\`a di Roma Tor Vergata, Via della Ricerca Scientifica 1, 00133 Roma, Italy. porretta@mat.uniroma2.it}}

\maketitle

\begin{abstract} We develop a splitting method to prove the well-posedness, in short time, of solutions for two master equations in mean field game (MFG) theory: the second order master equation, describing MFGs with a common noise, and the system of master equations associated with MFGs with a major player. Both problems are infinite dimensional equations stated in the space of probability measures. Our new approach simplifies, shortens and generalizes previous existence results for  second order master equations and provides the first existence result for  systems associated with MFG problems with a major player. 
\end{abstract}

\tableofcontents

\section{Introduction}

The paper is dedicated to a  construction  of a solution of the so-called ``master equations" in mean field game theory (MFG). These equations have been introduced by Lasry and Lions and discussed by  Lions in \cite{LLperso}. Let us recall that mean field games describe the behavior of infinitely many agents in interaction. We consider here two problems: the master equation with common noise and the master equation with a major player. We present a general approach valid for both problems. 

Let us first discuss the master equation with common noise. In this problem, the agents are subject to a common source of randomness. The master equation is then a second order equation in the space of measures and reads as follows: 
\be\label{MasterEqIntro}
\left\{\begin{array}{l} 
\ds - \partial_t U(t,x,m)  - {\rm Tr}((a(t,x)+a^0(t,x))D^2_{xx}U(t,x,m))+H(x,D_xU(t,x,m),m)\\
\ds \qquad -\int_{\R^d}{\rm Tr}((a(t,y)+a^0(t,y))D^2_{ym}U(t,x,m,y))\ dm(y) \\
\ds    \qquad    + \int_{\R^d} D_m U(t,x,m,y)\cdot H_p(y,D_xU(t,y,m),m) \ dm(y) \\
\ds \qquad -2\int_{\R^d} {\rm Tr} \left[\sigma^0(t,y)(\sigma^0(t,x))^TD^2_{xm}U(t,x,m,y)\right]m(dy) \\
\ds    \qquad  
-\int_{\R^{2d}} {\rm Tr}[\sigma^0(t,y)(\sigma^0(t,y'))^TD^2_{mm}U(t,x,m,y,y')]m(dy)m(dy') = 0 \\
 \ds \qquad \qquad  \qquad {\rm in }\; (0,T)\times \R^d\times \Pw\\
 U(T,x,m)= G(x,m) \qquad  {\rm in }\;  \R^d\times \Pw\\
\end{array}\right.
\ee
In the above equation, the unknown $U=U(t,x,m)$ is scalar valued and depends on the time variable $t\in [0,T]$, the space variable $x\in \R^d$ and the distribution of the agents $m\in \Pw$ ($\Pw$ is the space of Borel probability measures with finite second order moment); the derivatives $D_mU$ and $D^2_{mm}U$ refer to the derivative with respect to the probability measure (see subsection \ref{subsec.derivatives}); the maps $H=H(x,p,m)$ and $G=G(x,m)$ reflect the running and terminal costs of the agents. The matrix valued function $a=a(t,x)$ is the volatility term corresponding to idiosyncratic noise of the small players while $a^0=a^0(t,x)= \sigma^0(\sigma^0)^T(t,x)$ is the volatility corresponding to the common noise.

As explained by Lions \cite{LLperso}, the master equation can be understood as a non-linear transport equation in the space of probability measures. When $a^0 =0$ (i.e., in the so-called first order master equation), the characteristics of this transport equation are  given by the MFG system:  if we fix an initial time $t_0$ and an initial probability measure $m_0$ on $\R^d$, and if the pair $(u,m)$ is a solution of the MFG system 
\be\label{eq.MFGsystIntro}
\left\{ \begin{array}{rl}
\ds (i) &  -\partial_t u -{\rm Tr}(a(t,x)D^2u)+H(x,Du,m(t))=0\qquad {\rm in}\; (t_0,T)\times \R^d\\
\ds (ii) &  \partial_t m -\sum_{i,j} D_{ij}(a_{i,j}m) -\dive( m H_p(x,Du,m(t))=0\qquad {\rm in}\; (t_0,T)\times \R^d\\
\ds  (iii)& m(t_0)=m_0, \qquad u(T,x)= G(x,m(T)) \qquad {\rm in}\;\R^d
\end{array}\right.
\ee
then we expect the equality 
\be\label{Rel.Uu}
U(t,x,m(t))= u(t,x) \qquad \forall t\in [t_0,T].
\ee
The interpretation of the MFG system \eqref{eq.MFGsystIntro}  is the following: the map $u$ is the value function of a typical small agent (anticipating the evolution of the population density $(m(t))$) and accordingly solves the Hamilton-Jacobi equation \eqref{eq.MFGsystIntro}-(i). When this agent plays in an optimal way, the drift in the dynamic of its state is given by the term $-H_p(x,Du,m(t))$. By a mean field argument (assuming that the noise of the agents are independent), the resulting evolution of the population density $\tilde m$ satisfies the Kolmogorov equation 
$$
\left\{ \begin{array}{l}
\ds \partial_t \tilde m -\sum_{i,j} D_{ij}(a_{i,j}\tilde m) -\dive(\tilde  m H_p(x,Du,m(t))=0\qquad {\rm in}\; (t_0,T)\times \R^d\\
\ds m(t_0)=m_0 \qquad {\rm in}\;\R^d
\end{array}\right.
$$
In an equilibrium configuration, i.e., when agents anticipate correctly the evolving measure, one has $\tilde m=m$ and therefore the population density $m$ solves \eqref{eq.MFGsystIntro}-(ii). 

The existence/uniqueness of the solution for the MFG system is rather well understood: it relies on Schauder estimates,  fixed point methods  and monotonicity arguments (see, in particular, \cite{LL06cr2, LL07mf}).  From the well-posedness of the MFG system, one can  derive the existence of a solution to the first order master equation ``quite easily": one just needs to define the map $U$ by \eqref{Rel.Uu} with $t=t_0$ and  check that the map $U$ thus defined is a classical solution to the first order master equation. This is the path followed in \cite{GS14-2, May} (when there is no diffusion at all:  $a=a^0\equiv 0$) and in \cite{CgCrDe} (when $a>0$ is constant and $a^0=0$). See also \cite{CDLL} for a similar result (in the torus) using PDE linearization techniques. 

When $a^0\not \equiv 0$ (i.e., for the second order master equation, or master equation with a common noise), the characteristics  are now given by the system of SPDEs (called ``stochastic MFG system"):  
\be\label{eq.MFGsIntro}
\left\{ \begin{array}{l}
\ds 
 d u(t,x) = \bigl[ -{\rm Tr}((a+a^0)(t,x)D^2u(t,x))+ H(x,Du(t,x),m(t)) \\
 \ds \qquad \qquad \qquad\qquad -  \sqrt{2} {\rm Tr}(\sigma^0(t,x) Dv(t,x)) \bigr] dt + v(t,x) \cdot dW_{t} \qquad \qquad {\rm in }\; (0,T)\times \R^d,\\
\ds d m(t,x) = \bigl[ \sum_{i,j} D_{ij}(( (a_{ij})+a^0_{ij})(t,x)m(t,x)) + {\rm div} \bigl( m(,x) D_{p} H(x,D u(t,x),m(t))\bigr) \bigr] dt\\
 \qquad \qquad \qquad \qquad \ds  - {\rm div} ( m(t,x) \sqrt{2}\sigma^0(t,x) dW_{t} \bigr), 
 \qquad \qquad {\rm in }\; (0,T)\times \R^d,
 \\
 \ds u(T,x)=G(x,m(T)), \; m(0)=m_0,\qquad {\rm in }\;  \R^d
\end{array}\right.
\ee
In the above system, $(W_t)$ is the common noise (here a Brownian motion) and   the unknown is the triplet $(u,m,v)$, where  the new variable $v$  (a random vector field in $\R^d$) ensures the solution $u$ of the backward Hamilton-Jacobi (HJ) equation to be adapted to the filtration generated by the common noise $(W_t)$. 
The analysis of this system is much more involved than the deterministic one: Schauder estimates are no longer available and the usual fixed point methods based on compactness arguments can no longer be applied. One has to replace them by continuation methods, which are much heavier to handle, see \cite{CDLL}. Besides the PDE approach we just  mentioned, MFG with common noise can also be handled through a probabilistic formulation: see the pioneering result \cite{CDLcn}, as well as \cite{Ah, LW} and the monograph \cite{CaDebook}. Once the analysis of the stochastic MFG system has been performed, one can proceed with the construction of the second order master equation as in the first order case, defining the map $U$ by \eqref{Rel.Uu} for $t=t_0$, where $u$ is now the $u-$component of the solution of the stochastic MFG system ($u(t_0,\cdot)$ turns out to be deterministic). However, here again, the verification that the map $U$  defined so far is smooth enough to satisfy \eqref{MasterEqIntro} requires a lot of work: see \cite{CDLL} and \cite{CaDebook}. 

Let us finally recall another approach, suggested by P.-L. Lions in the seminar \cite{LLperso2}: it consists in writing the equation for the quantity $D_mU$ as an hyperbolic equation in a Hilbert space of random variables. The construction requires, however,  convexity conditions on the system with respect to the space variable (but no uniform ellipticity for the matrix $a$). \\

We now discuss the second equation considered in this paper: the master  equation corresponding to MFG models with a major player. It reads  as follows: 
\be\label{eq.MmIntro}
\left\{\begin{array}{rl}
\ds (i)& \ds -\partial_t U^{0}-\Delta_{x_0} U^0+ H^0(x_0,D_{x_0}U^{0}, m) -\int_{\R^d} \dive_yD_m U^0(t,x_0,m,y)dm(y)\\
&\ds \qquad + \int_{\R^d} D_mU^0(t,x_0,m,y)\cdot H_p(x_0,y, D_{x}U(t,x_0,y,m), m)dm(y)=0 \\ 
&\ds \hspace{8cm} {\rm in} \; (0,T)\times \R^{d_0}\times \Pw,\\
\ds (ii) & \ds -\partial_t U- \Delta_{x} U-\Delta_{x_0}U+ H(x_0,x,D_{x}U, m) -\int_{\R^d} \dive_y D_mU(t,x_0,x,m,y)dm(y)\\
&\ds \qquad  + D_{x_0}U \cdot H^0_p(x_0,D_{x_0}U^0(t,x_0,m), m)\\
&\ds \qquad   + \int_{\R^d} D_m U(t,x_0,x,m,y)\cdot H_p(x_0,y, D_{x}U(t,x_0,y,m), m)dm(y)=0 \\ 
&\ds \hspace{8cm} {\rm in} \; (0,T)\times \R^{d_0}\times \R^{d}\times \Pw,\\
\ds (iii) & \ds U^{0}(T,x_0,m)= G^0(x_0,m), \qquad  {\rm in} \; \R^{d_0}\times \Pw,\\
\ds (iv) & \ds U(T,x_0, x,m)= G(x_0,x,m)\qquad  {\rm in} \; \R^{d_0}\times \R^{d}\times \Pw.\\
\end{array}\right.
\ee
In the above system, $U^0=U^0(t,x_0,m)$ corresponds to the  payoff at equilibrium for a major player interacting with a crowd  in which each agent  has at equilibrium  a payoff given by $U=U(t,x_0,x, m)$. Here $m$ is the distribution law of the agents. Notice that each agent is influenced by the major player whereas the latter is only influenced by the distribution of the whole population. Mean field games with a major player have been first discussed by Huang in \cite{H10} and several notions of equilibria, in different contexts, have been proposed in the literature since then: see \cite{BCY15, BCY16, BLP14, CK14, CaDebook, CW16, CW17, CZ16, LL18}. The above system has been introduced by Lasry and Lions in \cite{LL18}.  In the companion paper \cite{CCP}, we explain how the above master equation is related to the approach by Carmona and al. \cite{CW16, CW17, CZ16}. Concerning the existence of a solution, \cite{CZ16} shows the existence of an equilibrium in short time for the case of a finite state space, \cite{LL18} proves the existence of a solution to the master equation still in the finite state space framework and notes that the Hilbertian techniques described in \cite{LLperso2} could be adapted to the master equation with a major player \eqref{eq.MmIntro}. \\

The purpose of this paper is to introduce a   different path towards the construction of a solution to the second order master equation and to the master equation with a major player, using as a building block the construction of a solution to the first order master equation. For the second order master equation, we justify this point of view by the fact that the deterministic MFG system and  the   first order master equation are much easier to manipulate than the stochastic MFG system. Our approach allows for instance to build solutions of the second order master equation (in short time) under more general assumptions than in \cite{CDLL, CaDebook}. For the MFG problem with a major player, we prove for the first time the (short time) well-posedness of the associated system of master equations in continuous space. 

Let us first explain our ideas for the master equation with common noise \eqref{MasterEqIntro}. In contrast to previous works, we do not use directly the representation formula \eqref{Rel.Uu} (for $t=t_0$) for the solution of the second order master equation. 
Instead, we somehow decompose the second order master equation as the superposition of the first order master equation: 
\be\label{eq.transport}
\left\{\begin{array}{l} 
\ds - \partial_t U  - {\rm Tr}(a(t,x)D^2_{xx}U) +H(x,D_xU,m) -\int_{\R^d}{\rm Tr}(a(t,y)D^2_{ym}U)\ dm(y) \\
\ds    \qquad   + \int_{\R^d} D_m U\cdot H_p(y,D_xU,m) \ dm(y) =0 
 \qquad {\rm in }\; (0,T)\times \R^d\times \Pw\\
 U(T,x,m)= G(x,m) \qquad  {\rm in }\;  \R^d\times \Pw\\
\end{array}\right.
\ee
and of a linear second order master equation:
\be\label{eq.master1}
\left\{\begin{array}{l} 
\ds - \partial_t U -{\rm Tr}\left[ \sigma^0(\sigma^0)^T(t,x) D^2_{xx} U\right]
-\int_{\R^d} {\rm Tr}\left[\sigma^0(\sigma^0)^T(t,y) D^2_{ym}U\right] m(dy)
\\
\ds \qquad 
-2\int_{\R^d} {\rm Tr} \left[\sigma^0(t,y)(\sigma^0(t,x))^TD^2_{xm}U\right]m(dy) 
\\
\ds    \qquad  
-\int_{\R^{2d}} {\rm Tr}[\sigma^0(t,y)(\sigma^0(t,y'))^TD^2_{mm}U]m(dy)m(dy') = 0 \\
 \qquad\qquad\qquad\qquad\qquad\qquad {\rm in }\; (0,T)\times \R^d\times \Pw\\
 U(T,x,m)= G(x,m) \qquad  {\rm in }\;  \R^d\times \Pw\\
\end{array}\right.
\ee
The  solution to this linear second order master equation is just given by a Feynman-Kac formula, and thus it is very easy to handle. Then we use Trotter-Kato formula, alternating the two equations in short time intervals to build in the limit a solution of the full equation \eqref{MasterEqIntro}. If the technique is quite transparent, its actual implementation requires some care. Indeed, one has to check that, at each step of the process, the regularity of the solution does not deteriorate too much, meaning at least in a linear way in time. The aim of Section \ref{Sec.FirstOrdreMaster} is precisely to quantify this deterioration for the solution $U$ of the first order master equation \eqref{eq.transport}, as well as for its derivatives in the measure variable. As the solution of \eqref{eq.transport} is built by using the representation formula \eqref{Rel.Uu} (where $t=t_0$) presented above, one has first to do the analysis on the MFG system \eqref{eq.MFGsystIntro} and this is the aim of Section \ref{sec:ReguMFG}. Note that we are able to control  the regularity of the linear second order equation \eqref{eq.master1} only when the matrix $a^0$ is constant. Hence we only prove the short time existence of a solution to \eqref{MasterEqIntro}  in that case. 

For the problem with a major player, we argue in a  similar way: we view equation \eqref{eq.MmIntro} as the superposition of two systems: the first one is a first order system of master equations (for a fixed $x_0$): 
$$
\left\{\begin{array}{rl}
\ds (i)& \ds -\partial_t U^{0} -\int_{\R^d} \dive_yD_m U^0(t,x_0,m,y)dm(y)\\
&\ds \qquad + \int_{\R^d} D_mU^0(t,x_0,m,y)\cdot H_p(x_0,y, D_{x}U(t,x_0,y,m), m)dm(y)=0 \\ 
\ds (ii) & \ds -\partial_t U- \Delta_{x} U+ H(x_0,x,D_{x}U, m) -\int_{\R^d} \dive_y D_mU(t,x_0,x,m,y)dm(y)\\
&\ds \qquad   + \int_{\R^d} D_m U(t,x_0,x,m,y)\cdot H_p(x_0,y, D_{x}U(t,x_0,y,m), m)dm(y)=0 \\ 
\end{array}\right.
$$
It turns out that this system can be handled by the method of characteristics. As for the  second one, it is a simple system of HJ equations (for fixed $x,m$):
$$
\left\{\begin{array}{rl}
\ds (i)& \ds -\partial_t U^{0}-\Delta_{x_0} U^0+ H^0(x_0,D_{x_0}U^{0}, m)  =0  \\ 
\ds (ii) & \ds -\partial_t U- \Delta_{x_0}U+ D_{x_0}U \cdot H^0_p(x_0,D_{x_0}U^0(t,x_0,m), m)=0 \,.\\ 
\end{array}\right.
$$
The idea of splitting time is not completely new in the framework of mean field games. For instance, the construction, given in  \cite{CDLcn},  of (weak) equilibria for MFG problems with common noise relies on a time splitting. The main difference is that it is done at the level of the MFG equilibrium, while we do the construction at the (stronger) level of the master equation. One  consequence is that, with our approach, the construction of a solution to the stochastic MFG system (in short time, though) is straightforward once the solution of the master equation is built, while deriving a solution of the master equation from the stochastic MFG system is much trickier.  Let us also quote the paper in preparation \cite{ALLRS} in which the authors use a splitting technique similar to the one described above to compute numerically the solution  of MFGs with a major player.
\\

Let us finally point out that, in this paper, we do not address at all the problem of the existence of a solution on a large time interval. For the first and second order master equation, this question is related to the Lasry-Lions monotonicity condition \cite{LL06cr2, LL07mf}. The existence of a solution  on a large time interval can be obtained under this condition either  by the Hilbertian approach, as explained in  \cite{LLperso2},  or by a continuation method, as  in \cite{CgCrDe} and  \cite{CaDebook} or even directly by using the long time existence of a solution for the MFG system, as in \cite{CDLL}. Let us recall that, when the monotonicity condition is not fulfilled, the solution to the second order master equation is expected to develop shocks (i.e., discontinuities) in finite time. Note also that a structure condition  similar to the monotonicity condition is not known for MFGs with a major player. \\

The paper is organized in the following way. 
In Section  \ref{sec.notation} we  fix the notation, we recall the definition of derivatives in the space of measures and state our main assumptions. 
The main existence results for the second order master equation (equation \eqref{MasterEqIntro}) and for the system of master equations for MFG with a major player (system \eqref{eq.MmIntro}) are collected in Sections \ref{sec.master2} and \ref{sec.Mm} respectively. Both sections require estimates on the first order master equations. As first order master equations are  built by the method of characteristics involving the solutions of classical MFG systems  \eqref{eq.MFGsystIntro},  Section \ref{sec:ReguMFG} first provides estimates for these systems. Then Section \ref{Sec.FirstOrdreMaster} is devoted to the first order master equations. We complete the paper by appendices in which we prove short-time estimates for the standard Hamilton-Jacobi equations (Section \ref{sec.HJ}) and we discuss several facts on maps defined on the space of measures (differentiability, interpolation and Ascoli Theorem, Section \ref{sec.diffUm}). 
\\

\noindent {\bf Acknowledgement. } The first author was partially supported by the ANR (Agence Nationale de la Recherche) project ANR-16-CE40-0015-01 and by the AFOSR  grant FA9550-18-1-0494. The second author was partially supported by the Fondazione CaRiPaRo Project ``Nonlinear Partial Differential Equations: Asymptotic Problems and Mean-Field Games'' and the Programme ``FIL-Quota Incentivante'' of University of Parma, co-sponsored by Fondazione Cariparma. The third author was partially supported by Indam (Gnampa national project 2018) and by FSMP (Foundation Sciences Math\'ematiques de Paris).


\section{Notation and assumptions}\label{sec.notation}

\subsection{Notation}

Throughout the paper, we work in the euclidean space $\R^d$ (with $d\in \N$, $d\geq 1$), endowed with the scalar product $(x,y)\to x\cdot y$ and the distance $|\cdot|$. Given $T>0$ and a map 
$\phi:(0,T)\times \R^d\to \R$, we denote by $\partial_t\phi$ the derivative of $\phi$ with respect to the time-variable, by  $\partial_{x_i}\phi$ its partial derivative with respect to the $i-$th space variable ($i=1,\dots, d$) and by $D\phi$ the gradient with respect to the space variable. 

For $n\in\N$, we denote by $C^n_b$  the set of maps $\phi:\R^d\to \R$ which are $n-$times differentiable with continuous and bounded derivatives: in particular, $C^0_b$ is the set of continuous and bounded maps.  Given $\phi\in C^n_b$ and a multi-index $k=(k_1,\dots, k_d)\in \N^d$, with length $|k|:=\sum_{i=1}^{d} k_i\leq n$, we denote by $\partial^k \phi= \frac{\partial^{k_1}}{\partial x_1^{k_1}}\dots \frac{\partial^{k_d}}{\partial x_d^{k_d}}\phi$ (or briefly $\phi_k$) the $k-$th derivative of $\phi$.  We also denote by $D^n\phi$ ($n\in \N$, $n\geq 1$) the vector $(\partial^k\phi)_{|k|= n}$.  The norm of $\phi$ in $C^n_b$ is 
$$
\|\phi\|_n:= \sum_{r=0}^n\sup_x \left(\sum_{|\alpha|=r}|\partial^{\alpha}\phi(x)|^2\right)^{1/2} = \sum_{r=0}^n \|D^r \phi\|_\infty. 
$$
For $n=0$, we use indifferently the notation $\|\phi\|_0$ or $\|\phi\|_\infty$.

For $(n_1, \dots, n_k)\in \N^k$ ($k\in \N$, $k\geq 2$), we denote by $C^{n_1, \dots, n_k}_b$ the space of functions $\phi:\R^{d_1}\times \dots\times \R^{d_k}\to \R$ ($d_i\geq 1$) having continuous and bounded derivatives $D^{l_1}_{x_1}\cdots D^{l_k}_{x_k}\phi$ for all $l_1 \leq n_1, \ldots, l_k \leq n_k$, endowed with the norm
$$
\|\phi\|_{n_1,\dots, n_k} = \|\phi(\cdot_{x_1},\dots, \cdot_{x_k})\|_{n_1,\dots, n_k} := \sum_{l_1 \leq n_1, \ldots, l_k \leq n_k} \|D^{l_1}_{x_1}\cdots D^{l_k}_{x_k}\phi\|_\infty,
$$
where now $(x_1, \dots, x_k)$ stands for a generic element $\R^{d_1}\times \R^{d_k}$. 

We denote by $C^{-n}$ the dual space of $C^n_b$, endowed with the usual norm 
$$
\|\rho\|_{-n}:= \sup_{\|\phi\|_n\leq 1} \rho(\phi) \qquad \forall \rho\in C^{-n}.
$$

Finally, when a map $\phi=\phi(t,x)$ depends also on time $t$ belonging to an interval $I$, we often write 
$\sup_{t\in I}\|\phi(t)\|_n$ for $\sup_{t\in I}\|\phi(t,\cdot)\|_n$. We use a corresponding notation for a map $\rho\in C^0([0,T], C^{-k})$. \\

Throughout the paper,  ${\mathcal P}$ stands for the set of Borel probability measures on $\R^d$ and for $k\geq 1$, ${\mathcal P}_k$ stands for the set of measures in ${\mathcal P}$ with finite moment of order $k$: namely,
$$
M_k(m):= \left(\int_{\R^d} |x|^k m(dx)\right)^{1/k} <+\infty \qquad {\rm if} \; m\in {\mathcal P}_k. 
$$
The set ${\mathcal P}_k$ is endowed with the distance  (see for instance \cite{AGS, RaRu98, villani})
$$
{\bf d}_k(m,m') =\inf_{\pi} \left( \int_{\R^d} |x-y|^k\pi(dx,dy)\right)^{1/k}, \qquad \forall m,m'\in {\mathcal P}_k, 
$$
where the infimum is taken over the couplings $\pi$ between $m$ and $m'$, i.e., over the Borel probability measures $\pi$ on $\R^d\times \R^d$ with first marginal $m$ and second marginal $m'$. Note that $\Pw\subset {\mathcal P}_1$ and ${\bf d}_1\leq {\bf d}_2$ by Cauchy-Schwarz inequality. We will often use the fact that, if $\phi:\R^d\to \R$ is Lipschitz continuous with a Lipschitz constant $L\geq 0$, then
$$
\left| \int_{\R^d} \phi(x)(m-m')(dx)\right| \leq L {\bf d}_1(m,m'), \qquad \forall m,m'\in {\mathcal P}_1. 
$$
Moreover,  ${\bf d}_1(m,m')$ is the smallest constant for which the above inequality holds for any $L-$Lipschitz continuous map $\phi$  (see for instance \cite{RaRu98, villani}). Given $m\in {\mathcal P}$ and $\phi\in C^0_b$,  the image $\phi\sharp m$ of $m$ by $\phi$ is the element of ${\mathcal P}$ defined by 
$$
\int_{\R^d} f(x) \ m\sharp \phi (dx)= \int_{\R^d} f(\phi(x)) m(dx)\qquad \forall f\in C^0_b.
$$

\subsection{Derivatives in the space of measures}\label{subsec.derivatives}

We now define the derivative in the space ${\mathcal P}_2$. For this, we follow mostly the definition and notations introduced in \cite{CDLL} (in a slightly different context) and which are reminiscent of earlier approaches: see \cite{AKR, AGS} and the references in \cite{CaDebook}. We say that a map $U:\Pw\to \R$ is $C^1$ if there exists a {\it continuous and bounded} map $\frac{\delta U}{\delta m}:\Pw\times \R^d\to \R$ such that 
\be\label{zalekrnjdg}
U(m')-U(m)= \int_0^1 \int_{\R^d} \frac{\delta U}{\delta m}((1-s)m+sm',y) (m'-m)(dy)ds \qquad \forall m,m'\in \Pw. 
\ee
Note that the restriction on $\frac{\delta U}{\delta m}$ to be continuous on the entire space $\R^d$ and globally bounded is restrictive: it will however simplify our forthcoming construction. The map $\frac{\delta U}{\delta m}$ is defined only up to an additive constant that we fix with the convention 
\be\label{eq.convention}
\int_{\R^d} \frac{\delta U}{\delta m}(m,y)m(dy)=0 \qquad \forall m\in \Pw.
\ee
We say that the map $U$ is continuously $L-$differentiable (in short: $L-C^1$) if $U$ is $C^1$ and if $y\to \frac{\delta U}{\delta m}(m,y)$ is everywhere differentiable with a  continuous and globally bounded derivative on $\Pw\times \R^d$. We   denote by
$$
D_mU(m,y):= D_y \frac{\delta U}{\delta m}(m,y)
$$
this $L-$derivative. In view of the discussion in \cite{CDLL}, $D_mU$ coincides with the Lions derivative as introduced in \cite{LLperso} and discussed in \cite{CaDebook}. In particular, 
it estimates the Lipschitz regularity of $U$ in $\Pw$ (Remark 5.27 in \cite{CaDebook}): 
 \be\label{ineq.LipU}
 |U(m)-U(m')| \leq {\bf d}_2(m,m') \sup_{\mu\in \Pw} \left( \int_{\R^d} |D_mU(\mu,y)|^2\mu(dy)\right)^{1/2}\qquad \forall m,m'\in \Pw. 
 \ee
Of course one can also  estimate the Lipschitz regularity of $U$ through the ${\bf d}_1$ norm, as
 \be\label{ineq.LipU2}
  |U(m)-U(m')| \leq {\bf d}_1(m,m') \sup_{\mu\in \Pw} \|D_mU(\mu, \cdot)\|_\infty\leq {\bf d}_2(m,m') \sup_{\mu\in \Pw} \|D_mU(\mu, \cdot)\|_\infty\,.
 \ee
Note that, with our boundedness convention, if  $U$ is continuously $L-$differentiable, then  $U$ is automatically globally Lipschitz continuous. 

When $U$ is smooth enough, we often see the map $\frac{\delta U}{\delta m}$ as a linear map on $C^{-k}$ by 
 $$
 \frac{\delta U}{\delta m}(m)(\rho)= \lg \rho , \frac{\delta U}{\delta m}(m,\cdot)\rangle_{C^{-k},C^k} \qquad \forall \rho\in C^{-k}.
 $$

 We say that $U$ is $C^2$ if $\frac{\delta U}{\delta m}$ is $C^1$ in $m$ with a continuous and bounded derivative: namely $\frac{\delta^2 U}{\delta m^2}= \frac{\delta}{\delta m}(\frac{\delta U}{\delta m}):\Pw\times \R^d\times\R^d\to \R$ is continuous in all variables and bounded. We say that $U$ is twice $L-$differentiable if the map $D_m U$ is $L-$differentiable  with respect to $m$ with a second order derivative $D^2_{mm}U=D^2_{mm} U(m, y,y')$ which is continuous and bounded on $\Pw\times \R^d\times \R^d$ with values in $\R^{d\times d}$. 
 
 When a map $U:\R^d\times \Pw\to \R$ is of class $C^n_b$ with respect to the space variable, uniformly with respect to the measure variable, we often set 
\be\label{laezbrebd}
\|U\|_n:=\sup_{m\in \Pw} \|U(\cdot,m)\|_n. 
\ee
We use similar notation for a map $U$ depending on several space variables and on a measure.  
   
When a map $U:\R^d\times \Pw\to \R$ is Lipschitz continuous with respect to $m$, uniformly with respect to the space variable in some $C^n$ norm, we define ${\rm Lip}_{n}(U)$ as the smallest constant $C$ such that 
$$
\|U(\cdot, m_1)-U(\cdot,m_2)\|_n \leq C {\bf d}_2(m_1,m_2)\qquad \forall m,m'\in \Pw.
$$
Namely: 
$$
{\rm Lip}_{n}(U):= \sup_{m_1\neq m_2} \frac{\|U(\cdot, m_1)-U(\cdot,m_2)\|_n}{{\bf d}_2(m_1,m_2)}. 
$$
More generally, if $U:(\R^d)^k\times \Pw\to \R$ (for $k\in \N$, $k\geq 1$) is Lipschitz continuous in the measure variable in some $C^{n_1,\dots, n_k}_b$ norm (where $n_i\in \N$ for $i=1,\dots, k$), then we set 
$$
{\rm Lip}_{n_1, \dots, n_k}(U):= \sup_{m_1\neq m_2} \frac{\|U(\cdot_{x_1},\dots, \cdot_{x_k}, m_1)-U(\cdot_{x_1},\dots, \cdot_{x_k},m_2)\|_{n_1, \dots, n_k}}{{\bf d}_2(m_1,m_2)}. 
$$
We will  typically use this notation for the derivatives of a map $U: \R^d\times \Pw\to \R$: indeed we will often have to estimate quantities of the form 
$$
{\rm Lip}_{n_1,n_2}(D_mU):= \sup_{m_1\neq m_2} \frac{\|D_mU(\cdot_x, m_1,\cdot_y)-D_mU(\cdot_x, m_2,\cdot_y)\|_{n_1,n_2}}{{\bf d}_2(m_1,m_2)}
$$
and 
$$
{\rm Lip}_{n_1, n_2, n_3}(D^2_{mm}U):= \sup_{m_1\neq m_2} \frac{\|D^2_{mm}U(\cdot_{x},m_1, \cdot_{y},\cdot_{y'})-D^2_{mm}U(\cdot_{x},m_2, \cdot_{y},\cdot_{y'}) \|_{n_1, n_2,n_3}}{{\bf d}_2(m_1,m_2)}. 
$$
Concerning the Lipschitz continuity with respect to one of the entries $x_i$, we will use the following notation:
\begin{multline*}
{\rm Lip}^{x_i}_{n_1, \dots, n_{i-1}, n_{i+1}, \ldots, n_k}(U):= \\ \sup_{m, x_i^1\neq x_i^2} \frac{\|U(\cdot_{x_1},\dots, \cdot_{x_{i-1}}, x_i^1, \cdot_{x_{i+1}}, \dots, \cdot_{x_k}, m)-U(\cdot_{x_1},\dots, \cdot_{x_{i-1}}, x_i^2, \cdot_{x_{i+1}}, \dots, \cdot_{x_k}, m)\|_{n_1, \dots, n_{i-1}, n_{i+1}, \ldots, n_k}}{|x_i^1-x_i^2|}.
\end{multline*}

{\bf Further norms:} In order to estimate the $y-$dependence of a derivative with respect to the measure of a map $U=U(x,m)$, we systematically proceed by duality method, testing this derivative against distributions. This yields to the following norms, for  $n,k\in \N$ (note the the subtle difference in notation between $\|\cdot\|_{n,k}$ and $\|\cdot \|_{n;k}$): 
\begin{align*}
& \left\|\frac{\delta U}{\delta m}\right\|_{n;k}:= 
 \sup_{m\in \Pw} \ \sum_{r=0}^n  \ \sup_{ \substack{ x\in \R^{d}, \rho\in C^0_c \\   \ \|\rho\|_{-k} = 1} } 
\left( \sum_{|\alpha| =r } \left|\partial^{\alpha}_{x} \frac{ \delta U}{\delta m} (x,m)(\rho) \right|^2\right)^{1/2}
=\sup_{m\in \Pw} \ \sum_{r=0}^n  \ \sup_{ \substack{ x\in \R^{d}, \rho\in C^0_c, \\ \|\rho\|_{-k} = 1} }
\left|D^r_{x} \frac{ \delta U}{\delta m} (x,m)(\rho) \right| 
,\\
&  \left\|\frac{\delta^2 U}{\delta m^2}\right\|_{n;k,k'}: = 
\sup_{m\in \Pw} \ \sum_{r=1}^n  \ \sup_{ \substack{ x\in \R^{d}, \rho, \rho'\in C^0_c, \\  \|\rho\|_{-k} =\|\rho'\|_{-k'} = 1} } 
\left|D^r_{x}\frac{ \delta^2 U}{\delta m^2} (x,m)(\rho,\rho') \right|.
\end{align*}
For maps $U=U(x_1, x_2,m)$ depending on two (or more) space variables, we use the transparent notation $\|\cdot\|_{n_1,n_2;k}$  (and, if $n_1=0$ (say), we simply set $\|\cdot\|_{n_2, k}=\|\cdot\|_{0,n_2; k}$). Finally, we use similar notation for the Lipschitz norms,  setting, for instance for a map $U=U(x,m)$,  
\begin{align*}
& {\rm Lip}_{n;k,k'} \Big(\frac{\delta^2 U}{\delta m^2}\Big) :=   \sup_{m_1\neq m_2}\dw(m_1,m_2)^{-1}  \  \sum_{r=0}^n\sup_{ \substack{ x\in \R^{d}, \rho, \rho'\in C^0_c\\ \ \|\rho\|_{-k} =\|\rho'\|_{-k'} = 1} }
\left|D^r_{x} \frac{\delta^2 U}{\delta m^2}(x,m_2)(\rho, \rho')- D^r_x \frac{\delta^2 U}{\delta m^2}(x,m_1)(\rho, \rho') \right|. 
\end{align*}

Some comment about the norms we just introduced are now in order. We discuss the norm $\|\cdot \|_{n;k}$ to fix the ideas. With these notations, we have, if $U = U(x,m)$ is smooth enough,  
 $$
\|\frac{\delta U}{\delta m}(\cdot,m)(\rho)\|_{n} \leq  \|\frac{\delta U}{\delta m}\|_{n;k} \|\rho\|_{-k},  
$$ 
for every fixed $m\in \Pw$.
Inequalities of this type are used throughout the text. Next we note that the norms
$\sup_{m\in \Pw}\|\cdot \|_{n,k}$ and $\|\cdot \|_{n;k}$ are equivalent if we know a priori that $\delta U/\delta m= \delta U/\delta m(x,m,y)$ is in
$C^{n,k}_b$.  In general we do not have this information, but only know that $\delta U/\delta m$ is (at least) continuous. In this case, we use the following remark:

\begin{Lemma}\label{lem.k;k} Let $k\in \N$ with $k\geq 1$ and $u\in C^0$ be such that 
\be\label{lkjknsr,d}
\theta:=\sup_{ \substack{ \rho\in C^0_c, \ \|\rho\|_{-k} = 1} } \int_{\R^d} u(y)\rho(y)dy <+\infty.
\ee
Then $u\in C^{k-1}_b$ with $\|u\|_{k-1}\leq C_k \theta$ (where $C_k$ depends on $d$ and $k$) and, for any $\beta\in \N^d$ with $|\beta|=k-1$, $\partial^\beta u$ is $\theta-$Lipschitz continuous. 
\end{Lemma}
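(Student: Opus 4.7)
The hypothesis can be rephrased as saying that $\rho \mapsto \int u\,\rho$ is a linear functional on $C^0_c$ of norm at most $\theta$ with respect to the norm $\|\cdot\|_{-k}$. To turn this soft information into classical differentiability, I mollify. Fix a nonnegative mollifier $\psi \in C^\infty_c(\R^d)$ with $\int\psi = 1$, set $\psi_\epsilon(z) := \epsilon^{-d}\psi(z/\epsilon)$, and let $u_\epsilon := u * \psi_\epsilon$. For each multi-index $\alpha$, the derivative $\partial^\alpha u_\epsilon(x)$ equals $\int u(y)\,\rho^\alpha_x(y)\,dy$ where $\rho^\alpha_x(y) := (\partial^\alpha \psi_\epsilon)(x-y) \in C^\infty_c$, so the hypothesis will apply once $\|\rho^\alpha_x\|_{-k}$ is controlled.

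The core estimate is $\|\rho^\alpha_x\|_{-k} \leq 1$ for every $|\alpha| \leq k$. Given $\phi \in C^k_b$ with $\|\phi\|_k \leq 1$, I move the derivatives off $\psi_\epsilon(x-\cdot)$ and onto $\phi$: writing $(\partial^\alpha \psi_\epsilon)(x-y) = (-1)^{|\alpha|}\partial^\alpha_y[\psi_\epsilon(x-y)]$ and integrating by parts $|\alpha|$ times in $y$,
\begin{equation*}
\int_{\R^d} \phi(y)\, (\partial^\alpha \psi_\epsilon)(x-y)\, dy \;=\; \int_{\R^d} (\partial^\alpha \phi)(y)\, \psi_\epsilon(x - y)\, dy,
\end{equation*}
whose modulus is bounded by $\|\partial^\alpha \phi\|_\infty \leq \|\phi\|_k \leq 1$. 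Hence $|\partial^\alpha u_\epsilon(x)| \leq \theta$ for every $x$ and every $|\alpha|\leq k$. To recover the sharp bound on the Euclidean length $|D^k u_\epsilon(x)|$, which is what is needed to pin down the sharp Lipschitz constant $\theta$ on $\partial^\beta u$, I test against linear combinations: for $c=(c_\alpha)_{|\alpha|=k}$ with $|c|\leq 1$, set $\rho_x := \sum_{|\alpha|=k} c_\alpha \rho^\alpha_x$. The same integration by parts followed by Cauchy--Schwarz yields $\|\rho_x\|_{-k} \leq 1$, so $\sum_{|\alpha|=k} c_\alpha \,\partial^\alpha u_\epsilon(x) \leq \theta$; taking the sup over $|c|\leq 1$ produces $|D^k u_\epsilon(x)| \leq \theta$. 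Consequently, for any $|\beta|=k-1$ the gradient $\nabla \partial^\beta u_\epsilon(x)$ is a subvector of $D^k u_\epsilon(x)$ and so has Euclidean length at most $\theta$, whence $\partial^\beta u_\epsilon$ is $\theta$-Lipschitz. The same argument at each order $r \leq k-1$ gives $\|D^r u_\epsilon\|_\infty \leq \theta$, hence $\|u_\epsilon\|_k \leq (k+1)\theta$ uniformly in $\epsilon$.

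The passage to the limit $\epsilon \to 0$ is then routine. Since $u$ is continuous, $u_\epsilon \to u$ locally uniformly; combined with the uniform $C^k$ bound, Arzela-Ascoli yields (up to a subsequence) convergence $\partial^\beta u_\epsilon \to \partial^\beta u$ locally uniformly for every $|\beta|\leq k-1$. This places $u$ in $C^{k-1}_b$ with $\|u\|_{k-1}\leq C_k\theta$, and the uniform $\theta$-Lipschitz bound on $\partial^\beta u_\epsilon$ passes to the limit. The only mildly delicate point is the use of the Euclidean structure on $D^k u_\epsilon$ (rather than bounding one derivative at a time) to obtain the sharp Lipschitz constant $\theta$ instead of a dimension-dependent multiple of it; otherwise this is a clean duality--mollification argument.
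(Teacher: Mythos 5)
Your proof is correct, and it takes a genuinely different route from the paper's. The paper works directly with the dual side: for $k=1$ it tests the functional against approximate Dirac masses $\delta_x$, $\delta_x-\delta_y$, and for $k\geq 2$ it first extends the functional to (a subset of) $C^{-k}$ and then uses the identity $h^{-1}(\delta_{x+hv'}-\delta_x)\to-\partial_v\delta_x$ in $C^{-k}$, proceeding by induction on $k$. You instead mollify $u$ on the primal side: you observe that $\partial^\alpha u_\epsilon(x)=\int u\,\rho^\alpha_x$ with $\rho^\alpha_x=(\partial^\alpha\psi_\epsilon)(x-\cdot)\in C^\infty_c$ satisfying $\|\rho^\alpha_x\|_{-k}\leq 1$ (and, via Cauchy--Schwarz on linear combinations, the sharp Euclidean bound $|D^r u_\epsilon|\leq\theta$ for $r\leq k$), which gives uniform $C^k$ bounds on $u_\epsilon$ and lets you conclude by Arzel\`a--Ascoli. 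Your approach buys a cleaner, non-inductive argument and avoids the slightly delicate extension step in the paper; the price is the final compactness/identification-of-limit passage (you should, strictly speaking, note that the Arzel\`a--Ascoli limit of $\partial^\beta u_\epsilon$ agrees with the distributional derivative $\partial^\beta u$, which is standard), whereas the paper's proof extracts the derivative estimate directly without a separate limiting step. Both give the sharp Lipschitz constant $\theta$; your use of the Euclidean length $|D^k u_\epsilon|$ rather than a component-wise bound is exactly what makes this work.
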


\begin{Remark}\label{rem.k;k} In particular, if $\frac{\delta U}{\delta m} \in C^{n,0}_b$  and $\left\|\frac{\delta U}{\delta m}\right\|_{n;k}$ is finite for some $n,k\in \N$ with $k\geq 1$, then  $\frac{\delta U}{\delta m} \in C^{n,k-1}_b$ and 
$$
\|\delta U/\delta m\|_{n,k-1}\leq C_{n,k}\|\delta U/\delta m\|_{n;k}
$$
for some constant $C_{n,k}$ depending in addition on dimension only. Moreover, the derivatives of the form $\partial_x^\alpha\partial_y^\beta \frac{\delta U}{\delta m}$ for $|\alpha|\leq n$ and $\beta\leq k-1$ are Lipschitz continuous with respect to $y$ and thus---by \eqref{ineq.LipU2}---also with respect to $m$, with a Lipschitz constant bounded by $\left\|\frac{\delta U}{\delta m}\right\|_{n;k}$. 
\end{Remark}

\begin{proof} For $k=1$, we have, approximating Dirac masses by continuous maps with compact support: for any $x,y\in \R^d$,  
$$
|u(x)| \leq \theta \|\delta_x\|_{-1} = \theta 
\qquad {\rm and}\qquad |u(x)-u(y)| \leq \theta \|\delta_x-\delta_y\|_{-1} = \theta |x-y|. 
$$
This proves the claim for $k=1$. Let now assume that \eqref{lkjknsr,d} holds for $k=2$. Then $u$ can be extended to an element $T$ in $(C^{-2})'$ with norm $\|T\|\leq \theta$,  such that $T(\rho)= \int ud\rho$ for any Radon measure  $\rho$. As, for any $v\in \R^d$, 
$$
\lim_{h\to 0, v'\to v} h^{-1} (\delta_{x+hv'}-\delta_x) = -\partial_{v} \delta_x \qquad {\rm in}\; C^{-2}, 
$$
we infer that 
$$
\lim_{h\to 0, v'\to v} h^{-1} (u(x+hv')-u(x))= -T(\partial_{v} \delta_x ). 
$$
The map $(x,v)\to \partial_{v} \delta_x$  being continuous in $C^{-2}$ with $\|\partial_{v} \delta_x\|_{-2} \leq |v|$,  $u$ is in $C^1$ with $\|Du\|\leq \theta$. Then, arguing as for $k=1$, one  can easily check that $Du$ is $\theta-$Lipschitz continuous. So the result also holds for $k=2$. The proof can be completed in the same way for any $k$ by induction. 
\end{proof}

\subsection{Assumptions on the data}\label{subsec.Hyp}

We state here the assumptions needed on $a$, $H$ and $G$ for the existence of a classical solution to the second order master equation \eqref{MasterEqIntro} and to the master equation \eqref{eq.MmIntro} for the MFG problem with a major player. These assumptions are in force throughout the paper. Note that they are common to both problems \eqref{MasterEqIntro} and \eqref{eq.MmIntro} since both require the same kind of estimates on the first order master equation (see Section \ref{Sec.FirstOrdreMaster}).

We assume  that the map $a:[0,T]\times \R^d\to \R^{d\times d}$ can be written as $a=\sigma\sigma^T$  where $\sigma:[0,T]\times \R^d\to \R^d$ ($M\in \N$, \; $M\geq 1$) is bounded in $C^n_b$ with respect to the space variable, uniformly with respect to the time variable, for some  $n\geq 4$. We also assume that the following uniform ellipticity condition holds: 
\be\label{Conda}
a(t,x)\geq C_0^{-1}I_d, \qquad \|Da\|_\infty\leq C_0\,,
\ee
for some $C_0>0$. 

We assume that the map $H:\R^{d_0}\times \R^{d}\times \R^d\times \Pw\to\R$ satisfies the growth condition 
\be\label{CondH}
 \sup_{x_0\in \R^{d_0},\ x\in \R^d, \; m\in \Pw} |D_xH(x_0,x,p,m)|\leq C_0(1+|p|^\gamma),\,\quad \forall p\in \R^d,
\ee
for some $\gamma>1$. We also suppose that, for any $R>0$, the quantities
$$
\|H(\cdot_{x_0},\cdot_x,\cdot_p, m)\|_{3,n,n+1}, \;   \left\|\frac{\delta H}{\delta m}(\cdot_{x_0},\cdot_x,\cdot_p, m, \cdot_y)\right\|_{2,n-1,n,k},
\; \left\|\frac{\delta^2 H}{\delta m^2}(\cdot_{x_0}, \cdot_x,\cdot_p, m, \cdot_y,\cdot_{y'})\right\|_{1, n-2,n-1,k-1,k-1},
$$
and ${\rm Lip}_{1, n-3,n-2,k-1,k-1} (\frac{\delta^2 H}{\delta m^2})$
are bounded for $|p|\leq R$, $m\in \Pw$ and $x_0 \in \R^{d_0}$, for any  $k\in \{2, \dots, n-1\}$. Note that we could also allow for a time dependence for $H$ without changing at all the arguments: we will not do so to simplify a little the notation. For the second order master equation, the Hamiltonian $H$ actually does not depend on $x_0$, but this dependence is important to handle the MFG problem with a major player. 

As for the initial condition $G:\R^{d_0}\times \R^{d}\times \Pw\to \R$, we assume that $G$ is of class $C^2$ with respect to all variables and that the quantities 
\begin{align*}
& \|G(\cdot_{x_0}, \cdot_x, m)\|_{3,n}, \;   \left\|\frac{\delta G}{\delta m}(\cdot_{x_0}, \cdot_x, m,\cdot_y)\right\|_{2, n-1,k},
\; \left\|\frac{\delta^2 G}{\delta m^2}(\cdot_{x_0},\cdot_x, m, \cdot_y, \cdot_{y'})\right\|_{1,n-2,k-1,k-1}, \\ 
& {\rm Lip}_{1,n-3,k-2,k-2} (\frac{\delta^2 G}{\delta m^2})(\cdot_{x_0}, \cdot_x,  m, \cdot_y, \cdot_{y'}),
\end{align*}
are bounded uniformly with respect to  $m\in \Pw$. Here again, for the  second order master equation, the terminal condition $G$  does not depend on $x_0$, but this dependence is needed in the MFG problem with a major player.\\

{\bf Additional assumptions for the MFG problem with a major player.} This problem involves in addition a Hamiltonian $H^0:\R^{d_0} \times \R^{d_0} \times \Pw \to \R$  and a terminal condition $G^0:\R^{d_0}\times \Pw\to \R$. We assume that the map $H^0$ satisfies the growth property
\be\label{CondH0} 
 \sup_{x_0\in \R^{d_0}, \; m\in \Pw}
 |D_{x_0,p}H^0(x_0,p,m)| + |D^2_{x_0,p}H^0(x_0,p,m)|  \leq C_0(|p|^\gamma+1), 
\ee
for some $\gamma>1$. We also suppose that, for any $R>0$, the quantities 
$$
\|H^0(\cdot_{x_0},\cdot_p, m)\|_{3,4}, \;   \left\|\frac{\delta H^0}{\delta m}(\cdot_{x_0},\cdot_p, m, \cdot_y)\right\|_{2,3,k},
\; \left\|\frac{\delta^2 H^0}{\delta m^2}(\cdot_{x_0},\cdot_p, m, \cdot_y,\cdot_{y'})\right\|_{1,2,k-1,k-1},
$$
and ${\rm Lip}_{0,1,k-2,k-2} (\frac{\delta^2 H}{\delta m^2})$
are bounded for $|p|\leq R$, $m\in \Pw$ and $x_0 \in \R^{d_0}$, for any  $k\in \{2, \dots, n-1\}$.

The initial condition $G^0:\R^{d_0}\times \Pw\to \R$ is assumed to be of class $C^2$ with respect to the measure variable and the quantities 
\begin{align*}
& \|G^0(\cdot, m)\|_{3}, \;   \left\|\frac{\delta G^0}{\delta m}(\cdot, m,\cdot)\right\|_{2,k},
\; \left\|\frac{\delta^2 G^0}{\delta m^2}(\cdot_{x_0}, m, \cdot_y, \cdot_{y'})\right\|_{1,k-1,k-1}, \\ 
& {\rm Lip}_{0,k-2,k-2} (\frac{\delta^2 G^0}{\delta m^2})(\cdot_{x_0}, m, \cdot_y, \cdot_{y'}),
\end{align*}
are supposed to be bounded uniformly with respect to  $m\in \Pw$. \\

Throughout the proofs, we assume that the time horizon $T$ is small, say $T\leq 1$. We denote by $C$ and $C_M$  a constant which might change from line to line and which depends only on the data of the problem, i.e., on $a$, $H$ and $H^0$---the dependence in $G$ and $G^0$ being always explicitly written---and, for $C_M$,  on the additional real number $M$. In some proofs, when there is no ambiguity, we drop the $M$ dependence of $C_M$ to simplify the expressions. 

\section{The second order master equation}\label{sec.master2}

The aim of the section is to show the short-time existence of the second order master equation: 

\be\label{eq.Master2}
\left\{\begin{array}{l} 
\ds - \partial_t U(t,x,m)  - {\rm Tr}((a(t,x)+a^0)D^2_{xx}U(t,x,m))+H(x,D_xU(t,x,m),m)\\
\ds \qquad -\int_{\R^d}{\rm Tr}((a(t,y)+a^0)D^2_{ym}U(t,x,m,y))\ dm(y) \\
\ds    \qquad    + \int_{\R^d} D_m U(t,x,m,y)\cdot H_p(y,D_xU(t,y,m),m) \ dm(y) \\
\ds \qquad -2\int_{\R^d} {\rm Tr} \left[a^0 D^2_{xm}U(t,x,m,y)\right]m(dy) \\
\ds    \qquad  
-\int_{\R^{2d}} {\rm Tr}[a^0 D^2_{mm}U(t,x,m,y,y')]m(dy)m(dy') = 0 \\
 \ds \qquad \qquad  \qquad {\rm in }\; (0,T)\times \R^d\times \Pw\,,\\
 U(T,x,m)= G(x,m) \qquad  {\rm in }\;  \R^d\times \Pw\;,
\end{array}\right.
\ee
where $a^0$ is a symmetric positive definite $d\times d$ matrix (independent of time and space).

\begin{Definition} We say that $U:[0,T]\times \R^d\times \Pw\to \R$ is a classical solution of \eqref{eq.Master2} if $U$ and its derivatives involved in \eqref{eq.Master2} exist, are continuous in all variables and are bounded, and if \eqref{eq.Master2} holds.
\end{Definition}

Our main result is the following short time existence Theorem: 

\begin{Theorem}\label{theo.ShortTime} Under the assumptions of Subsection \ref{subsec.Hyp}, there exists a time $T>0$ such that the second order master equation \eqref{eq.Master2} has a  classical solution $U$ on $[0,T]$ .
\end{Theorem}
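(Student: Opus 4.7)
The plan is to implement the splitting (Trotter--Kato) scheme outlined in the introduction. Let $h=T/N$ for a large integer $N$ (to be chosen), and set $t_k=kh$. On the grid, I would define an approximate solution $U^N$ backward in time as follows. On $[t_{N-1},T]$, take $U^N$ to be the solution of the first order master equation \eqref{eq.transport} with terminal data $G$; call this solution $\mathcal{T}_1(h)G$. On $[t_{N-2},t_{N-1}]$, apply in sequence the linear operator $\mathcal{T}_2(h)$ (the solution operator of the purely second-order linear equation \eqref{eq.master1} on an interval of length $h$, obtained via the Feynman--Kac formula with the constant matrix $a^0$) to $U^N(t_{N-1},\cdot,\cdot)$, and then the first-order flow $\mathcal{T}_1(h)$ to the result. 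Iterating, $U^N(t_k,\cdot,\cdot)=(\mathcal{T}_1(h)\circ\mathcal{T}_2(h))^{N-1-k}\mathcal{T}_1(h)G$, and in between grid points $U^N$ is defined by running one of the two sub-flows.

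The main work is to show that the nonlinear flow $\mathcal{T}_1$ and the linear flow $\mathcal{T}_2$ each dissipate regularity at most linearly in the time step. Concretely, for a terminal datum $V$ whose norms
$\|V\|_{n}$, $\|\delta V/\delta m\|_{n-1;k}$, $\|\delta^2 V/\delta m^2\|_{n-2;k-1,k-1}$ and the corresponding Lipschitz-in-$m$ norms are bounded by some $M$, I must exhibit constants $C_M$ and a rate $\alpha_M$ such that $\mathcal{T}_j(h)V$ satisfies the same estimates with $M$ replaced by $M+\alpha_M h$, for $h\leq h_0(M)$ and $j\in\{1,2\}$. For $\mathcal{T}_1$, this is exactly the quantitative short-time regularity theory of the first order master equation developed in Sections~\ref{sec:ReguMFG}--\ref{Sec.FirstOrdreMaster} (it relies on the representation \eqref{Rel.Uu} through the MFG system and carefully tracks the norms of $D_m U$, $D^2_{mm}U$ and their Lipschitz moduli in $m$). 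For $\mathcal{T}_2$, since $a^0$ is constant, a direct computation on the Feynman--Kac-type representation against independent Brownian perturbations of $x$ and of the measure argument shows that all the norms listed in Subsection~\ref{subsec.derivatives} are actually non-increasing (up to multiplicative constants close to $1$); this is where the constancy of $a^0$ is essential, because otherwise $x$-dependence would couple with $D_{xm}U$ and $D_{mm}U$ terms and only yield nonlinear-in-$h$ losses.

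Composing these two estimates, one iteration of $\mathcal{T}_1(h)\circ \mathcal{T}_2(h)$ turns bounds by $M$ into bounds by $M+\alpha_M h$. Iterating $N$ times and choosing $T$ small so that $\alpha_{2M_0}T\leq M_0$ with $M_0$ being the norm of $G$, a Gronwall/Bihari argument shows that the full family $\{U^N\}_N$ stays uniformly bounded in all the relevant norms, uniformly in $N$. By the Ascoli-type compactness statement for maps on $\Pw$ recalled in Section~\ref{sec.diffUm}, we can extract a subsequence converging (together with its $x$- and $m$-derivatives up to the orders entering \eqref{eq.Master2}) locally uniformly on $[0,T]\times\R^d\times\Pw$ to some limit $U$.

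It remains to verify that $U$ is a classical solution of \eqref{eq.Master2}. On a subinterval $[t_k,t_{k+1}]$, $U^N$ solves alternately \eqref{eq.transport} and \eqref{eq.master1}; a standard Trotter--Kato consistency argument, which uses that both sub-equations have classical solutions with the derivatives appearing in \eqref{eq.Master2} continuous in all their variables, shows that the sum of the two equations---namely \eqref{eq.Master2}---is satisfied by the limit $U$ in a classical sense. The terminal condition $U(T,\cdot,\cdot)=G$ is preserved along the scheme. The hard part, as usual with splitting methods, will be the uniform regularity bound: all the technical estimates of Sections \ref{sec:ReguMFG} and \ref{Sec.FirstOrdreMaster} are designed precisely to establish the ``linear-in-$h$'' deterioration for $\mathcal{T}_1$, while the restriction to a constant $a^0$ is what ensures the same for $\mathcal{T}_2$ and hence justifies the statement of the theorem.
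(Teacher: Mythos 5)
Your proposal follows essentially the same Trotter--Kato splitting strategy as the paper: alternate the nonlinear first-order master equation with the linear second-order (Feynman--Kac) equation, establish that each sub-flow loses regularity at most linearly in the step size (Propositions \ref{prop.linear2}, \ref{Prop.DerivU}, \ref{Prop.DerivU2}, \ref{prop:rlipdelta2U}), obtain uniform bounds by the telescoping/Gronwall argument of Lemma \ref{lem.estiUN}, extract a subsequence via the Arzel\`a--Ascoli statement adapted to $\Pw$ (Lemma \ref{lem.Arzela-Ascoli}), and pass to the limit. The only cosmetic gap: the paper uses $2N$ subintervals of length $T/(2N)$ with the coefficients of each sub-equation doubled (so that each effectively runs for total time $T$), and then passes to the limit by rewriting everything in integral form and using the weak-$*$ convergence of $\sum_k {\bf 1}_{[t_{2k},t_{2k+1}]}$ and $\sum_k {\bf 1}_{[t_{2k+1},t_{2k+2}]}$ to the constant $1/2$ together with dominated convergence; your ``standard Trotter--Kato consistency'' hand-wave and your ambiguous normalization (running both $\mathcal{T}_1(h)$ and $\mathcal{T}_2(h)$ over an interval of physical length $h$) should be replaced by this explicit argument to close the proof.
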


  We shall not prove here the uniqueness of the solution to \eqref{eq.Master2}, which holds under our assumptions: this point has been often discussed in the literature (see \cite{CDLL, CaDebook} for instance). The reader may notice that we cannot handle a second order master equation with a space dependent matrix $a^0=a^0(t,x)$. The reason is that we do not know how to extend the estimate in Proposition  \ref{prop.linear2} to the space dependent case.

The proof of Theorem \ref{theo.ShortTime} is given at the end of the section, after a few preliminary steps. The key idea is to use a Trotter-Kato scheme alternating  the first order master equation as in \eqref{eq.transport} and a linear second order master equation. The analysis of the first order master equation, being quite technical, is postponed to Section \ref{Sec.FirstOrdreMaster} below. We now discuss the linear second order master equation.

\subsection{The linear second order master equation}

In this section we consider the (forward) second order linear master equation
\be\label{linearMforward}
\left\{\begin{array}{l} 
\ds  \partial_t U(t,x,m) -{\rm Tr}\left[ a^0D^2_{xx} U(t,x,m)\right]
-\int_{\R^d} {\rm Tr}\left[a^0 D^2_{ym}U(t,x,m,y)\right] m(dy)\\
\ds \qquad -2\int_{\R^d} {\rm Tr} \left[a^0 D^2_{xm}U(t,x,m,y)\right]m(dy) 
\\
\ds    \qquad  
-\int_{\R^{2d}} {\rm Tr}[a^0 D^2_{mm}U(t,x,m,y,y')]m(dy)m(dy') = 0 \\
 \qquad\qquad\qquad\qquad\qquad\qquad {\rm in }\; (0,T)\times \R^d\times \Pw\;,\\
 U(0,x,m)= G(x,m) \qquad  {\rm in }\;  \R^d\times \Pw\,.
\end{array}\right.
\ee
Let $\Gamma$ be the fundamental solution of the equation associated with $a^0$: 
$$
\left\{\begin{array}{l} 
\partial_t \Gamma(t,x) -{\rm Tr}\left[ a^0D^2_{xx} \Gamma(t,x)\right]= 0 \qquad {\rm in}\; (0,+\infty)\times \R^d,\\
\Gamma(0,x)= \delta_0(x)  \qquad {\rm in}\;  \R^d,
\end{array}\right.
$$
and, given a map $G:\R^d\times \Pw\to \R$ of class $C^2$ in $(x,m)$, let us set
$$
U(t,x,m)=  \int_{\R^d} G(\xi,(id-x+\xi)\sharp m) \Gamma(t,x-\xi)d\xi \qquad \forall (t,x,m)\in [0,T]\times \R^d\times \Pw.
$$

\begin{Proposition}\label{prop.linear2}
The map $U$ is a classical solution to the second order equation \eqref{linearMforward}. Moreover, there exists a constant $C>0$ (depending only on $n$, $k$ and $a^0$), such that 
$$
\sup_{t\in [0,T]} \|U(t)\|_n \leq (1+CT)\sup_{m\in \Pw}\|G\|_n
$$
and, for $k\in \{2, \dots, n-1\}$, 
$$
\sup_{t\in [0,T]}  \left\|\frac{\delta U}{\delta m}(t)\right\|_{n-1;k} \leq 
(1+CT)  \left\|\frac{\delta G}{\delta m}\right\|_{n-1;k},
$$
$$
\sup_{t\in [0,T]} \left\|\frac{\delta^2 U}{\delta m^2}(t)\right\|_{n-2;k-1,k-1} \leq 
(1+ CT) \left\|\frac{\delta^2 G}{\delta m^2}\right\|_{n-2;k-1,k-1}
$$
and 
$$
\sup_{t\in [0,T]}  {\rm Lip}_{n-3;k-2,k-2} (\frac{\delta^2 U}{\delta m^2}(t))
\leq (1+ CT){\rm Lip}_{n-3;k-2,k-2} (\frac{\delta^2 G}{\delta m^2})\, . 
$$
\end{Proposition}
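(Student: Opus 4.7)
The plan is to proceed via a probabilistic representation together with explicit formulas for the derivatives. Setting $\eta = x - \xi$ in the definition of $U$ one has
$$
U(t,x,m) = \int_{\R^d} G(x - \eta, (id - \eta)\sharp m)\, \Gamma(t,\eta)\, d\eta = \E\bigl[G(x - Z_t,\, (id - Z_t)\sharp m)\bigr],
$$
where $Z_t$ has density $\Gamma(t, \cdot)$, i.e.\ $Z_t = \sqrt{2}\sigma^0 W_t$ for a Brownian motion $W$ and a matrix $\sigma^0$ with $\sigma^0(\sigma^0)^T = a^0$. In this representation the tagged point $x$ and every atom of $m$ are translated by the \emph{same} random vector, so $U$ is the Markov semigroup of the common-noise translation on $(x,m)$. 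Equation \eqref{linearMforward} is the corresponding Kolmogorov/generator equation, and the quantitative bounds will follow from the translation invariance of the norms $\|\cdot\|_n$, $\|\cdot\|_{-k}$ and of the distance $\dw$.

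To verify that $U$ is a classical solution I fix $(x,m)$ and set $F(z) := G(x+z,\mu_z)$ with $\mu_z := (id+z)\sharp m$. Combining the first-variation formula for $\delta G/\delta m$ with the transport interpretation $\int \phi\, d(\mu_{z+h}-\mu_z) \approx \int D\phi\cdot h\, d\mu_z$ yields
$$
D_z F(z) = D_x G(x+z, \mu_z) + \int_{\R^d} D_m G(x+z, \mu_z, y')\, d\mu_z(y'),
$$
and, differentiating once more while keeping track of the two occurrences of $z$ (in $x+z$ and in $\mu_z$),
$$
D^2_{zz} F(z) = D^2_{xx} G + 2\int D^2_{xm} G\, d\mu_z + \int D^2_{ym} G\, d\mu_z + \iint D^2_{mm} G\, d\mu_z(y')\, d\mu_z(y''),
$$
the factor $2$ being the product-rule contribution of the two terms of $D_z F$. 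It\^o's formula applied to $F(Z_t)$, after taking expectation, gives the generator identity at $t=0^+$; the Markov property $U(t+s,x,m) = \E[U(t, x-Z_s, (id-Z_s)\sharp m)]$ then upgrades it to $\partial_t U = L U$ for every $t\in(0,T)$, where $L$ is exactly the operator on the right-hand side of \eqref{linearMforward}. Continuity and boundedness of all the required derivatives follow by dominated convergence from the regularity of $G$ recalled in Subsection \ref{subsec.Hyp}.

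For the quantitative estimates I exploit that in $U(t,x,m) = \int G(x-\eta,(id-\eta)\sharp m)\Gamma(t,\eta)\,d\eta$ the variable $x$ enters only as the first argument of $G$, so differentiating under the integral immediately gives $\|U(t)\|_n \le \|G\|_n$. A direct computation (the convention $\int \delta G/\delta m\, dm = 0$ being preserved under translation) yields
$$
\frac{\delta U}{\delta m}(t,x,m,y) = \int \frac{\delta G}{\delta m}(x-\eta,(id-\eta)\sharp m, y-\eta)\,\Gamma(t,\eta)\,d\eta,
$$
and analogously for $\delta^2 U/\delta m^2$, with both $y$-arguments shifted by $\eta$. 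Testing against $\rho,\rho'\in C^0_c$ amounts to replacing $\rho(y)$ by its translate $\rho(\cdot+\eta)$; since translation preserves $\|\cdot\|_{-k}$ and since $\dw((id-\eta)\sharp m_1,(id-\eta)\sharp m_2) = \dw(m_1,m_2)$, all four asserted bounds follow at once with constant $1$---a fortiori with constant $1 + CT$.

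I expect the main obstacle to be the second-order computation of $F(z)$: producing the four terms and, in particular, the factor $2$ in front of $\int D^2_{xm} G\, d\mu_z$ requires careful bookkeeping of the two ways $z$ enters $F(z)$. Once this algebra is settled, interchanging derivatives with the Gaussian integral against $\Gamma(t,\cdot)$ is routine under the standing regularity hypotheses, and the translation-invariance argument then delivers all four estimates simultaneously.
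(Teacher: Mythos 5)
Your proof is correct and follows essentially the same route as the paper: an explicit Gaussian-kernel formula for $U$, the chain-rule computation of $D^2_{zz}F$ producing the factor $2$ in front of $D^2_{xm}$, a semigroup argument to verify the PDE (you dress it up as It\^o's formula on $F(-Z_t)$ where the paper integrates by parts against $\Gamma(h,\cdot)$, but these are two faces of the same heat-semigroup identity), and translation invariance of the norms and of $\dw$ for the estimates. You make explicit the translation-invariance step that the paper passes over with ``This easily implies the estimates,'' and your bound with constant $1$ is in fact sharper than the stated $1+CT$.
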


\begin{Remark}\label{kjhbaezls} If we assume that, for some constant $M$,  
$$
\|G\|_{n}+ \left\|\frac{\delta G}{\delta m}\right\|_{n-1;k}+ \left\|\frac{\delta^2 G}{\delta m^2}\right\|_{n-2;k-1,k-1}+{\rm Lip}_{n-3;k-2,k-2} (\frac{\delta^2 G}{\delta m^2}) \leq M,
$$
then the above estimates can be rewritten in the form:
\begin{align*}
\sup_{t\in [0,T]}(\|U(t)\|_n + \left\|\frac{\delta U}{\delta m}(t)\right\|_{n-1;k}+  \left\|\frac{\delta^2 U}{\delta m^2}(t)\right\|_{n-2;k-1,k-1} 
+ {\rm Lip}_{n-3;k-2,k-2} (\frac{\delta^2 U}{\delta m^2}(t)))\\
\leq \|G\|_n+
\left\|\frac{\delta G}{\delta m}\right\|_{n-1;k}+ \left\|\frac{\delta^2 G}{\delta m^2}\right\|_{n-2;k-1,k-1}+{\rm Lip}_{n-3;k-2,k-2} (\frac{\delta^2 G}{\delta m^2})+C_MT,  
\end{align*}
for some constant $C_M$ depending on  $n$, $k$, $a^0$ and $M$. 
\end{Remark}

In order to prove this Proposition, we need two Lemmas, the proof of which are easy and left to the reader. 

\begin{Lemma}\label{lem1} Let $U:\Pw\to \R$ be $L-C^1$ and let $\phi:\R^d\to \R^d$ be of class $C^1$ with bounded derivative. Let us set $V(m)=U(\phi\sharp m)$. Then $V$ is   $L-C^1$ with
$$
D_mV(m,y)= (D\phi(y))^T D_mU(\phi\sharp m, \phi(y)).
$$
\end{Lemma}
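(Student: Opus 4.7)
The plan is to compute the flat derivative $\delta V/\delta m$ directly from the defining identity \eqref{zalekrnjdg} and then differentiate in $y$ to obtain the $L$-derivative.

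First I would use the fact that the pushforward is linear in the measure, namely $(1-s)\phi\sharp m + s\phi\sharp m' = \phi\sharp\bigl((1-s)m+sm'\bigr)$, which is immediate by testing on Borel sets and using $\phi^{-1}$. Applying \eqref{zalekrnjdg} to $U$ at $\phi\sharp m$ and $\phi\sharp m'$, together with the change of variables $\int_{\R^d} f(z)(\phi\sharp\mu)(dz)=\int_{\R^d} f(\phi(y))\mu(dy)$, yields
\[
V(m')-V(m) = \int_0^1\!\!\int_{\R^d} \frac{\delta U}{\delta m}\bigl(\phi\sharp m_s,\phi(y)\bigr)(m'-m)(dy)\,ds,
\]
where $m_s=(1-s)m+sm'$. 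This identifies, up to the normalization \eqref{eq.convention} (which is irrelevant for the $L$-derivative since we differentiate in $y$),
\[
\frac{\delta V}{\delta m}(m,y) \;=\; \frac{\delta U}{\delta m}\bigl(\phi\sharp m,\phi(y)\bigr) \;+\; c(m).
\]

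Next I would check the regularity required by the definition of $L$-$C^1$. Since $\phi$ has bounded derivative, $\phi$ is globally Lipschitz, so $\phi\sharp$ maps $\Pw$ into $\Pw$ and is Lipschitz continuous in $\dw$ (via coupling). Combined with the continuity and boundedness of $\delta U/\delta m$ on $\Pw\times\R^d$, this gives that $\delta V/\delta m$ is continuous and bounded on $\Pw\times\R^d$, so $V$ is $C^1$.

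Finally I would differentiate in $y$ using the classical chain rule, exploiting that $y\mapsto \delta U/\delta m(\mu,y)$ is $C^1$ with continuous bounded derivative $D_mU(\mu,y)$:
\[
D_mV(m,y) \;=\; D_y\!\left[\frac{\delta U}{\delta m}\bigl(\phi\sharp m,\phi(y)\bigr)\right] \;=\; (D\phi(y))^T D_mU\bigl(\phi\sharp m,\phi(y)\bigr),
\]
and observe that the right-hand side is continuous in $(m,y)$ and bounded on $\Pw\times\R^d$, thanks to the boundedness of $D\phi$ and $D_mU$. No real obstacle arises; the only bookkeeping point is the linearity of the pushforward under convex combinations, used to align the interpolating measure with the flat-derivative integral before applying the change of variables.
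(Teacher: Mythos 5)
Your proof is correct, and since the paper omits this proof ("easy and left to the reader") there is no in-paper argument to compare against; your argument — linearity of $\phi\sharp$ under convex combinations, change of variables to rewrite the integral in \eqref{zalekrnjdg}, then the chain rule in $y$ — is exactly the intended one. As a small aside, the additive constant $c(m)$ you introduce actually vanishes: $\int_{\R^d}\frac{\delta U}{\delta m}(\phi\sharp m,\phi(y))\,m(dy)=\int_{\R^d}\frac{\delta U}{\delta m}(\phi\sharp m,z)\,(\phi\sharp m)(dz)=0$ by the normalization \eqref{eq.convention} applied to $U$, so the flat derivative of $V$ is already correctly normalized; this does not affect $D_mV$, as you note, but it tidies the identification.
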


\begin{Lemma}\label{lem2}  Let $U:\Pw\to \R$ be $L-C^1$ and let $V(x,m)= U((id+x)\sharp m)$. Then $V$ is of class $C^1$ with 
$$
D_xV(x,m)=  \int_{\R^d} D_mU((id+x)\sharp m, x+y)dm(y).
$$
\end{Lemma}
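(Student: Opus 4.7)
The plan is to differentiate $V$ in $x$ directly from the definition of $L$-$C^1$ maps on $\Pw$, combining the fundamental identity \eqref{zalekrnjdg} with the smoothness of $\frac{\delta U}{\delta m}$ in its spatial slot.

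Concretely, I would fix $(x,m)\in\R^d\times \Pw$ and $h\in\R^d$, and set $m_1:=(id+x)\sharp m$ and $m_2:=(id+x+h)\sharp m$, both elements of $\Pw$. Applying \eqref{zalekrnjdg} to $U$ between $m_1$ and $m_2$ gives
\[
V(x+h,m)-V(x,m)=\int_0^1\!\!\int_{\R^d}\frac{\delta U}{\delta m}(m_s,y)\,(m_2-m_1)(dy)\,ds,
\]
with $m_s:=(1-s)m_1+sm_2\in\Pw$. Since $m_1,m_2$ are push-forwards of $m$, a direct change of variables yields
\[
\int_{\R^d}\frac{\delta U}{\delta m}(m_s,y)(m_2-m_1)(dy)=\int_{\R^d}\!\Bigl[\frac{\delta U}{\delta m}(m_s,x+h+z)-\frac{\delta U}{\delta m}(m_s,x+z)\Bigr]\,m(dz).
\]
Exploiting now the hypothesis that $y\mapsto\frac{\delta U}{\delta m}(m_s,y)$ is $C^1$ with gradient $D_mU(m_s,y)$, a segment integration produces
\[
\frac{\delta U}{\delta m}(m_s,x+h+z)-\frac{\delta U}{\delta m}(m_s,x+z)=\int_0^1 D_mU(m_s,x+z+th)\cdot h\,dt,
\]
so that, combining the three identities,
\[
V(x+h,m)-V(x,m)=h\cdot\int_0^1\!\!\int_0^1\!\!\int_{\R^d}D_mU(m_s,x+z+th)\,m(dz)\,dt\,ds.
\]

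To conclude, I would let $h\to 0$. Coupling $m_s$ with $m_1$ by $(Y+x,\,Y+x+\varepsilon h)$, where $Y\sim m$ and $\varepsilon$ is an independent Bernoulli variable of mean $s$, one gets ${\bf d}_2(m_s,m_1)\leq \sqrt{s}\,|h|\leq |h|$, uniformly in $s\in [0,1]$. Since $D_mU$ is continuous and uniformly bounded on $\Pw\times\R^d$, dominated convergence gives
\[
\int_0^1\!\!\int_0^1\!\!\int_{\R^d}D_mU(m_s,x+z+th)\,m(dz)\,dt\,ds \;\longrightarrow\; \int_{\R^d}D_mU(m_1,x+z)\,m(dz),
\]
so every directional derivative of $V(\cdot,m)$ at $x$ exists and is linear in the direction; hence $D_xV(x,m)$ exists and equals the claimed formula. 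Continuity of $D_xV$ in $x$ then follows immediately from continuity of $D_mU$ together with translation, while continuity in $m$ relies on the standard splitting of $\int D_mU(\mu_n,x+z)\,m_n(dz)-\int D_mU(\mu,x+z)\,m(dz)$ into two pieces, combined with tightness of convergent sequences in $\Pw$ and uniform continuity of $D_mU$ on compacts.

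The only mildly delicate step is the $m$-continuity of $D_xV$, since $m$ appears simultaneously inside the push-forward argument of $D_mU$ and as the integrator; everything else is a direct computation from the definitions, using only \eqref{zalekrnjdg}, a push-forward change of variables, the fundamental theorem of calculus in the $y$ variable, and dominated convergence.
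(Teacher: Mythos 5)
The paper leaves this lemma to the reader, and your argument is precisely the intended one: the exact identity obtained from \eqref{zalekrnjdg}, the push-forward change of variables, and the fundamental theorem of calculus in the $y$-slot, followed by dominated convergence using the global bound and continuity of $D_mU$. The only cosmetic point is that "directional derivatives linear in the direction" does not by itself give differentiability; but your exact remainder identity together with dominated convergence yields a uniform $o(|h|)$ error (and, in any case, continuity in $x$ of the candidate partials already implies $C^1$), so the proof is complete.
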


\begin{proof}[Proof of Proposition \ref{prop.linear2}] Let us first note that 
$$
U(t,x,m)=  \int_{\R^d} G(\xi,(id-x+\xi)\sharp m) \Gamma(t,x-\xi)d\xi =  \int_{\R^d} G(x-z,(id-z)\sharp m) \Gamma(t,z)dz.  
$$
In particular, $U$ is $C^1$ in $t$, $C^2$ in $x$ and has second order derivatives which are $C^2$ in the space variables with, in view of Lemma  \ref{lem1} and Lemma \ref{lem2}, 
$$
D_x U(t,x,m)=   \int_{\R^d} D_xG(x-y,(id-y)\sharp m) \Gamma(t,y)dy,   
$$
$$ 
D^2_x U(t,x,m)=   \int_{\R^d} D^2_xG(x-y,(id-y)\sharp m) \Gamma(t,y)dy,   
$$
$$
D_mU(t,x,m,y)=  \int_{\R^d} D_mG(x-z, (id-z)\sharp m, y-z)\Gamma (t,z)dz, 
$$
and
$$
D^2_mU(t,x,m,y,y')=  \int_{\R^d} D^2_mG(x-z, (id-z)\sharp m, y-z,y'-z)\Gamma (t,z)dz. 
$$
This easily implies the estimates on $U$ and its derivatives. 

On the other hand, since $(id-w)\sharp [(id-z)\sharp m]= (id-z-w)\sharp m$, we have, for any $t\in (0,T)$ and $h\in (0,T-t)$, 
\begin{align*}
& \int_{\R^d} U(t,x-z,(id-z)\sharp m)\Gamma(h,z)dz \\
& \qquad =   \int_{\R^d} \int_{\R^d}  G(x-z-w,(id-z-w)\sharp m)\Gamma(h,z)\Gamma(t,w)dwdz\\ 
& \qquad =    \int_{\R^d}  G(x-u,(id-u)\sharp m)\left(  \int_{\R^d} \Gamma(h,u-w)\Gamma(t,w)dw\right)du\\
& \qquad  =  \int_{\R^d}  G(x-u,(id-u)\sharp m)\Gamma(t+h,u)du = U(t+h,x,m). 
\end{align*}
So, taking the derivative with respect to $h>0$ in the above expression:
\begin{align*}
& \partial_t U(t+h,x,m) =  \int_{\R^d} U(t,x-z,(id-z)\sharp m)\partial_t \Gamma(h,z)dz. 
\end{align*}
Integrating by parts and using Lemma \ref{lem1} and Lemma \ref{lem2}:
\begin{align*}
& \partial_t U(t+h,x,m)   =  \int_{\R^d} U(t,x-z,(id-z)\sharp m) \Bigl( {\rm Tr}\left[ a^0D^2_{zz} \Gamma(h,z)\right] \Bigr) dz \\ 
& =  \int_{\R^d}  \Bigl( {\rm Tr}\left[ a^0D^2_{xx}U(t,x-z,(id-z)\sharp m)\right] + 2 \int_{\R^d} {\rm Tr}\left[ a^0D^2_{xm}U(t,x-z,(id-z)\sharp m,y-z)\right]m(dy)\\
&  \qquad +  \int_{\R^d} {\rm Tr}\left[ a^0D^2_{ym}U(t,x-z,(id-z)\sharp m,y-z)\right]m(dy) \\
&  \qquad +  \int_{\R^d} \int_{\R^d} {\rm Tr}\left[ a^0D^2_{mm} U(t,x-z,(id-z)\sharp m,y-z,y'-z)\right]m(dy)m(dy') \Bigr) \Gamma(h,z) ) dz . 
\end{align*}
Letting $h\to 0$ we obtain 
\begin{align*}
& \partial_t U(t,x,m) =  {\rm Tr}\left[ a^0D^2_{xx}U(t,x,m)\right] + 2 \int_{\R^d}  {\rm Tr}\left[ a^0D^2_{xm}U(t,x,m,y)\right]m(dy)\\
& \qquad  +  \int_{\R^d} {\rm Tr}\left[ a^0D^2_{ym}U(t,x,m,y)\right]m(dy) 
+   \int_{\R^d} \int_{\R^d}  {\rm Tr}\left[ a^0D^2_{mm} U(t,x,m,y,y')\right]m(dy)m(dy'). 
\end{align*}
So $U$ is a solution to \eqref{linearMforward}. 
\end{proof}

\subsection{Existence of a solution}

\subsubsection{Definition of the semi-discrete scheme}

Let us fix some horizon $T>0$ (small) and a step-size $\tau:= T/(2N)$ (where $N\in \N$, $N\geq 1$). We set $t_k= kT/(2N)$, $k\in \{0, 2N\}$. We define by backward induction a continuous map $U^N=U^N(t,x,m)$, with $U^N:[0,T]\times \R^d\times \Pw\to \R$, as follows: we require that 

\begin{itemize}

\item[(i)] $U^N$ satisfies the terminal condition  
$$
U^N(T,x,m)= G(x,m)\qquad \forall (x,m)\in \R^d\times \Pw, 
$$

\item[(ii)]  $U^N$ solves the backward linear second order master equation 
\be\label{bla}
\begin{array}{l} 
\ds - \partial_t U^N -2{\rm Tr}\left[ a^0D^2_{xx} U^N\right] -2\int_{\R^d} {\rm Tr}\left[a^0 D^2_{ym}U^N \right] m(dy)\\
\ds \qquad -4\int_{\R^d} {\rm Tr} \left[a^0 D^2_{xm}U^N \right]m(dy) 
-2\int_{\R^{2d}} {\rm Tr}[a^0 D^2_{mm}U^N ]m(dy)m(dy') 
 = 0 \\
\end{array}
\ee
on time intervals of the form $(t_{2j+1}, t_{2j+2})$, for $j=0,\ldots,N-1$, 

\item[(iii)] $U^N$ solves the first order master equation 
\be\label{blabla}
\begin{array}{l} 
\ds - \partial_t U^N  - 2{\rm Tr}(aD^2_{xx}U^N) +2H(x,D_xU^N,m) -2\int_{\R^d}{\rm Tr}(aD^2_{ym}U^N)\ dm(y) \\
\ds    \qquad   + 2\int_{\R^d} D_m U^N\cdot H_p(y,D_xU^N,m) \ dm(y) =0 
\end{array}
\ee
on time intervals of the form $(t_{2j}, t_{2j+1})$,  for $j=0,\ldots,N-1$. 
\end{itemize}

Our aim is to show that, if the time horizon is short enough, $U^N$ converges to a solution of the second order master equation as $N\to+\infty$. 

\subsubsection{Estimates of $U^N$}

For $n\geq 4$
and $k\in \{3, \dots, n-1\}$, let 
\be\label{eq.defM}
M:= \|G\|_{n}+ \left\|\frac{\delta G}{\delta m}\right\|_{n-1;k}+ \left\|\frac{\delta^2 G}{\delta m^2}\right\|_{n-2;k-1,k-1}+{\rm Lip}_{n-3;k-2,k-2} (\frac{\delta^2 G}{\delta m^2})+1.
\ee

\begin{Lemma}\label{lem.estiUN} There exists $T_{M}>0$ such that, for any $T\in (0,T_{M}]$ and $N\geq 1$, we have 
$$
\sup_{t\in [0,T]}( \|U^N(t)\|_{n}+ \|\frac{\delta U^N}{\delta m}(t)\|_{n-1;k} + \left\|\frac{\delta^2  U^N}{\delta m^2}(t)\right\|_{n-2;k-1,k-1}+{\rm Lip}_{n-3;k-2,k-2} \left(\frac{\delta^2 U^N}{\delta m^2}(t)\right))\leq M.
$$
Moreover: 
\begin{itemize}
\item The maps $U^N$, $D_xU^N$, $D^2_{xx}U^N$ are globally Lipschitz  continuous in $(t,x,m)$, uniformly with respect to $N$.
\item The maps $D_mU$, $D_m D_xU^N$, $D_yD_m U^N$ are Holder continuous in $(t,x,m,y)$, uniformly with respect to $N$, in any set of the form 
\be\label{def.setofformR}
\{(t,x,m,y)\in [0,T]\times \R^d\times \Pw \times \R^d, \; M_2(m)\leq R, \; |y|\leq R\}\,,
\ee
(where $M_2(m)= (\int_{\R^d} |y|^2m(dy))^{1/2}$). 
\item The map $D^2_m U^N$ is Holder continuous in $(t,x,m,y,y')$, uniformly with respect to $N$, in any set of the form 
\be\label{def.setofformR2}
\{(t,x,m,y,y')\in [0,T]\times \R^d\times \Pw \times \R^d\times \R^d, \; M_2(m)\leq R, \; |y|, |y'|\leq R\}\,.
\ee
\end{itemize}
\end{Lemma}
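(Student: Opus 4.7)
The plan is to argue by backward induction on the $2N$ subintervals, showing that the quantity
$$
\Phi^N(t) := \|U^N(t)\|_{n}+ \left\|\tfrac{\delta U^N}{\delta m}(t)\right\|_{n-1;k} + \left\|\tfrac{\delta^2 U^N}{\delta m^2}(t)\right\|_{n-2;k-1,k-1}+{\rm Lip}_{n-3;k-2,k-2}\!\left(\tfrac{\delta^2 U^N}{\delta m^2}(t)\right)
$$
never exceeds $M$ on $[0,T]$, provided $T$ is small enough. The definition of $M$ in \eqref{eq.defM} gives the starting buffer $\Phi^N(T) \le M-1$.

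On each subinterval $(t_{2j+1},t_{2j+2})$, equation \eqref{bla} is exactly of the form \eqref{linearMforward} (after time reversal and doubling of $a^0$), so Proposition \ref{prop.linear2}, in the cumulative form of Remark \ref{kjhbaezls}, gives
$$
\Phi^N(t)\le \Phi^N(t_{2j+2})+C_M\,(t_{2j+2}-t)\qquad \forall t\in [t_{2j+1},t_{2j+2}]
$$
as soon as $\Phi^N(t_{2j+2})\le M$, with a constant $C_M$ depending only on $n$, $k$, $a^0$, and $M$. On each subinterval $(t_{2j},t_{2j+1})$, equation \eqref{blabla} is a first order master equation with data $U^N(t_{2j+1},\cdot)$; the short-time estimates established in Section~\ref{Sec.FirstOrdreMaster} (which are the whole point of that section) deliver the analogous inequality
$$
\Phi^N(t)\le \Phi^N(t_{2j+1})+C_M\,(t_{2j+1}-t)\qquad \forall t\in [t_{2j},t_{2j+1}],
$$
again for a constant $C_M$ that depends on $M$ but not on $\tau$ or $j$. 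Enlarging $C_M$ once for all to dominate both contributions and concatenating backward from $t=T$,
$$
\sup_{t\in[0,T]}\Phi^N(t) \le \Phi^N(T)+2N\tau\, C_M \le M-1+C_M T,
$$
so the induction closes as soon as $T\le T_M:=1/C_M$; this choice is independent of $N$, and the bound passes from the grid points to all of $[0,T]$ thanks to the pointwise form of the two estimates above.

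The continuity statements are now essentially free from the uniform bound $\Phi^N(t)\le M$. Since $n\ge 4$, the maps $U^N, D_x U^N, D^2_{xx} U^N$ are uniformly Lipschitz in $x$; Lipschitz continuity in $m$ follows from \eqref{ineq.LipU2} applied to $U^N$ and to its first and second $x$-derivatives, which all have uniformly bounded $D_m$-derivatives because $\|\delta U^N/\delta m\|_{n-1;k}\le M$ and $k\ge 3$; Lipschitz continuity in $t$ is read off \eqref{bla}--\eqref{blabla}, whose right-hand sides are uniformly bounded (all the $x$-derivatives of $U^N$ and all the combinations $D_m U^N$, $D_x D_m U^N$, $D_y D_m U^N$, $D^2_{mm} U^N$ that appear are controlled by $\Phi^N \le M$ via Lemma \ref{lem.k;k}). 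For the H\"older continuity of $D_m U^N,\,D_x D_m U^N,\,D_y D_m U^N$ on sets of the form \eqref{def.setofformR}, and of $D^2_{mm}U^N$ on sets of the form \eqref{def.setofformR2}, Lemma \ref{lem.k;k} and Remark \ref{rem.k;k} transform the finite norms $\|\delta U^N/\delta m\|_{n-1;k}$ and $\|\delta^2 U^N/\delta m^2\|_{n-2;k-1,k-1}$ into uniform $C^{n-1,k-1}_b$ and $C^{n-2,k-2,k-2}_b$ bounds in the $(x,y)$ and $(x,y,y')$ variables; the $m$-regularity then comes from the uniform Lipschitz-in-$\dw$ estimate obtained from ${\rm Lip}_{n-3;k-2,k-2}(\delta^2 U^N/\delta m^2)\le M$, combined with the Ascoli-type statement of Section \ref{sec.diffUm}, which produces compactness on sets with $M_2(m)\le R$.

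The main obstacle is the first step, namely producing the estimate on each $(t_{2j},t_{2j+1})$ for the nonlinear first order master equation with a constant $C_M$ that is truly independent of the subinterval and of $\tau$. This is where the hypothesis $\Phi^N(t_{2j+1})\le M$ is crucially used, and it is precisely the content of Sections \ref{sec:ReguMFG}--\ref{Sec.FirstOrdreMaster}: one freezes the current norms at level $M$, runs the characteristic method through the MFG system \eqref{eq.MFGsystIntro}, and checks that all norms deteriorate at most linearly in time at a rate depending only on $M$. Once such an estimate is available, the $+1$ buffer in the definition \eqref{eq.defM} of $M$ absorbs the total accumulated error $C_M T$ along the $2N$ steps.
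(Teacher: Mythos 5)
Your induction argument for the uniform bound on $\Phi^N$ is correct and essentially identical to the paper's: you correctly isolate the role of the $+1$ buffer built into the definition \eqref{eq.defM} of $M$, and the fact that both the linear-second-order step (Proposition \ref{prop.linear2}/Remark \ref{kjhbaezls}) and the first-order step (Propositions \ref{Prop.DerivU}, \ref{Prop.DerivU2}, \ref{prop:rlipdelta2U}) deteriorate the norms only linearly in time at a rate $C_M$ depending only on $M$. The paper writes this with an explicit sequence $\theta_{2j}=M-1+C_M\frac{T}{N}(N-j)$, but your concatenation argument is the same computation. The choice $T_M=1/C_M$ is slightly less conservative than the paper's $T_M<1/(2C_M)$ but closes the induction all the same.

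The continuity part, however, has a genuine gap in the treatment of time regularity of the measure derivatives. From the scheme you get a uniform bound on $\partial_t U^N$ in $C^{n-2}_b$, hence Lipschitz continuity in time of $U^N$ and its low-order $x$-derivatives — that much you have. But the equations \eqref{bla}--\eqref{blabla} give you \emph{no} direct control of $\partial_t D_m U^N$ or $\partial_t D^2_{mm}U^N$, so the time H\"older regularity of $D_m U^N$, $D_x D_m U^N$, $D_y D_m U^N$ and $D^2_{mm}U^N$ on sets \eqref{def.setofformR} and \eqref{def.setofformR2} does not follow from what you've written. You gesture at ``the Ascoli-type statement of Section \ref{sec.diffUm}'', but Lemma \ref{lem.Arzela-Ascoli} goes in the wrong direction: it extracts convergent subsequences \emph{from} equicontinuity, it does not \emph{produce} H\"older estimates. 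The tool you actually need is the interpolation Lemma \ref{lem.interp}: knowing (a) that $U^N$ is Lipschitz in $t$ uniformly in $(x,m)$ and (b) that $D_m U^N$ is uniformly Lipschitz in $(m,y)$ (the latter from the bound on $\|\delta^2 U^N/\delta m^2\|_{n-2;k-1,k-1}$ via Remark \ref{rem.k;k}), one interpolates to obtain H\"older-in-time of $D_m U^N$ on sets with $M_2(m)\le R$, $|y|\le R$; a second application of Lemma \ref{lem.interp} to $D_m U^N$ itself, using the Lipschitz-in-$(m,y,y')$ bound from ${\rm Lip}_{n-3;k-2,k-2}(\delta^2 U^N/\delta m^2)\le M$, then gives the H\"older-in-time of $D^2_{mm}U^N$ on \eqref{def.setofformR2}. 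Without invoking Lemma \ref{lem.interp} the claimed time regularity is unjustified.
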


\begin{proof}   In order to prove the estimate, we use Proposition  \ref{prop.linear2} as well as Propositions \ref{Prop.DerivU}, \ref{Prop.DerivU2}, \ref{prop:rlipdelta2U} (in Section \ref{Sec.FirstOrdreMaster} below). Let $T_M$ be the smallest positive constant associated with  these Propositions. Let also  $C_M$ be the largest constant in Propositions  \ref{prop.linear2}, \ref{Prop.DerivU}, \ref{Prop.DerivU2} and \ref{prop:rlipdelta2U}. We assume without loss of generality that $T_M <1/(2C_M)$ and we fix $T\in (0,T_M]$.


 We define  the sequence $(\theta_{k})_{k=0,\dots,2N}$ by 
$$
 \theta_{2j}= M-1 + C_{M}\frac{T}{N}(N-j)\,, \qquad  j=0,\ldots,N.  
$$
As $T_{M}\leq  1/(2C_M)$, we have $\theta_{2j} \leq M$ for any $T\in (0,T_{M}]$ and $N\geq 1$.

Now, using  Propositions \ref{Prop.DerivU}, \ref{Prop.DerivU2}, \ref{prop:rlipdelta2U} and \ref{prop.linear2} one  checks by backward induction that  
\be\label{induc}
\begin{array}{l}
\ds \sup_{t\in [t_{2j},t_{2j+2}]} \left\{ \|U^N(t)\|_{n}+ \|\frac{\delta U^N}{\delta m}(t)\|_{n-1;k} + \left\|\frac{\delta^2  U^N}{\delta m^2}(t)\right\|_{n-2;k-1,k-1}\right.\\
\qquad \qquad \qquad \qquad  \ds \left.+{\rm Lip}_{n-3;k-2,k-2} \left(\frac{\delta^2 U^N}{\delta m^2}(t)\right)\right\}\leq \theta_{2j} \leq M\qquad \forall j=0,\ldots,N-1. 
\end{array}
\ee
Indeed, assume that this is true for $j+1$;  Proposition \ref{prop.linear2} (see also Remark \ref{kjhbaezls}), applied in the interval $[t_{2j+1},t_{2j+2}]$ and with the terminal condition $U^N(t_{2j+2}, \cdot, \cdot)$ which satisfies \eqref{induc} by assumption,  implies that
$$
\begin{array}{l}
\ds \sup_{t\in [t_{2j+1},t_{2j+2}]} \left\{ \|U^N(t)\|_{n}+ \|\frac{\delta U^N}{\delta m}(t)\|_{n-1;k} + \left\|\frac{\delta^2  U^N}{\delta m^2}(t)\right\|_{n-2;k-1,k-1}\right.\\
\m
\qquad \qquad \qquad \qquad \qquad \ds \left.+{\rm Lip}_{n-3;k-2,k-2} \left(\frac{\delta^2 U^N}{\delta m^2}(t)\right)\right\}\leq \theta_{2j+2}+\frac {C_M T}{2N}\,.
\end{array}
$$
Then using Propositions \ref{Prop.DerivU}, \ref{Prop.DerivU2}, \ref{prop:rlipdelta2U}  for the interval $[t_{2j},t_{2j+1}]$ and the terminal condition $U^N(t_{2j+1}, \cdot, \cdot)$ for which  \eqref{induc} now holds, one gets:
$$
\begin{array}{l}
\ds \sup_{t\in [t_{2j},t_{2j+1}]} \left\{ \|U^N(t)\|_{n}+ \|\frac{\delta U^N}{\delta m}(t)\|_{n-1;k} + \left\|\frac{\delta^2  U^N}{\delta m^2}(t)\right\|_{n-2;k-1,k-1}\right.\\
\m
\qquad \qquad \qquad\qquad \qquad  \qquad \qquad \ds \left.+{\rm Lip}_{n-3;k-2,k-2} \left(\frac{\delta^2 U^N}{\delta m^2}(t)\right)\right\}\leq  \theta_{2j+2}+ \frac {C_M T}{N}
= \theta_{2j}\,,
\end{array}
$$
so \eqref{induc} holds for $j$.
Since the first step ($j=N-1$) can be proved similarly using the very definition of $M$ in \eqref{eq.defM}, we can conclude that \eqref{induc} holds for every $j= 0, \dots,N-1$.

We now prove the second part of the Lemma. As $U^N$ solves \eqref{bla} on the time intervals $(t_{2j+1}, t_{2j+2})$ and \eqref{blabla} on time intervals $(t_{2j},t_{2j+1})$,  we obtain directly,  by the space estimates proved above:
\be\label{ineq.partialtUN}
\sup_{t,m}\left\| \partial_t U(t,\cdot,m) \right\|_{n-2}\leq C_M,
\ee
where $C_M$ does not depend on $N$. 

Let now $l\in \N^d$ with $|l|\leq 2$. By \eqref{ineq.partialtUN} and the fact that $\|U^N\|_n$ is bounded for $n>|l|$,  $D^lU^N$ is uniformly Lipschitz continuous in $t$ and $x$. Moreover, since $\|\delta U^N/\delta m\|_{n-1;k}$ is bounded (for $k\geq 1$), $D^l U^N$ is uniformly Lipschitz continuous in $m$ as well by Remark \ref{rem.k;k} since  $|l|\leq n-1$.

Next we prove the uniform continuity of $D^l_xD^r_y D_mU^N$ for $|l|,|r|\leq 1$. First we recall that $\|\delta U^N/\delta m\|_{n-1;k}$ is bounded, so that $\|D_m U^N\|_{n-1;k-1}$ is bounded, with $n-1\geq 2$ and $k- 1\geq 2$.
 Therefore $D^l_xD^r_yD_m U^N$ is uniformly Lipschitz continuous in $(x,y)$ (for $y$, this is Remark \ref{rem.k;k}). Second, recall that $\|\delta^2U^N/\delta m^2\|_{n-2;k-1,k-1}$ is bounded,  so that $\|\frac{\delta}{\delta m} D_mU^N\|_{n-2;k-2,k-1}$ is bounded as well, 
 with $n\geq 3$ and $k\geq 3$: therefore $D^l_xD^r_y D_mU^N$ is uniformly Lipschitz continuous in $m$. As we have already proved that $U^N$ is uniformly Lipschitz continuous in $t$, we can deduce from Lemma \ref{lem.interp} below applied to $U^N$ that $D_mU^N$ is also Holder continuous in time in any set of the form \eqref{def.setofformR}. 

Finally we consider $D^2_{mm}U^N= D^2_{mm}U^N(t,x,m,y,y')$. Since $\|\delta^2U^N/\delta m\|_{n-2;k-1,k-1}$ and\\ ${\rm Lip}_{n-3;k-2,k-2} \left(\frac{\delta^2 U^N}{\delta m^2}\right)$ are bounded, with $n\geq 4$ and $k\geq 3$, $D^2_{mm}U^N$ is uniformly Lipschitz continuous in $(x,m,y,y')$. Applying Lemma \ref{lem.interp} to the map $D_mU^N$, which is Holder continuous in time in sets of the form \eqref{def.setofformR} (as we have seen above) and such that $D^2_{mm}U$ is uniformly Lipschitz in $(m,y,y')$, we deduce that  $D^2_{mm}U^N$ is also Holder continuous in time, uniformly in $N$, in sets of the form \eqref{def.setofformR2}. So we conclude that $D^2_{mm}U^N$ is uniformly Holder continuous in all variables.
\end{proof}

\subsubsection{Proof of Theorem \ref{theo.ShortTime}}

\begin{proof}[Proof of Theorem \ref{theo.ShortTime}]
In view of Lemma \ref{lem.estiUN}, the maps $U^N$, $D_xU^N$, $D^2_{xx}U^N$, $D_mU^N$, $D_m D_xU^N$, $D_yD_m U^N$ and  $D^2_m U^N$ are locally Holder continuous in all variables, uniformly with respect to $N$. So, by a version of Arzela-Ascoli Theorem (see Lemma \ref{lem.Arzela-Ascoli} below), there is a subsequence denoted in the same way such that $U^N$, $D_xU^N$, $D^2_{xx}U^N$, $D_mU^N$,  $D_m D_xU^N$, $D_yD_m U^N$ and  $D^2_m U^N$ converge pointwisely in $m$ and locally uniformly in time-space to some maps $U$, $D_xU$, $D^2_{xx}U$, $V$, $D_x V$, $D_yV$ and  $W$. Moreover, using the integral formula \eqref{zalekrnjdg}, is  easy to check that $V=D_mU$ and $W=D^2_mU$. 

By the equation satisfied by $U^N$, we have, for any $0\leq s<t\leq T$, 
\begin{align*}
& U^N(t,x,m)-U^N(s,x,m)  \\
& \qquad = -\sum_{k=0}^{N-1}\int_{t_{2k+1}}^{t_{2k+2}} 2 \Bigl\{ \ds  {\rm Tr}\left[ a^0D^2_{xx} U^N\right] + \int_{\R^d} {\rm Tr}\left[a^0 D^2_{ym}U^N\right] m(dy)
\\
& \qquad \qquad \ds  
+2\int_{\R^d} {\rm Tr} \left[a^0 D^2_{xm}U^N\right] m(dy)
+ \int_{\R^{2d}} {\rm Tr}\left[a^0 D^2_{mm}U^N\right]  m(dy)m(dy')  
\Bigr\}{\bf 1}_{[s,t]}(\tau)d\tau\\
 &\qquad \qquad   -\sum_{k=0}^{N-1}\int_{t_{2k}}^{t_{2k+1}}  2\Bigl\{ 
  {\rm Tr}(aD^2_{xx}U^N) -  H(x,D_xU^N,m) \\
 &  \qquad  \qquad\qquad+ \int_{\R^d}{\rm Tr}(aD^2_{ym}U^N)\  m(dy)  -  \int_{\R^d} D_m U^N\cdot H_p(y,D_xU^N,m) \  m(dy) \Bigr\}{\bf 1}_{[s,t]}(\tau)d\tau.
\end{align*}
Since, as $N$ tends to infinity, the maps 
$$
t \to \sum_{k=0}^{N-1} {\bf 1}_{[t_{2k+1},t_{2k+2}]}(t)\qquad {\rm and }\qquad t\to  \sum_{k=0}^{N-1} {\bf 1}_{[t_{2k},t_{2k+1}]}(t)
$$
weakly converge to the constant  $1/2$ and since the space integrals in the above equation converge pointwisely to the corresponding quantities for the limit $U$, we obtain by the dominated convergence Theorem:
\begin{align*}
& U(t,x,m)-U(s,x,m)  \\
& \qquad = - \int_{s}^{t}   \Bigl(\ds {\rm Tr}\left[ a^0D^2_{xx} U\right] +\int_{\R^d} {\rm Tr}\left[a^0 D^2_{ym}U\right] dm
\\
& \qquad \qquad \ds  
+2\int_{\R^d} {\rm Tr} \left[a^0 D^2_{xm}U\right]dm
+\int_{\R^{2d}} {\rm Tr}\left[a^0 D^2_{mm}U\right] dm\otimes dm \\
 &\qquad \qquad   +
 {\rm Tr}(a D^2_{xx}U) - H(x,D_xU,m) \\
 &  \qquad  \qquad+\int_{\R^d}{\rm Tr}(aD^2_{ym}U)\ dm   - \int_{\R^d} D_m U\cdot H_p(y,D_xU,m) \ dm \Bigr)
 d\tau,
\end{align*}
so that $U$ is a classical solution to \eqref{eq.Master2}. 
\end{proof}

\subsection{Existence of the solution to the stochastic MFG system}

An easy consequence of the existence of a solution to the master equation is the well-posedness of the stochastic MFG system: 
\be\label{eq.MFGs}
\left\{ \begin{array}{l}
\ds 
 d u(t,x) = \bigl[ -{\rm Tr}((a+a^0)(t,x)D^2u(t,x))+ H(x,Du(t,x),m(t)) \\
 \ds \qquad \qquad \qquad\qquad -  \sqrt{2} {\rm Tr}(\sigma^0  Dv(t,x)) \bigr] dt + v(t,x) \cdot dW_{t} \qquad \qquad {\rm in }\; (0,T)\times \R^d,\\
\ds d m(t,x) = \bigl[ \sum_{i,j} D_{ij}(( (a_{ij})+a^0_{ij})(t,x)m(t,x)) + {\rm div} \bigl( m(,x) H_p(x,D u(t,x),m(t))\bigr) \bigr] dt\\
 \qquad \qquad \qquad \qquad \ds  - {\rm div} ( m(t,x) \sqrt{2}\sigma^0  dW_{t} \bigr), 
 \qquad \qquad {\rm in }\; (0,T)\times \R^d,
 \\
 \ds u(T,x)=G(x,m(T)), \; m(0)=m_0,\qquad {\rm in }\;  \R^d
\end{array}\right.
\ee
We say that $(u,m,v)$ is a classical solution to \eqref{eq.MFGs} if $u$, $m$ and $v$ are random with values in $C^0([0,T], C^2_b)$, $C^0([0,T],\Pw)$ and $C^0([0,T], C^1_b(\R^d, \R^d))$ respectively and  adapted to the filtration generated by $W$ and if
the backward HJ equation is satisfied in a classical sense: 
\begin{align*}
u(t,x) & = G(x,m(T))-\int_t^T \bigl( -{\rm Tr}((a+a^0)(s,x)D^2u(s,x))+ H(x,Du(s,x),m(s))\\ 
& \qquad\qquad\qquad\qquad  -  \sqrt{2} {\rm Tr}(\sigma^0  Dv(s,x)) \bigr) ds-\int_t^T v(s,x) \cdot dW_{s} 
\end{align*}
while the Fokker-Planck equation is satisfied in the sense of distributions: for any $\phi\in C^\infty_c([0,T)\times \R^d)$, 
\begin{align*}
& 0= \int_{\R^d} \phi(0,x)m_{0}(dx)\\
& \qquad + \int_0^T \int_{\R^d} \Bigl( {\rm Tr}((a+a^0)(s,x)D^2\phi(s,x)-D\phi(s,x)\cdot H_p(x,D u(s,x),m(s))\Bigr)m(s,dx)ds\\
& \qquad  + \sqrt{2}  \int_0^T \int_{\R^d}  ((\sigma^0)^TD\phi(s,x)m(s,dx) \cdot dW_s.
\end{align*}

\begin{Theorem} Under the assumptions of Theorem \ref{theo.ShortTime},  there exists a time $T>0$ for which the stochastic MFG system \eqref{eq.MFGs} has a classical solution $(u,m,v)$ in $[0,T]$. Moreover,  
\be\label{eq.defv}
v(t,x)= \sqrt{2}\int_{\R^d} (\sigma^0)^T D_mU(t,x,m(t),y)m(t,dy),
\ee
where $U$ is the solution to the second order master equation \eqref{eq.Master2}. 
\end{Theorem}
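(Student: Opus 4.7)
The plan is to use the classical solution $U$ of the second order master equation \eqref{eq.Master2} (whose existence is given by Theorem \ref{theo.ShortTime}) as input data, construct the density $m$ as the solution of a McKean--Vlasov type SPDE, then recover $u$ and $v$ explicitly from $U$ and check the backward SPDE via an It\^o--Wentzell formula on the Wasserstein space.

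First I would build $m$. Thanks to Theorem \ref{theo.ShortTime} the map $(x,m)\mapsto D_xU(t,x,m)$ is bounded and Lipschitz continuous in $(x,m)$, uniformly in $t\in [0,T]$. Therefore the stochastic Fokker--Planck equation
\[
dm(t) = \Bigl[ \sum_{i,j}D_{ij}\bigl((a_{ij}+a^0_{ij})(t,\cdot)m(t)\bigr) + \dive\bigl(m(t)\,H_p(\cdot,D_xU(t,\cdot,m(t)),m(t))\bigr)\Bigr]dt - \dive\bigl(m(t)\sqrt{2}\sigma^0 dW_t\bigr),
\]
with initial datum $m_0$, is a standard McKean--Vlasov SPDE with Lipschitz coefficients, which admits a unique strong solution $m\in C^0([0,T],\Pw)$ adapted to the filtration of $W$, as in \cite{CaDebook}. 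The argument is the usual Picard iteration in the Banach space $L^2(\Omega;C^0([0,T],\Pw))$; the Lipschitz constant of the coefficients yields contractivity on a short time interval, and the global existence on $[0,T]$ then follows from the linear growth of the drift.

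Once $m$ is fixed, I would set $u(t,x):=U(t,x,m(t))$ and $v(t,x)$ as in \eqref{eq.defv}. The announced regularity of $u$, $m$, $v$ is then a direct consequence of the space--measure regularity of $U$ established in Section \ref{Sec.FirstOrdreMaster} via Lemma \ref{lem.estiUN}. It remains to show that $u$ satisfies the backward Hamilton--Jacobi SPDE. The key step is to apply, for each fixed $x$, the It\^o formula along the flow $m(\cdot)$ to the map $t\mapsto U(t,x,m(t))$. This is the usual chain rule on $\Pw$ combined with the It\^o correction coming from both the idiosyncratic diffusion $a$ and the common noise $\sqrt{2}\sigma^0 dW_t$, which produces the drift
\[
\int_{\R^d}\!D_mU(t,x,m(t),y)\!\cdot\!(-H_p(y,D_xU,m(t)))\,m(t,dy)
+\int_{\R^d}\!{\rm Tr}\bigl((a+a^0)D^2_{ym}U\bigr)\,dm(t)
+\int_{\R^{2d}}\!{\rm Tr}\bigl(a^0 D^2_{mm}U\bigr)\,dm(t)\otimes dm(t),
\]
together with the cross term $2\int{\rm Tr}(a^0 D^2_{xm}U)\,dm(t)$ coming from the It\^o covariation between the $x$-component of $U$ (through $m(t)$ only if needed, but here because $U$ is evaluated at a fixed $x$ the cross term arises through the Itô-Wentzell interaction of the common noise, not directly; the correct Itô--Wentzell correction will be produced by the application below), and the martingale part
\[
\sqrt{2}\int_{\R^d}(\sigma^0)^T D_mU(t,x,m(t),y)\,m(t,dy)\cdot dW_t = v(t,x)\cdot dW_t.
\]
Substituting the master equation \eqref{eq.Master2} for $\partial_t U$ makes all drift contributions collapse to exactly
\[
\bigl[-{\rm Tr}((a+a^0)D^2_{xx}U) + H(x,D_xU,m(t)) - \sqrt{2}{\rm Tr}(\sigma^0 Dv(t,x))\bigr]dt,
\]
which is the backward equation in \eqref{eq.MFGs}. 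Differentiating under the integral sign in the definition of $v$ and using $D_m U(t,x,m,y)=D_y(\delta U/\delta m)$, one checks that the $\sqrt{2}{\rm Tr}(\sigma^0 Dv)$ term indeed matches the $2\int{\rm Tr}(a^0 D^2_{xm}U)\,dm$ contribution of the master equation.

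The main obstacle is a clean application of the Wasserstein It\^o formula to a function of both $x$ and $m(t)$ when $m(t)$ carries a common-noise component: one must be careful with the two ``$D^2_{xm}$'' and ``$D^2_{mm}$'' It\^o corrections produced by $\sqrt{2}\sigma^0 dW_t$, and with the Fubini-type exchange justifying $Dv(t,x) = \sqrt{2}\int (\sigma^0)^T D_xD_m U(t,x,m(t),y) m(t,dy)$. Both are justified by the regularity provided by Lemma \ref{lem.estiUN}, which ensures that all derivatives involved are continuous and bounded. Once the formal computation is justified, the existence of the classical solution $(u,m,v)$ and the representation \eqref{eq.defv} follow at once.
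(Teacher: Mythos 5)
Your overall strategy coincides with the paper's: construct $m$ as the solution of the McKean--Vlasov SPDE (the paper obtains it as the mean-field limit of a particle system, you by Picard iteration -- both work), set $u(t,x):=U(t,x,m(t))$, and recover the backward SPDE by the Wasserstein It\^o formula (\cite[Theorem A.1]{CDLL}, \cite[Theorem 11.13]{CaDebook}).

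There is, however, a genuine conceptual muddle in your account of the It\^o step. You write that the cross term $2\int{\rm Tr}(a^0 D^2_{xm}U)\,dm(t)$ comes "from the It\^o covariation between the $x$-component of $U$" or from an "It\^o--Wentzell interaction". Neither is correct. When you apply the chain rule to $t\mapsto U(t,x,m(t))$, the variable $x$ is frozen: it is not a stochastic process, and $U$ is a deterministic field of $(t,x,m)$, so there is no It\^o--Wentzell correction at all. The It\^o formula produces exactly three drift contributions -- the first-order Fokker--Planck term $\int D_mU\cdot(-H_p)\,dm$, the $\int{\rm Tr}((a+a^0)D^2_{ym}U)\,dm$ term, and the second-order correction $\int\!\int{\rm Tr}(a^0 D^2_{mm}U)\,dm\otimes dm$ -- plus the martingale part $v\cdot dW$. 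No $D^2_{xm}$ term appears here. The $-2\int{\rm Tr}(a^0 D^2_{xm}U)\,dm$ is simply one of the terms sitting in $\partial_t U$ via the master equation \eqref{eq.Master2}; after you substitute the master equation, the three It\^o-produced terms cancel the corresponding ones in $\partial_t U$, and the leftover $-2\int{\rm Tr}(a^0 D^2_{xm}U)\,dm$ is \emph{identified} with $-\sqrt 2\,{\rm Tr}(\sigma^0 Dv)$ by differentiating \eqref{eq.defv} under the integral sign. You do eventually state this identity, so the proof reaches the right conclusion; but the description of the cross term as an It\^o correction is wrong and should be removed -- a reader who tried to derive that term from the It\^o formula would find it is not there, and could incorrectly conclude the computation does not close.
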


\begin{proof} Let $m$ be the solution to the stochastic McKean-Vlasov equation: 
\be\label{stochastic McKean-Vlasov equation}
\left\{ \begin{array}{l}
\ds d m(t,x) = \bigl[ \sum_{i,j} D_{ij}((a_{i,j}+a^0_{i,j})(t,x) m(t,x)) + {\rm div} \bigl( m(t,x) H_p(x,D U(t,x,m(t)),m(t)) 
\bigr) \bigr] dt\\ 
 \qquad \qquad \qquad \qquad \ds  - {\rm div} ( m(t,x) \sqrt{2}\sigma^0  dW_{t} \bigr), 
  \qquad \qquad {\rm in }\; (0,T)\times \R^d, \\
 \ds m(0,dx)=m_{0},\qquad {\rm in }\;  \R^d
\end{array}\right.
\ee
Existence of a solution for this system can be obtained, for instance, as the mean field limit of the SDE
$$
\left\{ \begin{array}{l}
dX^{N,i}_s= -H_p(X^{N,i}_s,D_x U(t,X^{N,i}_s,m^N_{{\bf X}^N_s}),m^N_{{\bf X}^N_s})ds +\sqrt{2} \sigma(s,X^{N,i}_s)dB^i_s +  \sqrt{2}
\sigma^0(s,X^{N,i}_s) dW_s \\
X^{N,i}_0= \bar X^{N,i}_0
\end{array}\right.
$$
where $\bar X^{N,i}_0$ is a family of i.i.d. r.v. of law $m_0$ and where $\ds m^N_{{\bf X}^N_s}=\frac{1}{N} \sum_{i=1}^N \delta_{X^{N,i}_s}$. Indeed, one can show that the family of laws of $(m^N_{{\bf X}^N_s})$ is tight in $C^0([0,T], \Pw)$ and that its limit is a solution to \eqref{stochastic McKean-Vlasov equation}. Uniqueness for \eqref{stochastic McKean-Vlasov equation} comes from the regularity of $U$ and Gronwall's Lemma. 

Then one can use the Itô's formula  in \cite[Theorem A.1]{CDLL} (see also \cite[Theorem 11.13]{CaDebook}) to derive that $u(t,x):= U(t,x,m(t))$ solves the backward stochastic HJ equation
$$
\left\{ \begin{array}{l}
\ds 
 d u(t,x) = \bigl[ -{\rm Tr}((a+a^0)(t,x)D^2u(t,x))+ H(x,Du(t,x),m(t)) \\
 \ds \qquad \qquad \qquad\qquad -  \sqrt{2} {\rm Tr}(\sigma^0  Dv(t,x)) \bigr] dt + v(t,x) \cdot dW_{t} \qquad \qquad {\rm in }\; (0,T)\times \R^d,\\
 \ds u(T,x)=G(x,m(T)) \qquad {\rm in }\;  \R^d
\end{array}\right.
$$
where $v$ is given by \eqref{eq.defv}. Note that, by the regularity of $U$, $u$ and $v$ have the required regularity. 
\end{proof}


\section{The  master equation for MFGs with a major player}\label{sec.Mm}

In this section we investigate the well-posedness of the master equation associated with the MFG problem with a major player. The unknown  $(U^0, U)$ solves the system of master equations:  
\be\label{eq.MasterMM}
\left\{\begin{array}{rl}
\ds (i)& \ds -\partial_t U^{0}(t,x_0,m)-\Delta_{x_0} U^0(t,x_0,m)+ H^0(x_0,D_{x_0}U^{0}(t,x_0,m), m) \\
\ds & \ds \qquad -\int_{\R^d} \dive_yD_m U^0(t,x_0,m,y)dm(y)\\
&\ds \qquad + \int_{\R^d} D_mU^0(t,x_0,m,y)\cdot H_p(x_0,y, D_{x}U(t,x_0,y,m), m)dm(y)=0 \\ 
&\ds \hspace{8cm} {\rm in} \; (0,T)\times \R^{d_0}\times \Pw,\\
\ds (ii) & \ds -\partial_t U(t,x_0,x,m)- \Delta_{x} U(t,x_0,x,m)-\Delta_{x_0}U(t,x_0,x,m)\\
&\ds \qquad  + H(x_0,x,D_{x}U(t,x_0,x,m), m)\\
&\ds \qquad  -\int_{\R^d} \dive_y D_mU(t,x_0,x,m,y)dm(y)
 + D_{x_0}U \cdot H^0_p(x_0,D_{x_0}U^0(t,x_0,m), m)\\
&\ds \qquad   + \int_{\R^d} D_m U(t,x_0,x,m,y)\cdot H_p(x_0,y, D_{x}U(t,x_0,y,m), m)dm(y)=0 \\ 
&\ds \hspace{8cm} {\rm in} \; (0,T)\times \R^{d_0}\times \R^d\times \Pw,\\
\ds (iii) & \ds U^{0}(T,x_0,m)= G^0(x_0,m), \qquad  {\rm in} \; \R^{d_0}\times \Pw,\\
\ds (iv) & \ds U(T,x_0, x,m)= G(x_0,x,m)\qquad  {\rm in} \;  \R^{d_0}\times \R^d\times \Pw.\\
\end{array}\right.
\ee

\begin{Definition} Let  $U^0:[0,T]\times \R^{d_0}\times \Pw\to \R$ and $U:[0,T]\times\R^{d_0}\times \R^d\times \Pw\to \R$. We say that $(U^0,U)$ is a classical solution of \eqref{eq.MasterMM} if $U^0$ and $U$ and their derivatives involved in \eqref{eq.MasterMM} exist, are continuous in all variables and are bounded, and if \eqref{eq.MasterMM} holds. 
\end{Definition}

Throughout this part, assumptions in Subsection \ref{subsec.Hyp} are in force.  Our main result is the following: 

\begin{Theorem}\label{thm.Mm} Under the assumptions of Subsection \ref{subsec.Hyp}, there exists a time $T>0$ and a classical solution $(U^0, U)$ to \eqref{eq.MasterMM} on the time interval $[0,T]$, which is, in addition, such that $D_{x_0}U^0$ and $D_{x_0,x}U$ are uniformly Lipschitz continuous in the space and measure variables.
\end{Theorem}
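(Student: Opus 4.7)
The plan is to adapt the Trotter-Kato splitting scheme used for Theorem \ref{theo.ShortTime} to the system \eqref{eq.MasterMM}, following the decomposition sketched in the introduction. Fix $N \geq 1$, set $\tau = T/(2N)$, $t_k = k\tau$, and define by backward induction a pair $(U^{0,N}, U^N)$ with $U^{0,N}(T,\cdot,\cdot)=G^0$, $U^N(T,\cdot,\cdot,\cdot)=G$. On the intervals $(t_{2j+1}, t_{2j+2})$ we require $(U^{0,N}, U^N)$ to solve, at doubled speed, the parametric first-order master system (with $x_0$ frozen)
\begin{equation*}
\left\{
\begin{array}{l}
-\partial_t U^{0,N} - 2\int \dive_y D_m U^{0,N}\, dm + 2\int D_m U^{0,N} \cdot H_p(x_0,y,D_x U^N,m)\, dm = 0, \\[2mm]
-\partial_t U^N - 2\Delta_x U^N + 2H(x_0,x,D_x U^N,m) - 2\int \dive_y D_m U^N\, dm \\
\qquad + 2\int D_m U^N \cdot H_p(x_0,y,D_x U^N,m)\, dm = 0;
\end{array}
\right.
\end{equation*}
and on the intervals $(t_{2j}, t_{2j+1})$ we require, at doubled speed, the coupled Hamilton-Jacobi system (with $(x,m)$ frozen)
\begin{equation*}
\left\{
\begin{array}{l}
-\partial_t U^{0,N} - 2\Delta_{x_0} U^{0,N} + 2H^0(x_0, D_{x_0} U^{0,N}, m) = 0, \\[2mm]
-\partial_t U^N - 2\Delta_{x_0} U^N + 2 D_{x_0} U^N \cdot H^0_p(x_0, D_{x_0} U^{0,N}, m) = 0.
\end{array}
\right.
\end{equation*}
The first block is exactly of the form analyzed in Section \ref{Sec.FirstOrdreMaster}, built via the method of characteristics from the MFG system of Section \ref{sec:ReguMFG}, with $x_0$ carried as an extra parameter; the second block consists of a nonlinear HJ equation in $U^{0,N}$ (Section \ref{sec.HJ}) followed by a linear transport-diffusion equation for $U^N$ whose drift $H^0_p(x_0, D_{x_0} U^{0,N}, m)$ is determined by the first equation.

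The bulk of the work lies in uniform estimates. Introduce, as in \eqref{eq.defM}, a single constant $M$ majorizing the $\|\cdot\|_{3,n}$, $\|\delta\cdot/\delta m\|_{2,n-1;k}$, $\|\delta^2\cdot/\delta m^2\|_{1,n-2;k-1,k-1}$ norms together with the associated ${\rm Lip}$-norms of $G^0$ and $G$. The estimates of Section \ref{Sec.FirstOrdreMaster} must first be extended to track the $x_0$-dependence: since the parametric MFG system depends smoothly on $x_0$, differentiating its characteristics up to three times in $x_0$ yields bounds of the same type, thanks precisely to the $C^3$-in-$x_0$ hypotheses on $H$ and $G$ imposed in Subsection \ref{subsec.Hyp}. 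Then, during the HJ half-step, standard parabolic Schauder estimates from Section \ref{sec.HJ} show that the same family of norms grows by at most $C_M \tau$; the coupling term $D_{x_0} U^N \cdot H^0_p(x_0, D_{x_0} U^{0,N}, m)$ is tame once $D_{x_0} U^{0,N}$ is bounded in $C^2$, so $U^N$ solves a linear parabolic equation in $x_0$ whose derivatives in $(x_0,x)$ and the Lions-type derivatives in $m$ propagate linearly in $\tau$. A backward induction exactly analogous to that of Lemma \ref{lem.estiUN}, built on a sequence $\theta_{2j} = M - 1 + C_M (T/N)(N-j)$, then shows that for $T \leq T_M$ depending only on $M$ and the data,
\begin{equation*}
\sup_{t \in [0,T]}\Bigl(\|U^{0,N}(t)\|_{3} + \|U^N(t)\|_{3,n} + \|\delta U^{0,N}/\delta m(t)\|_{2;k} + \|\delta U^N/\delta m(t)\|_{2,n-1;k} + \cdots\Bigr) \leq M,
\end{equation*}
uniformly in $N$. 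Combining these with the uniform bounds on $\partial_t U^{0,N}, \partial_t U^N$ read off from the equations and with Lemma \ref{lem.interp} yields uniform Holder continuity of $U^{0,N}, U^N$ and of all derivatives appearing in \eqref{eq.MasterMM} on every set where $M_2(m), |y|, |y'|$ are bounded.

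Finally, by the version of Arzela-Ascoli given in Lemma \ref{lem.Arzela-Ascoli}, a subsequence of $(U^{0,N}, U^N)$ and of their derivatives up to the orders occurring in \eqref{eq.MasterMM} converges pointwise in $m$ and locally uniformly in the remaining variables to limits $(U^0, U)$ together with the corresponding derivatives, the identification of the limit derivatives being carried out via the integral formula \eqref{zalekrnjdg} exactly as in the proof of Theorem \ref{theo.ShortTime}. Integrating the discretized equations over $[s,t]$ and using that the indicator functions of the odd and even half-intervals weakly converge to $\tfrac{1}{2}$, one passes to the limit and recovers that $(U^0, U)$ solves \eqref{eq.MasterMM} in the classical sense; the stated uniform Lipschitz continuity of $D_{x_0} U^0$ and $D_{x_0,x} U$ in space and measure follows directly from the bounds on $\|U^{0,N}\|_3$, $\|U^N\|_{3,n}$ and $\|\delta U^{0,N}/\delta m\|_{2;k}$, $\|\delta U^N/\delta m\|_{2,n-1;k}$, via Remark \ref{rem.k;k}. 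The principal obstacle is the parametric extension of Section \ref{Sec.FirstOrdreMaster} to the $x_0$-variable: one must verify, step by step, that the $x_0$-derivatives of the MFG characteristics do not generate additional $m$-regularity losses beyond what is already controlled, so that each half-step contributes $C_M \tau$ rather than $C_M \sqrt{\tau}$. This is conceptually routine because $x_0$ enters the MFG system only through the data $H, G$, but it forces one to re-run the proofs of Propositions \ref{Prop.DerivU}, \ref{Prop.DerivU2} and \ref{prop:rlipdelta2U} with the parameter $x_0$ carried along.
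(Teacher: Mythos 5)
Your proposal is correct and follows exactly the route the paper takes: the splitting scheme \eqref{blaMajor}--\eqref{blablaMajor}, the backward induction on the family of norms as in Lemma \ref{lem.estiUNMm} using the $x_0$-parametrized estimates of Section \ref{Sec.FirstOrdreMaster} (Propositions \ref{Prop.DerivUL}, \ref{Prop.DerivU02}, \ref{prop:rlipdelta2U0}) together with the coupled HJ estimates of Section \ref{s:sshj} (Propositions \ref{prop.p0}--\ref{Prop.3MmBIS}), and then Arzela--Ascoli and the weak convergence of the indicators. The only detail you brush past is that the paper handles the coupled HJ half-step via the paired norms $\|(V^0,V)\|_n$ etc. and Bernstein-type arguments (Propositions \ref{prop.HighSyst}, \ref{prop.prop.HighSystL}) rather than Schauder estimates, which is what guarantees the clean $(1+C_MT)$ growth you need; otherwise your argument matches the paper's proof, which indeed reduces Theorem \ref{thm.Mm} to the same mechanism as Theorem \ref{theo.ShortTime}.
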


The result can be easily extended to non constant diffusions. We work here with a constant diffusion to simplify the notation. \\ 

The idea of the proof follows a  similar splitting method as we did in Section \ref{sec.master2}, by dividing the time interval $[0,T]$ into $[t_{2k},t_{2k+1})$ and $[t_{2k+1}, t_{2k+2})$,   where $t_k= kT/(2N)$, $k\in \{0, 2N\}$. This time we alternate the  two following 
problems: in  $[t_{2k+1},t_{2k+2})$  we solve, for a fixed $x_0\in \R^{d_0}$,  the first order system of master equations in $\R^d\times\Pw$:
\be\label{syst.1}
\left\{\begin{array}{rl}
\ds (i)& \ds -\partial_t U^{0} -2\int_{\R^d} \dive_yD_m U^0(t,x_0,m,y)dm(y)\\
&\ds \qquad +2\int_{\R^d} D_mU^0(t,x_0,m,y)\cdot H_p(x_0,y, D_{x}U(t,x_0,y,m), m)dm(y)=0\,, \\ 
\ds (ii) & \ds -\partial_t U- 2\Delta_{x} U+ 2H(x_0,x,D_{x}U, m) -2\int_{\R^d} \dive_y D_mU(t,x_0,x,m,y)dm(y)\\
&\ds \qquad   + 2\int_{\R^d} D_m U(t,x_0,x,m,y)\cdot H_p(x_0,y, D_{x}U(t,x_0,y,m), m)dm(y)=0\,, \\ 
\end{array}\right.
\ee
while on  $[t_{2k}, t_{2k+1})$ we solve for a fixed $(x,m)\in \R^d\times \Pw$ the system of HJ equations in $\R^{d_0}$:
\be\label{syst.2}
\left\{\begin{array}{rl}
\ds (i)& \ds -\partial_t U^{0}-2\Delta_{x_0} U^0+ 2H^0(x_0,D_{x_0}U^{0}, m)  =0 \,, \\ 
\ds (ii) & \ds -\partial_t U- 2\Delta_{x_0}U+ 2D_{x_0}U \cdot H^0_p(x_0,D_{x_0}U^0(t,x_0,m), m)=0\,. \\ 
\end{array}\right.
\ee

The (technical) analysis of System \eqref{syst.1} is postponed to Section \ref{Sec.FirstOrdreMaster}. 
We now concentrate on System \eqref{syst.2}. In order to  write the estimate, we need to treat the pair of maps $(U^0,U)$ simultaneously: this requires specific notation that we discuss first.

\subsection{Analysis of the simple system of HJ equations}\label{s:sshj}

In this section we consider the system 
\be\label{syst.2bis}
\left\{\begin{array}{cl}
\ds (i)& \ds -\partial_t U^{0}(t,x_0;m)-\Delta_{x_0} U^0(t,x_0;m)+ H^0(x_0,D_{x_0}U^{0}(t,x_0;m), m)  =0  
\; {\rm in} \; (0,T)\times \R^{d_0}\\
\ds (ii) & \ds -\partial_t U(t,x_0;x,m)- \Delta_{x_0}U(t,x_0;x,m)
\\ 
&\ds 
\qquad + D_{x_0}U(t,x_0;x,m) \cdot H^0_p(x_0,D_{x_0}U^0(t,x_0,m), m)=0\qquad  {\rm in} \; (0,T)\times \R^{d_0} \\
\ds (iii) & \ds U^0(T,x_0;m)= G^0(x_0,m) \; {\rm in} \; \R^{d_0},\; U(T,x_0; x,m)= G(x_0,x,m) \; {\rm in} \; \R^{d_0}, 
\end{array}\right.
\ee
where $(x,m)\in \R^d\times \Pw$ are fixed. 
The main part of this subsection consists in proving estimates on the solution $(U^0,U)$ to \eqref{syst.2bis}.

\subsubsection{Notation for the norms}

In this section, we are dealing with pairs of maps $(V^0,V)=(V^0(x_0,m),V(x_0, x,m))$ which might also depend on time $t$, not indicated here. The way we compute the norms is crucial in order to match all the estimates. We use the following norms:
\begin{align*}
\left\| (V^0,V)\right\|_n :=  &  \sup_{m \in \Pw}  \ \sum_{r=0}^n \ \sup_{x_0\in \R^{d_0}, x \in \R^{d}} \left(|V^0(x_0,m)|^2+ |D^r_x V(x_0,x,m)|^2\right)^{1/2}, \\
\left\|\frac{\delta (V^0,V)}{\delta m }\right\|_{n;k} := & \sup_{m \in \Pw} \ \sum_{r=0}^n  \ \sup_{ \substack{ x_0\in \R^{d_0}, x \in \R^{d}, \\ \rho\in C^0_b, \|\rho\|_{-k} = 1} }\left( \left|\frac{ \delta V^0}{\delta m} (x_0,m)(\rho) \right|^2 +  \left|D^r_x\frac{ \delta V}{\delta m}(x_0,x,m)(\rho) \right|^2 \right)^{1/2}, \\
\left\|\frac{\delta^2 (V^0,V)}{\delta m^2 }\right\|_{n;k,k} :=  & \sup_{m \in \Pw} \ \sum_{r=0}^n \ \sup_{ \substack{ x_0\in \R^{d_0}, x \in \R^{d}, \\ \rho,\rho'\in C^0_b, \|\rho\|_{-k} = \|\rho'\|_{-k} = 1 } } \left( \left|\frac{ \delta^2 V^0}{\delta m^2} (x_0,m)(\rho, \rho') \right|^2 +  \left| D^r_x \frac{ \delta^2 V}{\delta m^2} (x_0,x,m)(\rho,\rho') \right|^2 \right)^{1/2}
\end{align*}
and 
\begin{align*}
&  {\rm Lip}_{n;k,k} \Big(\frac{\delta^2 (V^0,V)}{\delta m^2}\Big) := \sup_{m_1\neq m_2} \dw(m_1,m_2)^{-1} \left\|\frac{\delta^2}{\delta m^2 } \big(V^0(m_2)-V^0(m_1),V(m_2)-V(m_1)\big)\right\|_{n;k,k} \\ 
& \qquad= \sup_{m_1\neq m_2} \dw(m_1,m_2)^{-1}  \sum_{r=0}^n  
\sup_{\scriptsize \begin{array}{c} x_0\in \R^{d_0}, x \in \R^{d},\\ \rho,\rho'\in C^0_b, \|\rho\|_{-k} = \|\rho'\|_{-k} = 1 \end{array}  }\\
& \; \left( \left| \frac{\delta^2 V^0}{\delta m^2}(x_0,m_2)(\rho, \rho')-\frac{\delta^2 V^0}{\delta m^2}(x_0,m_1)(\rho, \rho') \right|^2+
\left| D^r_x \frac{\delta^2 V}{\delta m^2}(x_0,x,m_2)(\rho, \rho')- D^r_x \frac{\delta^2 V}{\delta m^2}(x_0,x,m_1)(\rho, \rho') \right|^2\right)^{1/2}. 
\end{align*}
We define in a similar way the quantities ${\rm Lip}_{n}^{x_0} (D^2_{x_0} V^0, D^2_{x_0} V) $, 
${\rm Lip}_{n;k} (\frac{\delta V^0_{x_0}}{\delta m}, \frac{\delta V_{x_0}}{\delta m})$  and ${\rm Lip}_{n} (D^2_{x_0}V^0, D^2_{x_0} U). $

Note that arguing as in Remark \ref{rem.k;k}, a control on $\left\|\frac{\delta (V^0,V)}{\delta m }\right\|_{n;k}$ yields a control on $\left\|\frac{\delta V^0}{\delta m }\right\|_{n, k-1}$ and $\left\|\frac{\delta V}{\delta m }\right\|_{n, k-1}$, and similarly for $\left\|\frac{\delta^2 (V^0,V)}{\delta m^2 }\right\|_{n;k,k}$, ${\rm Lip}_{n;k,k} \Big(\frac{\delta^2 (V^0,V)}{\delta m^2}\Big)$, ...

\subsubsection{Basic regularity of $(U^0,U)$} 

 We recall that $H^0$ satisfies the assumptions of Section \ref{subsec.Hyp},  in particular  condition \eqref{CondH0} is in force.

\begin{Proposition}\label{prop.p0} Fix $M>0$ and $n\geq 3$. There are constants $K_M,T_M>0$, depending on $M$, $C_0$ and $\gamma$,  and a constant $C_M>0$ depending on 
$$
{  \sup_{|p|\leq K_M} \sup_{m\in \Pw} \sum_{k=0}^3 \|D^k_{(x_0,p)}H^0(\cdot,p,m)\|_\infty} + \sum_{k=0}^3 \|D^k_{(x_0,p)}H_p^0(\cdot,p,m)\|_\infty,
$$
 such that, if $ \sum_{k = 0}^2 \big(\|D^k G^0\|_\infty+ \|D_{x_0}^k G\|_{0,n-k} \big)+ \Bigl({\rm Lip}_{n-3}^{x_0} (D^2_{x_0} G^0, D^2_{x_0} G)    \Bigr) \leq M$, then, for any $T\in (0,T_M)$, we have
\begin{align*}
& \sup_{t} \Bigl(\Bigl\|(U^0,U)(t)\Bigr\|_n +  \Bigl\|D_{x_0}(U^0,U)(t)\Bigr\|_{n-1} 
+  \Bigl\|D^2_{x_0}(U^0,U)(t)\Bigr\|_{n-2} 
+({\rm Lip}_{n-3}^{x_0} (D^2_{x_0} (U^0,  U)(t)))  \Bigr) \\
& \qquad \leq  
\Bigl\|(G^0,G)\Bigr\|_n + \Bigl\|D_{x_0}(G^0,G)\Bigr\|_{n-1} 
+ \Bigl\|D^2_{x_0}(G^0,G))\Bigr\|_{n-2} 
+ ({\rm Lip}_{n-3}^{x_0} (D^2_{x_0} (G^0,G)))
+ C_MT. 
\end{align*}
\end{Proposition}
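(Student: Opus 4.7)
The plan is to exploit the one-way coupling in \eqref{syst.2bis}: equation $(i)$ is a self-contained Hamilton--Jacobi equation for $U^0$ in which $m\in\Pw$ appears only as a parameter, while equation $(ii)$ is a linear backward drift-diffusion equation for $U$ in the variable $x_0$ (with $x$ and $m$ entering only as parameters) once $D_{x_0}U^0$ is known. I would therefore treat the two equations in sequence.

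\emph{Step 1 (Hamilton--Jacobi estimates for $U^0$).} For each fixed $m$, I would apply the short-time well-posedness and smoothness results from Section \ref{sec.HJ} to the uniformly parabolic backward HJ equation on $\R^{d_0}$ with Hamiltonian $H^0(\cdot,\cdot,m)$ and terminal datum $G^0(\cdot,m)$. Under the regularity assumptions of Section \ref{subsec.Hyp}, this delivers constants $T_M,K_M>0$ depending only on $M$, $C_0$ and $\gamma$, such that for $T\le T_M$ one has $\|D_{x_0}U^0(t,\cdot,m)\|_\infty\le K_M$, together with
\[ \|U^0(t)\|_n+\|D_{x_0}U^0(t)\|_{n-1}+\|D^2_{x_0}U^0(t)\|_{n-2}+{\rm Lip}^{x_0}_{n-3}\bigl(D^2_{x_0}U^0(t)\bigr)\le\mathcal N(G^0)+C_MT,\]
where $\mathcal N(G^0)$ denotes the same combination of norms for $G^0$ and $C_M$ is of the form prescribed in the statement. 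Since $m$ enters only as a parameter, all bounds are uniform in $m$.

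\emph{Step 2 (linear equation for $U$).} Set $b(t,x_0,m):=H^0_p\bigl(x_0,D_{x_0}U^0(t,x_0,m),m\bigr)$. From Step 1 and the smoothness of $H^0_p$ on the ball $\{|p|\le K_M\}$, the drift $b$ and its $x_0$-derivatives up to order two are bounded, and $D^2_{x_0}b$ is Lipschitz in $x_0$, with constants depending only on $M$. Equation $(ii)$ then reads
\[-\partial_tU-\Delta_{x_0}U+b(t,x_0,m)\cdot D_{x_0}U=0,\qquad U(T,\cdot)=G(\cdot,x,m),\]
posed on $[0,T]\times\R^{d_0}$ for every frozen $(x,m)$. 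Since $b$ is independent of $x$, differentiation in $x$ commutes with the linear operator, so $D^r_xU$ solves the same equation with terminal datum $D^r_xG$, and the maximum principle (equivalently, a Feynman--Kac representation) yields $\|D^r_xU(t)\|_\infty\le\|D^r_xG\|_\infty$ for every $r\le n$ and every $(x,m)$. Mixed derivatives $W=D^l_{x_0}D^r_xU$ with $l\le 2$ and $l+r\le n$ satisfy an equation of the same type, with a commutator source involving $D^{\le l}_{x_0}b$ applied to lower-order $x_0$-derivatives of $D^r_xU$; Duhamel's formula combined with the bounds on $b$ from Step 2 then gives $\|W(t)\|_\infty\le\|D^l_{x_0}D^r_xG\|_\infty+C_MT$. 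The Lipschitz-in-$x_0$ estimate on $D^2_{x_0}D^r_xU$ for $r\le n-3$ follows by differentiating once more in $x_0$ and invoking the Lipschitz bound on $D^2_{x_0}b$.

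\emph{Main obstacle.} The delicate step is the Lipschitz-in-$x_0$ control of $D^2_{x_0}U$: it requires three $x_0$-derivatives of the drift $b$, and hence Lipschitz control of $D^2_{x_0}U^0$. This is precisely the last piece that must be delivered by the HJ appendix, and it is what forces the assumption on ${\rm Lip}^{x_0}_{n-3}\bigl(D^2_{x_0}(G^0,G)\bigr)$ together with the hypothesis $n\ge 3$. Assembling the $U^0$ bound from Step 1 with the $U$ bounds from Step 2 in the paired norms introduced above then yields the statement.
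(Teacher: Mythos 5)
Your structural reading of \eqref{syst.2bis} is correct: $(i)$ is an autonomous backward HJ equation for $U^0$, and once $D_{x_0}U^0$ is in hand, $(ii)$ becomes a linear transport-diffusion equation for $U$ with drift $b(t,x_0,m)=H^0_p(x_0,D_{x_0}U^0(t,x_0,m),m)$; the role of the gradient bound $K_M$ and the cascade of regularity from $(G^0,G)$ are also identified correctly. The gap is entirely in the last sentence ("assembling the $U^0$ bound ... with the $U$ bounds ... yields the statement"), and it is not a detail: the assembled estimate is quantitatively wrong in a way that defeats the purpose of the proposition.

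The paired norm $\|(V^0,V)\|_n$ introduced in Section \ref{s:sshj} is, for each $r$, the quantity $\sup_{x_0,x}\bigl(|V^0(x_0,m)|^2+|D^r_xV(x_0,x,m)|^2\bigr)^{1/2}$, with the supremum \emph{outside} the square root of the sum of squares. Your Steps 1 and 2 give the decoupled bounds $\sup_{x_0}|U^0(t)|\le\sup_{x_0}|G^0|+C_MT$ and $\sup_{x_0,x}|D^r_xU(t)|\le\sup_{x_0,x}|D^r_xG|+C_MT$ (and similarly for the $x_0$-derivatives). Combining these only gives
\begin{equation*}
\sup_{x_0,x}\bigl(|U^0|^2+|D^r_xU|^2\bigr)^{1/2}\;\le\;\Bigl((\sup_{x_0}|G^0|)^2+(\sup_{x_0,x}|D^r_xG|)^2\Bigr)^{1/2}+\sqrt{2}\,C_MT,
\end{equation*}
and the quantity $\bigl((\sup|G^0|)^2+(\sup|D^r_xG|)^2\bigr)^{1/2}$ can be as large as $\sqrt{2}\,\sup_{x_0,x}\bigl(|G^0|^2+|D^r_xG|^2\bigr)^{1/2}$ when the two suprema are attained at different points $x_0$. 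So your argument only yields $\|(U^0,U)(t)\|_n\le\sqrt{2}\,\|(G^0,G)\|_n+C_MT$. The multiplicative $\sqrt{2}$ on the leading term cannot be absorbed by shrinking $T$, and it is fatal for the splitting scheme of Lemma \ref{lem.estiUNMm}: the iteration over $N$ half-steps hinges on the coefficient in front of the terminal-data norm being exactly $1$, so that errors accumulate additively as $O(T)$ rather than multiplicatively as $2^{N/2}$.

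The paper avoids this by establishing a genuinely joint estimate: Proposition \ref{prop.HighSyst} runs a Bernstein argument on the pointwise quantity $v(t,x_0)=\sum_i\bigl((u^0_i)^2+(u_i)^2\bigr)$ (and its higher-order analogues), so the maximum principle controls $\sup_{x_0,x}(\,\cdot\,)^{1/2}$ of the pair directly with coefficient $1$. This works because, after differentiation, the drift $h^0_p\cdot D(\cdot)$ acts on both components and recombines into $h^0_p\cdot Dv$, while the remaining coupling terms only produce $v^{1/2}(\cdots)$ contributions that go into the $C_MT$. The paper's proof of Proposition \ref{prop.p0} then consists of invoking Proposition \ref{prop.HighSyst} successively with $r=0,1,2,3$ (and $l\le n-r$), summing over $l$. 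To repair your proposal you would need to replace the decoupled Steps 1--2 by this joint Bernstein argument on the sum of squares, rather than estimating $U^0$ and $U$ independently and merging at the end.
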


\begin{proof}  To estimate $\|(U^0,U)\|_n$ it suffices to apply successively Proposition \ref{prop.HighSyst} with $r=0$ and $l \le n$, and to sum over $l$. The argument to estimate first and higher order derivatives with respect to $x_0$  is identical: apply successively Proposition \ref{prop.HighSyst} with $r=1$ and $l \le n-1$ (for $\|(D_{x_0}U^0,D_{x_0}U)\|_{n-1}$), with $r=2$ and $l \le n-2$ (for $\|(D^2_{x_0}U^0,D^2_{x_0}U)\|_{n-2}$) and finally with $r=3$ and $l \le n-3$ (for the Lipschitz bound in $x_0$ of $D^2_{x_0}(U^0,U)$). 
\end{proof}

\subsubsection{First order differentiability in $m$}

\begin{Proposition}\label{Prop.1Mm} Under the assumptions of Proposition \ref{prop.p0}, the pair $(U^0,U)$ is of class $C^1$ with respect to $m$, as well as its derivatives with respect to $x$ appearing below, and, for any fixed  $(x,m,\rho)\in \R^d\times \Pw\times C^{-k}$ the derivative 
$$
(v^0,v)= \Big(\frac{\delta U^0}{\delta m} (t,x_0;m) (\rho), \ \frac{\delta U}{\delta m}(t,x_0;x,m)(\rho) \Big)
$$ 
solves,  
\be\label{Prop.derivM}
\left\{\begin{array}{rl}
\ds (i)& \ds -\partial_t v^{0}-\Delta_{x_0} v^0+  \frac{\delta H^0}{\delta m}(x_0,D_{x_0}U^{0}, m)(\rho)  \\
&\ds \qquad \qquad\qquad\qquad + H^0_p(x_0,D_{x_0}U^{0}, m)\cdot D_{x_0}v^0 =0  \qquad  {\rm in} \; (0,T)\times \R^{d_0},\\
\ds (ii) & \ds -\partial_t v- \Delta_{x_0}v+ D_{x_0}v \cdot H^0_p(x_0,D_{x_0}U^0, m)\\
&\ds \   +D_{x_0}U \cdot \left(  \frac{\delta  H^0_p}{\delta m}(x_0,D_{x_0}U^0, m)(\rho)  +  H^0_{pp}(x_0,D_{x_0}U^0, m)D_{x_0}v^0\right)=0 
\ {\rm in} \; (0,T)\times \R^{d_0}, \\
\ds (iii) & \ds v^0(T,x_0; m)= \frac{\delta G^0}{\delta m}(x_0,m)(\rho) , \;  v(T,x_0 ,x;m)= \frac{\delta G}{\delta m}(x_0,x,m)(\rho) \qquad {\rm in}\; \R^{d_0}.
\end{array}\right.
\ee
Suppose in addition that, for $k \ge 2$,
\[
\sup_{x_0,m} \Bigl(\|\frac{\delta G^0}{\delta m}\|_{k}+ \|\frac{\delta G}{\delta m}\|_{n-1;k} +
\|\frac{\delta G^0_{x_0}}{\delta m}\|_{k-1}+ \|\frac{\delta G_{x_0}}{\delta m}\|_{n-2;k-1} +
\big( {\rm Lip}^{x_0}_{n-3;k-2} (\frac{\delta G^0_{x_0}}{\delta m}, \frac{\delta G_{x_0}}{\delta m})  \big) \Bigr) \le M.
\]
Then there exists $T_M,C_M>0$ such that, for any $T\in (0, T_M)$, we have
\begin{align*}
&
\sup_{t} \Bigl( \Bigl\|\frac{\delta (U^0,U)}{\delta m}(t)\Bigr\|_{n-1;k}+ 
 \Bigl\|\frac{\delta (U^0_{x_0},U_{x_0})}{\delta m}(t)\Bigr\|_{n-2;k-1}
+ ({\rm Lip}^{x_0}_{n-3;k-2} (\frac{\delta U^0_{x_0}}{\delta m}, \frac{\delta U_{x_0}}{\delta m})(t)) \Bigr) \\
&\qquad   \le   \Bigl( \Big\|\frac{\delta (G^0,G)}{\delta m}\Bigr\|_{n-1;k}
+  \Bigl\|\frac{\delta (G^0_{x_0},G_{x_0})}{\delta m}\Bigr\|_{n-2;k-1} 
+ {\rm Lip}^{x_0}_{n-3;k-2} (\frac{\delta G^0_{x_0}}{\delta m}, \frac{\delta G_{x_0}}{\delta m})  \Bigr) + C_MT ,
\end{align*}
where $C_M$ depends on $M$, $r$, $n$, $k$ and on the regularity of $H^0$. 
\end{Proposition}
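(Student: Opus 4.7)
My plan is to construct $(v^0, v)$ as the classical solution of the linearized system \eqref{Prop.derivM} and then identify it with $\big(\tfrac{\delta U^0}{\delta m}(\rho), \tfrac{\delta U}{\delta m}(\rho)\big)$, paralleling the structure of the proof of Proposition \ref{prop.p0}. Formally differentiating \eqref{syst.2bis} in the direction $\rho$ produces \eqref{Prop.derivM}, which is a \emph{decoupled} linear parabolic system: the equation for $v^0$ involves only $v^0$, with smooth bounded drift $H^0_p(x_0, D_{x_0}U^0, m)$ (bounded via Proposition \ref{prop.p0}) and source $\tfrac{\delta H^0}{\delta m}(x_0, D_{x_0}U^0, m)(\rho)$ controlled by $\|\rho\|_{-k}$; the equation for $v$ is then linear in $v$ once $v^0$ is fixed. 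Classical linear parabolic theory yields a unique smooth solution $(v^0, v)$ for each fixed $\rho \in C^{-k}$.

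To obtain the short-time estimates, I would apply Proposition \ref{prop.HighSyst} to \eqref{Prop.derivM} in exactly the same way as for the nonlinear system in Proposition \ref{prop.p0}, now treating $\rho$ as a fixed parameter and tracking the $\|\rho\|_{-k}$ dependence linearly through the sources. Running successively $r=0$ and $\ell \le n-1$, then $r=1$ and $\ell \le n-2$, then $r=2$ and $\ell \le n-3$ (and the corresponding $x_0$-Lipschitz estimate for $r=3$), and summing in $\ell$, yields bounds of the form
\[
\sup_t \big\|(v^0,v)(t)\big\|_{n-1} \le \big\|(v^0(T),v(T))\big\|_{n-1} + C_M T\, \|\rho\|_{-k},
\]
and analogous bounds for $D_{x_0}(v^0,v)$ with $\|\rho\|_{-(k-1)}$ and for $D^2_{x_0}(v^0,v)$ with $\|\rho\|_{-(k-2)}$. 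The downshift from $k$ to $k-1$ to $k-2$ when estimating higher $x_0$-derivatives reflects that differentiating the source $\tfrac{\delta H^0}{\delta m}(x_0, D_{x_0}U^0, m)(\rho)$ once in $x_0$ produces a contribution $\tfrac{\delta H^0_p}{\delta m}(x_0, D_{x_0}U^0, m)(\rho) \cdot D^2_{x_0}U^0$ whose available regularity budget in $y$ decreases by one (per the assumption on $\|\tfrac{\delta H^0}{\delta m}\|_{2,3;k}$). Dividing by the appropriate dual norm of $\rho$ and taking the sup produces the announced estimates.

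It remains to identify the constructed $(v^0, v)$ with $\big(\tfrac{\delta U^0}{\delta m}(\rho), \tfrac{\delta U}{\delta m}(\rho)\big)$. For $m,m' \in \Pw$, I would consider the curve $m_s := (1-s) m + s m'$ for $s \in [0,1]$ and study the stability of $(U^0,U)(\cdot; m_s)$ in $s$: subtracting the copies of \eqref{syst.2bis} at $m_s$ and $m$ gives a system for the difference, to which Proposition \ref{prop.HighSyst} applies to yield Lipschitz (and then $C^1$) dependence on $s$ in sufficiently strong norms. Passing to the $s$-derivative, one finds that $\partial_s (U^0, U)(\cdot; m_s)\big|_{s=0}$ solves \eqref{Prop.derivM} with $\rho = m'-m$; by uniqueness for the linearized system it coincides with $(v^0, v)$, and integrating in $s \in [0,1]$ gives the integral identity \eqref{zalekrnjdg}. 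Linearity in $\rho$ extends the identification to arbitrary $\rho \in C^{-k}$.

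The main obstacle is the last step: justifying the differentiation under the nonlinear evolution requires continuity of $(U^0,U)(\cdot; m_s)$ in $s$ in strong enough norms to make sense of the linearization, and this continuity has to be obtained uniformly along $m_s$ through difference estimates parallel to those of Proposition \ref{prop.p0}. The secondary technical issue is the bookkeeping of the three differentiation levels and the associated shifts in the $(n, k)$-indices, which must match exactly the regularity budget provided by the hypotheses on $\tfrac{\delta H^0}{\delta m}$ and $\tfrac{\delta^2 H^0}{\delta m^2}$ so that no estimate falls outside the applicable range of Proposition \ref{prop.HighSyst}.
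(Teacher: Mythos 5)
Your proposal follows essentially the same strategy as the paper: write down the linearized system \eqref{Prop.derivM}, estimate its solution $(v^0,v)$ via the parabolic-system propositions of Appendix~A, and identify $(v^0,v)$ with $\big(\tfrac{\delta U^0}{\delta m}(\rho),\tfrac{\delta U}{\delta m}(\rho)\big)$ by differentiating along convex curves in $\Pw$. Two technical points need correcting. First, the a priori bounds on $(v^0,v)$ should come from Proposition~\ref{prop.prop.HighSystL} (the \emph{linear} system estimate), not Proposition~\ref{prop.HighSyst}: the linearized system~\eqref{Prop.derivM} has exactly the form~\eqref{eq.HJsystL} with drifts $V^0 = H^0_p(x_0,D_{x_0}U^0,m)$, $V = H^0_{pp}(x_0,D_{x_0}U^0,m)\,D_{x_0}U$ and sources $f^0 = \tfrac{\delta H^0}{\delta m}(\rho)$, $f = D_{x_0}U\cdot\tfrac{\delta H^0_p}{\delta m}(\rho)$, whereas \ref{prop.HighSyst} governs a system whose first equation is genuinely nonlinear. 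Second, your indexing at the last level is off by one: the Lipschitz estimate ${\rm Lip}^{x_0}_{n-3;k-2}$ for $\big(\tfrac{\delta U^0_{x_0}}{\delta m},\tfrac{\delta U_{x_0}}{\delta m}\big)$ is obtained with $r=2$ and $l\le n-3$ (not $r=3$) — the three stated quantities correspond exactly to $r=0,1,2$, one notch below what happens in Proposition~\ref{prop.p0} because one $x_0$-derivative has here been traded for the $m$-derivative.

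On the identification step, the paper takes a slightly shorter route than yours: rather than linearizing along the curve $(1-s)m+sm'$ for arbitrary $m'$ and then integrating in $s$, it invokes Lemma~\ref{lem.condC1}, which reduces the task to checking that $s\mapsto U^0(t,x_0,(1-s)m+s\delta_y)$ is $C^1$ with a jointly continuous and bounded derivative. That $C^1$ dependence in $s$ comes directly from the fact that $H^0(\cdot,\cdot,(1-s)m+s\delta_y)$ and $G^0(\cdot,(1-s)m+s\delta_y)$ are $C^1$ in $s$, combined with standard smooth dependence of parabolic solutions on parameters; no difference-quotient argument is required. Your version (arbitrary $m'$, then integrate) also works, but you then have to carry out the $C^1$-in-$s$ argument by hand in strong norms, which is exactly the bookkeeping that Lemma~\ref{lem.condC1} is designed to sidestep.
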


\begin{proof} 
In order to show that $U^0$ is $C^1$ with respect to $m$, let us define 
$$
\hat U^0(t,x_0; s,m,y):=U^0(t,x_0, (1-s)m+s\delta_y).
$$ 
Then, as $\hat H^0:=H^0(x_0,p, (1-s)m+s\delta_y)$ and $\hat g^0:= G^0(x_0, (1-s)m+s\delta_y)$ are of class $C^1$ with respect to the parameter $s\in[0,1]$, the map $\hat U^0$ is $C^1$ in $s$ and  its derivative $\hat v^0(t,x_0;m,y):= (d\hat U^0/ds)(t,x_0; 0,m,y)$ solves the linearized equation
$$
\left\{\begin{array}{l}
\ds \ds -\partial_t \hat v^{0}-\Delta_{x_0} \hat v^0+ \frac{\delta H^0}{\delta m}(x_0,D_{x_0}U^{0}, m,y) 
+ H^0_p(x_0,D_{x_0}U^{0}, m)\cdot D_{x_0}\hat v^0 =0  \;  {\rm in} \; (0,T)\times \R^{d_0},\\
\ds \hat v^0(T,x_0)= \frac{\delta G^0}{\delta m}(x_0,m,y)\qquad {\rm in}\; \R^{d_0}.
\end{array}\right.
$$
By uniqueness and parabolic regularity, the solution to this equation depends continuously of the parameters $(m,y)$. 
Hence Lemma \ref{lem.condC1} states that $U^0$ is $C^1$ in $m$ with $\delta U^0/\delta m(t, x_0,m,y)= \hat v^0(t, x_0;m,y)$. 

Next we consider the linear equation satisfied by $U$.  By our previous discussion on $U^0$, the vector field
$$
(t,x_0)\to H^0_p(x_0, D_{x_0}U^0(t,x_0, m),m)   
$$
is $C^1$ with respect to $m$. 
For $(s,m,y)\in [0,1]\times \Pw\times \R^{d_0}$, the map  $\hat U(t,x_0;s,x,m,y):= U(t,x_0,x, (1-s)m+s\delta_y)$ solves a linear equation in which the vector field $$
\hat V(t,x_0; s,m,y):=H^0_p(x_0, D_{x_0}U^0(t,x_0; (1-s)m+s\delta_y), (1-s)m+s\delta_y)
$$ 
and the terminal condition $\ds
\hat g(x_0; x,s,m,y):= G(x_0,x,  (1-s)m+s\delta_y)$
are $C^1$ in $s$. Then  $\hat U$ is $C^1$ in $s$ and   its derivative 
$ {  \hat v}(t,x_0; x,m,y):=(d/ds)\hat U(t,x_0;0,x,m,y)$ solves the linear equation 
$$
\left\{\begin{array}{rl}
\ds & \ds -\partial_t {  \hat v}- \Delta_{x_0}{  \hat v}+ D_{x_0}{  \hat v} \cdot H^0_p(x_0,D_{x_0}U^0, m)\\
&\ds \qquad  +D_{x_0}U \cdot \left( \frac{\delta H_p}{\delta m}(x_0,D_{x_0}U^0, m,y)+ H_{pp}(x_0,D_{x_0}U^0, m)D_{x_0}{  \hat v}^0\right)=0 
\qquad {\rm in} \; (0,T)\times \R^{d_0} ,\\
& \ds {  \hat v}(T,x_0; x,m,y)=\frac{\delta G}{\delta m}(x_0,x,m,y) \qquad {\rm in}\; \R^{d_0}.
\end{array}\right.
$$
As the  solution to this equation depends continuously of the parameters $(m,y)$, Lemma \ref{lem.condC1} states that $U$ is $C^1$ in $m$ with $\delta U/\delta m(t, x_0, x,m,y)= {  \hat v}(t, x_0;x,m,y)$. This proves that  the derivative ${  (\hat v^0,\hat v)}= (\delta U^0/\delta m, \delta U/\delta m)(t, x_0,x, m,y)$ solves \eqref{Prop.derivM}  with $\rho = \delta_y$.

Hence, for any $\rho \in C^0_b$, the pair $(v^0,v) = (\frac{\delta U^0}{\delta m} (t,x_0;m) (\rho), \frac{\delta U}{\delta m}(t,x_0;x,m)(\rho) )$ solves a linear system of the form \eqref{eq.HJsystL} in which the drifts 
$$
V^0(t,x^0;m):= H^0_p(x_0,D_{x_0}U^0(t,x_0,m), m) 
$$ 
and
$$
V(t,x^0;x,m):=  H^0_{pp}(x_0,D_{x_0}U^0(t,x_0,m), m)D_{x_0}U(t,x_0;x)
$$
are bounded in class $C_b^{1}$ and $C_b^{0,n-1} \cap C_b^{1,n-2}$ respectively, while the source terms 
$$
f^0(t,x^0;m):=  \frac{\delta H^0}{\delta m}(x_0,D_{x_0}U^{0}, m)(\rho) 
$$
and 
\begin{align*}
f(t,x_0;x,m)& := D_{x_0}U(t,x_0;x) \cdot  \frac{\delta  H^0_p}{\delta m}(x_0,D_{x_0}U^0, m)(\rho) 
\end{align*}
are in $C_b^{1}$ and $C_b^{0,n-1} \cap C_b^{1,n-2}$ respectively, thanks to Proposition \ref{prop.p0}. We then use Proposition \ref{prop.prop.HighSystL} successively to obtain the estimates:  first with $r=0$ and $l \le n-1$, we get
\begin{multline*}
\Bigl( |\frac{\delta U^0}{\delta m} (t,x_0;m) (\rho)|^2 + | D^l_x \frac{\delta U}{\delta m}(t,x_0;x,m)(\rho) |^2 \Bigr)^{1/2} \leq \\ (1+CT)\sup_{x_0,x} \Big(|\frac{\delta G^0}{\delta m}(x_0; m)(\rho)|^2+ |D^l_x \frac{\delta G}{\delta m}(x_0; x,m)(\rho)|^2\Big)^{1/2}  +CT.
\end{multline*} 
Then by taking the supremum over $\|\rho\|_{-k} = 1, x_0, x$ and summing over $l \le n-1$ we find the estimate for $\|\frac{\delta (U^0,U)}{\delta m}\|_{n-1;k}$. An analogous application of Proposition \ref{prop.prop.HighSystL} with $r=1$ and $l \le n-2$ provides the bound for $\|\frac{\delta (U^0_{x_0}, U_{x_0})}{\delta m}\|_{n-2;k-1}$, while the Lipschitz estimate in $x_0$ for $(\frac{\delta U^0_{x_0}}{\delta m}, \frac{\delta U_{x_0}}{\delta m})$ is obtained similarly with $r=2$, and $l \le n-3$. 
\end{proof}

\subsubsection{Second order differentiability with respect to $m$}

\begin{Proposition}\label{Prop.2Mm} Under the assumptions of Proposition \ref{Prop.1Mm}, $k\geq 3$, the pair $(U^0,U)$ (and its derivatives with respect to $x$) is of class $C^2$ with respect to $m$ and, for any fixed $(x,m,\rho,\rho')\in \R^d\times \Pw\times C^{-(k-1)}\times C^{-(k-1)}$ the derivative $(w^0,w)= (\delta^2 U^0/\delta m^2(t,x_0;m)(\rho, \rho'), \delta^2 U/\delta m^2(t,x_0; x,m)(\rho, \rho') \, )$ solves  
\be\label{Prop.derivM2}
\left\{\begin{array}{rl}
\ds (i)& \ds -\partial_t w^{0}-\Delta_{x_0} w^0+ H^0_p(x_0,D_{x_0}U^{0}, m)\cdot D_{x_0}w^0  + \frac{\delta^2 H^0}{\delta m^2}(x_0,D_{x_0}U^{0}, m)(\rho, \rho') \\
& \ds \qquad +  H^0_{pp}(x_0,D_{x_0}U^{0}, m)D_{x_0}v^0 \cdot D_{x_0}(v')^0+ {  \frac{\delta H^0_p}{\delta m}}(x_0,D_{x_0}U^{0}, m)(\rho) \cdot D_{x_0} (v')^0 \\
&\ds  \qquad 
+{  \frac{\delta H^0_p}{\delta m}}(x_0,D_{x_0}U^{0}, m)(\rho')\cdot D_{x_0} v^0 =0
\qquad  {\rm in} \; (0,T)\times \R^{d_0}\\
\ds (ii) & \ds -\partial_t w- \Delta_{x_0}w+  H^0_p(x_0,D_{x_0}U^0, m)\cdot D_{x_0}w\\
& \ds +D_{x_0}v \cdot \left( \frac{\delta  H^0_p}{\delta m}(x_0,D_{x_0}U^0, m)(\rho')+  H^0_{pp}(x_0,D_{x_0}U^0, m)D_{x_0}(v')^0 \right)\\
& \ds +D_{x_0}v' \cdot \left( \frac{\delta  H^0_p}{\delta m}(x_0,D_{x_0}U^0, m)(\rho) +  H^0_{pp}(x_0,D_{x_0}U^0, m)D_{x_0}v^0\right)\\
&\ds \   +D_{x_0}U \cdot \Bigl(  \frac{\delta  H^0_{pp}}{\delta m}(x_0,D_{x_0}U^0, m)(\rho) D_{x_0}(v')^0+\frac{\delta^2  H^0_p}{\delta m^2}(x_0,D_{x_0}U^0, m)(\rho, \rho')\\
& \ds \qquad  + H^0_{ppp}(x_0,D_{x_0}U^0, m)D_{x_0}v^0D_{x_0}(v')^0+ \frac{\delta H^0_{pp}}{\delta m} (x_0,D_{x_0}U^0, m)(\rho')D_{x_0}v^0\\
& \ds  \qquad \qquad + H^0_{pp}(x_0,D_{x_0}U^0, m)D_{x_0}w^0\Bigr)=0 
\ {\rm in} \; (0,T)\times \R^{d_0} \\
\ds (iii) & \ds w^0(T,x_0; m)=\frac{\delta^2 G^0}{\delta m^2}(x_0,m)(\rho, \rho'), \;  w(T,x_0; x,m)=\frac{\delta^2 G}{\delta m^2}(x_0,x,m)(\rho,\rho') \; {\rm in}\; \R^{d_0},
\end{array}\right.
\ee
where $(v^0,v), ((v')^0, v')$ are the solutions to \eqref{Prop.derivM} associated with $\rho$ and $\rho'$ respectively. 
Moreover, if 
$$
\left\|\frac{\delta^2 (G^0,G)}{\delta m^2}\right\|_{n-2;k-1,k-1}
+{\rm Lip}_{n-3;k-2,k-2}^{x_0} (\frac{\delta^2 G^0}{\delta m^2}, \frac{\delta^2 G}{\delta m^2})  \leq M, 
$$
then there exists $T_M,C_M>0$  such that, for any $T\in (0, T_M)$, 
\begin{align*}
& \sup_t \Bigl(  \left\|\frac{\delta^2 (U^0,U)}{\delta m^2}(t)\right\|_{n-2;k-1,k-1}
+({\rm Lip}_{n-3;k-2,k-2}^{x_0} (\frac{\delta^2 U^0}{\delta m^2}, \frac{\delta^2 U}{\delta m^2})(t)) \Bigr)  \\ 
&\qquad   \leq \Bigl( \left\|\frac{\delta^2 (G^0,G)}{\delta m^2}\right\|_{n-2;k-1,k-1}
+{\rm Lip}_{n-3;k-2,k-2}^{x_0} (\frac{\delta^2 G^0}{\delta m^2}, \frac{\delta^2 G}{\delta m^2})  \Bigr)+C_MT.
\end{align*}
\end{Proposition}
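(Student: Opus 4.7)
The proof will follow the same scheme as Proposition \ref{Prop.1Mm}, but at one order higher, bootstrapping on the first-order differentiability already established.

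\textbf{Step 1: Existence of $\delta^2/\delta m^2$.} To prove that $(U^0,U)$ is $C^2$ in $m$, I would fix $m, y'\in \Pw\times\R^d$ and set
\[
\bigl(\hat v^0, \hat v\bigr)(t,x_0;x,s,m,y,y') := \Bigl(\tfrac{\delta U^0}{\delta m}, \tfrac{\delta U}{\delta m}\Bigr)\bigl(t,x_0;x,(1-s)m+s\delta_{y'},y\bigr),
\]
for $s\in[0,1]$. From Proposition \ref{Prop.1Mm} we already know $(\hat v^0,\hat v)$ solves the linear parabolic system \eqref{Prop.derivM} with $\rho=\delta_y$, in which all coefficients depend on $(1-s)m+s\delta_{y'}$ through $U^0, D_{x_0}U^0, D_{x_0}U, H^0, H^0_p$, etc.; each of these is $C^1$ in $s$ by our first-order results and the assumptions on $H^0$. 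Differentiating the PDE in $s$, the derivative $(d/ds)(\hat v^0,\hat v)|_{s=0}$ is then the unique solution of a linear system of the same type, whose coefficients and source terms depend continuously on $(m,y,y')$. Lemma \ref{lem.condC1} then yields $C^1$-differentiability of $(\hat v^0,\hat v)$ in $m$, and an easy symmetrization/approximation by continuous test functions shows that for any $\rho,\rho'\in C^0_c$, the pair $(w^0,w)(t,x_0;x,m)(\rho,\rho')$ defined as $\delta^2/\delta m^2$ applied to $(\rho,\rho')$ solves exactly the linearized system \eqref{Prop.derivM2}.

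\textbf{Step 2: A priori estimates via the linear system.} The system \eqref{Prop.derivM2} has exactly the same drift structure as \eqref{Prop.derivM}: equation (i) is a linear Hamilton-Jacobi equation in $w^0$ with drift $H^0_p(x_0,D_{x_0}U^0,m)$; equation (ii) is a linear HJ equation in $w$ with drift $H^0_p$, plus a forcing term containing $D_{x_0}U\cdot H^0_{pp}(x_0,D_{x_0}U^0,m)D_{x_0}w^0$ coupling $w$ to $w^0$. So it is of the form \eqref{eq.HJsystL}, with drifts bounded in $C^{1}_b$ (resp.\ $C^{0,n-2}_b\cap C^{1,n-3}_b$) thanks to Proposition \ref{prop.p0}. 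The source terms are the remaining terms: the purely $(\rho,\rho')$-dependent pieces $\frac{\delta^2 H^0}{\delta m^2}(\cdot)(\rho,\rho')$ and $D_{x_0}U\cdot\frac{\delta^2 H^0_p}{\delta m^2}(\cdot)(\rho,\rho')$, and the quadratic-in-first-variation pieces such as $H^0_{pp}D_{x_0}v^0\cdot D_{x_0}(v')^0$ and $\frac{\delta H^0_p}{\delta m}(\rho)\cdot D_{x_0}(v')^0$. I would then apply Proposition \ref{prop.prop.HighSystL} successively with $r=0$ and $l\le n-2$ to obtain the bound on $\|\delta^2(U^0,U)/\delta m^2\|_{n-2;k-1,k-1}$, and then with $r=1$, $l\le n-3$ to obtain the Lipschitz-in-$x_0$ bound.

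\textbf{Step 3: Control of the source terms.} The crucial step is to verify that the source terms are bounded, uniformly in $\rho,\rho'$ with $\|\rho\|_{-(k-1)}=\|\rho'\|_{-(k-1)}=1$, in $C^{0,n-2}_b\cap C^{1,n-3}_b$. The $(\rho,\rho')$-bilinear pieces involving $\frac{\delta^2 H^0}{\delta m^2}$ (resp.\ $\frac{\delta^2 H^0_p}{\delta m^2}$) are handled directly by the assumptions on $H^0$ in Subsection \ref{subsec.Hyp}, together with the control of $D_{x_0}U^0$ and $D_{x_0}U$ from Proposition \ref{prop.p0}. The quadratic pieces require simultaneously bounding $D_{x_0}v^0$ and $D_{x_0}(v')^0$ in norms dual to $\|\rho\|_{-(k-1)}$ and $\|\rho'\|_{-(k-1)}$ respectively; this is precisely the content of the bound on $\|\delta(U^0_{x_0},U_{x_0})/\delta m\|_{n-2;k-1}$ furnished by Proposition \ref{Prop.1Mm}. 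For the Lipschitz-in-$x_0$ estimate we additionally need ${\rm Lip}^{x_0}_{n-3;k-2}(\delta U^0_{x_0}/\delta m, \delta U_{x_0}/\delta m)$, which is also provided by Proposition \ref{Prop.1Mm}.

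\textbf{Main obstacle.} The principal technical difficulty is bookkeeping the loss of one unit in the space-regularity index and one unit in the $k$-dual index per order of $m$-derivative, while preserving the quadratic source terms in $D_{x_0}v^0, D_{x_0}(v')^0$. Each product $D_{x_0}v^0\cdot D_{x_0}(v')^0$ must be estimated against the tensor norm $\|\rho\|_{-(k-1)}\|\rho'\|_{-(k-1)}$ rather than a single dual norm, which is why Proposition \ref{Prop.1Mm}'s bound $\|\delta(U^0_{x_0},U_{x_0})/\delta m\|_{n-2;k-1}$ (one derivative in $x_0$, dual index $k-1$) is tight: only this matches the claimed $\|\cdot\|_{n-2;k-1,k-1}$ output norm. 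Once the indices line up, linearity of the $(w^0,w)$-system and the form $C_M T$ of the short-time estimate in Proposition \ref{prop.prop.HighSystL} yield the stated inequality with a constant $C_M$ depending on $M$ and the regularity of $H^0$.
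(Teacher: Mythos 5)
Your proposal is correct and follows essentially the same path as the paper's proof: establish the representation formula \eqref{Prop.derivM2} by perturbing the linear system \eqref{Prop.derivM} in $m$ and invoking Lemma \ref{lem.condC1}, then read off the estimate by applying Proposition \ref{prop.prop.HighSystL} with drifts $V^0=H^0_p(x_0,D_{x_0}U^0,m)$, $V=H^0_{pp}(x_0,D_{x_0}U^0,m)D_{x_0}U$ and source terms estimated via Proposition \ref{Prop.1Mm} (for the $D_{x_0}v^0$, $D_{x_0}(v')^0$ factors) and the regularity assumptions on $H^0$. Your identification of the tight bookkeeping requirement --- that the quadratic source terms must be controlled against the tensor norm $\|\rho\|_{-(k-1)}\|\rho'\|_{-(k-1)}$, which is precisely what $\|\delta(U^0_{x_0},U_{x_0})/\delta m\|_{n-2;k-1}$ provides --- matches the paper's implicit reliance on that estimate.
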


\begin{proof} The differentiability of $\delta U^0/\delta m$ and of $\delta U/\delta m$ and the representation formula \eqref{Prop.derivM2} can be established as for $U^0$ and $U$ in Proposition \ref{Prop.1Mm}. To prove the estimate, we use Proposition \ref{prop.prop.HighSystL} with 
$$
V^0(t,x^0;m):= H^0_p(x_0,D_{x_0}U^0(t,x_0,m), m) 
$$ 
and
$$
V(t,x^0x;m):=  H^0_{pp}(x_0,D_{x_0}U^0(t,x_0,m), m)D_{x_0}U(t,x_0,x),
$$
which are bounded in class $C_b^{1}$ and $C_b^{0,n-1} \cap C_b^{1,n-2}$  respectively, while the source terms 
\begin{align*} 
f^0(t,x^0;m):= &  \frac{\delta^2 H^0}{\delta m^2}(x_0,D_{x_0}U^{0}, m)(\rho, \rho')  +  H^0_{pp}(x_0,D_{x_0}U^{0}, m)D_{x_0}v^0 \cdot D_{x_0}(v')^0 \\
& \ds + {  \frac{\delta H^0_p}{\delta m}}(x_0,D_{x_0}U^{0}, m)(\rho) \cdot D_{x_0} (v')^0 +{  \frac{\delta H^0_p}{\delta m}}(x_0,D_{x_0}U^{0}, m)(\rho')\cdot D_{x_0} v^0
\end{align*}
and 
\begin{align*}
f(t,x_0, x;m)& := 
D_{x_0}v \cdot \left( \frac{\delta  H^0_p}{\delta m}(x_0,D_{x_0}U^0, m)(\rho')+  H^0_{pp}(x_0,D_{x_0}U^0, m)D_{x_0}(v')^0 \right)\\
& \ds +D_{x_0}v' \cdot \left( \frac{\delta  H^0_p}{\delta m}(x_0,D_{x_0}U^0, m)(\rho) +  H^0_{pp}(x_0,D_{x_0}U^0, m)D_{x_0}v^0\right)\\
&\ds \   +D_{x_0}U \cdot \Bigl(  \frac{\delta  H^0_{pp}}{\delta m}(x_0,D_{x_0}U^0, m)(\rho) D_{x_0}(v')^0+\frac{\delta^2  H^0_p}{\delta m^2}(x_0,D_{x_0}U^0, m)(\rho, \rho')\\
& \ds \qquad  + H^0_{ppp}(x_0,D_{x_0}U^0, m)D_{x_0}v^0D_{x_0}(v')^0+ \frac{\delta H^0_{pp}}{\delta m} (x_0,D_{x_0}U^0, m)(\rho')D_{x_0}v^0\Bigr)
\end{align*} 
are in  $C_b^{0}$ and $C_b^{0,n-2}$ respectively, thanks to Propositions \ref{prop.p0} and \ref{Prop.1Mm}. By Proposition \ref{prop.prop.HighSystL}, with $r=0$ and $n-2$ we obtain the estimates for $\|\frac{\delta^2 (U^0,U)}{\delta m^2} \|_{n-2;k-1,k-1}$. The Lipschitz bound in $x_0$ of $(\frac{\delta^2 U^0}{\delta m^2},\frac{\delta^2 U}{\delta m^2})$ follows analogously.
\end{proof}

\subsubsection{Lipschitz regularity of the second order derivatives} 

We finally address the Lipschitz regularity of second order derivatives of $U^0$ and $U$ with respect to $m$ and $x_0$.

\begin{Proposition}\label{Prop.3Mm} Under the assumptions of Proposition \ref{Prop.2Mm} and if, in addition,\\
$
{\rm Lip}_{n-3;k-2,k-2} (\frac{\delta^2 G^0}{\delta m^2}, \frac{\delta^2 G}{\delta m^2}) \leq M, 
$
then we have, 
\begin{align*}
& \sup_t ({\rm Lip}_{n-3;k-2,k-2} (\frac{\delta^2 U^0}{\delta m^2}, \frac{\delta^2 U}{\delta m^2})(t))  \leq 
 {\rm Lip}_{n-3;k-2,k-2} (\frac{\delta^2 G^0}{\delta m^2}, \frac{\delta^2 G}{\delta m^2})   +C_MT, 
\end{align*} 
where the constant $C_M$ depend on the regularity of $H$ and $H^0$ and on $M$. 
\end{Proposition}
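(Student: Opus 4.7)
The plan is to follow the same strategy used in the proofs of Propositions \ref{Prop.1Mm} and \ref{Prop.2Mm}, namely apply the linear parabolic estimate of Proposition \ref{prop.prop.HighSystL} to the system satisfied by the difference of two solutions corresponding to two distinct measures. Fix $m_1,m_2\in\Pw$ and distributions $\rho,\rho'\in C_c^0$ with $\|\rho\|_{-(k-2)}=\|\rho'\|_{-(k-2)}=1$. For $i=1,2$ denote by $(v_i^0,v_i)$ and $((v_i')^0,v_i')$ the solutions of \eqref{Prop.derivM} associated with $m_i$ and with $\rho,\rho'$, and by $(w_i^0,w_i)$ the solution of \eqref{Prop.derivM2} associated with $m_i$. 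The pair $(\delta w^0,\delta w):=(w_2^0-w_1^0,w_2-w_1)$ satisfies a linear backward system of the type covered by Proposition \ref{prop.prop.HighSystL}, with the same drifts as in \eqref{Prop.derivM2} evaluated at $m_2$, and with a new source term obtained by taking the difference of the right-hand sides of \eqref{Prop.derivM2} at $m_2$ and at $m_1$. The terminal data are $\delta w^0(T,x_0)=\frac{\delta^2 G^0}{\delta m^2}(x_0,m_2)(\rho,\rho')-\frac{\delta^2 G^0}{\delta m^2}(x_0,m_1)(\rho,\rho')$ and similarly for $\delta w$, which are controlled in the $C^{n-3}_b$ norm by $\dw(m_1,m_2)\cdot{\rm Lip}_{n-3;k-2,k-2}(\frac{\delta^2 G^0}{\delta m^2},\frac{\delta^2 G}{\delta m^2})\le M\dw(m_1,m_2)$.

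The next step is to bound the new source term. Each summand in the right-hand side of \eqref{Prop.derivM2} is a product of two or three factors, each of which is one of: a derivative of $H^0$ (or $H^0_p$, $H^0_{pp}$, $\frac{\delta H^0}{\delta m}$, $\frac{\delta^2 H^0}{\delta m^2}$) evaluated at $(x_0,D_{x_0}U^0(m),m)$; a derivative of $U^0$ or $U$ in $x_0$; the linearization $v^0,v,(v')^0,v'$ or its $x_0$-derivative; or $w^0_1,w_1$ themselves. The difference of such a product is decomposed telescopically, so it suffices to estimate the $\dw$-Lipschitz constant in $m$ of each factor in the $C_b^{n-3}$ (or $C_b^{0,n-3}$) norm. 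For the $U^0,U$ and their $x_0$-derivatives, this is provided by Proposition \ref{Prop.1Mm} (which bounds $\|\frac{\delta(U^0,U)}{\delta m}\|_{n-1;k}$ and $\|\frac{\delta(U^0_{x_0},U_{x_0})}{\delta m}\|_{n-2;k-1}$, yielding Lipschitz estimates in the desired norms via \eqref{ineq.LipU}). For $v^0,v$ and $(v')^0,(v')$, the Lipschitz dependence in $m$ (at fixed unit test distribution) is provided by Proposition \ref{Prop.2Mm}, which bounds the $\dw$-Lipschitz constant of $\delta (U^0,U)/\delta m$ by ${\rm Lip}_{n-3;k-2}^{x_0}$-type norms of $\delta^2 (U^0,U)/\delta m^2$. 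Finally, the $\dw$-Lipschitz continuity of the Hamiltonian factors follows from the regularity assumptions on $H^0$ in Subsection \ref{subsec.Hyp} composed with the $m$-Lipschitz dependence of $D_{x_0}U^0(m)$.

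Once each factor in every product has been shown to be bounded by $C_M$ and $\dw(m_1,m_2)$-Lipschitz in $m$ (in the appropriate $C_b$ norm), the telescoping decomposition yields a source term whose $C_b^{n-3}$ norm is bounded by $C_M\dw(m_1,m_2)$. Applying Proposition \ref{prop.prop.HighSystL} with $r=0$ and $l\le n-3$ then gives
\[
\|(\delta w^0,\delta w)(t)\|_{n-3}\le(1+CT)\,\|\text{terminal data}\|_{n-3}+C_M T\,\dw(m_1,m_2),
\]
for all $t\in[0,T]$. Taking the supremum over $\|\rho\|_{-(k-2)}=\|\rho'\|_{-(k-2)}=1$, over $x_0$, summing over $l\le n-3$, and dividing by $\dw(m_1,m_2)$ yields the desired estimate for a sufficiently small $T_M$ depending on $M$ and the data.

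The main obstacle is the bookkeeping inherent to this argument: the source term of \eqref{Prop.derivM2} is a rather long sum of products involving up to three factors depending on $m$, and each product must be split telescopically while keeping track of which norms (in $x_0$, in $x$, and which order $k-1$ or $k-2$ for the test distributions) are available from the preceding propositions. In particular, the presence of $\frac{\delta^2 H^0}{\delta m^2}(\rho,\rho')$ in the source forces us to test against distributions of order at most $k-1$ to apply the bound in Subsection \ref{subsec.Hyp}, whereas the Lipschitz-in-$m$ bound ${\rm Lip}_{0,1,k-2,k-2}(\frac{\delta^2 H^0}{\delta m^2})$ is only assumed for distributions of order $k-2$; this is precisely why the hypothesis $k\ge 3$ from Proposition \ref{Prop.2Mm} is needed and why the Lipschitz estimate is measured in the $C^{-(k-2)}$ norm.
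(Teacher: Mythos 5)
Your proposal follows essentially the same route as the paper's own proof: you form the difference $(z^0,z)=(w_2^0-w_1^0,w_2-w_1)$ of the second-order linearized solutions, observe that it satisfies a linear system of the type handled by Proposition \ref{prop.prop.HighSystL}, bound the terminal data by the Lipschitz hypothesis on $\frac{\delta^2 (G^0,G)}{\delta m^2}$, estimate the source term by telescoping products and invoking Propositions \ref{Prop.1Mm} and \ref{Prop.2Mm} for the $\dw$-Lipschitz dependence of the factors, and conclude. One small imprecision in your write-up: since the drift coefficient $H^0_p(x_0,D_{x_0}U^0,m)$ in \eqref{Prop.derivM2} itself depends on $m$, the difference of the two equations produces a cross term of the form $\bigl(H^0_p(\cdot,m_2)-H^0_p(\cdot,m_1)\bigr)\cdot D_{x_0}w_j^0$ (for some fixed $j$) that is not part of "the difference of the right-hand sides" as you describe it, but must be added to the source; the paper writes this term out explicitly at the head of $f^0$ and $f$. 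You do, however, list "$w^0_1,w_1$ themselves" among the factors to be estimated, which shows you are aware of it, so this is a matter of wording rather than a gap. Your concluding remark on why the index $k-2$ is forced by the assumed regularity of $\frac{\delta^2 H^0}{\delta m^2}$ accurately reflects the bookkeeping constraints.
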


\begin{proof} Let $(x,\rho,\rho') \in \R^d \times C^{-(k-2)} \times C^{-(k-2)}$, 
$m^1,m^2\in \Pw$, $(U^{0,1},U^1)$ be the solution to \eqref{syst.2bis} associated with $(x, m^1)$ and $(U^{0,2}, U^2)$ be the solution associated with $(x, m^2)$. We denote by $(v^{0,1},v^1)$, $((v')^{0,1},(v')^1)$ (resp. $(v^{0,2},v^2)$, $((v')^{0,2},(v')^2)$) the corresponding solutions to the first order linearized system \eqref{Prop.derivM} associated with $\rho$ and $\rho'$, and by $(w^{0,1},w^1)$ (resp. $(w^{0,2},w^2)$) the  corresponding  solution of the second order linearized system \eqref{Prop.derivM2}. We want to estimate the difference $(z^0,z):= (w^{0,2}-w^{0,1},w^2-w^1)$. We have 
$$
\left\{\begin{array}{l}
\ds  -\partial_t z^{0}-\Delta_{x_0} z^0 +  H^0_p(x_0,D_{x_0}U^{0,1}(t,x_0,m^1), m^1)\cdot D_{x_0}z^0+f^0=0, \\
\ds -\partial_t z- \Delta_{x_0}z+ D_{x_0}z \cdot H^0_p(x_0,D_{x_0}U^{0,1}, m^1)
-H^0_{pp}(x_0,D_{x_0}U^{0,1}, m)D_{x_0}U^1 \cdot D_{x_0}z^{0} +f =0\\
\ds z^0(T)= \frac{\delta^2 G^0}{\delta m^2}(x_0,m^2)(\rho, \rho')-\frac{\delta^2 G^0}{\delta m^2}(x_0,m^1)(\rho, \rho'), \\ 
\ds z(T)= \frac{\delta^2 G}{\delta m^2}(x_0,x,m^2)(\rho, \rho')-\frac{\delta^2 G}{\delta m^2}(x_0,x,m^1)(\rho, \rho')\,,
\end{array}\right.
$$
where
\begin{align*}
f^0:=& (H^0_p(x_0,D_{x_0}U^{0,2}, m^2)
- H^0_p(x_0,D_{x_0}U^{0,1}, m^1))\cdot D_{x_0}w^{0,2} \\
&  + \frac{\delta^2 H^0}{\delta m^2}(x_0,D_{x_0}U^{0,2}, m^2)(\rho, \rho') -\frac{\delta^2 H^0}{\delta m^2}(x_0,D_{x_0}U^{0,1}, m^1)(\rho, \rho')  \\
&   +  H^0_{pp}(x_0,D_{x_0}U^{0,2}, m^2)D_{x_0}v^{0,2}\cdot D_{x_0}(v')^{0,2} - H^0_{pp}(x_0,D_{x_0}U^{0,1}, m^1)D_{x_0}v^{0,1}\cdot D_{x_0}(v')^{0,1} \\
&  +{  \frac{\delta H^0_p}{\delta m }}(x_0,D_{x_0}U^{0,2}, m^2)(\rho)\cdot D_{x_0} (v')^{0,2}-{  \frac{\delta H^0_p}{\delta m }}(x_0,D_{x_0}U^{0,1}, m^1)(\rho)\cdot D_{x_0} (v')^{0,1} \\
&   
+{  \frac{\delta H^0_p}{\delta m }}(x_0,D_{x_0}U^{0,2}, m^2)(\rho')\cdot D_{x_0} v^{0,2}-{  \frac{\delta H^0_p}{\delta m }}(x_0,D_{x_0}U^{0,1}, m^1)(\rho')\cdot D_{x_0} v^{0,1} 
\end{align*}
and
\begin{align*}
f:= & D_{x_0}w^2 \cdot \left( H^0_p(x_0,D_{x_0}U^{0,2}, m^2)- H^0_p(x_0,D_{x_0}U^{0,1}, m^1)\right)\\ 
&  +D_{x_0}v^2 \cdot \left( \frac{\delta  H^0_p}{\delta m}(x_0,D_{x_0}U^{0,2}, m^2)(\rho')+  H^0_{pp}(x_0,D_{x_0}U^{0,2}, m^2)D_{x_0}(v')^{0,2}\right)\\
&  -D_{x_0}v^1 \cdot \left( \frac{\delta  H^0_p}{\delta m}(x_0,D_{x_0}U^{0,1}, m^1)(\rho')+  H^0_{pp}(x_0,D_{x_0}U^{0,1}, m^1)D_{x_0}(v')^{0,1}\right)\\
&  +D_{x_0}(v')^2 \cdot \left( \frac{\delta  H^0_p}{\delta m}(x_0,D_{x_0}U^{0,2}, m^2)(\rho)+  H^0_{pp}(x_0,D_{x_0}U^{0,2}, m^2)D_{x_0}v^{0,2}\right)\\
&  -D_{x_0}(v')^1 \cdot \left( \frac{\delta  H^0_p}{\delta m}(x_0,D_{x_0}U^{0,1}, m^1)(\rho)+  H^0_{pp}(x_0,D_{x_0}U^{0,1}, m^1)D_{x_0}v^{0,1}\right)\\
&    +D_{x_0}U^2 \cdot \Bigl(  \frac{\delta  H^0_{pp}}{\delta m}(x_0,D_{x_0}U^{0,2}, m^2)(\rho)D_{x_0}(v')^{0,2}+\frac{\delta^2  H^0_p}{\delta m^2}(x_0,D_{x_0}U^{0,2}, m^2)(\rho, \rho')\\
&  \qquad  + H^0_{ppp}(x_0,D_{x_0}U^{0,2}, m^2)D_{x_0}v^{0,2}D_{x_0}(v')^{0,2}+ \frac{\delta H^0_{pp}}{\delta m} (x_0,D_{x_0}U^{0,2}, m^2)(\rho')D_{x_0}v^{0,2}\Bigr) \\
&    +\Bigl( H^0_{pp}(x_0,D_{x_0}U^{0,2}, m)D_{x_0}U^2 - H^0_{pp}(x_0,D_{x_0}U^{0,1}, m)D_{x_0}U^1\Bigr)\cdot D_{x_0}w^{0,2} \\
&   -D_{x_0}U^1 \cdot \Bigl(  \frac{\delta  H^0_{pp}}{\delta m}(x_0,D_{x_0}U^{0,1}, m^1)(\rho)D_{x_0}(v')^{0,1}+\frac{\delta^2  H^0_p}{\delta m^2}(x_0,D_{x_0}U^{0,1}, m^1)(\rho, \rho')\\
&  \qquad  + H^0_{ppp}(x_0,D_{x_0}U^{0,1}, m^1)D_{x_0}v^{0,1}D_{x_0}(v')^{0,1}+ \frac{\delta H^0_{pp}}{\delta m} (x_0,D_{x_0}U^{0,1}, m^1)(\rho')D_{x_0}v^{0,1}\Bigr)\,. 
\end{align*}
Proposition \ref{Prop.1Mm} (for the representation of the $(v^{0,i}, v^i)$) and Proposition \ref{Prop.2Mm} (for their Lipschitz regularity in $m$  and in $x_0$) imply in particular
that
$$
\sup_t \big(\|D_{x_0} (v^{0,2}-v^{0,1})\|_\infty + \|D_{x_0} (v^{2}-v^{1})\|_{0,n-3} \big)  \leq C\dw(m_1,m_2)
$$
and hence we have, using also Proposition \ref{Prop.2Mm},
\begin{align*}
 \sup_t \big( \|f^0\|_\infty +\|f\|_{0,n-3} \big) \leq C \dw (m^1, m^2). 
\end{align*}
Using Proposition \ref{prop.prop.HighSystL} (with $r = 0$), we obtain, for any $ l \le n-3$, 
 \begin{align*}
& \sup_{t,x_0, x} (|z^0(t,x_0)|^2+ |D^l_x z(t,x_0,x)|^2)^{1/2}\\
& \; \leq  (1+CT)\sup_{x_0, x} \Bigl( \left|  \frac{\delta^2 G^0}{\delta m^2}(x_0,m^2)(\rho, \rho')-\frac{\delta^2 G^0}{\delta m^2}(x_0,m^1)(\rho, \rho') \right|^2\\
& \qquad +
\left| D^l_x  \frac{\delta^2 G}{\delta m^2}(x_0,x,m^2)(\rho, \rho') -  D^l_x \frac{\delta^2 G}{\delta m^2}(x_0,x,m^1)(\rho, \rho')  \right|^2\Bigr)^{1/2}+ CT\dw(m^1,m^2),
\end{align*}
which gives the claim. 
\end{proof}

We complete this section by stating similar estimates on the Lipschitz regularity of the other second order derivatives: 
\begin{Proposition}\label{Prop.3MmBIS} Under the assumptions of Proposition \ref{Prop.2Mm} and if, in addition, 
${\rm Lip}_{n-3;k-2} (\frac{\delta G^0_{x_0}}{\delta m}, \frac{\delta G_{x_0}}{\delta m}) +  {\rm Lip}_{n-3} (D^2_{x_0} G^0, D^2_{x_0}G)  \leq M$,
then we have, 
\begin{align*}
& \sup_t ({\rm Lip}_{n-3;k-2} (\frac{\delta U^0_{x_0}}{\delta m}, \frac{\delta U_{x_0}}{\delta m})(t))  \leq 
{\rm Lip}_{n-3;k-2} (\frac{\delta G^0_{x_0}}{\delta m}, \frac{\delta G_{x_0}}{\delta m})   +C_MT
\end{align*} 
and 
\begin{align*}
& \sup_t ({\rm Lip}_{n-3} (D^2_{x_0}U^0, D^2_{x_0} U)(t))  \leq 
 {\rm Lip}_{n-3} (D^2_{x_0} G^0, D^2_{x_0}G)   +C_MT, 
\end{align*} 
where the constant $C_M$ depend on the regularity of $H$ and $H^0$ and on $M$. 
\end{Proposition}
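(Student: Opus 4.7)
The plan is to follow the same template as the proof of Proposition \ref{Prop.3Mm}, but applied to the pairs $(\frac{\delta U^0_{x_0}}{\delta m}, \frac{\delta U_{x_0}}{\delta m})$ and $(D^2_{x_0}U^0, D^2_{x_0}U)$. For each pair we write down the linear system solved by the derivative (obtained by differentiating the system \eqref{syst.2bis}, respectively the first-order linearized system \eqref{Prop.derivM}, in $x_0$), then take the difference of the solutions corresponding to two measures $m^1,m^2\in\Pw$, and finally invoke Proposition \ref{prop.prop.HighSystL} to estimate the difference in terms of the difference of the terminal data plus an error from the source, which by the Lipschitz estimates already available will be $O(\dw(m^1,m^2))$.

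Concretely, for $(\frac{\delta U^0_{x_0}}{\delta m}, \frac{\delta U_{x_0}}{\delta m})$, differentiate \eqref{Prop.derivM} in $x_0$. This gives a backward linear system with drift $H^0_p(x_0,D_{x_0}U^0,m)$ (for the $U$-component there is also the term $H^0_{pp}(x_0,D_{x_0}U^0,m)D_{x_0}U$), terminal data $\frac{\delta G^0_{x_0}}{\delta m}(\rho)$ and $\frac{\delta G_{x_0}}{\delta m}(\rho)$, and a source involving $\frac{\delta H^0}{\delta m}(\rho)$, $\frac{\delta H^0_p}{\delta m}(\rho)$ and their $x_0$-derivatives, all evaluated at $(x_0, D_{x_0}U^0, m)$. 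Taking the difference of the two systems corresponding to $m^1$ and $m^2$, Propositions \ref{prop.p0}, \ref{Prop.1Mm}, \ref{Prop.2Mm} give $C\dw(m^1,m^2)$-Lipschitz control on $U^0$, $U$, $D_{x_0}U^0$, $D_{x_0}U$ and their first $m$-derivatives; combined with the assumed Lipschitz bounds on $H^0$ and its derivatives, this yields an $O(\dw(m^1,m^2))$ bound on the source in the appropriate $C_b^0$/$C_b^{0,n-3}$ norm. Proposition \ref{prop.prop.HighSystL} (with $r=1$ and $l\le n-3$, then duality in $\rho$ with $\|\rho\|_{-(k-2)}\le 1$) then produces the desired inequality.

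For $(D^2_{x_0}U^0, D^2_{x_0}U)$, differentiate \eqref{syst.2bis} twice in $x_0$ and proceed analogously: the resulting backward linear system for this pair has the same drift structure, terminal data $D^2_{x_0}G^0$ and $D^2_{x_0}G$, and a source composed of second-order polynomials in $D_{x_0}U^0$, $D_{x_0}U$, $D^2_{x_0}U^0$, $D^2_{x_0}U$ against derivatives of $H^0$. Taking the difference for $m^1,m^2$ and using Proposition \ref{prop.p0} (which provides Lipschitz-in-$m$ bounds on $U^0,U,D_{x_0}U^0,D_{x_0}U,D^2_{x_0}U^0,D^2_{x_0}U$ with constant $C$) together with the new hypothesis on the Lipschitz regularity of the terminal data, the source is $O(\dw(m^1,m^2))$ in $C_b^{n-3}$. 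Applying Proposition \ref{prop.prop.HighSystL} with $r=0$ and $l\le n-3$ gives the second estimate.

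The main obstacle—more bookkeeping than real analytic difficulty—is checking that in the source terms every occurrence of a factor like $\frac{\delta}{\delta m}(\cdot)(\rho)$, or of a high $x_0$-derivative of $U$ or $U^0$, is indeed controlled with the correct Lipschitz constant in $m$ (with the right exponent in the duality $\|\rho\|_{-(k-2)}\le 1$), so that one never uses a derivative of order higher than those whose $\dw$-Lipschitzianity has already been established in Propositions \ref{prop.p0}--\ref{Prop.2Mm}. Once this bookkeeping is in place, the estimate follows exactly as in Proposition \ref{Prop.3Mm}.
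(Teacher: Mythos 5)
Your proposal matches the paper's intent exactly: the paper states only that ``the proof is completely similar to the proof of Proposition~\ref{Prop.3Mm}, we omit it,'' and your plan --- differentiate the relevant system (\eqref{Prop.derivM} once in $x_0$, resp.\ \eqref{syst.2bis} twice in $x_0$), take the difference of the two solutions corresponding to $m^1$ and $m^2$, bound the resulting source terms by $C\dw(m^1,m^2)$ using the Lipschitz-in-$m$ control already supplied by Propositions~\ref{prop.p0}--\ref{Prop.2Mm}, and close by Proposition~\ref{prop.prop.HighSystL} in the appropriate norms --- is precisely the template of the proof of Proposition~\ref{Prop.3Mm}, transposed to the mixed derivative $\delta(U^0_{x_0},U_{x_0})/\delta m$ and to $D^2_{x_0}(U^0,U)$. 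Your caveat about the bookkeeping (making sure each factor in the source has its $\dw$-Lipschitz constant available, with the correct duality exponent) is exactly the only content beyond mechanical repetition, and it is handled by the bounds on $\delta(U^0_{x_0},U_{x_0})/\delta m$ and ${\rm Lip}^{x_0}_{n-3;k-2}(\delta U^0_{x_0}/\delta m, \delta U_{x_0}/\delta m)$ from Proposition~\ref{Prop.1Mm}, combined with Remark~\ref{rem.k;k}, which upgrades a $\|\cdot\|_{\cdot;\cdot}$ bound into a Lipschitz-in-$m$ bound for the corresponding derivative.
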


As the proof is completely similar to the proof of Proposition \ref{Prop.3Mm}, we omit it. 

\subsection{Existence of a solution} 

\subsubsection{Definition of the semi-discrete scheme}

Let us fix some horizon $T>0$ (small) and a step-size $\tau:= T/(2N)$ (where $N\in \N$, $N\geq 1$). We set $t_k= kT/(2N)$, $k\in \{0, 2N\}$. We define by backward induction the continuous maps  $U^{0,N}=U^{0,N}(t,x_0,m)$ and $U^N=U^N(t,x_0,x,m)$ as follows: we require that 

\begin{itemize}

\item[(i)] $(U^{0,N},U^N)$ satisfies the terminal condition:  
$$
U^{0,N}(T,x_0,m)= G^0(x_0,m), \;  U^N(T,x_0,x,m)= G(x_0,x,m)\qquad \forall (x_0,x,m)\in \R^d\times \R^{d_0}\times \Pw, 
$$

\item[(ii)]  for $x_0\in \R^{d_0}$ fixed, $(U^{0,N},U^N)$ solves the backward system of first order master equations:
\be\label{blaMajor}
\left\{\begin{array}{rl}
\ds (i)& \ds -\partial_t U^{0} -2\int_{\R^d} \dive_yD_m U^0(t,x_0,m,y)dm(y)\\
&\ds \qquad + 2\int_{\R^d} D_mU^0(t,x_0,m,y)\cdot H_p(x_0,y, D_{x}U(t,x_0,y,m), m)dm(y)=0 \\ 
\ds (ii) & \ds -\partial_t U- 2\Delta_{x} U+ 2H(x_0,x,D_{x}U, m) -2\int_{\R^d} \dive_y D_mU(t,x_0,x,m,y)dm(y)\\
&\ds \qquad   + 2\int_{\R^d} D_m U(t,x_0,x,m,y)\cdot H_p(x_0,y, D_{x}U(t,x_0,y,m), m)dm(y)=0 
\end{array}\right.
\ee
on time intervals of the form $(t_{2j+1}, t_{2j+2})$, for $j=0,\ldots,N-1$, 

\item[(iii)] for $(x,m)\in \R^d\times \Pw$ fixed, $(U^{0,N}, U^N)$ solves the backward system of HJ equations: 
\be\label{blablaMajor}
\left\{\begin{array}{rl}
\ds (i)& \ds -\partial_t U^{0}-2\Delta_{x_0} U^0+ 2H^0(x_0,D_{x_0}U^{0}, m)  =0  \\ 
\ds (ii) & \ds -\partial_t U- 2\Delta_{x_0}U+ 2D_{x_0}U \cdot H^0_p(x_0,D_{x_0}U^0(t,x_0,m), m)=0 \\ 
\end{array}\right.
\ee
on time intervals of the form $(t_{2j}, t_{2j+1})$,  for $j=0,\ldots,N-1$. 
\end{itemize}

Our aim is to show that, if the time horizon is short enough, $(U^{0,N},U^N)$ converges to a solution of the master equation for MFGs with a major player as $N\to+\infty$.

\subsubsection{Proof of the existence of a solution}

  For $n\geq 4$ and $k\in \{3, \dots, n-1\}$, let 
\begin{align*}
M &:= 1+\Bigl\|(G^0,G)\Bigr\|_n + \Bigl\|D_{x_0}(G^0,G)\Bigr\|_{n-1} 
+ \Bigl\|D^2_{x_0}(G^0,G)\Bigr\|_{n-2} 
+ ({\rm Lip}_{n-3}^{x_0} (D^2_{x_0} G^0, D^2_{x_0} G))\\
& + \Big\|\frac{\delta (G^0,G)}{\delta m}\Bigr\|_{n-1;k}
+  \Bigl\|\frac{\delta (G^0_{x_0},G_{x_0})}{\delta m}\Bigr\|_{n-2;k-1} 
+ ({\rm Lip}^{x_0}_{n-3;k-2} (\frac{\delta G^0_{x_0}}{\delta m}, \frac{\delta G_{x_0}}{\delta m}))\\
& + \left\|\frac{\delta^2 (G^0,G)}{\delta m^2}\right\|_{n-2;k-1,k-1}
+({\rm Lip}_{n-3;k-2,k-2}^{x_0} (\frac{\delta^2 G^0}{\delta m^2}, \frac{\delta^2 G}{\delta m^2})) \\
& + ({\rm Lip}_{n-3;k-2,k-2} (\frac{\delta^2 G^0}{\delta m^2}, \frac{\delta^2 G}{\delta m^2})) + ({\rm Lip}_{n-3;k-2} (\frac{\delta G^0_{x_0}}{\delta m}, \frac{\delta G_{x_0}}{\delta m}) )+  ({\rm Lip}_{n-3} (D^2_{x_0} G^0, D^2_{x_0}G)) .
\end{align*}

\begin{Lemma}\label{lem.estiUNMm} There exists $T_{M}>0$, depending on the regularity of $H^0$, $H$ and on $M$, such that, for any $T\in (0,T_{M}]$ and $N\geq 1$, we have, for any $t\in [0,T]$,    
\begin{align}\label{estiti}
& \Bigl\|(U^0,U)(t)\Bigr\|_n + \Bigl\|D_{x_0}(U^0,U)(t)\Bigr\|_{n-1} 
+ \Bigl\|D^2_{x_0}(U^0,U)(t)\Bigr\|_{n-2} 
+ ({\rm Lip}_{n-3}^{x_0} (D^2_{x_0} U^0, D^2_{x_0} U)(t)) \notag\\
& + \Big\|\frac{\delta (U^0,U)}{\delta m}(t)\Bigr\|_{n-1;k}
+  \Bigl\|\frac{\delta (U^0_{x_0},U_{x_0})}{\delta m}(t)\Bigr\|_{n-2;k-1} 
+ ({\rm Lip}^{x_0}_{n-3;k-2} (\frac{\delta U^0_{x_0}}{\delta m}, \frac{\delta U_{x_0}}{\delta m})(t)) \\
& + \left\|\frac{\delta^2 (U^0,U)}{\delta m^2}(t)\right\|_{n-2;k-1,k-1}
+({\rm Lip}_{n-3;k-2,k-2}^{x_0} (\frac{\delta^2 U^0}{\delta m^2}, \frac{\delta^2 U}{\delta m^2})(t))  \notag\\
& + ({\rm Lip}_{n-3;k-2,k-2} (\frac{\delta^2 U^0}{\delta m^2}, \frac{\delta^2 U}{\delta m^2})(t)) + ({\rm Lip}_{n-3;k-2} (\frac{\delta U^0_{x_0}}{\delta m}, \frac{\delta U_{x_0}}{\delta m}) (t))+  ({\rm Lip}_{n-3} (D^2_{x_0} U^0, D^2_{x_0}U)(t)) \;   \leq \; M. \notag
\end{align}

Moreover: 
\begin{itemize}
\item The maps $U^{0,N}$ and $U^N$ are globally Lipschitz continuous in all variables and their first and second space derivatives  are globally Holder continuous in all variables, uniformly with respect to $N$.
\item The maps $D_mU^{0,N}$ and $D_mU^N$  are Holder continuous in $(t,x_0,m,y)$ and $(t,x_0,x,m,y)$ respectively, uniformly with respect to $N$, in any set of the form
\begin{align} \label{def.setofformRMn}
& \{(t,x_0, m,y)\in [0,T]\times \R^{d_0} \times \Pw \times \R^d, \; M_2(m)\leq R, \; |y|\leq R\}\notag\\
&  {\rm and}\; 
\{(t,x_0, x,m,y)\in [0,T]\times \R^{d_0}\times \R^d\times \Pw \times \R^d, \; M_2(m)\leq R, \; |y|\leq R\}\,
\end{align}
respectively (where $M_2(m)= (\int_{\R^d} |y|^2m(dy))^{1/2}$). 
\end{itemize}
\end{Lemma}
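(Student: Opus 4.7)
The plan is to mirror the argument of Lemma \ref{lem.estiUN}: run a backward induction on the half-steps $[t_{2j}, t_{2j+2}]$, using Propositions \ref{prop.p0}--\ref{Prop.3MmBIS} to propagate the norms across the HJ-system steps $[t_{2j}, t_{2j+1}]$ and the analogous estimates from Section \ref{Sec.FirstOrdreMaster} (applied to the first-order system \eqref{blaMajor} for fixed $x_0$) to propagate across the first-order-master steps $[t_{2j+1}, t_{2j+2}]$. Denote by $\mathcal N[V^0,V](t)$ the sum of all the norms appearing on the left-hand side of \eqref{estiti}. Let $C_M$ be twice the maximum of the constants supplied by the propositions just listed and let $T_M$ be the minimum of the thresholds of those propositions and of $1/(2C_M)$; fix $T\in(0,T_M]$.

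Set $\theta_{2j} := M - 1 + C_M (T/N)(N-j)$, so that $\theta_{2N} = M-1$ and $\theta_{2j}\le M$ for all $j$. I claim that $\sup_{t\in[t_{2j}, T]} \mathcal N[U^{0,N},U^N](t) \le \theta_{2j}$, by backward induction. The terminal case $j=N$ is just the very definition of $M$ (which majorises $\mathcal N$ of $(G^0,G)$ by $M-1$). For the inductive step, assume the bound at $t_{2j+2}$. On the interval $[t_{2j+1}, t_{2j+2}]$ the pair $(U^{0,N}, U^N)$ solves \eqref{blablaMajor} with terminal data of norm at most $\theta_{2j+2}$, so Propositions \ref{prop.p0}, \ref{Prop.1Mm}, \ref{Prop.2Mm}, \ref{Prop.3Mm} and \ref{Prop.3MmBIS} give
\[
\sup_{t\in[t_{2j+1}, t_{2j+2}]} \mathcal N[U^{0,N}, U^N](t) \;\le\; \theta_{2j+2} + \tfrac{C_M T}{2N}.
\]
Then on $[t_{2j}, t_{2j+1}]$ the pair solves \eqref{blaMajor} for each fixed $x_0\in\R^{d_0}$; applying the estimates of Section \ref{Sec.FirstOrdreMaster} (uniformly in the parameter $x_0$, exactly as was done in Lemma \ref{lem.estiUN}) yields a further increase of at most $C_M T/(2N)$, producing the bound $\theta_{2j+2} + C_M T/N = \theta_{2j}$. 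This proves \eqref{estiti}.

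For the compactness part, once \eqref{estiti} holds uniformly in $N$, the equations \eqref{blaMajor}--\eqref{blablaMajor} immediately give a uniform bound on $\partial_t U^{0,N}$ and $\partial_t U^N$ in the space variables (all the ingredients entering the right-hand side are controlled by $M$). Hence $U^{0,N}$, $U^N$ together with their first and second space derivatives are uniformly Lipschitz in $t$. Combined with Remark \ref{rem.k;k} applied to the bounds on $\|\delta U^{0,N}/\delta m\|_{n-1;k}$ and $\|\delta U^N/\delta m\|_{n-1;k}$, this furnishes uniform Lipschitz regularity in $(x_0,x,m)$ as well, so we obtain uniform H\"older continuity of $U^{0,N}, U^N, DU^{0,N}, DU^N, D^2 U^{0,N}, D^2 U^N$ in all variables. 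The corresponding statement for $D_m U^{0,N}$ and $D_m U^N$ follows by the same bootstrap: the bounds on $\|\delta^2 U^{0,N}/\delta m^2\|_{n-2;k-1,k-1}$ (and its analogue for $U^N$) give, via Remark \ref{rem.k;k}, Lipschitz regularity of $D_m U^{0,N}, D_m U^N$ in $(x_0, x, m, y)$ on the sets \eqref{def.setofformRMn}; the H\"older regularity in $t$ is then transferred from $U^{0,N}, U^N$ to $D_m U^{0,N}, D_m U^N$ via the interpolation Lemma \ref{lem.interp}, exactly as in the last paragraphs of the proof of Lemma \ref{lem.estiUN}.

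The main technical point is simply that the collection of norms in \eqref{estiti} is \emph{closed} under both half-step propagations: choosing the constant $C_M$ large enough to dominate the additive contributions of all listed Propositions (of Subsection \ref{s:sshj} and of Section \ref{Sec.FirstOrdreMaster}) simultaneously is the only delicate point, but this is just a matter of taking a maximum and reducing $T_M$ accordingly. Once this is done, the induction closes and the remainder is routine, following line by line the scheme already used for Lemma \ref{lem.estiUN}.
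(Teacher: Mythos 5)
Your proposal takes essentially the same route as the paper's own proof, which explicitly refers back to Lemma \ref{lem.estiUN} and asks the reader to carry out the same backward induction with the half-step propositions for the two alternating systems. Two small points should be cleaned up. First, you swapped the two half-intervals: the scheme stipulates that the \emph{first-order master system} \eqref{blaMajor} governs $(t_{2j+1}, t_{2j+2})$ (where Propositions \ref{Prop.DerivUL}, \ref{Prop.DerivU02}, \ref{prop:rlipdelta2U0} apply) and the \emph{HJ system} \eqref{blablaMajor} governs $(t_{2j}, t_{2j+1})$ (Propositions \ref{prop.p0}--\ref{Prop.3MmBIS}); you wrote it the other way around. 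This is harmless for the arithmetic of the induction but would confuse a reader checking against the scheme's definition. Incidentally, you correctly noticed that Proposition \ref{Prop.3MmBIS} must be added to control the terms ${\rm Lip}_{n-3;k-2}(\frac{\delta U^0_{x_0}}{\delta m}, \frac{\delta U_{x_0}}{\delta m})$ and ${\rm Lip}_{n-3}(D^2_{x_0}U^0, D^2_{x_0}U)$ appearing in \eqref{estiti}; the paper's terse citation list omits it.

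Second, in the regularity part you assert that boundedness of $\partial_t U^{0,N}$, $\partial_t U^N$ immediately makes their first and second space derivatives Lipschitz in $t$, with no justification. This is not automatic: Lipschitz continuity of $U$ in $t$ alone says nothing about $D^2_x U$ in $t$. The paper's own proof sidesteps this by invoking the interpolation Lemma \ref{lem.interp0} to obtain (only) H\"older regularity of the space derivatives in time, which is what the lemma statement requires. Alternatively, your Lipschitz claim can be rescued the way the analogous step was treated in the proof of Lemma \ref{lem.estiUN}: from the equations \eqref{blaMajor}--\eqref{blablaMajor} and the bound $\|(U^{0,N},U^N)\|_n\le M$ with $n\geq 4$, one has $\sup_{t,x_0,m}\|\partial_t U^N(t,x_0,\cdot,m)\|_{n-2}\leq C_M$ (similarly for $U^{0,N}$), and then $D^l_x U^N$ is Lipschitz in $t$ for $|l|\leq 2\leq n-2$. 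Either route works, but as written the step has a gap: you should either invoke Lemma \ref{lem.interp0} explicitly or upgrade the statement to the stronger bound $\|\partial_t U^N\|_{n-2}\leq C_M$.
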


\begin{proof} We only sketch the proof, since it is exactly the same as for the second order master equation (see Lemma \ref{lem.estiUN}). The proof of \eqref{estiti} can be established by collecting the estimates in Propositions \ref{Prop.DerivUL},  \ref{Prop.DerivU02} and \ref{prop:rlipdelta2U0} in Section \ref{Sec.FirstOrdreMaster} below, which provide the bounds on intervals of the form $(t_{2j+1}, t_{2j+2})$, and, for the intervals of the form $(t_{2j}, t_{2j+1})$,  by  Proposition \ref{prop.p0}, \ref{Prop.1Mm}, \ref{Prop.2Mm} and \ref{Prop.3Mm}. 

The Lipschitz regularity in space of $U^{0,N}$ and $U^N$ and of their first and second order space derivatives follows immediately from \eqref{estiti}. As $D_mU^{0,N}$ and $D_mU^N$ are bounded {  according to \eqref{estiti}}, $U^{0,N}$ and $U^N$ and their first and second order space derivatives are also Lipschitz continuous in $m$. Finally, since $U^{0,N}$ and $U^N$ satisfy the equations \eqref{blaMajor} and \eqref{blablaMajor},  the bounds in \eqref{estiti} show that $\partial_t U^{0,N}$ and $\partial_t U^N$ are bounded and therefore that $U^{0,N}$ and $U^N$ are also Lipschitz continuous in time. The global Holder regularity of the first and second space derivatives of $U^{0,N}$ and $U^N$  then follows by interpolation (Lemma \ref{lem.interp0}). 

The Lipschitz regularity in space and in measure of $D_mU^{0,N}$ and $D_mU^N$ is a consequence of \eqref{estiti} while the Holder regularity in time in sets of the form \eqref{def.setofformRMn} comes from interpolation (Lemma \ref{lem.interp}). 
\end{proof}

\begin{proof}[Proof of Theorem \ref{thm.Mm}] It relies exactly on the same argument as in the proof of Theorem \ref{theo.ShortTime} and we omit it. 
\end{proof}

\subsection{Uniqueness of the solution}

We finally address the uniqueness of the solution of the master equation for MFGs with a major player: 

\begin{Theorem} Let $(U^{0,1}, U^1)$ and $(U^{0,2},U^2)$ be two classical solutions to \eqref{eq.MasterMM} defined on the time interval $[0,T]$ and such that $D_{x_0}U^{0,1}$ and  $D_{x_0,x}U^1$ are uniformly Lipschitz continuous in the space and measure variables. Then $(U^{0,1}, U^1)=(U^{0,2},U^2)$. 
\end{Theorem}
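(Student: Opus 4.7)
The plan is to establish uniqueness by a backward Gronwall argument along the forward MFG characteristics associated with each solution, using an It\^o formula for functionals on the space of measures. Fix an arbitrary initial condition $(t_0, x_0^0, m_0)\in [0,T)\times \R^{d_0}\times \Pw$ and, for $i\in \{1,2\}$, introduce the coupled forward flow $(X_0^i, m^i)$ given by
\[
dX_0^i(t)=-H^0_p\bigl(X_0^i(t),D_{x_0}U^{0,i}(t,X_0^i(t),m^i(t)),m^i(t)\bigr)\,dt+\sqrt{2}\,dW^0_t,\quad X_0^i(t_0)=x_0^0,
\]
\[
\partial_t m^i = \Delta_y m^i+\dive_y\bigl(m^i H_p(X_0^i(t),y,D_xU^i(t,X_0^i(t),y,m^i(t)),m^i(t))\bigr),\quad m^i(t_0)=m_0,
\]
where $m^i(t)$ is an $\mathcal{F}^{W^0}_t$-adapted random measure. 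The uniform Lipschitz regularity of $D_{x_0}U^{0,i}$ and $D_{x_0,x}U^i$ assumed in the statement makes these drifts Lipschitz jointly in space and measure, so the coupled SDE/Fokker--Planck system has a unique adapted solution on $[t_0,T]$ by a standard contraction argument.

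Next, I would apply the It\^o formula of \cite[Theorem A.1]{CDLL} (see also \cite[Theorem 11.13]{CaDebook}) to the process $U^{0,i}(t,X_0^i(t),m^i(t))$. Plugging in equation (i) of \eqref{eq.MasterMM}, every second-order and nonlocal $D_m$ contribution exactly cancels with the Fokker--Planck term, leaving
\[
dU^{0,i}(t,X_0^i,m^i)=\bigl(H^0-H^0_p\cdot D_{x_0}U^{0,i}\bigr)\,dt+\sqrt{2}\,D_{x_0}U^{0,i}\cdot dW^0_t,
\]
and an analogous identity holds for $U^i(t,X_0^i(t),x,m^i(t))$. Taking expectation and using the terminal conditions $G^0$ and $G$ yields Feynman--Kac type representations for $U^{0,i}(t_0,x_0^0,m_0)$ and $U^i(t_0,x_0^0,x,m_0)$.

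To compare the two solutions, I would then It\^o-expand the \emph{cross} evaluations $U^{0,1}(t,X_0^2(t),m^2(t))$ and $U^1(t,X_0^2(t),x,m^2(t))$, using the master equation for $(U^{0,1},U^1)$ and the characteristic system of index~$2$. Subtracting these from the Feynman--Kac representation of the previous step applied at $i=2$, the cancellations driven by the master equation leave only differences of the form $H^0(\cdot,D_{x_0}U^{0,1},\cdot)-H^0(\cdot,D_{x_0}U^{0,2},\cdot)$ and $H_p(\cdot,D_xU^1,\cdot)-H_p(\cdot,D_xU^2,\cdot)$, multiplied by bounded quantities such as $D_mU^{0,1}$ and $D_mU^1$. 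By the $C^2$-regularity of $H,H^0$, these are controlled by $\|D_{x_0}(U^{0,1}-U^{0,2})\|_\infty$ and $\|D_x(U^1-U^2)\|_\infty$. Differentiating the master equations once in $x_0$ and $x$ and repeating the same It\^o/cross-evaluation argument for $D_{x_0}U^{0,i}$ and $D_xU^i$ (which satisfy linear master equations with Lipschitz coefficients, by the bounds of Lemma \ref{lem.estiUNMm}) closes the derivative estimates. The outcome is a backward Gronwall inequality of the form
\[
\Phi(t)\le C\int_t^T\Phi(s)\,ds,\qquad \Phi(t):=\sup\bigl(|U^{0,1}-U^{0,2}|+|U^1-U^2|+\|D_{x_0}(U^{0,1}-U^{0,2})\|+\|D_x(U^1-U^2)\|\bigr)(t),
\]
which, together with $\Phi(T)=0$, forces $\Phi\equiv 0$.

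The main obstacle will be the careful bookkeeping of the nonlocal $D_m$ and $D^2_{mm}$ contributions in the It\^o chain rule when $(x_0,m)\mapsto U^{0,i}(t,x_0,m)$ is evaluated along a flow $(X_0^i,m^i)$ in which the measure $m^i$ is itself random and coupled to the diffusion $X_0^i$ through the common noise $W^0$. The regularity provided by Lemma \ref{lem.estiUNMm} is precisely what is needed to justify this chain rule, as well as the subsequent estimates on the linearised master equations satisfied by $D_{x_0}U^{0,i}$ and $D_xU^i$; once that It\^o calculus is in place, the cancellations enforced by the master equation and the final Gronwall closure proceed in a routine fashion.
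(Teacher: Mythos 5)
Your overall setup---running the It\^o formula along forward MFG characteristics and comparing the two solutions---is close in spirit to the paper's argument, but the closure step you propose is not the one the paper uses and, as written, has a genuine gap. The paper does not differentiate the master equations, and it does not attempt a Gronwall argument on sup-norms of $D_{x_0}(U^{0,1}-U^{0,2})$ and $D_x(U^1-U^2)$. Instead it evaluates \emph{both} solutions along a \emph{single} flow, namely the one built from the drifts of $(U^{0,1},U^1)$---the only solution on which the Lipschitz assumption is imposed---and it computes the It\^o differential of the \emph{squared} difference $(U^{0,2}_t-U^{0,1}_t)^2$. The quadratic variation of the common-noise martingale then produces the sign-definite term $+2|D_{x_0}(U^{0,2}-U^{0,1})|^2\,dt$, which, after a Cauchy--Schwarz split of the drift differences into pieces of the form $C_\epsilon(U^{0,2}_t-U^{0,1}_t)^2+\epsilon|D_{x_0}(U^{0,2}-U^{0,1})|^2+\ldots$, absorbs all gradient errors and closes a Gronwall inequality purely at the level of $\E\bigl[(U^{0,2}_t-U^{0,1}_t)^2+(U^{2}_t-U^{1}_t)^2\bigr]$. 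Your route of differentiating the equations in $x_0$ and $x$ introduces fresh nonlocal terms ($D_{x_0}D_mU^{0,i}$, $D^2_{x_0x_0}U^{0,i}$, \dots) whose control would require regularity on the \emph{second} solution that the theorem never assumes---only $D_{x_0}U^{0,1}$ and $D_{x_0,x}U^1$ are postulated Lipschitz---and the bounds of Lemma \ref{lem.estiUNMm} apply to the scheme $U^N$, not to arbitrary classical solutions, so they cannot be invoked to close that cascade. For the same reason, constructing a second flow from $(U^{0,2},U^2)$ is problematic: its well-posedness needs exactly the Lipschitz control you do not have.

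A second, related gap is that you construct only $(X_0^i,m^i)$ and evaluate $U^i(t,X_0^i(t),x,m^i(t))$ at a fixed $x$. The paper additionally introduces a representative minor-agent diffusion $X_t$, with $X_{t_0}=Z\sim\bar m_0$, and works with $U^i(t,X^0_t,X_t,m_t)$. This is essential for coupling the two energy estimates: the Gronwall inequality coming from the $U^0$-line leaves an error $\epsilon\,\E\bigl[\int_{\R^d}|D_x(U^2-U^1)(t,X^0_t,y,m_t)|^2\,m_t(dy)\bigr]$, while the $U$-line produces a sign-definite term $\E\bigl[|D_x(U^2-U^1)(t,X^0_t,X_t,m_t)|^2\bigr]$. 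These are matched via the conditional-law identity
\[
\E\bigl[|D_x(U^2-U^1)(t,X^0_t,X_t,m_t)|^2\bigr]=\E\Bigl[\int_{\R^d}|D_x(U^2-U^1)(t,X^0_t,y,m_t)|^2\,m_t(dy)\Bigr],
\]
valid because $m_t$ is the conditional law of $X_t$ given $(W^0_u)_{u\le t}$ and $X^0_t$ is adapted to the common-noise filtration. Without this minor-agent layer you have no way to absorb the nonlocal term $\int_{\R^d} D_m U^{0,2}\cdot\bigl(H_p(\cdot,D_xU^2,\cdot)-H_p(\cdot,D_xU^1,\cdot)\bigr)\,dm_t$ that the cross-evaluation of $U^{0,2}$ necessarily produces, and the argument does not close.
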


\begin{proof} Let $(t_0, \bar x_0, \bar m_0)\in [0,T)\times \R^{d_0}\times \Pw$ be an initial condition, $Z$ be a random variable with law $\bar m_0$  and let $(X^0_t, m_t, X_t)$ be the solution to 
$$
\left\{\begin{array}{l}
dX^0_t= -  H^0_p(X^0_t, D_{x_0}U^{0,1}(t,X^0_t,m_t),m_t) dt + \sqrt{2} dW^0_t\; {\rm in }\; (0,T)\\ 
dm_t= \left(\Delta m_t+ \dive(m_t H_p(X^0_t, x, D_xU^1(t,X^0_t, x, m_t), m_t))\right)dt \; {\rm in }\; (0,T)\times \R^d\\ 
dX_t= -H_p(X^0_t, X_t, D_xU^1(t,X^0_t, X_t, m_t)dt +\sqrt{2} dW_t \; {\rm in }\; (0,T)\\ 
X^0_{t_0}=\bar x_0, \; m_{t_0}= \bar m_0, \; X_{t_0}=Z
\end{array}\right. 
$$
where $(W^0_t)$ and $(W_t)$ are  Brownian motions, $(W^0_t)$, $(W_t)$ and $Z$ being independent. As $D_xU^{0,1}$ and  $D_xU^1$ are globally Lipschitz continuous, the above   system has a unique solution. Note that $m_t$ is the conditional law of $X_t$ given $(W^0_s)_{s\leq t}$. 

We compute the variation of $U^{0,1}$ along $(t,X^0_t, m_t)$:
\begin{align*}
& dU^{0,1}(t,X^0_t, m_t) =  \Bigl( \partial_t U^{0,1}+  \Delta_{x_0} U^{0,1}- H^0_p(X^0_t, D_{x_0}U^{0,1}, m_t)\cdot D_{x_0}U^{0,1} \\ 
&  - \int_{\R^d} D_mU^{0,1}\cdot H_p(X^0_t, y, D_xU^1(t,X^0_t,y,m_t),m_t) m_t(dy) + \int_{\R^d} \dive_y D_mU^{0,1}m_t(dy)\Bigr)dt \\ 
& +\sqrt{2} D_{x_0}U^{0,1}\cdot dW^0_t,
\end{align*}
where, unless specified otherwise,  $U^{0,1}$ and its space derivatives are computed at $(t,X^0_t,m_t)$ while $D_mU^{0,1}$ and its space derivatives are computed at $(t,X^0_t,m_t,y)$. In view of the equation satisfied by $U^{0,1}$, we find 
\begin{align*}
dU^{0,1}(t,X^0_t, m_t)  = & \Bigl( H^0(X^0_t, D_{x_0}U^{0,1},m_t) - H^0_p(X^0_t, D_{x_0}U^{0,1}, m_t)\cdot D_{x_0}U^{0,1}\Bigr)dt\\
& \qquad  +\sqrt{2} D_{x_0}U^{0,1}\cdot dW^0_t\,.
\end{align*}
We proceed in the same way for $U^{0,2}$ and obtain, in view of the equation satisfied by $U^{0,2}$: 
\begin{align*}
& dU^{0,2}(t,X^0_t, m_t) =  \Bigl(H^0(X^0_t, D_{x_0}U^{0,2},m_t) - H^0_p(X^0_t, D_{x_0}U^{0,1}, m_t)\cdot D_{x_0}U^{0,2} \\ 
&  + \int_{\R^d} D_mU^{0,2}\cdot (H_p(X^0_t, y, D_xU^2(t,X^0_t,y,m_t),m_t) -H_p(X^0_t, y, D_xU^1(t,X^0_t,y,m_t)),m_t) m_t(dy) \Bigr)dt \\ 
& +\sqrt{2} D_{x_0}U^{0,2}\cdot dW^0_t,
\end{align*}
where, unless specified otherwise,  $U^{0,2}$ and its space derivatives are computed at $(t,X^0_t,m_t)$ while $D_mU^{0,2}$ and its space derivatives are computed at $(t,X^0_t,m_t,y)$. Therefore 
\begin{align*}
& d(U^{0,2}-U^{0,1})^2 =  2(U^{0,2}-U^{0,1}) \Bigl(H^0(X^0_t, D_{x_0}U^{0,2},m_t)- H^0(X^0_t, D_{x_0}U^{0,1},m_t) \\ 
& \qquad  - H^0_p(X^0_t, D_{x_0}U^{0,1}, m_t)\cdot (D_{x_0}U^{0,2}-D_{x_0}U^{0,1}) \\ 
& \qquad  + \int_{\R^d} D_mU^{0,2}\cdot (H_p(X^0_t, y, D_xU^2(t,X^0_t,y,m_t),m_t) -H_p(X^0_t, y, D_xU^1(t,X^0_t,y,m_t),m_t)) m_t(dy)\Bigr)dt  \\ 
& \qquad + 2(D_{x_0}U^{0,2}-D_{x_0}U^{0,1})^2dt +2\sqrt{2} (U^{0,2}-U^{0,1})(D_{x_0}U^{0,2}-D_{x_0}U^{0,1})\cdot dW^0_t.
\end{align*}
Let us set $U^{0,i}_t= U^{0,i}(t,X^0_t,m_t)$ (for $i=1, 2$). We integrate in time between $s\in [t_0,T]$ and $T$, take expectation and use the fact that $U^{0,1}_T=U^{0,2}_T= G^0(X^0_T,m_T)$: 
\begin{align*}
0= & \E\Bigl[(U^{0,2}_s-U^{0,1}_s)^2 +\int_s^T   2(U^{0,2}_t-U^{0,1}_t) \Bigl(H^0(X^0_t, D_{x_0}U^{0,2},m_t)- H^0(X^0_t, D_{x_0}U^{0,1},m_t) \\ 
& \qquad  - H^0_p(X^0_t, D_{x_0}U^{0,1}, m_t)\cdot (D_{x_0}U^{0,2}-D_{x_0}U^{0,1}) \\ 
& \qquad  + \int_{\R^d} D_mU^{0,2}\cdot (H_p(X^0_t, y, D_xU^2(t,X^0_t,y,m_t),m_t) -H_p(X^0_t, y, D_xU^1(t,X^0_t,y,m_t),m_t)) m_t(dy)\Bigr)dt  \\ 
& \qquad + 2\int_s^T |D_{x_0}U^{0,2}-D_{x_0}U^{0,1}|^2dt \ \Bigr].
\end{align*}
Thanks to the regularity of the solutions, we have by Cauchy-Schwarz inequality and for any $\ep>0$: 
\begin{align*}
0\geq & \E\Bigl[(U^{0,2}_s-U^{0,1}_s)^2 - \int_s^T  \Bigl( C_\ep (U^{0,2}_t-U^{0,1}_t)^2 +\ep |D_{x_0}(U^{0,2}-U^{0,1})|^2 \\ 
& \qquad  + \ep \int_{\R^d} |D_x(U^2-U^1)(t,X^0_t,y,m_t)|^2 m_t(dy) \Bigr)dt+ 2\int_s^T |D_{x_0}(U^{0,2}-U^{0,1})|^2\ dt \ \Bigr].
\end{align*}
So, for $\ep$ small enough, we obtain 
\begin{align*}
0\geq & \E\Bigl[(U^{0,2}_s-U^{0,1}_s)^2 - \int_s^T  \Bigl( C_\ep (U^{0,2}_t-U^{0,1}_t)^2   + \ep \int_{\R^d} |D_x(U^2-U^1)(t,X^0_t,y,m_t)|^2 m_t(dy)\Bigr)dt  \\ 
& \qquad + \int_s^T |D_{x_0}(U^{0,2}-U^{0,1})|^2dt \ \Bigr].
\end{align*}
We argue in the same way for $U^i_t:=U^i(t,X^0_t,X_t,m_t)$ ($i=1,2$) and find: 
\begin{align*}
0\geq & \E\Bigl[(U^{2}_s-U^{1}_s)^2 - \int_s^T   \Bigl( C_\ep (U^{2}_t-U^{1}_t)^2   + \ep |D_{x_0}(U^{0,2}-U^{0,1})|^2 \\
&  + \ep \int_{\R^d} |D_x(U^2-U^1)(t,X^0_t,y,m_t)|^2 m_t(dy)\Bigr)dt  
+  \int_s^T |D_{x_0}(U^{2}-U^{1})|^2 + |D_{x}(U^{2}-U^{1})|^2 \ dt \ \Bigr].
\end{align*}
We add the last two inequalities to obtain: 
\begin{align}
0\geq & \E\Bigl[(U^{0,2}_s-U^{0,1}_s)^2 + (U^{2}_s-U^{1}_s)^2 - \int_s^T   \Bigl( C_\ep ((U^{0,2}_t-U^{0,1}_t)^2+(U^{2}_t-U^{1}_t)^2)\notag \\ 
&    + \ep |D_{x_0}(U^{0,2}-U^{0,1})|^2 + 2 \ep \int_{\R^d} |D_x(U^2-U^1)(t,X^0_t,y,m_t)|^2 m_t(dy)\Bigr)dt  \label{lenf}\\ 
&  +\int_s^T  \Bigl( |D_{x_0}(U^{0,2}-U^{0,1})|^2+  |D_{x_0}(U^{2}-U^{1})|^2 + |D_{x}(U^{2}-U^{1})|^2 \Bigr)\ dt \ \Bigr].\notag
\end{align}
Note that, as $m_t$ is the conditional law of $X_t$ given $(W^0_u)_{u\leq t}$, we have 
\begin{align*}
& \E \Bigl[   |D_x(U^2-U^1)(t,X^0_t,X_t,m_t)|^2 \Bigr] = \E \Bigl[  \E\Bigl[  |D_x(U^2-U^1)(t,X^0_t,X_t,m_t)|^2\ |\  (W^0_u)_{u\leq t}\Bigr]\Bigr] \\
& \qquad = \E \Bigl[ \int_{\R^d} |D_x(U^2-U^1)(t,X^0_t,y,m_t)|^2m_t(dy)\Bigr]
\end{align*}
since $X^0_t$ and $X_t$ are adapted to $(W^0_u)_{u\leq t}$. Plugging this relation into \eqref{lenf} we find  therefore, for $\ep>0$ small enough,  
\begin{align*}
0\geq & \E\Bigl[(U^{0,2}_s-U^{0,1}_s)^2 + (U^{2}_s-U^{1}_s)^2 - \int_s^T   \Bigl( C_\ep ((U^{0,2}_t-U^{0,1}_t)^2+(U^{2}_t-U^{1}_t)^2)dt \\ 
& \qquad +(1/2) \int_s^T  |D_{x_0}(U^{0,2}-U^{0,1})|^2+  |D_{x_0}(U^{2}-U^{1})|^2 + |D_{x}(U^{2}-U^{1})|^2 \ dt \ \Bigr].
\end{align*}
We conclude by Gronwall's inequality that, for any $t\in [t_0,T]$,  
$$
 \E\Bigl[(U^{0,2}(t,X^0_t,m_t)-U^{0,1}(t,X^0_t,m_t))^2 + (U^{2}(t,X^0_t,X_t,m_t)-U^{1}(t,X^0_t,X_tm_t))^2\Bigr] = 0. 
$$
For $t=t_0$, we have therefore $U^{0,2}(t_0,\bar x_0, \bar m_0)=U^{0,1}(t_0,\bar x_0, \bar m_0)$ and 
$$
U^{1}(t_0,\bar x_0, Z, \bar m_0)
=U^{2}(t_0,\bar x_0,Z, \bar m_0)\; \; a.s. 
$$ 
If $\bar m_0$ has a positive density, the fact that the law of $Z$ is $\bar m_0$ easily implies the equality of $U^1$ and $U^2$ at any point $(t_0,\bar x_0, x, \bar m_0)$ for $x\in \R^d$. We conclude by density of such laws and by continuity of the $U^i$'s. 
\end{proof}

\section{Estimates on the MFG system}\label{sec:ReguMFG}

We are now left to prove the estimates on the first order master equations considered in the two previous sections. As the solutions of these equations are built by a method of characteristics, where the characteristics are the solutions of the MFG system, we first need to discuss the well-posedness and the regularity properties of this system: 
\be\label{eq.MFGcoupled}
\left\{ \begin{array}{rl}
(i) &  \ds -\partial_t u(t,x) -{\rm Tr}(a(t,x)D^2u(t,x))+H(x_0,x,Du(t,x),m(t))=0\qquad {\rm in}\; (t_0,T)\times \R^d,\\
(ii) &  \ds \partial_t m(t,x) -\sum_{i,j} D_{ij}(a_{i,j}(t,x)m(t,x)) -\dive( m(t,x) H_p(x_0,x,Du(t,x),m(t)))=0\\
& \ds \qquad \qquad \qquad \qquad \qquad \qquad \qquad \qquad  {\rm in}\; (t_0,T)\times \R^d,\\
(iii) & \ds m(t_0)=m_0, \qquad u(T,x)= G(x_0,x,m(T)) \qquad {\rm in}\;\R^d.
\end{array}\right.
\ee
Here $x_0\in \R^{d_0}$ is treated as a fixed parameter.
We also present similar results for the corresponding linearized systems. These estimates are motivated by the construction  and the regularity of the first order master equation in the next section. 

Let us first explain the notion of solution to \eqref{eq.MFGcoupled}.
Fix $(t_0,m_0)\in [0,T]\times \Pw$ and $x_0 \in \R^{d_0}$. We say that $(u,m)$ is a  solution to \eqref{eq.MFGcoupled} if $u\in C^0([t_0,T],C^2_b)$ satisfies 
$$
u(t,x)= G(x_0,x,m(T))  +\int_t^T ({\rm Tr}(a(s,x)D^2u(s,x))-H(x_0,x,Du(s,x),m(s)))ds \qquad \forall t\in [t_0,T]
$$
and if $m\in C^0([t_0,T], \Pw)$ solves the Fokker-Planck equation in the sense of distributions: for any $\phi\in C^\infty_c([0,T)\times \R^d)$, 
\begin{align*}
0& = \int_{\R^d} \phi(0,x) m_0(dx)\\
& \qquad +  \int_0^T \int_{\R^d} ({\rm Tr}(a(s,x)D^2 \phi(s,x)) - D\phi(s,x) \cdot H_p(x_0,x,Du(s,x),m(s)) ) m(s,dx)ds\,.
\end{align*}

The assumptions on $a$, $H$ and $G$ given in Subsection \ref{subsec.Hyp} are in force throughout the section. 

\subsection{Well-posedness and regularity of the MFG system}

We discuss here the well-posedness of the MFG system \eqref{eq.MFGcoupled} and provide several estimates. Let us start with the Hamilton-Jacobi (HJ) equation (general estimates on this equation are given in the Appendix \ref{sec.HJ}). 

\begin{Proposition}\label{prop.esti1MFG-0} For any $M>0$, there exist  $T_M>0$,  $L_M>0$, depending on $C_0$ and $\gamma$ given in assumptions \eqref{Conda} and \eqref{CondH}, such that, if $ \sup_{x_0, m}\|G(x_0,\cdot,m)\|_1\leq M$, then, for any $T\in (0,T_M)$ and any $m\in C^0([0,T], \Pw)$, the solution $u$ to the HJ equation
\be\label{eq.HJ-0}
\left\{ \begin{array}{l}
 \ds -\partial_t u(t,x) -{\rm Tr}(a(t,x)D^2u(t,x))+H(x_0,x,Du(t,x),m(t))=0\qquad {\rm in}\; (t_0,T)\times \R^d\\
\ds  u(T,x)=  G(x_0,x,m(T)) \qquad {\rm in}\;\R^d
\end{array}\right.
\ee
 satisfies
$$
\sup_{t\in [t_0,T]} \|u\|_1 \leq   \sup_{x_0, m}\|G(x_0, \cdot, m)\|_1+ L_MT\,. 
$$
 Henceforth, we set 
$
K_M:= \sup_{x_0, m}\|G(x_0, \cdot, m)\|_1+ L_MT_M$.
\vskip0.3em
 If, in addition, $\sup_{x_0, m}\|G(x_0,\cdot,m)\|_n\leq M$, then there exists $C_M>0$, depending on $n$,  $C_0$, $\gamma$ and
$$
\sup_{t\in [0,T_M]} \|a(t)\|_n + \sup_{|p|\leq K_M, x_0\in \R^{d_0}, m\in \Pw} \sum_{k=0}^n \|D^k_{(x,p)}H(x_0,\cdot,p,m)\|_\infty,
$$
such that  $u$ satisfies, for any $T\in (0,T_M)$, $x_0\in \R^{d_0}$ and  $r \le n$, 
$$
\sup_{t\in [t_0,T], x\in \R^d} |D^r_x u(t, x)| \leq  \sup_{x\in \R^d} |D^r_x G(x_0, x, m(T))|+ C_MT.
$$
Therefore, for any $x_0\in \R^{d_0}$,  
\be\label{norman-0}
\sup_{t\in [t_0,T]} \|u(t)\|_n \leq \sup_m \|G(x_0, \cdot, m)\|_n+ C_MT.
\ee
\end{Proposition}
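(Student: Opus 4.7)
The plan is to reduce \eqref{eq.HJ-0} to a standard Hamilton-Jacobi equation with frozen parameters and then invoke the short-time HJ estimates collected in Appendix \ref{sec.HJ}. Since $m \in C^0([t_0,T], \Pw)$ and $x_0 \in \R^{d_0}$ are fixed throughout, the equation is a backward parabolic HJ equation for $u$ whose coefficients depend on $(t,x)$ only through $a(t,x)$ and through the effective Hamiltonian $\widetilde H(t,x,p) := H(x_0,x,p,m(t))$. The assumptions of Subsection \ref{subsec.Hyp} transfer to $\widetilde H$ uniformly in $t$ and $x_0$: in particular the growth condition \eqref{CondH} yields $|D_x \widetilde H(t,x,p)| \leq C_0(1+|p|^\gamma)$, and the bounds on $\|H(\cdot_{x_0},\cdot_x,\cdot_p,m)\|_{3,n,n+1}$ provide the needed $C^n_b$ regularity in $(x,p)$ uniformly in $t$.

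For the first assertion, I would obtain the $C^1$ bound by a Bernstein-type argument: the $L^\infty$ bound for $u$ follows from the maximum principle applied to \eqref{eq.HJ-0}, using that $\widetilde H(t,x,0)$ is bounded in terms of $M$; then, differentiating once in $x$ and maximizing $|Du|^2 + |u|^2$ along the backward flow, the growth \eqref{CondH} gives the differential inequality
\[
\tfrac{d}{dt}\bigl(\sup_x|Du(t,x)|\bigr) \geq - C_0\bigl(1+\sup_x|Du(t,x)|^\gamma\bigr) - C_M.
\]
Choosing $T_M$ small enough so that the ODE comparison solution with initial datum $M$ at $t=T$ stays in $[M, M+1]$ on $[T-T_M,T]$, I can select $L_M$ dominating $C_0(1+(M+1)^\gamma)+C_M$ and conclude $\sup_{t}\|u(t)\|_1 \leq \sup_{x_0,m}\|G(x_0,\cdot,m)\|_1 + L_M T$. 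This fixes $K_M$ as in the statement.

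For the higher-order estimate, I would argue by induction on $r \leq n$. Once $\|Du\|_\infty \leq K_M$ is known, $\widetilde H$ and all its derivatives up to order $n$ in $(x,p)$ evaluated along $(x,Du(t,x))$ are bounded by a constant depending only on the data and $M$. Differentiating \eqref{eq.HJ-0} $r$ times in $x$, the derivative $w := D^r u$ satisfies a linear backward parabolic equation
\[
-\partial_t w - \mathrm{Tr}(a(t,x) D^2 w) + \widetilde H_p(t,x,Du) \cdot D w = F_r(t,x),
\]
where $F_r$ is polynomial in the derivatives $D^j u$ for $j \leq r$ and in the derivatives of $\widetilde H$ up to order $r$. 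Using the inductive control on $\|D^j u\|_\infty$ for $j<r$, the source $F_r$ is bounded by a constant $C_M$, and the standard linear parabolic estimate from Appendix \ref{sec.HJ} applied on $[t_0,T]$ yields $\sup_t \|D^r u(t)\|_\infty \leq \sup_x |D^r G(x_0,x,m(T))| + C_M T$. Summing over $r \leq n$ produces \eqref{norman-0}.

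The main obstacle is the first step, obtaining the $C^1$ bound under the polynomial growth condition \eqref{CondH}: because $\gamma$ may be strictly larger than $1$, one cannot hope for a global-in-time estimate, and the choice of $T_M$ must be tuned so that the blow-up time of the associated scalar ODE lies beyond $T_M$. Once this is secured, the higher-order bootstrap is linear and the crucial linear-in-$T$ form $\sup_m \|G(x_0,\cdot,m)\|_n + C_M T$ (rather than a multiplicative $(1+C_M T)$) follows because the source $F_r$ is controlled without involving $\|D^r u\|_\infty$ itself, so no Gronwall multiplier on the terminal datum is needed.
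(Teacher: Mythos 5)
Your overall strategy—reduce to a frozen-parameter HJ equation, get the Lipschitz bound by a Bernstein argument (Proposition \ref{prop.LipEstiBernstein} in the paper), then bootstrap to higher orders (Proposition \ref{prop.High})—matches the paper's one-line proof. The $C^1$ step is essentially correct in spirit, even though your differential inequality is a little loose (the actual Bernstein computation produces a term linear in $\sup_x|Du|$ in addition to $1+\sup_x|Du|^\gamma$, but that does not affect the short-time conclusion).

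However, the higher-order bootstrap has a genuine gap. You write that $F_r$ is "polynomial in the derivatives $D^j u$ for $j\leq r$" but then claim that $F_r$ is bounded by $C_M$ "using the inductive control on $\|D^j u\|_\infty$ for $j<r$." These two sentences are inconsistent: $F_r$ \emph{does} depend linearly on the unknown $D^r u$ (e.g.\ through $H_{xp}\cdot D^r u$ and $H_{pp}D^2u\cdot D^r u$ for $r\geq 2$), so the source is \emph{not} a priori bounded by the inductive hypothesis alone. Worse, since $a=a(t,x)$ is not constant, differentiating the diffusion term $r$ times also produces terms of order $D^{r+1}u$ (from $Da\cdot D^{r+1}u$ and its Leibniz cousins). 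A plain linear-parabolic estimate à la Proposition \ref{prop.highS}—whose hypotheses require the source to be bounded in $C^n_b$ independently of time—cannot be applied to an equation whose source contains the unknown and a derivative of it one order higher. This is exactly why the paper runs a full Bernstein argument at every order (Lemma \ref{lem.High}): setting $w:=\sum_{|k|=r}u_k^2$, the ellipticity produces a good coercive term $-2C_0^{-1}\sum|Du_k|^2=-2C_0^{-1}|D^{r+1}u|^2$ that absorbs the $D^{r+1}u$ contributions via Young's inequality, while the linear-in-$D^r u$ terms of the source turn into a bound of the form $Cw+Cw^{1/2}$ that is closed by the maximum principle and exponentiation for $T\leq 1$. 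Your final diagnostic paragraph is therefore also incorrect: the higher-order step is not a "linear" bootstrap with a bounded source, and the additive form $\|D^r g\|_\infty+C_M T$ is \emph{not} obtained by avoiding a Gronwall factor—it is obtained from the multiplicative bound $(1+CT)\|D^r g\|_\infty+CT$ by using the standing assumption $\|D^r g\|_\infty\leq M$ to absorb $CTM$ into $C_M T$. To repair your proof you should replace the "differentiate $r$ times and treat $F_r$ as a bounded source" step with the Bernstein argument of Lemma \ref{lem.High}, making the Young-inequality absorption of $D^{r+1}u$ and the quadratic self-interaction of $D^r u$ explicit.
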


\begin{proof} Use Propositions \ref{prop.LipEstiBernstein} and \ref{prop.High}. 
\end{proof}

Next we discuss the dependence of the solution $u$ of \eqref{eq.HJ-0} with respect to $(m(t))_{t\in [t_0,T]}$ and $x_0 \in \R^{d_0}$.  We stress that, hereafter, we currently use the preliminary gradient estimate $\sup_{t\in [t_0,T_M]} \|u(t)\|_1 \leq K_M$ which is obtained as a first step in Proposition \ref{prop.esti1MFG-0}. In particular, the Hamiltonian $H(x_0,x,p,m)$ will be systematically estimated for $|p|\leq K_M$.

\begin{Proposition} \label{Prop.LipDep-00}  If the assumptions of Proposition \ref{prop.esti1MFG-0} are satisfied so that \eqref{norman-0} holds true, then there exists  $T_M>0$ such that, for $T\in (0, T_M)$ and any $t_0\in [0,T]$, for any $m^1, m^2\in C^0([0,T], \Pw)$ and any $x_0^1,x_0^2\in \R^{d_0}$,  if  $u^1$ and $u^2$ are the corresponding solutions to the HJ equation \eqref{eq.HJ-0},  then we have, for $n\geq 2$,
\begin{align*}
 \sup_{t\in [t_0,T]} & \|u^1(t)- u^2(t)\|_{n-1}  \leq  C_M\, T \big( \sup_{t\in [t_0,T]} \dw(m^1(t),m^2(t)) + |x_0^1-x_0^2|\big) \\
& + (1+ C_M T)\big\{[{\rm Lip}_{0,n-1}(G)]\dw(m^1(T),m^2(T)) + [{\rm Lip}^{x_0}_{n-1}(G)]|x_0^1-x_0^2|\big\}
\end{align*}
 where $C_M$ depends on the same quantities as in Proposition \ref{prop.esti1MFG-0} as well as on ${\rm Lip}_{n-1,n}(H(x_0, \cdot, \cdot, m))$, ${\rm Lip}^{x_0}_{n-1,n}(H(x_0, \cdot, \cdot, m))$ (for $x\in \R^d$ and $|p|\leq K_M$) and  $\sup_{x_0, m}\|G(x_0, \cdot, m)\|_n$.
\end{Proposition}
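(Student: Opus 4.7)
The plan is to linearize the HJ equation around the difference $w := u^1 - u^2$ and reduce the statement to a linear backward parabolic estimate, already available (in essence) from the standard Schauder/energy theory used in the appendix.

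First, I would subtract the two HJ equations and use a Taylor expansion of $H$ in the gradient variable to write
\begin{equation*}
-\partial_t w - \mathrm{Tr}(a(t,x) D^2 w) + V(t,x)\cdot Dw = F(t,x)\,, \qquad w(T,x) = G(x_0^1,x,m^1(T)) - G(x_0^2,x,m^2(T))\,,
\end{equation*}
where
\begin{equation*}
V(t,x) := \int_0^1 H_p\bigl(x_0^1, x, sDu^1(t,x) + (1-s)Du^2(t,x), m^1(t)\bigr)\,ds
\end{equation*}
and
\begin{equation*}
F(t,x) := -\bigl[H(x_0^1, x, Du^2(t,x), m^1(t)) - H(x_0^2, x, Du^2(t,x), m^2(t))\bigr].
\end{equation*}

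Second, using \eqref{norman-0} one has $\sup_{t\in[t_0,T]}\|u^i(t)\|_n \le C_M$, so that $V$ is bounded in $C^{0,n-1}_b$ uniformly in $t$, with a bound depending only on $C_M$ and on derivatives of $H_p$ of order $\le n-1$ evaluated on $|p|\le K_M$. Next, writing
\begin{equation*}
F(t,x) = -\bigl[H(x_0^1,x,Du^2,m^1) - H(x_0^2,x,Du^2,m^1)\bigr] - \bigl[H(x_0^2,x,Du^2,m^1) - H(x_0^2,x,Du^2,m^2)\bigr],
\end{equation*}
the Lipschitz seminorms $\mathrm{Lip}^{x_0}_{n-1,n}(H)$ and $\mathrm{Lip}_{n-1,n}(H)$ together with Faà di Bruno/chain rule estimates for compositions (whose $x$-derivatives pull down factors of $D^k u^2$, all bounded by $C_M$) give
\begin{equation*}
\sup_{t\in[t_0,T]}\|F(t,\cdot)\|_{n-1} \le C_M\bigl(|x_0^1-x_0^2| + \sup_{t\in[t_0,T]}\dw(m^1(t),m^2(t))\bigr).
\end{equation*}
The terminal datum is likewise controlled, directly from the definition of $\mathrm{Lip}$-seminorms on $G$, by $[\mathrm{Lip}_{0,n-1}(G)]\dw(m^1(T),m^2(T)) + [\mathrm{Lip}^{x_0}_{n-1}(G)]|x_0^1-x_0^2|$.

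Third, I would apply the linear backward parabolic estimate for equations with bounded drift in $C^{0,n-1}_b$ and bounded source (the same machinery used for the HJ estimates in the appendix; essentially differentiating up to $n-1$ times in $x$, using the ellipticity of $a$ and Gronwall in time) to obtain
\begin{equation*}
\sup_{t\in[t_0,T]}\|w(t)\|_{n-1} \le (1+C_MT)\|w(T,\cdot)\|_{n-1} + C_M T \sup_{t\in[t_0,T]}\|F(t,\cdot)\|_{n-1}.
\end{equation*}
Plugging in the bounds on $F$ and on $w(T,\cdot)$ then yields the claim.

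The main technical point is the second step: verifying that the compositions appearing in $F$ and in $V$ retain $n-1$ bounded space derivatives with constants only depending on the prescribed seminorms of $H$ and on $\sup_m\|G(x_0,\cdot,m)\|_n$. This is where the asymmetric regularity hypothesis on $H$ (with one extra derivative in $p$ compared to $x$) is used, through the chain rule applied to $H(x_0,x,Du^i(t,x),\cdot)$. Everything else is a routine backward linear parabolic estimate for short time.
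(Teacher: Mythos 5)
Your proposal is correct and follows essentially the same path as the paper: write the difference $w = u^1 - u^2$, linearize the Hamiltonian in $p$ to obtain a linear parabolic equation with drift $V$ and source $F$, bound $V$ in $C^{0,n-1}_b$ via the a priori $C^n$ estimate on $u^1,u^2$, bound $F$ and the terminal datum through the Lipschitz seminorms of $H$ and $G$, and close with the linear parabolic estimate (Proposition \ref{prop.highS}). The only (immaterial) difference is the point at which the remainders are frozen — you evaluate $F$ at $Du^2$ and the drift at $(x_0^1,m^1)$, while the paper evaluates the analogous residual at $Du^1$ and the drift at $(x_0^2,m^2)$ — and you split $F$ into an $x_0$-increment plus an $m$-increment, which is just an explicit spelling-out of the same bound.
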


\begin{proof} The map $v:=u^1-u^2$ satisfies 
$$
\left\{ \begin{array}{l}
\ds -\partial_t v -{\rm Tr}(a(t,x)D^2v)+ V(t,x)\cdot Dv+f(t,x)=0\\
\ds v(T,x)= G(x_0^1,x,m^1(T))-G(x_0^2,x,m^2(T)) 
\end{array}\right.
$$
where $\ds V(t,x)= \int_0^1 H_p(x, x_0^2, sDu^1(t,x)+(1-s)Du^2(t,x),m^2(t))ds$ and  
$$
f(t,x):= H(x_0^1,x,Du^1(t,x),m^1(t))-H(x_0^2,x,Du^1(t,x),m^2(t)).
$$
By Proposition \ref{prop.highS} (applied with  $k=1$ and $n-1$), we have 
\begin{align*}
\sup_{t\in [0,T]}  & \|u^1(t)-u^2(t)\|_{n-1}  \leq (1+CT)\|G( x_0^1, \cdot, m^1(T))-G( x_0^2, \cdot, m^2(T))\|_{n-1}+ CT\sup_{t\in [t_0,T]} \|f(t)\|_{n-1}\\
& \leq (1+CT)\big\{[{\rm Lip}_{0,n-1}(G)]\dw(m^1(T),m^2(T)) + [{\rm Lip}^{x_0}_{n-1}(G)]|x_0^1-x_0^2|\big\}\\
& \qquad + CT \big( \sup_{t\in [t_0,T]} \dw(m^1(t),m^2(t)) + |x_0^1-x_0^2|\big)\,,
\end{align*}
 where the constant $C$ depends on $H$ and on $\sup_{t\in [0,T]}\|V(t)\|_{n-1}$, hence   on $\sup_{t\in [0,T]}[\|u^1(t)\|_{n}$, $\sup_{t\in [0,T]}\|u^2(t)\|_{n}]$, which are estimated thanks to Proposition \ref{prop.esti1MFG-0}.
\end{proof}

In our next step, we consider the solution to the Fokker-Planck equation
\be\label{eq.FP-0}
\left\{ \begin{array}{l}
 \ds \partial_t \tilde m(t,x) -\sum_{i,j} D_{ij}(a_{i,j}(t,x)\tilde m(t,x)) -\dive( \tilde m(t,x) H_p(x_0,x,Du(t,x), m(t))=0\; {\rm in}\; (t_0,T)\times \R^d\\
\ds \tilde m(t_0)=m_0 \qquad {\rm in}\;\R^d
\end{array}\right.
\ee
where $(m(t))_{t\in [t_0,T]}$ is given and $u$ satisfies \eqref{eq.HJ-0}. Let us recall that, under the assumptions of Proposition \ref{prop.esti1MFG-0}, there exists a unique weak solution $\tilde m\in C^0([t_0,T],\Pw)$ to \eqref{eq.FP-0}. 

\begin{Proposition} \label{Prop.LipDep-0}   Assume that 
\begin{equation}\label{eqglipboh}
\|D_xG\|_\infty\leq M\,,\quad \|D^2_{xx}G\|_\infty\leq M\,, \quad {\rm Lip}_{0,1}(G) + {\rm Lip}^{x_0}_{1}(G) \leq M\,.
\end{equation}
Then  there exists a constant $C_M>0$,  only depending on $M$, $\|a\|_2$ and the regularity of $H$, such that, for any $m^1, m^2\in C^0([0,T], \Pw)$, $x_0^1, x_0^2 \in \R^{d_0}$ and $m_0^1,m_0^2\in \Pw$, if  $u^1$ and $u^2$ are the corresponding solutions to the HJ equation \eqref{eq.HJ-0} with $x_0 = x_0^i$ and if $\tilde m_1,\tilde m_2$ are the corresponding solutions to \eqref{eq.FP-0} starting from $m_0^1$ and $m_0^2$ respectively, then 
$$
\sup_{t\in [t_0,T]} \dw^2(\tilde m^1(t),\tilde m^2(t)) \leq (1+C_MT) \dw^2(m^1_0,m^2_0)+ C_MT\left(\sup_{t\in [t_0,T]} \dw^2(m^1(t),m^2(t)) + |x_0^1 - x_0^2|^2 \right).
$$
\end{Proposition}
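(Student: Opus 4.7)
\smallskip
\noindent\textbf{Proof proposal.} The plan is to use a synchronous coupling of the probabilistic representations of $\tilde m^1$ and $\tilde m^2$. Let $\pi_0\in \Pw(\R^d\times\R^d)$ be an optimal coupling of $(m_0^1,m_0^2)$ for $\dw$. On a probability space carrying $\pi_0$ and an independent Brownian motion $B$, define, for $i=1,2$,
\begin{equation*}
dY^i_t = -H_p(x_0^i, Y^i_t, Du^i(t,Y^i_t), m^i(t))\,dt + \sqrt{2}\,\sigma(t,Y^i_t)\,dB_t, \qquad (Y^1_{t_0},Y^2_{t_0})\sim \pi_0.
\end{equation*}
Proposition \ref{prop.esti1MFG-0} applied with $n=2$ gives $\sup_{t}\|u^i(t)\|_2\leq C_M$ (whose finiteness uses \eqref{eqglipboh}), so the drift $b^i(t,y):=-H_p(x_0^i,y,Du^i(t,y),m^i(t))$ is globally Lipschitz in $y$ uniformly in $t$; together with the $C^1_b$ regularity of $\sigma$, this yields unique strong solutions with $\E|Y^i_t|^2<+\infty$, and by the standard link between Fokker--Planck equations and SDEs the law of $Y^i_t$ is $\tilde m^i(t)$.

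Set $Z_t:=Y^1_t-Y^2_t$ and apply It\^o's formula to $|Z_t|^2$. Decomposing the drift difference by successive changes in the entries,
\begin{align*}
b^1(t,Y^1_t)-b^2(t,Y^2_t) &= \bigl[H_p(x_0^2,Y^2_t,Du^2(t,Y^2_t),m^2(t))-H_p(x_0^1,Y^2_t,Du^2(t,Y^2_t),m^2(t))\bigr]\\
&\quad +\bigl[H_p(x_0^1,Y^2_t,Du^2(t,Y^2_t),m^2(t))-H_p(x_0^1,Y^1_t,Du^1(t,Y^1_t),m^1(t))\bigr],
\end{align*}
and using the $C^2$-bounds on $H$ on $\{|p|\leq K_M\}$ together with $\|D^2u^1\|_\infty\leq C_M$, one controls
\begin{equation*}
|b^1(t,Y^1_t)-b^2(t,Y^2_t)| \leq C_M\bigl(|x_0^1-x_0^2|+|Z_t|+\|Du^1(t)-Du^2(t)\|_\infty+\dw(m^1(t),m^2(t))\bigr).
\end{equation*}
The key input is the Lipschitz dependence from Proposition \ref{Prop.LipDep-00} with $n=2$, which (thanks to \eqref{eqglipboh}) yields
\begin{equation*}
\|Du^1(t)-Du^2(t)\|_\infty \leq C_M\bigl(\sup_{s\in [t_0,T]}\dw(m^1(s),m^2(s))+|x_0^1-x_0^2|\bigr).
\end{equation*}
The diffusion contribution is $2\,\E|\sigma(t,Y^1_t)-\sigma(t,Y^2_t)|^2\leq C\,\E|Z_t|^2$ by the Lipschitz bound on $\sigma$, and the martingale part vanishes after taking expectation.

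Collecting these estimates and using $2ab\leq a^2+b^2$,
\begin{equation*}
\frac{d}{dt}\E|Z_t|^2 \leq C_M\,\E|Z_t|^2 + C_M\bigl(\sup_{s\in [t_0,T]}\dw^2(m^1(s),m^2(s))+|x_0^1-x_0^2|^2\bigr).
\end{equation*}
Gronwall's inequality over $[t_0,T]$ with $T\leq T_M$ small (absorbing exponentials into $1+C_MT$) gives
\begin{equation*}
\E|Z_t|^2 \leq (1+C_MT)\,\E|Z_{t_0}|^2 + C_MT\Bigl(\sup_{s\in [t_0,T]}\dw^2(m^1(s),m^2(s))+|x_0^1-x_0^2|^2\Bigr).
\end{equation*}
Since $(Y^1_t,Y^2_t)$ is a coupling of $(\tilde m^1(t),\tilde m^2(t))$, $\dw^2(\tilde m^1(t),\tilde m^2(t))\leq \E|Z_t|^2$; and by the choice of $\pi_0$, $\E|Z_{t_0}|^2=\dw^2(m_0^1,m_0^2)$. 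Taking the supremum in $t$ concludes. The only nontrivial step is the uniform gradient-stability bound on $u^1-u^2$, but this is precisely Proposition \ref{Prop.LipDep-00}; everything else is a routine synchronous-coupling computation.
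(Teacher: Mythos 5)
Your argument is correct and is essentially the paper's own proof: a synchronous SDE coupling of the two Fokker--Planck representations, a drift-difference bound combined with the gradient-stability estimate from Proposition \ref{Prop.LipDep-00} (with $n=2$), and Gronwall. The only cosmetic difference is your appeal to $T\le T_M$ small when absorbing $e^{C_MT}$ into $1+C_MT$, which is unnecessary since the standing assumption $T\le 1$ already suffices.
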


\begin{proof} 
We can represent $\tilde m^i(t)$ as the law of $X^i_t$ where $\E[|X^1_0-X_0^2|^2]= \dw^2(m^1_0,m^2_0)$ and $X^i$ solves  
$$
X^i_t = X^i_0-\int_0^t H_p( x_0^i,X^i_s,Du^i(s,X^i_s),m^i(s)) ds + \sqrt{2} \int_0^t \sigma(s,X^i_s)dB_s, 
$$
so that
\begin{align*}
&\E\left[ |X^1_t-X^2_t|^2\right]  \leq  \E\left[|X^1_0-X^2_0|^2\right] \\
& \qquad  +2\E\left[\int_0^t (X^1_s-X^2_s)\cdot \left(H_p( x_0^1, X^1_s, Du^1,m^1(t))-H_p( x_0^2, X^2_s, Du^2,m^2(t))\right) ds\right] \\
& \qquad + \E\left[  \int_0^t {\rm Tr} \left((\sigma(s,X^1_s)-\sigma(s,X^2_s))(\sigma(s,X^1_s)-\sigma(s,X^2_s))^*\right)ds|\right]  \leq  \E\left[|X^1_0-X^2_0|^2\right] \\
&  \qquad\qquad +C_M \E\left[\int_0^t  (|X^1_s-X^2_s|^2 + |D(u^1-u^2)(s,X^1_s)|^2+ \dw^2(m^1(s),m^2(s)) + |x_0^1 - x_0^2|^2) ds\right] 
\end{align*}
where $C_M$ depends  on the Lipschitz regularity of $H_p$ in $\R^{d_0} \times \R^d \times B(K_M)\times \Pw$ (where $K_M$ is defined in Proposition \ref{prop.esti1MFG-0}), on  $\sup_t \|u^1(t)\|_2$,  and on the Lipschitz regularity of $\sigma$. We infer from Gronwall's Lemma that
\begin{align*}
& \E\left[|X^1_t-X^2_t|^2\right] \leq   (1+C_MT) \E\left[|X^1_0-X^2_0|^2\right]\\ &
\qquad \qquad \qquad  + C_MT\left( \sup_t \|D(u^1-u^2)(t)\|_\infty^2+\sup_{t\in[t_0,T]}\dw^2(m^1(t),m^2(t)) + |x_0^1 - x_0^2|^2\right).
\end{align*}
As $\E[|X^1_0-X_0^2|^2]= \dw^2(m^1_0,m^2_0)$ and $\dw^2(\tilde m^1(t),\tilde m^2(t))\leq \E\left[|X^1_t-X^2_t|^2\right]$, we obtain: 
\begin{align*}
\sup_{t\in [t_0,T]} \dw^2(\tilde m^1(t),\tilde m^2(t)) & \leq (1+C_MT) \dw^2(m^1_0,m^2_0) \\
& \qquad + C_MT\left(\sup_t \|D(u^1-u^2)(t)\|_\infty^2+\sup_{t\in[t_0,T]}\dw^2(m^1(t),m^2(t)) + |x_0^1 - x_0^2|^2 \right).
\end{align*}
We estimate the term $\sup_t \|D(u^1-u^2)(t)\|_\infty^2$ by Proposition \ref{Prop.LipDep-00}  (with $n=2$):  since ${\rm Lip}_{0,1}(G)$ and ${\rm Lip}^{x_0}_{1}(G)$ are estimated by \eqref{eqglipboh}, we deduce, for some (possibly different) constant $C_M$:
$$
\sup_{t\in [t_0,T]} \dw^2(\tilde m^1(t),\tilde m^2(t)) \leq (1+C_MT) \dw^2(m^1_0,m^2_0)+ C_MT \left(\sup_{t\in [t_0,T]} \dw^2(m^1(t),m^2(t)) + |x_0^1 - x_0^2|^2 \right).
$$
\end{proof}

Collecting the estimates in Propositions \ref{prop.esti1MFG-0}, \ref{Prop.LipDep-00} and \ref{Prop.LipDep-0}  yields the well-posedness of the MFG system and estimates on the solution: 

\begin{Proposition}\label{Prop.LipDep} Fix $M>0$ and assume that
\eqref{eqglipboh} holds true and that $\|G\|_n\leq M$ holds. Then there exists $T_M, C_M>0$, depending on $M$, $n$,  $C_0$, $\gamma$ and
$$
\sup_{t\in [0,T_M]} \|a(t)\|_n + \sup_{|p|\leq K_M, x_0\in \R^{d_0}, m\in \Pw} \sum_{k=0}^n \|D^k_{(x,p)}H(x_0,\cdot,p,m)\|_\infty,
$$
(where $K_M$ is given in Proposition \ref{prop.esti1MFG-0}) such that, for any $T\in (0,T_M)$, for any $(t_0,m_0)\in [0,T]\times \Pw$, there exists a unique solution to the MFG system \eqref{eq.MFGcoupled}. This solution satisfies 
$$
 \sup_{t\in [t_0,T]} \|u(t)\|_n \leq \|G(x_0, \cdot, m(T))\|_n+ C_MT.
$$

Moreover, if $(t_0,m_0^1)$ and $(t_0,m_0^2)$ are two initial conditions in $[0,T]\times \Pw$ and $x_0^1, x_0^2 \in \R^{d_0}$, and if $(u^1,m^1)$ and $(u^2,m^2)$ are the corresponding solutions to the MFG system \eqref{eq.MFGcoupled} with $x_0 = x_0^1$ and $x_0 = x_0^2$ respectively, then
$$
 \sup_{t\in [t_0,T]} \dw(m^1(t),m^2(t)) \leq (1+C_MT) \dw(m_0^1, m_0^2) + C_MT |x_0^1 - x_0^2|\,,
 $$
and
\begin{align*}
 \sup_{t\in [t_0,T]} & \|u^1(t)- u^2(t)\|_{n-1}  \leq C_M T \big( \dw(m^1_0,m^2_0) + |x_0^1-x_0^2|\big) \\
& + (1+C_M T)\big\{[{\rm Lip}_{0, n-1}(G)](\dw(m^1_0,m^2_0) + |x_0^1-x_0^2|) + [{\rm Lip}^{x_0}_{n-1}(G)] |x_0^1-x_0^2|\big\}.
\end{align*}
\end{Proposition}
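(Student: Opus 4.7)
The plan is to obtain existence and uniqueness by a Banach fixed point argument at the level of the measure-valued trajectory, and then to read off the stability estimates directly from Propositions \ref{Prop.LipDep-00} and \ref{Prop.LipDep-0}. Fix $(t_0,x_0,m_0)$ and let $\mathcal M:=C^0([t_0,T],\Pw)$ with distance $d_\infty(m,\tilde m):=\sup_{t\in[t_0,T]}\dw(m(t),\tilde m(t))$, which is complete. Define $\Phi:\mathcal M\to\mathcal M$ by $\Phi(m)=\tilde m$, where $u$ first solves the Hamilton--Jacobi equation \eqref{eq.HJ-0} with input $m$---so $u\in C^0([t_0,T],C^n_b)$ with $\sup_t\|u(t)\|_n\le\|G(x_0,\cdot,m(T))\|_n+C_MT$ by Proposition \ref{prop.esti1MFG-0}---and $\tilde m$ then solves the Fokker--Planck equation \eqref{eq.FP-0} starting from $m_0$ with drift $-H_p(x_0,\cdot,Du,m)$. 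Applying Proposition \ref{Prop.LipDep-0} to two inputs $m^1,m^2$ sharing the same $(t_0,x_0,m_0)$ gives
\[
d_\infty(\Phi(m^1),\Phi(m^2))^2\le C_MT\,d_\infty(m^1,m^2)^2,
\]
so choosing $T_M$ with $C_MT_M<1/2$ makes $\Phi$ a strict contraction. The unique fixed point is the desired MFG solution $(u,m)$, and the bound on $\sup_t\|u(t)\|_n$ is then just \eqref{norman-0}.

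For the stability of $m$, I would apply Proposition \ref{Prop.LipDep-0} to the two MFG solutions themselves (so that the ``input'' and ``output'' trajectories coincide). With $A:=\sup_{t\in[t_0,T]}\dw^2(m^1(t),m^2(t))$ one gets
\[
A\le(1+C_MT)\dw^2(m_0^1,m_0^2)+C_MT\,A+C_MT|x_0^1-x_0^2|^2.
\]
The smallness $C_MT\le1/2$ absorbs $C_MT\,A$ into the left-hand side, yielding $A\le(1+C'_MT)\dw^2(m_0^1,m_0^2)+C'_MT|x_0^1-x_0^2|^2$; taking square roots (using $\sqrt{a+b}\le\sqrt a+\sqrt b$ and $\sqrt{1+x}\le 1+x/2$ for $x\ge 0$), and enlarging $C_M$ if necessary to absorb the residual $\sqrt T$ coming from $\sqrt{C_MT}$, produces the announced Wasserstein bound. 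The estimate on $\|u^1(t)-u^2(t)\|_{n-1}$ is then a direct application of Proposition \ref{Prop.LipDep-00} to the pairs $(u^i,m^i)$, feeding into its right-hand side the Wasserstein control just obtained so as to replace $\sup_t\dw(m^1(t),m^2(t))$ and $\dw(m^1(T),m^2(T))$ by $\dw(m_0^1,m_0^2)+|x_0^1-x_0^2|$.

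I expect the only real subtlety to be the compatibility of the smallness thresholds $T_M$: each of Propositions \ref{prop.esti1MFG-0}, \ref{Prop.LipDep-00} and \ref{Prop.LipDep-0} comes with its own requirement, its own $K_M$ built on the preliminary gradient estimate of Proposition \ref{prop.esti1MFG-0}, and the fixed-point step imposes one further smallness condition. I would take $T_M$ to be the minimum of all these thresholds and absorb the various intermediate constants into a single $C_M$, so that the whole argument is essentially bookkeeping; no new PDE estimate is needed beyond the three preliminary propositions.
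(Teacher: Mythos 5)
Your proof reproduces the paper's: a fixed point on $C^0([t_0,T],\Pw)$ for existence and uniqueness, then stability by applying Proposition \ref{Prop.LipDep-0} with $\tilde m^i=m^i$, absorbing the quadratic term using $C_MT\le1/2$, taking square roots, and plugging the resulting $\dw$-bound into Proposition \ref{Prop.LipDep-00}. The approach is the same as the paper's; your fixed-point step is just spelled out more explicitly.

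The one place where your write-up (and, to be fair, the paper's proof as well) is loose is the square-root step. From $\sup_t\dw^2(m^1(t),m^2(t))\le (1+C_M'T)\dw^2(m^1_0,m^2_0)+C_M'T\,|x_0^1-x_0^2|^2$ one honestly obtains
\[
\sup_{t\in[t_0,T]}\dw(m^1(t),m^2(t))\le(1+C_MT)\,\dw(m^1_0,m^2_0)+C_M\sqrt{T}\,|x_0^1-x_0^2|,
\]
and the $\sqrt{T}$ cannot be traded for $T$ by ``enlarging $C_M$'': since $\sqrt{T}/T\to\infty$ as $T\to 0$, no constant gives $\sqrt{C_M'T}\le C_MT$ uniformly over $T\in(0,T_M)$. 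So the intermediate Wasserstein estimate should really carry $C_M\sqrt{T}$ in the $|x_0^1-x_0^2|$-coefficient. This does not compromise the final estimate on $\|u^1-u^2\|_{n-1}$: substituting the corrected $\dw$-bound into Proposition \ref{Prop.LipDep-00}, the extra $C_M\sqrt{T}\,|x_0^1-x_0^2|$ appears once multiplied by $C_MT$ (giving $O(T^{3/2})\le O(T)$) and once multiplied by $(1+C_MT)\,{\rm Lip}_{0,n-1}(G)$, and the latter contribution is absorbed into the $(1+C_M'T)\,{\rm Lip}_{0,n-1}(G)\,|x_0^1-x_0^2|$ term already present in the stated conclusion once $T_M$ is taken small enough that $C_M\sqrt{T_M}\le 1$. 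You should therefore either record the $\sqrt{T}$ factor in the intermediate $\dw$-estimate, or remark explicitly that it is innocuous for the $u$-estimate, rather than claim it can be removed by adjusting the constant.
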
 

\begin{proof} The existence and uniqueness result  come from a standard fixed point argument on $C^0([t_0,T],\Pw)$ for $T$ small enough (say $T\leq T_M$ where $C_MT_M\leq 1/2$,  $C_M$ being given by the previous Propositions). For the stability with respect to the initial condition, one first uses the estimate in Proposition \ref{Prop.LipDep-0} with $\tilde m^i=m^i$:
$$
\sup_{t\in [t_0,T]} \dw^2( m^1(t), m^2(t)) \leq (1+C_MT) \dw^2(m^1_0,m^2_0)+ C_MT\left(\sup_{t\in [t_0,T]} \dw^2(m^1(t),m^2(t)) + |x_0^1 - x_0^2|^2 \right).
$$
Thus, as $C_MT\leq 1/2$, one obtains  
$$
\sup_{t\in [t_0,T]} \dw( m^1(t), m^2(t)) \leq (1+C_MT) \dw(m^1_0,m^2_0) + C_M T |x_0^1 - x_0^2|,
$$
modifying $C_M$ if necessary. 
Plugging this estimate into the estimate for the $u^i$ in Proposition \ref{Prop.LipDep-00} gives the result.
\end{proof}


\subsection{The first order  linearized system}

Next we consider the linearized system
\be\label{MFG2lkjenze}
\left\{ \begin{array}{rl}
(i) & \ds -\partial_t v -{\rm Tr}(a(t,x)D^2v)+H_p(x_0,x,Du,m(t))\cdot Dv+ \frac{\delta H}{\delta m}(x_0,x,Du,m(t))(\rho(t))=R_1(t,x) \\
 & \ds \hspace{10cm} \qquad {\rm in}\; (t_0,T)\times \R^d\\
(ii) & \ds \partial_t \rho -\sum_{i,j} D_{ij}(a_{i,j}\rho) -\dive( \rho H_p(x_0,x,Du,m(t)))-\dive(mH_{pp}Dv)\\
& \ds \hspace{4cm} \qquad -\dive(m \frac{\delta H_p}{\delta m}(\rho))=\dive(R_2(t,x)) \qquad {\rm in}\; (t_0,T)\times \R^d\\
(iii) & \ds \rho(t_0)=\rho_0, \qquad v(T,x)= \frac{\delta G}{\delta m}(x_0,x,m(T))(\rho(T)) +R_3(x) \qquad {\rm in}\;\R^d
\end{array}\right.
\ee
where $(u,m)$ solves \eqref{eq.MFGcoupled} and $H$ and its derivatives are evaluated at $(x_0,x,Du(t,x),m(t))$. In this section, we work under the  conditions given in Proposition \ref{Prop.LipDep} so that \eqref{eq.MFGcoupled} admits a unique solution, in particular we always assume that $T\leq T_M$, where $T_M$ is   given by  Proposition \ref{Prop.LipDep}.   Our goal now is to establish estimates for $(v,\rho)$ in dependence of $G$ and $u$; we implicitly assume that $G(x_0,\cdot, m)$ is sufficiently regular (say, $C_b^n$) so that $u$ inherits the same order of regularity (from \eqref{norman-0}).

The data of equation \eqref{MFG2lkjenze} are $x_0\in \R^{d_0}$, $\rho_0\in C^{-k}$,  $R_1\in C^0([0,T], C^{n-1}_b)$, $R_2\in C^0([0,T], C^{-(k-1)})$ and   $R_3\in C^{n-1}_b$. Here $n\geq 2$ and $k\geq 1$. 
By a solution to \eqref{MFG2lkjenze}, we mean a pair $(v,\rho)$ such that  $v\in C^0([0,T],C^{n-1}_b)$ satisfies \eqref{MFG2lkjenze}-(i) (integrated in time) with terminal condition  $v(T,\cdot)= \frac{\delta G}{\delta m}(x_0,\cdot, m(T))(\rho(T)) +R_3(\cdot)$ and $\rho\in C^0([0,T],  C^{-(k-1)}_b)$ is a solution in the sense of distributions to \eqref{MFG2lkjenze}-(ii) with  initial condition $\rho(t_0)=\rho_0$.

\begin{Proposition}\label{prop.LipschDeltaU} Let us fix $M>0$,   $n\geq 2$ and $k\geq 1$. Under the assumptions of Proposition \ref{Prop.LipDep}, and if 
\be\label{cond.deltaG}
 \|\frac{\delta G}{\delta m}\|_{1;k}\leq M, 
\ee
then there exist  constants $T_M,C_M>0$, depending  on  $M$, $n$, $k$, $\sup_{t\in [0,T]} \|u(t)\|_n$, 
 $\sup_{t\in [0,T]} \|u(t)\|_{k+1}$, such that for $T\leq T_M$   there exists a unique solution $(v,\rho)$ to \eqref{MFG2lkjenze}, and this solution satisfies 
\be\label{lkqehsrmd}
\begin{array}{rl}
& \ds   \sup_{t\in [t_0,T]}\|v(t)\|_{n-1} \; \leq \\
&\;  \ds (1+C_MT)  \|\frac{\delta G}{\delta m} (x_0,\cdot_x,m(T),\cdot_y) \|_{n-1;k}\left( \|\rho_0\|_{-k}+  T\sup_t \|R_2(t)\|_{-(k-1)}+ T\sup_t \|R_1(t)\|_{1}\right)  \\
& \qquad \ds +(1+C_MT)\|R_3\|_{n-1}  +C_MT\left(1+\sup_t \|R_1(t)\|_{n-1}+ \|R_2\|_{-(k-1)}\right) ,  
\end{array}
\ee 
as well as 
\be\label{lkqehsrmd2}
\begin{array}{rl}
\ds 
  \sup_{t\in [t_0,T]} \|\rho(t)\|_{-k}\; \leq & \ds  \left(1+C_MT \right)\|\rho_0\|_{-k}\\
& \ds \qquad +C_MT(\sup_t\|R_1(t)\|_{1}+\sup_t\|R_2(t)\|_{-(k-1)}+ \|R_3\|_{n-1} ).  
\end{array}
\ee
Moreover, we have, for any $r \le n-1$,
 \be\label{lkqehsrmdBISBIS}
 \begin{array}{ll}
\ds  \sup_{t\in [0,T]} \|D^r_x v(t)\|_\infty \leq 
 \ds (1+C_MT)\left(   \Big\|D^r_x \frac{\delta G}{\delta m} (x_0,\cdot,m(T))(\rho(T))\Big\|_\infty  + \|D^r_x R_3\|_\infty\right) \\ \qquad\qquad\qquad\qquad\qquad 
 + C_M T \big(\|\rho_0\|_{-k} + \sup_t\|R(t)\|_{n-1}+\sup_t\|R_2(t)\|_{-(k-1)}+ \|R_3\|_{n-1} \big). 
 \end{array}
\ee 
\end{Proposition}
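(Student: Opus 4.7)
The plan is to treat the forward-backward system \eqref{MFG2lkjenze} by a fixed-point argument on $\rho$ in $C^0([t_0,T], C^{-k})$, and then extract the three estimates via a duality argument on the Fokker-Planck side and the parabolic bounds of Proposition \ref{prop.highS} on the Hamilton-Jacobi side. Given $\bar\rho \in C^0([t_0,T], C^{-k})$, I would first solve \eqref{MFG2lkjenze}-(i) with $\rho$ replaced by $\bar\rho$ in both the source $\frac{\delta H}{\delta m}(\cdots)(\bar\rho(t))$ and the terminal condition $\frac{\delta G}{\delta m}(\cdots)(\bar\rho(T))+R_3$; Proposition \ref{prop.highS}, applied with drift $H_p(x_0,\cdot,Du,m)\in C_b^{n-1}$ (thanks to Proposition \ref{prop.esti1MFG-0}) and with the bounds $\|\frac{\delta H}{\delta m}(\bar\rho)\|_{n-1}\le C_M\|\bar\rho\|_{-k}$ and $\|\frac{\delta G}{\delta m}(\bar\rho(T))\|_{n-1}\le M\|\bar\rho(T)\|_{-k}$ (from \eqref{cond.deltaG} and the analogous assumption on $H$ in Subsection \ref{subsec.Hyp}), yields a unique $v\in C^0([t_0,T],C_b^{n-1})$. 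Then I would solve \eqref{MFG2lkjenze}-(ii) in the sense of distributions, viewing the integro-differential term $\dive(m\frac{\delta H_p}{\delta m}(\rho))$ as a bounded perturbation of the transport operator. The resulting affine map $\bar\rho\mapsto\rho$ will be shown to be a contraction in $C^0([t_0,T],C^{-k})$ for $T$ small, giving existence and uniqueness.

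For \eqref{lkqehsrmd2} I would argue by duality: fixing $\phi\in C_b^k$ with $\|\phi\|_k\le 1$, let $w\in C^0([t_0,t],C_b^k)$ solve the backward adjoint equation with drift $H_p(x_0,\cdot,Du,m)$, second-order part $a$, and terminal datum $w(t,\cdot)=\phi$; Proposition \ref{prop.highS} gives $\sup_s\|w(s)\|_k\le 1+C_MT$. Pairing $\rho$ with $w$ and integrating by parts in \eqref{MFG2lkjenze}-(ii), the nonlocal term $\dive(m\frac{\delta H_p}{\delta m}(\rho))$ tested against $w$ becomes $\langle\rho,\Psi_w\rangle$ with
$$\Psi_w(y):=\int_{\R^d} m(dx)\,\frac{\delta H_p}{\delta m}(x_0,x,Du(x),m,y)\cdot Dw(x)$$
and $\|\Psi_w\|_k\le C_M$. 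An auxiliary bound $\sup_s\|Dv(s)\|_\infty\le C_M(\sup_s\|R_1(s)\|_1+\|R_3\|_{n-1}+\sup_s\|\rho(s)\|_{-k})$, obtained by applying Proposition \ref{prop.highS} to \eqref{MFG2lkjenze}-(i) in $C_b^1$, combined with Gronwall's inequality and absorption of the $C_MT\sup\|\rho\|_{-k}$ term for $T$ small, will close the bound and produce \eqref{lkqehsrmd2}.

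Estimate \eqref{lkqehsrmd} should then be immediate from Proposition \ref{prop.highS} applied to \eqref{MFG2lkjenze}-(i) in $C_b^{n-1}$: the terminal contribution is bounded by $(1+C_MT)(\|\frac{\delta G}{\delta m}\|_{n-1;k}\|\rho(T)\|_{-k}+\|R_3\|_{n-1})$ and the source contribution by $C_MT(\sup\|R_1\|_{n-1}+\sup\|\rho\|_{-k})$, after which substituting \eqref{lkqehsrmd2} for $\|\rho(T)\|_{-k}$ and $\sup\|\rho\|_{-k}$ gives the claimed form. For \eqref{lkqehsrmdBISBIS} I would apply the same argument derivative-by-derivative: Proposition \ref{prop.highS} keeps $\|D^r_x v(T)\|_\infty=\|D^r_x\frac{\delta G}{\delta m}(\rho(T))+D^r_x R_3\|_\infty$ intact rather than majorizing it through the full $\|\cdot\|_{n-1;k}$ norm, while the source term enters only with an extra factor $T$, which is absorbed into the $C_MT(\cdots)$ remainder via \eqref{lkqehsrmd2} and the hypothesis $\|\frac{\delta H}{\delta m}\|_{n-1;k}\le M$.

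\textbf{Main obstacle.} The delicate point is the forward-backward coupling: $v(T)$ depends linearly on $\rho(T)$ via $\frac{\delta G}{\delta m}$, while $\rho$ is in turn driven by $v$ through the source $\dive(mH_{pp}Dv)$ in \eqref{MFG2lkjenze}-(ii). Both the contraction in Step~1 and the sharp form of the estimates hinge on the fact that each traversal of this loop costs a factor $T$, absorbed for $T\le T_M$ sufficiently small. Additional technical care is required to isolate the top-order dependence on $\|\frac{\delta G}{\delta m}\|_{n-1;k}$ in \eqref{lkqehsrmd} and on $\|D^r_x\frac{\delta G}{\delta m}(\rho(T))\|_\infty$ in \eqref{lkqehsrmdBISBIS}, by applying Proposition \ref{prop.highS} separately to the terminal-data and source contributions rather than using a single blanket bound.
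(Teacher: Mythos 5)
Your proposal is essentially the paper's own argument: duality against an adjoint transport solution to bound $\|\rho\|_{-k}$ by $(1+CT)\|\rho_0\|_{-k}+CT(\|Dv\|_\infty+\cdots)$, the parabolic bound of Proposition \ref{prop.highS} on the HJ side to express $\|Dv\|_\infty$ back in terms of $\sup_t\|\rho(t)\|_{-k}$, and absorption of the resulting $O(T)$ cross-term for $T\le T_M$ small; the passage to $r\le n-1$ and to \eqref{lkqehsrmdBISBIS} is also handled identically (applying Proposition \ref{prop.highS} separately to the terminal and source contributions so the top-order dependence on $\|\frac{\delta G}{\delta m}\|_{n-1;k}$ and on $\|D^r_x\frac{\delta G}{\delta m}(\rho(T))\|_\infty$ survives intact). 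The one genuine difference is existence/uniqueness: you propose a Banach fixed point on $\bar\rho\mapsto\rho$, while the paper deduces it from the a priori estimates via a continuation argument; both are standard and require the same smallness of $T$, so this is a matter of taste rather than substance. One caveat worth noting: your stated auxiliary bound $\sup_s\|Dv(s)\|_\infty\le C_M(\sup\|R_1\|_1+\|R_3\|_{n-1}+\sup\|\rho\|_{-k})$ is loose in that the coefficient of $\sup\|\rho\|_{-k}$ is really $(1+CT)\|\frac{\delta G}{\delta m}\|_{1;k}+CT\cdot C\le (1+CT)M+O(T)$ (the terminal contribution enters with no $T$-prefactor); since $T_M$ is allowed to depend on $M$ this still closes the absorption, but you need this sharper form to obtain the precise $(1+C_MT)\|\frac{\delta G}{\delta m}\|_{n-1;k}(\cdots)$ structure of \eqref{lkqehsrmd}, as you acknowledge in your final remark.
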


\begin{proof}  After proving the a priori estimates, the existence of a solution can be obtained  using a continuation argument (see \cite{CDLL} for details). The uniqueness is an obvious consequence of the estimates. So it remains to prove the estimates. To simplify the expression, we omit the dependence of the constant $C$ with respect to $M$. Fix $t_1\in [t_0,T]$, $z_1\in C^k_b$   for $k\in\{ 1, \dots, n-1\}$. Let $z$ be the solution to 
\be\label{eq.zzzzz}
\left\{ \begin{array}{l}
-\partial_t z -{\rm Tr}(a(t,x)D^2z)+H_p(x_0,x,Du,m(t))\cdot Dz=0\qquad {\rm in}\; (t_0,t_1)\times \R^d, \\
z(t_1,\cdot)= z_1(x)\qquad  {\rm in}\; \R^d.
\end{array}\right. 
\ee
According to Proposition \ref{prop.highS} (with $k=1$), we have
$$
\sup_{t\in [t_0,t_1]}\| z(t)\|_{k}\leq (1+CT)\|z_1\|_{k},
$$
where $C$ depends on the regularity of $a$ and $H$ and on  $\sup_t \|u(t)\|_{k+1}$. 
Then, by duality, 
$$
\begin{array}{rl}
\ds \int_{\R^d} z_1\rho(t_1) \; = & \ds \int_{\R^d} z(t_0)\rho_0 
- \int_{t_0}^{t_1} \int_{\R^d} (H_{pp}Dv\cdot Dz +\frac{\delta H_p}{\delta m}(\rho)\cdot Dz)  m - \int_{t_0}^{t_1} \int_{\R^d} Dz\cdot R_2\\
\leq & \ds \|z({t_0})\|_{k}\|\rho_0\|_{-k} + C\|Dz\|_\infty \left(T\|Dv\|_\infty +\int_{t_0}^{t_1} \|\rho(t)\|_{-k}\right) +T \sup_t \|z(t)\|_k\|R_2\|_{-(k-1)}
\\
\leq & \ds (1+ C T)\|z_1\|_{k} \left( \|\rho_0\|_{-k} + C \left(T\|Dv\|_\infty +\int_{t_0}^{t_1} \|\rho(t)\|_{-k}\right) +T \|R_2\|_{-(k-1)}\right) ,
\end{array}
$$
where $\|R_2\|_{-(k-1)}:= \sup_t \|R_2(t)\|_{-(k-1)}$. 
Thus, taking the supremum over $\|z_1\|_{k}\leq 1$, we obtain: 
$$
\|\rho(t_1)\|_{-k}\leq (1+CT) \|\rho_0\|_{-k}+ CT\big(\|Dv\|_\infty +\|R_2\|_{-(k-1)} \big)+C \int_{t_0}^{t_1} \|\rho(t)\|_{-k}.
$$
Since this holds for all $t_1\in ({t_0},T]$,  by Gronwall's inequality we obtain
\be\label{EstiRhoByDv}
\sup_{t\in [{t_0},T]} \|\rho(t)\|_{-k}\leq (1+CT)\|\rho_0\|_{-k}+ CT\big(\|Dv\|_\infty +\|R_2\|_{-(k-1)} \big).
\ee
Next we apply Proposition \ref{prop.highS} (with $k=1$) to the HJ equation satisfied by $v$:  
we have, for any $r\leq n-1$,   
\be\label{ihkbrgd1}
\begin{array}{rl}
\ds   \sup_t \|v(t)\|_r \; \leq & \ds  (1+CT)\|v(T)\|_r  + CTC_1,
\end{array}
\ee
where $C$ depends on  $\sup_t \|a(t)\|_{n-1}$, on the regularity of $H$, on $\sup_t\|u(t)\|_n$, and where $C_1$ is estimated by 
\be\label{ihkbrgd2}
C_1= \sup_t \|\frac{\delta H}{\delta m}(x_0,\cdot,Du(t,\cdot),m(t))(\rho(t))\|_{n-1}+ \|R_1\|_{n-1} \leq C\sup_t\|\rho(t)\|_{-k}+ \|R_1\|_{n-1}\,,
\ee
where we used the inequality 
$$
\|\frac{\delta H}{\delta m}(x_0,\cdot,Du(t,\cdot),m(t))(\rho(t))\|_{n-1} \leq \|\frac{\delta H}{\delta m}(x_0,\cdot,Du(t,\cdot_x),m(t),\cdot)\|_{n-1,k} \|\rho(t)\|_{-k} \,.
$$
 Again we notice here that the right-hand side is estimated through the regularity of $H$ and   $\sup_t \|u(t)\|_n$.
Similarly we estimate, for $r\leq n-1$,  
\be\label{ihkbrgd3}
\begin{array}{rl}
\ds \|v(T)\|_{r} \le \ds   \|\frac{\delta G}{\delta m} (x_0,\cdot,m(T)) \|_{r; k}\sup_t\|\rho(t)\|_{-k} +\|R_3\|_{r}.
\end{array}
\ee
Collecting the estimates in \eqref{EstiRhoByDv}, \eqref{ihkbrgd1}, \eqref{ihkbrgd2}, \eqref{ihkbrgd3},   we find, for $r\leq n-1$:
\begin{align}
\ds  \sup_t \|v(t)\|_{r}  \leq & \ds  (1+CT)\|\frac{\delta G}{\delta m} (x_0,\cdot,m(T),\cdot) \|_{r;k} \Bigl\{ (1+CT)\|\rho_0\|_{-k}+ CT(\|Dv\|_\infty+\|R_2\|_{-(k-1)})\Bigr\} \label{neu}\\
& \;  \ds +\|R_3\|_{r} (1+CT) +CT\big(\|\rho_0\|_{-k}+ T(\|Dv\|_\infty+\|R_2\|_{-(k-1)})+ \|R_1\|_{n-1}\big) . \notag
\end{align}
 We first consider this inequality for $r=1$. Recall that $\|\frac{\delta G}{\delta m}\|_{1;k}\leq M$. So, if we choose $T_M>0$ such that
$$
(1+CT_M)M CT_M+ CT^2_M<1,
$$
we obtain \eqref{lkqehsrmd} for $T\leq T_M$ and $n=2$.
Then  from \eqref{EstiRhoByDv} we infer \eqref{lkqehsrmd2} (with a constant only depending on $\sup_t \|u(t)\|_{k+1}$).  Having now estimated $\|Dv\|_\infty$, we deduce from \eqref{neu} that  \eqref{lkqehsrmd} holds.

To obtain \eqref{lkqehsrmdBISBIS}, we apply again Proposition \ref{prop.highS} to the HJ equation satisfied by $v$, together with estimates \eqref{ihkbrgd2} and \eqref{lkqehsrmd2}.
\end{proof}

\subsection{The second order  linearized system}

Next we study the second order linearization of the MFG system. Given $(u,m)$ a solution to \eqref{eq.MFGcoupled} and $(v,\rho)$ and $(v',\rho')$ two solutions to \eqref{MFG2lkjenze} with arbitrary $R_1, R_2, R_3$ and $R_1', R_2', R_3'$, we consider a solution $(w,\mu)$ to 
\be\label{MFG3lkjenze}
\left\{ \begin{array}{rl}
(i) & \ds -\partial_t w -{\rm Tr}(aD^2w)+H_p\cdot Dw+ \frac{\delta H}{\delta m}(\mu(t))\\
& \ds \qquad   +  \frac{\delta^2 H}{\delta m^2}(\rho(t),\rho'(t)) +   H_{pp} Dv\cdot Dv'
+ \frac{\delta H_p}{\delta m}(\rho)\cdot Dv' + \frac{\delta H_p}{\delta m}(\rho')\cdot Dv =\tilde R_1(t,x) \\
& \ds \hspace{8cm} \qquad {\rm in}\; (t_0,T)\times \R^d\\
(ii) & \ds \partial_t \mu -\sum_{i,j} D_{ij}(a_{i,j}\mu) -\dive( \mu H_p)-\dive(mH_{pp}Dw) -\dive(m \frac{\delta H_p}{\delta m}(\mu))-\dive( \rho H_{pp}Dv')\\
& \ds  \qquad -\dive( \rho \frac{\delta H_p}{\delta m}(\rho')) -\dive(\rho'H_{pp}Dv) -\dive(mH_{ppp}DvDv')-\dive(m\frac{\delta H_{pp}}{\delta m}(\rho')Dv)  \\
& \ds  \qquad  -\dive(\rho' \frac{\delta H_p}{\delta m}(\rho))-\dive(m \frac{\delta H_{pp}}{\delta m}(\rho)Dv')-\dive(m \frac{\delta^2 H_p}{\delta m^2}(\rho,\rho'))=\dive(\tilde R_2(t,x))\\
& \ds \hspace{8cm} \qquad {\rm in}\; (t_0,T)\times \R^d\\
(iii) & \ds \mu(t_0)=0, \;  w(T,x)= \frac{\delta^2 G}{\delta m^2}(x_0,x,m(T))(\rho(T),\rho'(T))\\
& \ds \qquad \qquad \qquad \qquad \qquad +\frac{\delta G}{\delta m}(x_0,x,m(T))(\mu(T))+\tilde R_3(x) \qquad {\rm in}\;\R^d
\end{array}\right.
\ee 
where $H$ and its derivatives are  evaluated at $(x_0,x,Du(t,x), m(t))$.   Here again we work under the conditions 
assumed in the previous Sections which guarantee the existence, uniqueness  and enough regularity for $(u,m)$ as well as for the solutions of the linearized system. In particular, we always assume that $T\leq T_M$, where $T_M$ is now   given by  Proposition \ref{prop.LipschDeltaU}. The goal now is to establish estimates for $(w,\mu)$ in terms of $G$ as well as of $(u,m)$ and $(v,\rho)$, $(v',\rho')$.

The data of the problem are $\tilde R_1\in C^0([0,T], C^{n-2}_b)$, $\tilde R_2\in C^0([0,T], C^{-(k-1)})$ and $\tilde R_3\in C^{n-2}_b$.  By a solution to \eqref{MFG3lkjenze}, we mean a pair $(w, \mu)$ such that $w\in C^0([0,T],  C^{n-2}_b)$ satisfies  \eqref{MFG3lkjenze}-(i) (integrated in time) with the terminal condition in \eqref{MFG3lkjenze}-(iii) and $\mu\in C^0([0,T], C^{-k})$ solves \eqref{MFG3lkjenze}-(ii) in the sense of distributions with vanishing  initial condition. 
 Here we assume $n\geq 3$ and $k\geq 2$; the reason for this condition is just because we wish to keep the regularity threshold of $(w,\mu)$ consistent   with what stated previously for $(u,m)$ and for $(v,\rho)$. In general, the estimates below apply to any degree of $k,n$ but this  is obviously a cascade regularity: an estimate of $w$ in $C^{n-2}_b$ requires an estimate of $v$ in $C^{n-1}_b$ and of $u$ in $C^n_b$, while an estimate of $\mu$ in $C^{-k}$ requires an estimate of $\rho$ in $C^{-(k-1)}$.

\begin{Proposition}\label{Prop.estiw}  Let us fix $M>0$,   $n\geq 3$ and $k\geq 1$. Under the assumptions of Proposition  \ref{Prop.LipDep}, and if \eqref{cond.deltaG} holds,  there exist  $T_M>0$, depending on  $M$ and the regularity of $H$, such that for any $T\in (0,T_M]$, system \eqref{MFG3lkjenze}  has a unique solution which satisfies 
\be\label{ilauzehrd}
\begin{array}{rl}
\ds & \sup_t \|w(t)\|_{n-2} \\
& \qquad   \leq    \ds(1+C_MT)\Bigl( \|\frac{\delta^2 G}{\delta m^2} (x_0,\cdot,m(T), \cdot, \cdot) \|_{n-2;k-1,k-1}\|\rho(T)\|_{-(k-1)}\|\rho'(T)\|_{-(k-1)}
+\|\tilde R_3\|_{n-2}
\Bigr)\\
& \ds \qquad + C_MT(1+ \|\frac{\delta  G}{\delta m}\|_{n-2,k}) \left( \sup_t\|\tilde R_1(t)\|_{n-2}+\sup_t \|\tilde R_2(t)\|_{-(k-1)} \right.
\\
& \ds \qquad \qquad \qquad \qquad \qquad \qquad\qquad \qquad  +  {\mathcal R_{k-1,k}}{\mathcal R}'_{k-1,k}+ {\mathcal R_{k-1,n-1}}{\mathcal R}'_{k-1,n-1} \Bigr)
\end{array}
\ee
for some $C_M$ depending on $M$, on the regularity of $H$ as well as on $n,k, \sup_{t\in [0,T]} \|u\|_{n-1}, \sup_{t\in [0,T]} \|u\|_{k+1}$, and
\begin{align}
\ds & \sup_t \|\mu(t)\|_{-k}   \ds  \leq \tilde C_MT\Bigl(\Bigl(1+\|\frac{\delta^2 G}{\delta m^2} (x_0,\cdot,m(T), \cdot, \cdot) \|_{1;k-1,k-1}\Bigr)\|\rho(T)\|_{-(k-1)}\|\rho'(T)\|_{-(k-1)}\notag \\
& \qquad \ds +\sup_{t\in [0,T]}\|\tilde R_1(t)\|_{1} +\sup_{t\in [0,T]}\|\tilde R_2(t)\|_{-(k-1)}+\|\tilde R_3\|_{1} + {\mathcal R_{k-1,k}}{\mathcal R}'_{k-1,k}+  {\mathcal R_{k-1,2}}{\mathcal R}'_{k-1,2}\Bigr) \ ,\label{ilauzehrd2}
\end{align}
where $\tilde C_M$ depends on $M$, the regularity of $H$, $n,k,\sup_{t\in [0,T]} \|u\|_{k+1}$, and where we have set, for $k, j\geq 1$:
\[
\mathcal R_{k-1,j} := \sup_t ( \|\rho(t)\|_{-(k-1)} +  \|v(t)\|_{j})\,,\qquad {\mathcal R}_{k-1,j} ':=\sup_t ( \|\rho'(t)\|_{-(k-1)} +  \|v'(t)\|_{j})\,.
\]
In addition, if $$\|\frac{\delta  G}{\delta m}\|_{n-2;k} \le M, $$  then we have, for any $r \le n-2$, $(t, x_0) \in [0, T] \times \R^{d_0}$,
\begin{align}
& \|D^r w(t, \cdot)\|_\infty    \leq    
\Bigl(  \Big\| D_x^r \frac{\delta^2 G}{\delta m^2}(x_0,\cdot,m(T))(\rho(T),\rho'(T)) \Big\|_\infty + \|D_x^r \tilde R_3(\cdot)\|_\infty \Bigr)\notag\\ \label{ilauzehrdBISBIS}
& \qquad + C_MT \Bigl( \|\frac{\delta^2 G}{\delta m^2}(x_0,\cdot, m(T),\cdot, \cdot)\|_{n-2;k-1,k-1} \|\rho(T)\|_{ -(k-1)} \|\rho'(T)\|_{ -(k-1)}  \\ 
& \qquad\qquad +\sup_t\|\tilde R_1(t)\|_{n-2}+\sup_t \|\tilde R_2(t)\|_{-(k-1)} + \|\tilde R_3\|_{n-2} + {\mathcal R_{k-1,k}}{\mathcal R}'_{k-1,k}+ {\mathcal R_{k-1,n-1}}{\mathcal R}'_{k-1,n-1} \Bigr) .\notag
\end{align} 
\end{Proposition}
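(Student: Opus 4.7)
The plan is to mimic closely the proof of Proposition \ref{prop.LipschDeltaU}, treating \eqref{MFG3lkjenze} as a first order linearized system with more complicated source terms (built from products involving $(v,\rho)$ and $(v',\rho')$) and a more complicated terminal data (involving $\delta^2 G/\delta m^2$). Existence/uniqueness will follow from the a priori estimates via the same continuation argument as in Proposition \ref{prop.LipschDeltaU}, so the real work is to derive \eqref{ilauzehrd}--\eqref{ilauzehrdBISBIS}.

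\textbf{Step 1: Duality estimate on $\mu$.} Fix $t_1\in[t_0,T]$ and $z_1\in C^k_b$, and let $z$ solve the adjoint backward HJ equation \eqref{eq.zzzzz} on $[t_0,t_1]$ with terminal datum $z_1$. By Proposition \ref{prop.highS}, $\sup_{t\in[t_0,t_1]}\|z(t)\|_k\leq (1+C_M T)\|z_1\|_k$. Pairing $z$ with $\mu$ in the distributional formulation of \eqref{MFG3lkjenze}-(ii), the drift and diffusion term cancel (since $\mu(t_0)=0$ and $z$ solves the adjoint), leaving
\[
\int z_1\, d\mu(t_1) = -\int_{t_0}^{t_1}\!\!\int Dz\cdot \Psi(t,x)\,dxdt,
\]
where $\Psi$ is the sum of all the vector fields appearing in the divergences in \eqref{MFG3lkjenze}-(ii): $m H_{pp} Dw$, $m \frac{\delta H_p}{\delta m}(\mu)$, $\rho H_{pp} Dv'$, $\rho \frac{\delta H_p}{\delta m}(\rho')$, etc., plus $\tilde R_2$. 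Each term is estimated using $\|Dz\|_\infty\leq \|z\|_k$ together with the appropriate duality: terms linear in $\mu$ cost $\|Dz\|_\infty \|\mu(t)\|_{-k}$; terms linear in $\rho$ (resp.\ $\rho'$) cost $\|\rho(t)\|_{-(k-1)}\|v'(t)\|_{k}$ or similar, using $\|Dz\|_k$ and the regularity of $H$; the pure products like $m\frac{\delta^2 H_p}{\delta m^2}(\rho,\rho')$ are controlled by $\|\rho(t)\|_{-(k-1)}\|\rho'(t)\|_{-(k-1)}$ via the norm $\|\delta^2 H_p/\delta m^2\|_{1,k-1,k-1}$. Taking the supremum over $\|z_1\|_k\leq 1$ and applying Gronwall gives
\[
\sup_t\|\mu(t)\|_{-k}\leq C_M T\bigl(\sup_t\|Dw(t)\|_\infty+\sup_t\|\tilde R_2(t)\|_{-(k-1)}+\mathcal R_{k-1,k}\mathcal R'_{k-1,k}+\mathcal R_{k-1,2}\mathcal R'_{k-1,2}\bigr).
\]

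\textbf{Step 2: HJ estimate on $w$.} Apply Proposition \ref{prop.highS} to \eqref{MFG3lkjenze}-(i), viewed as a linear backward HJ equation with drift $H_p$ and source
\[
F=\tilde R_1-\tfrac{\delta H}{\delta m}(\mu)-\tfrac{\delta^2 H}{\delta m^2}(\rho,\rho')-H_{pp}Dv\cdot Dv'-\tfrac{\delta H_p}{\delta m}(\rho)\cdot Dv'-\tfrac{\delta H_p}{\delta m}(\rho')\cdot Dv.
\]
For any $r\leq n-2$, this yields
\[
\sup_t\|D^r w(t)\|_\infty\leq (1+C_M T)\|D^r w(T)\|_\infty+C_M T\sup_t\|F(t)\|_{n-2}.
\]
The terminal datum splits as $\frac{\delta^2 G}{\delta m^2}(\rho(T),\rho'(T))+\frac{\delta G}{\delta m}(\mu(T))+\tilde R_3$; the first and third pieces produce the leading terms in \eqref{ilauzehrd}--\eqref{ilauzehrdBISBIS}, while the $\mu(T)$ piece is bounded by $\|\delta G/\delta m\|_{n-2;k}\,\|\mu(T)\|_{-k}$. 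The source $F$ is estimated term by term in $C^{n-2}_b$: the duality-type terms $\frac{\delta H}{\delta m}(\mu)$ and $\frac{\delta^2 H}{\delta m^2}(\rho,\rho')$ by the dual norms of $\mu$ and $\rho,\rho'$; the product terms $H_{pp}Dv\cdot Dv'$ by $\mathcal R_{k-1,n-1}\mathcal R'_{k-1,n-1}$; and so on.

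\textbf{Step 3: Closing the loop.} Substituting the $\mu$ bound from Step 1 into Step 2 gives, for $r=1$,
\[
\sup_t\|Dw(t)\|_\infty\leq (1+C_M T)\|\tfrac{\delta G}{\delta m}\|_{1;k}\,\sup_t\|\mu(t)\|_{-k}+\text{(good terms)},
\]
and feeding back the expression for $\|\mu\|_{-k}$ produces a factor $C_M T$ in front of $\sup_t\|Dw\|_\infty$ on the right-hand side. Choosing $T_M$ small enough (depending only on $M$ via $\|\delta G/\delta m\|_{1;k}\leq M$ and the constants from Steps 1--2) we absorb this term and obtain an independent bound on $\sup_t\|Dw(t)\|_\infty$. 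Re-inserting this into the $\mu$ estimate yields \eqref{ilauzehrd2}, and then a further application of the HJ estimate with $r\leq n-2$ (using now the control of $\mu$) yields \eqref{ilauzehrd} and the pointwise form \eqref{ilauzehrdBISBIS}.

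\textbf{Main obstacle.} The routine but delicate point is the bookkeeping for the source terms: each of the many products in \eqref{MFG3lkjenze}-(i)--(ii) must be put in the appropriate norm so that it matches either a $C^{n-2}$ norm (for the $w$ equation) or a $C^{-(k-1)}$ norm in $x$ after one derivative (for the $\mu$ equation), which is why the regularity thresholds $n\geq 3$, $k\geq 2$ appear, and why the hypotheses of Subsection \ref{subsec.Hyp} on $\delta^2 H/\delta m^2$, $\delta H_{pp}/\delta m$, $H_{ppp}$, etc., are invoked in exactly this form. Apart from this, the true analytical input (Proposition \ref{prop.highS} plus Gronwall) is the same as in Proposition \ref{prop.LipschDeltaU}.
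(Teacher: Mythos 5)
Your proposal follows the paper's proof essentially line for line: the duality estimate on $\mu$ against the adjoint solution $z$ with Gronwall, the application of Proposition~\ref{prop.highS} to the HJ equation for $w$ splitting terminal data and source term, and the bootstrap that first closes the $\|Dw\|_\infty$ loop for small $T$ and then feeds back into \eqref{ilauzehrd2}, \eqref{ilauzehrd} and \eqref{ilauzehrdBISBIS}. The bookkeeping of the many source terms in $\tilde R_1,\tilde R_2$ and of the $\mathcal R$-products is exactly what the paper does, so this is the same argument in slightly compressed form.
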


\begin{Remark}\label{rr'}
 We recall that the quantities $\|\rho(T)\|_{-(k-1)}$ and  $\mathcal R_{k-1,j}$ are estimated from \eqref{lkqehsrmd} and \eqref{lkqehsrmd2}. In particular, we have 
$$
\begin{array}{rl}
\ds  \mathcal R_{k-1,k} \; \leq & \ds (1+C_MT)C \left( \|\rho_0\|_{-(k-1)} +  \sup_t \|R_2(t)\|_{-(k-2)}
+\sup_t \|R_1(t)\|_{k}+ \|R_3\|_{k}\right)   \,,
\end{array}
$$
for some constant $C$ depending on $\|\frac{\delta G}{\delta m}\|_{k;k-1}$ and $\sup_t \|u(t)\|_{k+1}$, and similarly
$$
\begin{array}{rl}
\ds   \mathcal R_{k-1,n-1} &  \; \leq  (1+C_MT) C\left( \|\rho_0\|_{-(k-1)}  +  \sup_t \|R_2(t)\|_{-(k-2)}+\|R_3\|_{n-1}+ \sup_t \|R_1(t)\|_{n-1}\right)   
\end{array}
$$
for a constant $C$ depending on $\|\frac{\delta G}{\delta m}\|_{n;k-1}$ and $\sup_t \|u(t)\|_{n}$. Of course the same holds for $\rho',v'$ accordingly.
\end{Remark}

\begin{proof} We  omit the proof of the well-posedness of the system, which is a consequence of the estimates (as for Proposition \ref{prop.LipschDeltaU}). To simplify the expression, we also omit the dependence of the constant $C$ with respect to $M$.
We first estimate $\mu$ by duality. Fix $t_1\in [t_0,T]$, $z_1\in C^k_b$  for $k\in\{ 1,\dots, n-1\}$.  Let $z$ be the solution to \eqref{eq.zzzzz}. Recall that Proposition  \ref{prop.highS} (with $k=1$) implies that there is a constant $C>0$, depending on   $\sup_t \|u(t)\|_{k+1}$, such that 
$$
\sup_{t\in [t_0,t_1]}\| z(t)\|_{k}\leq (1+CT)\|z_1\|_{k}.
$$
Then 
$$
\begin{array}{l}
\ds \int_{\R^d} \mu(t_1)z_1= 
-\Bigl\{\int_{t_0}^{t_1}\int_{\R^d} Dz \cdot \Bigl( mH_{pp}Dw+ m \frac{\delta H_p}{\delta m}(\mu)+\rho H_{pp}Dv' +\rho'H_{pp}Dv\\
\ds \qquad \qquad
+\rho \frac{\delta H_p}{\delta m}(\rho')+\rho' \frac{\delta H_p}{\delta m}(\rho) + mH_{ppp}DvDv' \\
\ds  \qquad\qquad  +m\frac{\delta H_{pp}}{\delta m}(\rho')Dv +m \frac{\delta H_{pp}}{\delta m}(\rho)Dv'+m \frac{\delta^2 H_p}{\delta m^2}(\rho,\rho')+\tilde R_2(t,x)\Bigr) \Bigr\}.
\end{array}
$$
Hence
$$
\begin{array}{l}
\ds \int_{\R^d} \mu(t_1)z_1\leq CT  \|Dw\|_\infty\|Dz\|_\infty+
C\|Dz\|_\infty \int_{t_0}^{t_1} \|\mu(s)\|_{-k}ds \\
\ds \qquad   + CT\Bigl(  \sup_t \|\rho(t)\|_{-(k-1)}\sup_t\|v'(t)\|_{k}+\sup_t \|\rho'(t)\|_{-(k-1)}\sup_t\|v(t)\|_{k} \Bigr)\sup_t\|z(t)\|_{k} \\
\ds \qquad + CT\Bigl( \sup_t\|\rho(t)\|_{-(k-1)}\sup_t\|\rho'(t)\|_{-(k-1)} \Bigr) \sup_t \|z(t)\|_{k} 
+   CT\, \|Dv\|_\infty\|Dv'\|_\infty \|Dz\|_\infty \\
\qquad   \ds + CT\Bigl( \sup_t \|\rho(t)\|_{-(k-1)} \|Dv'\|_\infty+\sup_t \|\rho'(t)\|_{-(k-1)} \|Dv\|_\infty +\sup_t \|\rho(t)\|_{-(k-1)}\sup_t \|\rho'(t)\|_{-(k-1)}\Bigr)\|Dz\|_\infty \\ 
\qquad   \ds 
+CT\, \|\tilde R_2\|_{-(k-1)}\sup_t \|z(t)\|_k \,,
\end{array}
$$
where the constant $C$ depends on
 the regularity of the function $H$ and on $\sup_t \|u(t)\|_{k}$. Taking the supremum over $\|z_1\|_k\leq 1$, we infer that:
\begin{align*}
\|\mu(t_1)\|_{-k} & \leq C \int_{t_0}^{t_1} \|\mu(s)\|_{-k}ds+  CT\Bigl\{\|Dw\|_\infty +\|\tilde R_2\|_{-(k-1)}  \\
&\qquad +  \Bigl(\sup_t\|\rho(t)\|_{-(k-1)}+\sup_t \|v(t)\|_{k}\Bigr) \Bigl(\sup_t\|\rho'(t)\|_{-(k-1)}+\sup_t \|v'(t)\|_{k}  \Bigr)\Bigr\}.
\end{align*}
By Gronwall's inequality, we obtain 
\be\label{lkajervlsf}
 \sup_t \|\mu(t)\|_{-k} \; \leq   CT\Bigl\{\|Dw\|_\infty +\sup_t \|\tilde R_2(t)\|_{-(k-1)} + {\mathcal R_{k-1,k}} {\mathcal R'_{k-1,k}} \Bigr\}\,,
\ee
where  $C$ depends on the regularity of the function $H$ and on $\sup_t \|u(t)\|_{k+1}$.
From Proposition \ref{prop.highS} (with $k=1$),
we have  
\be\label{estwgdas}
\sup_t \|w(t)\|_{n-2} \leq (1+CT)\Bigl(\|\frac{\delta^2 G}{\delta m}(\rho(T),\rho'(T))\|_{n-2}+ \|\frac{\delta G}{\delta m}(\mu(T))\|_{n-2} + \|\tilde R_3\|_{n -2}\Bigr)+ CT\sup_t \|f(t)\|_{n-2 },
\ee
where 
$$
\begin{array}{rl}
\ds f(t,x)\; = & \ds \frac{\delta H}{\delta m}(\mu(t))+  \frac{\delta^2 H}{\delta m^2}(\rho(t),\rho'(t)) + H_{pp} Dv\cdot Dv'
+\frac{\delta H_p}{\delta m}(\rho)\cdot Dv' + \frac{\delta H_p}{\delta m}(\rho')\cdot Dv -\tilde R_1(t,x).
\end{array}
$$
We estimate
$$
\begin{array}{rl}
\ds \sup_t \|f(t)\|_{n-2}\; \leq & \ds  \Bigl( \|\frac{\delta H}{\delta m}(x_0,\cdot_x,Du(t,\cdot_x), m(t),\cdot_y)\|_{n-2;k}  \sup_t \|\mu(t)\|_{-k} +\|\tilde R_1\|_{n-2}\\
& \qquad +  C \sup_t ( \|\rho(t)\|_{-(k-1)} +  \|v(t)\|_{n-1})
( \|\rho'(t)\|_{-(k-1)} +  \|v'(t)\|_{n-1})\Bigr)
\end{array}
$$
for a constant $C$ depending
 on the regularity of $H$ and on $\sup_t \|u(t)\|_{n-1}$. 
So  we conclude, using also \eqref{lkajervlsf},
$$
\begin{array}{rl}
\ds \sup_t \|f(t)\|_{n-2}\; \leq & \ds C\, T \Bigl(\|Dw\|_\infty + {\mathcal R}_{k-1,k} {\mathcal R}'_{k-1,k}+\|\tilde R_2\|_{-(k-1)}\Bigr) 
\\
& \qquad +\sup_t \|\tilde R_1\|_{n-2} +  C \, {\mathcal R}_{k-1,n-1} {\mathcal R}'_{k-1,n-1}\,.
\end{array}
$$
Similarly, again from \eqref{lkajervlsf} we get
$$
  \|\frac{\delta G}{\delta m}(\mu(T))\|_{n-2} \le CT\, \|\frac{\delta G}{\delta m}(x_0,\cdot, m(T),\cdot)\|_{n-2;k}  \Bigl(\|Dw\|_\infty + {\mathcal R}_{k-1,k} {\mathcal R}'_{k-1,k}+\|\tilde R_2\|_{-(k-1)}\Bigr)
$$
and 
$$
\begin{array}{rl}
\ds \|\frac{\delta^2 G}{\delta m^2}(\rho(T),\rho'(T))\|_{n-2} & \ds \leq \|\frac{\delta^2 G}{\delta m^2}(x_0,\cdot, m(T),\cdot, \cdot)\|_{n-2;k-1,k-1} \|\rho(T)\|_{ -(k-1)} \|\rho'(T)\|_{ -(k-1)}\,.
\end{array}
$$
Then, we find
$$
\begin{array}{rl}
\ds \sup_t \|w(t)\|_{n-2}\; \leq & \ds (1+CT)\left(\|\frac{\delta^2 G}{\delta m^2}(x_0,\cdot,m(T), \cdot, \cdot) \|_{n-2;k-1,k-1}  \|\rho(T)\|_{ -(k-1)} \|\rho'(T)\|_{ -(k-1)}
+\|\tilde R_3\|_{n-2}\right) \\
& \ds + CT \, (1+\|\frac{\delta  G}{\delta m }\|_{n-2;k} )\left(\|Dw\|_\infty +\|\tilde R_2\|_{-(k-1)} + {\mathcal R}_{k-1,k} {\mathcal R}'_{k-1,k} \right) \\
& \ds  \qquad + CT\, \left(  \|\tilde R_1\|_{n-2}+  {\mathcal R}_{k-1,n-1} {\mathcal R}'_{k-1,n-1}\right) 
\end{array}
$$
where now the constant $C$ depends on both $ \sup_t \|u(t)\|_{k+1}$ and $\sup_t \|u(t)\|_{n-1}$.

For $n=3$, if we choose $T$  small enough (depending on $\|\frac{\delta  G}{\delta m }\|_{1,k} $ and  $\sup_t \|u(t)\|_{2}$) we estimate $\|Dw\|_\infty$. Then, plugging this estimate into \eqref{lkajervlsf} gives \eqref{ilauzehrd2} (with a  constant only depending on $\sup_t \|u(t)\|_{k+1}$). Finally, we deduce  \eqref{ilauzehrd} for $n> 3$. 

For any $r\leq n-2$, $x_0\in \R^{d_0}$ and $t\in [0,T]$, the estimate  \eqref{ilauzehrdBISBIS} on $D^r_x w$ follows again from Proposition \ref{prop.highS} (with $k=1$), that gives, arguing as before,
\begin{align*}
& \|D^r_x w(t,\cdot)\|_\infty  \leq (1+CT)\Bigl(\|D^r_x \frac{\delta^2 G}{\delta m^2}(\rho(T),\rho'(T))\|_\infty+ \|D^r_x \frac{\delta G}{\delta m}(\mu(T))\|_\infty + \|D^r_x \tilde R_3\|_\infty\Bigr) \\ 
& \qquad + CT \sup_t \|f(t)\|_{n-2 } \\
& \leq  \Bigl(\|D^r_x \frac{\delta^2 G}{\delta m^2}(\rho(T),\rho'(T))\|_\infty + \|D^r_x \tilde R_3\|_\infty\Bigr)+  (1+CT)  \|\frac{\delta G}{\delta m}(x_0,\cdot, m(t),\cdot)\|_{n-2;k}\sup_t \|\mu(t)\|_{-k}\\
&\qquad    + CT\Bigl(
  \|\tilde R_1\|_{n-2} + \|\tilde R_2\|_{-(k-1)} +\|\tilde R_3\|_{n-2}+ \|\frac{\delta^2 G}{\delta m^2}(x_0,\cdot, m(T),\cdot, \cdot)\|_{n-2;k-1,k-1} \|\rho(T)\|_{ -(k-1)} \|\rho'(T)\|_{  -(k-1)} \\ 
&\qquad\qquad \qquad   +  {\mathcal R}_{k-1,k} {\mathcal R}'_{k-1,k}+  {\mathcal R}_{k-1,n-1} {\mathcal R}'_{k-1,n-1}\Bigr),
\end{align*}
that yields the desired claim using \eqref{ilauzehrd2}. 
\end{proof}

By gathering together  Proposition \ref{prop.LipschDeltaU} and Proposition \ref{Prop.estiw}, we deduce the following three corollaries,  which  will be useful in the derivation of second order estimates for the solution of the master equation.

\begin{Corollary}\label{d2m}
Let $M>0$, $n\geq 3$ and $k\in \{2,\ldots,n-1\}$, and assume that 
$$
\|G\|_n +  \|\frac{\delta G}{\delta m}\|_{n-1;k} + \|\frac{\delta^2 G}{\delta m^2}\|_{n-2;k-1,k-1} \leq M\,.
$$
Let $(u,m)$ be the unique solution to   \eqref{eq.MFGcoupled} in some interval $[0,T_M]$ given by Proposition \ref{Prop.LipDep}, and let $(v,\rho)$ and $(v',\rho')$ be two solutions to \eqref{MFG2lkjenze} with  $R_1=R_2=R_3=0$ and initial conditions $\rho_0$, $\rho_0'$ respectively.  

Then there exists  a constant $C_M$ such that 
the solution  $(w,\mu)$   to  \eqref{MFG3lkjenze} corresponding to $(u,m)$, $(v,\rho)$ and $(v',\rho')$ and with $\tilde R_1=\tilde R_2=\tilde R_3=0$ satisfies, for any $T\in (0,T_M)$, $r \le n-2$: 
$$
 \sup_{t, x} |D_x^r w(t,x)|   \leq    \sup_x \Big|D_x^r \frac{\delta^2 G}{\delta m^2} (x_0,x,m(T))(\rho(T), \rho'(T)) \Big|  + C_M T  \|\rho_0\|_{-(k-1)}\|\rho'_0\|_{-(k-1)}
$$   
where $C_M$ depends on  $M$, as well as on $ \|a\|_n$ and the regularity of $H$. 
\end{Corollary}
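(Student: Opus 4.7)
The plan is to obtain the Corollary as a direct consequence of the pointwise estimate \eqref{ilauzehrdBISBIS} in Proposition \ref{Prop.estiw}, combined with the first-order linearized bounds from Proposition \ref{prop.LipschDeltaU} applied to $(v,\rho)$ and $(v',\rho')$. The key observation is that, when all forcing terms $\tilde R_1,\tilde R_2,\tilde R_3$ vanish, the "principal" term on the right-hand side of \eqref{ilauzehrdBISBIS} is already the one appearing in the Corollary, and the remaining cross terms are products $\mathcal R_{k-1,j}\mathcal R'_{k-1,j}$ that should be controlled quadratically by $\|\rho_0\|_{-(k-1)}\|\rho'_0\|_{-(k-1)}$.

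First, I would invoke Proposition \ref{prop.LipschDeltaU} with its index $k$ replaced by $k-1$, noting that the hypothesis $\|\delta G/\delta m\|_{1;k-1}\le M$ follows from the monotonicity $\|\cdot\|_{1;k-1}\le\|\cdot\|_{n-1;k}\le M$ granted by the assumptions of the Corollary (recall $k\ge 2$). Since $R_1=R_2=R_3=0$, estimate \eqref{lkqehsrmd2} gives $\sup_t\|\rho(t)\|_{-(k-1)}\le (1+C_MT)\|\rho_0\|_{-(k-1)}$, while \eqref{lkqehsrmdBISBIS} together with the bound $\|D^r_x\frac{\delta G}{\delta m}(x_0,\cdot,m(T))(\rho(T))\|_\infty\le\|\frac{\delta G}{\delta m}\|_{n-1;k-1}\|\rho(T)\|_{-(k-1)}\le M\|\rho(T)\|_{-(k-1)}$ yields $\sup_t\|v(t)\|_{n-1}\le C_M\|\rho_0\|_{-(k-1)}$. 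Consequently, in the notation of Remark \ref{rr'}, both $\mathcal R_{k-1,k}$ and $\mathcal R_{k-1,n-1}$ are bounded by $C_M\|\rho_0\|_{-(k-1)}$, and the analogous bounds hold for the primed quantities.

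Next, I would apply \eqref{ilauzehrdBISBIS} of Proposition \ref{Prop.estiw} with $\tilde R_1=\tilde R_2=\tilde R_3=0$ (the assumption $\|\delta G/\delta m\|_{n-2;k}\le M$ required there also follows from $\|\delta G/\delta m\|_{n-1;k}\le M$). The first term on the right-hand side of \eqref{ilauzehrdBISBIS} is retained verbatim as the main term in the Corollary, while every remaining contribution contains either $\|\frac{\delta^2 G}{\delta m^2}\|_{n-2;k-1,k-1}\|\rho(T)\|_{-(k-1)}\|\rho'(T)\|_{-(k-1)}$ or a cross product $\mathcal R_{k-1,j}\mathcal R'_{k-1,j}$; each of these is majorized by $C_M\|\rho_0\|_{-(k-1)}\|\rho'_0\|_{-(k-1)}$ thanks to the previous step and the bound $\|\frac{\delta^2 G}{\delta m^2}\|_{n-2;k-1,k-1}\le M$. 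This gives the desired inequality.

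The only real subtlety—and what I expect to be the main bookkeeping hurdle—is to ensure that all hidden constants depending on the regularity of $u$ (in particular $\sup_t\|u\|_{n-1}$ and $\sup_t\|u\|_{k+1}$) and on the various norms of $G$ and its $m$-derivatives that enter Propositions \ref{Prop.LipDep}, \ref{prop.LipschDeltaU} and \ref{Prop.estiw} are uniformly controlled by $M$; this just requires matching indices and repeatedly using $\|\cdot\|_{n';k'}\le\|\cdot\|_{n'';k''}$ for $n'\le n''$, $k'\le k''$. Once this is done, the estimate is immediate, and no new PDE argument is needed beyond what has already been established.
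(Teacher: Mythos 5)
Your proposal is correct and follows essentially the same route as the paper: the paper's proof likewise invokes estimate \eqref{ilauzehrdBISBIS} of Proposition \ref{Prop.estiw} with $\tilde R_1=\tilde R_2=\tilde R_3=0$ and then controls $\mathcal R_{k-1,k}$, $\mathcal R_{k-1,n-1}$ (and their primed counterparts) via Proposition \ref{prop.LipschDeltaU} applied with index $k-1$, exactly as you do. The only point the paper makes explicit that you relegate to the final ``bookkeeping'' remark is the verification, via ${\rm Lip}_{0,1}(G)\le\|\delta G/\delta m\|_{1,1}\le M$, that Proposition \ref{Prop.LipDep} applies so that $u$ is bounded in $C^n_b$ on $[0,T_M]$; but you correctly flag this dependency, so there is no gap.
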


\begin{proof} 
We first notice  that 
$$
{\rm Lip}_{0,1}(G)\leq \sup_{x_0,m} \|\frac{\delta G}{\delta m}(x_0,\cdot,m,\cdot)\|_{1,1} \leq M
$$
hence we are in the position to apply Proposition \ref{Prop.LipDep}, and there exists a time $T_M>0$ such that the unique solution $(u,m)$ to \eqref{eq.MFGcoupled}  satisfies $u \in C^n_b$ with an estimate depending on $M$ and $\sup_{x_0, m}\|G(x_0, \cdot, m)\|_n$.

From Proposition \ref{Prop.estiw}, we have
$$
\begin{array}{rl} 
\|D^r_x w(t, \cdot)\|_\infty   & \leq    
 \Big\| D^r_x \frac{\delta^2 G}{\delta m^2}(x_0,\cdot,m(T))(\rho(T),\rho'(T)) \Big\|_\infty  \\
& + C_MT \Bigl(\|\rho(T)\|_{ -(k-1)} \|\rho'(T)\|_{ -(k-1)} + {\mathcal R_{k-1,k}}{\mathcal R}'_{k-1,k}+ {\mathcal R_{k-1,n-1}}{\mathcal R}'_{k-1,n-1} \Bigr) .\notag
\end{array}
$$ 
On the other hand, we know from  Proposition \ref{prop.LipschDeltaU}   that 
$$
\begin{array}{rl}
\ds   & \sup\limits_{t }\|v(t)\|_{n-1} \leq  (1+C_MT) \|\frac{\delta G}{\delta m}\|_{n-1;k-1}\,  \|\rho_0\|_{-(k-1)} \\
\noalign{\medskip}
&   \ds  
  \sup\limits_{t } \|\rho(t)\|_{-(k-1)}  \leq   \ds  \left(1+C_MT \right) \|\rho_0\|_{-(k-1)}\,,  
\end{array}
$$
 which allows us to estimate ${\mathcal R_{k-1,k}}$ and ${\mathcal R_{k-1,n-1}}$.
Here the constant depends on $\sup_t \|u(t)\|_{n}$. A   similar estimate holds for $(v',\rho')$. 
Therefore, we conclude 
 the desired estimate.
\end{proof}

\begin{Corollary}\label{d2mx0}
Under the assumptions of Corollary \ref{d2m},  suppose in addition that
$$
\|D_{x_0}\frac{\delta G}{\delta m}\|_{n-2; k-1} \le M .
$$ 
Let $(u,m)$ be the unique solution to   \eqref{eq.MFGcoupled} in  $[0,T_M]$, let $(v,\rho)$ be a solution  to \eqref{MFG2lkjenze} with  $R_1=R_2=R_3=0$ and initial condition  $\rho_0$, and, for any $|l|=1$, $l\in \R^{d_0}$, $(v^l,\rho^l)$ be a solution  to \eqref{MFG2lkjenze} with zero initial condition and with 
\be\label{R}\begin{split} 
& R_1(t,x)=- \partial^l_{x_0}H(y_0,x,Du(t,x),m(t))\\
& R_2(t,x)=m(t,x) \partial^l_{x_0}H_{p}(y_0,x,Du(t,x),m(t))\\
& R_3(t,x) = \partial^l_{x_0} G(y_0,x,m(T)).
\end{split}
\ee
Then there exists  a constant $C_M$ such that 
the solution  $(w^l,\mu^l)$   to  \eqref{MFG3lkjenze} corresponding to $(u,m)$, $(v,\rho)$ and $(v^l,\rho^l)$ and with 
\be\label{tildeR}
\begin{split}
& \tilde R_1(t,x) = -\partial^l_{x_0} H_{p}(x_0,x,Du,m(t)) Dv - \partial^l_{x_0}\frac{\delta H}{\delta m}(x_0,x,Du,m(t))(\rho(t)), \\
& \tilde R_{2}(t,x) =  \rho \partial^l_{x_0} H_{p} (x_0,x,Du,m(t))+ m\partial^l_{x_0} H_{pp}(x_0,x,Du,m(t)) Dv + m \partial^l_{x_0} \frac{\delta H_{p}}{\delta m}(\rho) , \\
& \tilde R_3(x) = \partial^l_{x_0} \frac{\delta G}{\delta m}(x_0,x,m(T))(\rho(T)),
\end{split}
\ee 
satisfies, for any $T\in (0,T_M)$, $r \le n-2$,
$$
\sup_{t,x}\Bigl(\sum_{|l|=1} |D_x^r w^l(t, x)|^2\Bigr)^{1/2}   \leq    \ds \sup_x 
 \Big|D_x^rD_{x_0} \frac{\delta G}{\delta m}(x_0,x,m(T))(\rho(T)) \Big|  + C_MT \|\rho_0\|_{-(k-1)},
 $$
where $C_M$ depends on  $M$, as well as on $ \|a\|_n$ and the regularity of $H$. 
\end{Corollary}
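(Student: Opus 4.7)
The plan is to apply the estimate \eqref{ilauzehrdBISBIS} from Proposition \ref{Prop.estiw} to the pair $(w^l,\mu^l)$ taken with $\rho'=\rho^l$ and with the sources $\tilde R_1,\tilde R_2,\tilde R_3$ given by \eqref{tildeR}, and then to sum the resulting estimates over an orthonormal basis of $\R^{d_0}$ indexed by $|l|=1$. The structure mimics the proof of Corollary \ref{d2m}; the new feature is that both the terminal condition and the sources of $(w^l,\mu^l)$ carry an extra derivative in $x_0$, so we need to organise the bookkeeping in such a way that the only contribution not of order $T\|\rho_0\|_{-(k-1)}$ is the one coming from $\partial^l_{x_0}\frac{\delta G}{\delta m}(\rho(T))$.

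As preliminary, Proposition \ref{Prop.LipDep} provides the unique solution $(u,m)$ to \eqref{eq.MFGcoupled} with a uniform control on $\|u(t)\|_n$. Applying Proposition \ref{prop.LipschDeltaU} to $(v,\rho)$ (with zero sources and initial datum $\rho_0$) gives
$$
\sup_t\|\rho(t)\|_{-(k-1)}+\sup_t\|v(t)\|_{n-1}\leq C_M\|\rho_0\|_{-(k-1)}.
$$
Applying the same proposition to $(v^l,\rho^l)$, whose initial datum vanishes and whose sources in \eqref{R} are bounded in the relevant norms by $M$ thanks to the regularity of $H$ and to $\sup_{x_0,m}\|D_{x_0}G\|_{n-1}\leq M$, one obtains
$$
\sup_t\|\rho^l(t)\|_{-(k-1)}\leq C_MT,\qquad \sup_t\|v^l(t)\|_{n-1}\leq C_M,
$$
where the bound on $v^l$ is only $O(1)$ because the terminal source $R_3=\partial^l_{x_0}G$ contributes a nontrivial terminal datum that is not multiplied by $T$ in \eqref{lkqehsrmd}.

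Next I apply \eqref{ilauzehrdBISBIS} to $(w^l,\mu^l)$, which yields, for $r\leq n-2$,
$$
\|D^r_x w^l(t)\|_\infty\leq \Big\|D^r_x\tfrac{\delta^2 G}{\delta m^2}(x_0,\cdot,m(T))(\rho(T),\rho^l(T))\Big\|_\infty+\|D^r_x\tilde R_3\|_\infty+C_MT(\cdots),
$$
where the term $\|D^r_x\tilde R_3\|_\infty=\|D^r_x\partial^l_{x_0}\frac{\delta G}{\delta m}(x_0,\cdot,m(T))(\rho(T))\|_\infty$ is precisely the leading contribution targeted by the statement. All the other pieces must be absorbed into $C_MT\|\rho_0\|_{-(k-1)}$: the $\frac{\delta^2 G}{\delta m^2}$ term is bounded by $M\|\rho(T)\|_{-(k-1)}\|\rho^l(T)\|_{-(k-1)}\leq C_MT\|\rho_0\|_{-(k-1)}$ using the preliminary bound on $\|\rho^l(T)\|_{-(k-1)}$; the sources $\tilde R_1,\tilde R_2$ from \eqref{tildeR} contain only linear combinations of $\rho$, $Dv$, and quantities involving derivatives of $H$, so their norms are $O(\|\rho_0\|_{-(k-1)})$; the terminal source $\tilde R_3$ is bounded in $C^{n-2}_b$ by $\|D_{x_0}\frac{\delta G}{\delta m}\|_{n-2;k-1}\|\rho(T)\|_{-(k-1)}\leq M\|\rho(T)\|_{-(k-1)}$, using the new assumption; finally, the quadratic factors $\mathcal R_{k-1,j}\mathcal R'_{k-1,j}$ appearing in \eqref{ilauzehrdBISBIS} (with $\rho'=\rho^l$, $v'=v^l$) are of order $\|\rho_0\|_{-(k-1)}\cdot 1$ by the preliminary estimates, and are thus also $O(\|\rho_0\|_{-(k-1)})$.

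Summing the resulting inequality over $|l|=1$ and using $\bigl(\sum_{|l|=1}|\partial^l_{x_0}\Phi|^2\bigr)^{1/2}=|D_{x_0}\Phi|$ for any smooth $\Phi$ gives
$$
\sup_{t,x}\Bigl(\sum_{|l|=1}|D^r_x w^l(t,x)|^2\Bigr)^{1/2}\leq \sup_x\Big|D^r_xD_{x_0}\tfrac{\delta G}{\delta m}(x_0,x,m(T))(\rho(T))\Big|+C_MT\|\rho_0\|_{-(k-1)},
$$
which is the claim. The main obstacle is purely accounting: one must check that every term produced by Proposition \ref{Prop.estiw} other than the targeted leading term is actually $O(T\|\rho_0\|_{-(k-1)})$, which relies crucially on the fact that $\rho^l$ is $O(T)$ in the $\|\cdot\|_{-(k-1)}$ norm and that the new hypothesis $\|D_{x_0}\frac{\delta G}{\delta m}\|_{n-2;k-1}\leq M$ controls the residual $\tilde R_3$.
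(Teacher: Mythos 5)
There is a genuine gap at the final summation step. Your plan is to apply the estimate \eqref{ilauzehrdBISBIS} to each pair $(w^l,\mu^l)$ individually, obtaining for each $l$ a bound of the form
$$
\sup_x |D^r_x w^l(t,x)| \;\le\; \sup_x \Big|D^r_x\partial^l_{x_0}\frac{\delta G}{\delta m}(x_0,x,m(T))(\rho(T))\Big| + C_MT\|\rho_0\|_{-(k-1)},
$$
and then to ``sum over $|l|=1$.'' But combining these in $\ell^2$ over $l$ produces on the right-hand side the quantity $\bigl(\sum_l \sup_x |D^r_x\partial^l_{x_0}G(x)|^2\bigr)^{1/2}$ with the supremum \emph{inside} the sum, whereas the claim requires $\sup_x\bigl(\sum_l|D^r_x\partial^l_{x_0}G(x)|^2\bigr)^{1/2}$ with the supremum \emph{outside} the sum. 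Since $\sum_l\sup_x \ge \sup_x\sum_l$, your route only yields a weaker conclusion (the naive combination also injects an extraneous factor $\sqrt{d_0}$ on the remainder). The stated bound is strictly sharper, and that sharpness is not cosmetic: the constant $1$ in front of the terminal datum is what makes the splitting iteration (Lemma \ref{lem.estiUNMm}) close.

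The paper avoids this by \emph{not} treating the $w^l$ one at a time. It observes that all the $w^l$ solve linear HJ equations with the \emph{same} diffusion and the \emph{same} drift $H_p(x_0,x,Du,m(t))$, only the zeroth-order sources and terminal data differ with $l$. It then applies Proposition \ref{prop.highS}---the vector-valued maximum-principle estimate, whose whole purpose is to deliver the bound with $\sup_x$ placed outside the $\ell^2$ sum over the family---to the system $(w^l)_{|l|=1}$ directly, arguing as in the proof of Proposition \ref{Prop.estiw} but for the coupled family. Your preliminary estimates on $(v,\rho)$, $(v^l,\rho^l)$ and $\mu^l$ are correct and identical to the paper's; the missing idea is the passage through Proposition \ref{prop.highS} at the system level. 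Without it, the interchange of $\sup_x$ and $\bigl(\sum_l\cdot\bigr)^{1/2}$ is unjustified.
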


\begin{proof} 
We first notice that 
$$
\sup_t\|\tilde R_1(t)\|_{n-2} + \sup_t \|\tilde R_2(t)\|_{-(k-1)} \leq C \sup_t \, \left(\|v(t) \|_{n-1}+ \|\rho(t)\|_{-(k-1)} \right) 
$$
for a constant depending on the regularity of $H$, on $\sup_t \|u(t)\|_{n-1}$ and on $\sup_t \|u(t)\|_{k}$. However, the latter term is bounded by $\sup_t \|u(t)\|_{n-1}$ since $k\leq n-1$. Next we estimate the terms $(v,\rho)$, $(v^l,\rho^l)$ and $\mu^l$: we have,  from  Proposition \ref{prop.LipschDeltaU} and Proposition \ref{Prop.estiw}, 
$$
\begin{array}{rl}
\ds   & \sup\limits_{t }\|v(t)\|_{n-1} \leq  (1+C_MT) \|\frac{\delta G}{\delta m}\|_{n-1;k-1}\,  \|\rho_0\|_{-(k-1)} \leq C_M  \|\rho_0\|_{-(k-1)}  \\
\noalign{\medskip}
&   \ds  
  \sup\limits_{t } \|\rho(t)\|_{-(k-1)}  \leq   \ds  \left(1+C_MT \right) \|\rho_0\|_{-(k-1)}\,,  
\end{array}
$$
and
$$
\begin{array}{rl}
\ds  & \sup_{t}\|v^l(t)\|_{n-1} \; \leq  \ds \left\{(1+C_MT) \|\frac{\delta G}{\delta m}\|_{n-1;k-1} +  C_MT \right\} \leq C_M,\\
& \qquad \ds    
 \sup_{t} \|\rho^l(t)\|_{-(k-1)}\; \leq  \ds   C_MT, \qquad \|\mu^l\|_{-k}\leq C_MT\|\rho_0\|_{-(k-1)}. 
\end{array}
$$
We note that the $w^l$ solve linear equations with the same diffusion and the same drift. So, combining  Proposition \ref{prop.highS} with the inequalities above and arguing as in the proof of Proposition \ref{Prop.estiw} gives, for any $r\leq n-2$,  
\begin{align*}
& \sup_x \Bigl(\sum_{|l|=1} |D_x^r w^l(t, x)|^2\Bigr)^{1/2} \leq \\
&  (1+CT) \sup_x \Bigl(\sum_{|l|=1} \Bigl(  | D^r_x \frac{\delta^2 G}{\delta m^2}(\rho(T),\rho^l(T)) | +| D^r_x \frac{\delta G}{\delta m}(\mu^l(T))|+ |D^r_x\partial^l_{x_0} \frac{\delta G}{\delta m}(\rho^l(T))|\Bigr)^2\Bigl)^{1/2} +C_MT\|\rho_0\|_{-(k-1)} \\
& \leq \sup_x \Bigl(\sum_{|l|=1} \Bigl(  |D^r_x\partial^l_{x_0} \frac{\delta G}{\delta m}(\rho^l(T))|+C_MT\|\rho_0\|_{-(k-1)}\Bigr)^2\Bigl)^{1/2} +C_MT\|\rho_0\|_{-(k-1)},
\end{align*} 
where we have omitted the dependence of $G$ with respect to $(x_0,x,m(T))$. This gives the result. 
\end{proof}

\begin{Corollary}\label{d2x0}
Under the assumptions of Corollary \ref{d2mx0}, suppose in addition that
$
\|D^2_{x_0} G(x_0,\cdot,m)\|_{n-2} \le M .
$ 
Fix $l,l'\in \N^{d_0}$ with $|l|=|l'|=1$. Let $(u,m)$ be the unique solution to   \eqref{eq.MFGcoupled} in  $[0,T_M]$ and  let $(v^l,\rho^l)$, $(v^{l'},\rho^{l'})$ be the solution  to \eqref{MFG2lkjenze} with  zero initial condition  and with $R_1,R_2,R_3$ and  $R_1',R_2',R_3'$ given by \eqref{R} for $l$ and $l'$ respectively.

Let   $(w^{l,l'},\mu^{l,l'})$   be the solution to  \eqref{MFG3lkjenze} corresponding to $(u,m)$, $(v^l,\rho^l)$ and $(v^{l'},\rho^{l'})$ and with 
\be\label{tilde2}
 \begin{split}
& \tilde R_1^{l,l'}(t,x)= - \Big( \partial^{l+l'}_{x_0}H + \partial^{l}_{x_0}H_{p} Dv^{l'} +  \partial^{l'}_{x_0}H_{p} Dv^l + 
\partial^{l}_{x_0} \frac{\delta H}{\delta m}(\rho^{l'}(t)) +  \partial^{l'}_{x_0} \frac{\delta H}{\delta m}(\rho^l(t))  \Big) \\
& \tilde R_2^{l,l'}(t,x)= \rho^{l'} \partial^{l}_{x_0}H_p + \rho^{l}  \partial^{l'}_{x_0}H_p + m (\partial^{l}_{x_0}H_{pp}Dv^{l'}+ \partial^{l'}_{x_0}H_{pp}Dv^{l})\\
& \qquad \qquad\qquad  +m (\partial^{l}_{x_0}\frac{\delta H_p}{\delta m}(\rho^{l'})+  \partial^{l'}_{x_0}\frac{\delta H_p}{\delta m}(\rho^{l}))
+ m \partial^{l+l'}_{x_0} H_p\\
& \tilde R_3^{l,l'}(t,x) =\partial^{l+l'}_{x_0}  G(x_0,x,m(T)) + D^{l}_{x_0}\frac{\delta G}{\delta m}(x_0,x,m(T))(\rho^{l'}(T))+
 \partial^{l'}_{x_0}\frac{\delta G}{\delta m}(x_0,x,m(T))(\rho^l(T))\ ,
\end{split}
\ee
where $H$ and its derivatives are computed at $(x_0,x,Du(t,x),m(t))$. Then there exists  a constant $C_M$ such that, for any $T\in (0,T_M)$, $r \le n-2$: 
$$
\sup_{t,x} \Bigl(\sum_{l,l'} |D_x^r w^{l,l'}(t, x)|^2\Bigr)^{1/2} \;  \leq   \sup_x  | D_x^r  D^2_{x_0}G(x_0, \cdot,m(T)) |  + C_MT, 
$$   
where $C_M$ depends on  $M$, as well as on $ \|a\|_n$ and the regularity of $H$. 
\end{Corollary}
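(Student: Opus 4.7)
The argument will parallel the proof of Corollary \ref{d2mx0}, with the pair $(v^l,\rho^l)$ encoding the directional derivative $\partial^l_{x_0}(u,m)$ and $(w^{l,l'},\mu^{l,l'})$ the mixed second directional derivative $\partial^{l+l'}_{x_0}(u,m)$. Because both linearization directions are now $x_0$-variations, the terminal data of $w^{l,l'}$ will be dominated by $\partial^{l+l'}_{x_0} G$ rather than by a cross term involving $\frac{\delta G}{\delta m}$. The proof will reduce to a careful bookkeeping of the norms enjoyed by the source terms $\tilde R_i^{l,l'}$ and the terminal condition, followed by a direct application of the general linear estimates already established.

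First, Proposition \ref{Prop.LipDep} produces, on some interval $[0,T_M]$, the unique solution $(u,m)$ of \eqref{eq.MFGcoupled} with $\sup_t \|u(t)\|_n \le C_M$, so that the coefficients of every linear equation under consideration have the required regularity. Next I would apply Proposition \ref{prop.LipschDeltaU} separately to $(v^l,\rho^l)$ and $(v^{l'},\rho^{l'})$; their sources \eqref{R} are bounded in $C^{n-1}_b$, $C^{-(k-1)}$ and $C^{n-1}_b$ respectively by a constant depending only on $M$, the regularity of $H$ and $G$, and $\sup_t\|u(t)\|_n$, while their initial data vanish. Estimates \eqref{lkqehsrmd}--\eqref{lkqehsrmd2} then yield
\[
\sup_t \|v^l(t)\|_{n-1}\le(1+C_MT)\|\partial^l_{x_0}G\|_{n-1}+C_MT\le C_M,\qquad \sup_t\|\rho^l(t)\|_{-(k-1)}\le C_MT,
\]
and analogously for $(v^{l'},\rho^{l'})$; the crucial smallness $\|\rho^l(T)\|_{-(k-1)}=O(T)$ comes from the zero initial condition.

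I would then apply \eqref{ilauzehrdBISBIS} of Proposition \ref{Prop.estiw} to $(w^{l,l'},\mu^{l,l'})$. Thanks to the previous step and the regularity of $H$, the sources $\tilde R_1^{l,l'},\tilde R_2^{l,l'}$ from \eqref{tilde2} are bounded by $C_M$ in the norms $\sup_t\|\cdot\|_{n-2}$ and $\sup_t\|\cdot\|_{-(k-1)}$, and the products $\mathcal R_{k-1,j}\mathcal R'_{k-1,j}$ appearing in \eqref{ilauzehrdBISBIS} are likewise $O(1)$. For the terminal data, the contribution $\frac{\delta^2 G}{\delta m^2}(x_0,\cdot,m(T))(\rho^l(T),\rho^{l'}(T))$ is $O(T^2)$ since both $\rho^l(T)$ and $\rho^{l'}(T)$ are $O(T)$, and in $\tilde R_3^{l,l'}$ the mixed pieces involving $\frac{\delta G}{\delta m}(\rho^{l'}(T))$ and $\frac{\delta G}{\delta m}(\rho^l(T))$ are also $O(T)$. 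Hence the only non-$T$-small contribution to $D^r_x\tilde R_3^{l,l'}$ is $D^r_x\partial^{l+l'}_{x_0}G(x_0,\cdot,m(T))$.

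Putting these ingredients together, \eqref{ilauzehrdBISBIS} gives, for each fixed pair $(l,l')$,
\[
\sup_{t,x}|D^r_x w^{l,l'}(t,x)|\le \sup_x |D^r_x\partial^{l+l'}_{x_0}G(x_0,x,m(T))|+C_M T.
\]
Squaring, summing over $|l|=|l'|=1$, and taking square roots yields the claimed inequality. The only delicate point is matching the orders $n,k$ with the regularity thresholds required in Propositions \ref{prop.LipschDeltaU} and \ref{Prop.estiw} (in particular verifying that the hypothesis $\|\frac{\delta G}{\delta m}\|_{n-2;k}\le M$ needed for \eqref{ilauzehrdBISBIS} is indeed available from the assumptions already made); no genuinely new analytic difficulty arises, the proof being a direct reduction to the cascade of linear estimates already in place.
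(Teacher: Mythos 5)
Your reduction of the data is fine, and it correctly identifies $\partial^{l+l'}_{x_0}G$ as the dominant terminal contribution while showing that $\tilde R_1^{l,l'},\tilde R_2^{l,l'}$ and the remaining pieces of $\tilde R_3^{l,l'}$ are $O(1)$ or $O(T)$. However, the final step "squaring, summing over $|l|=|l'|=1$, and taking square roots" does not produce the inequality stated in the Corollary. Applying \eqref{ilauzehrdBISBIS} to each pair $(l,l')$ separately gives $\sup_{t,x}|D^r_x w^{l,l'}|\le\sup_x|D^r_x\partial^{l+l'}_{x_0}G|+C_MT$, and summing these yields a bound by
\[
\Bigl(\sum_{l,l'}\bigl(\sup_x |D^r_x\partial^{l+l'}_{x_0}G|\bigr)^2\Bigr)^{1/2}+C_MT,
\]
i.e. a quantity in which the supremum over $x$ is taken \emph{inside} each coordinate of the sum. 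The Corollary asserts the stronger bound with the supremum \emph{outside}, namely by $\sup_x\bigl(\sum_{l,l'}|D^r_x\partial^{l+l'}_{x_0}G|^2\bigr)^{1/2}=\sup_x|D^r_xD^2_{x_0}G|$, and these two quantities are genuinely different: each coordinate may attain its supremum at a different $x$. This sup-outside form is precisely what is needed when the result is fed into the norms $\|\cdot\|_n$ defined in Section \ref{s:sshj}.

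The paper avoids this loss by observing that the $w^{l,l'}$ all solve linear parabolic equations with the \emph{same} diffusion and the \emph{same} drift $H_p(x_0,x,Du,m(t))$, so Proposition \ref{prop.highS} (the vector maximum-principle estimate for systems of affine equations with a common principal part) can be applied to the whole family $\{w^{l,l'}\}$ at once, directly bounding $\sup_{t,x}\bigl(\sum_{l,l'}|D^r_xw^{l,l'}|^2\bigr)^{1/2}$ by the corresponding sup-outside quantity on the terminal data plus $C_MT$. To repair your proof you should replace the per-pair application of \eqref{ilauzehrdBISBIS} with an application of Proposition \ref{prop.highS} to the coupled system, exactly as was done in the vector estimates of Corollary \ref{d2mx0} and Proposition \ref{Prop.DerivUL}, treating the source terms and the $T$-small pieces of $\tilde R_3^{l,l'}$ exactly as you already did.
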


\begin{proof} We can estimate $(v^l,\rho^l)$ and $(v^{l'},\rho^{l'})$ and $\mu^{l,l'}$---and therefore  $\tilde R_1^{l,l'}$ and  $\tilde R_2^{l,l'}$---exactly as in the previous Corollary. Moreover, as the $w^{l,l'}$ solve a HJ with the same diffusion and the same drift term, we can use Proposition \ref{prop.highS} to bound the sum $(\sum_{l,l'} |D_x^r w^{l,l'}(t, x)|^2)^{1/2}$: 
$$
\sup_{t,x} \Bigl(\sum_{l,l'} |D_x^r w^{l,l'}(t, x)|^2\Bigr)^{1/2} \;  \leq    \sup_x \Bigl(\sum_{l,l'} ( | D_x^r  \partial^{l+l'}_{x_0}G(x_0, \cdot,m(T)) |+C_MT)^2\Bigr)^{1/2} + C_MT,
$$
which gives the required estimate after rearranging. 
\end{proof}

\section{Estimates on the first order master equation}\label{Sec.FirstOrdreMaster}

In this section, we complete our program by proving regularity results for the solutions of the various first order master equations encountered in the previous sections. We mainly consider the first order master equation:
\be\label{eq.Master1}
\left\{\begin{array}{l} 
\ds - \partial_t U(t,x_0,x,m)  - {\rm Tr}(a(t,x)D^2_{xx}U(t,x_0,x,m)) +H(x_0,x,D_xU(t,x_0,x,m),m)\\
\ds    \qquad  -\int_{\R^d}{\rm Tr}(a(t,y)D^2_{ym}U(t,x_0,x,m,y))\ m(dy) \\
\ds    \qquad   + \int_{\R^d} D_m U(t,x_0,x,m,y)\cdot H_p(x_0,y,D_xU(t,x_0,y,m),m) \ m(dy) =0 
 \; {\rm in }\; (0,T)\times \R^d\times \Pw\\
 U(T,x_0,x,m)= G(x_0,x,m) \qquad  {\rm in }\;  \R^d\times \Pw\,.\\
\end{array}\right.
\ee
In the above equation, $x_0\in \R^{d_0}$ is considered as a parameter. Our aim is to build a solution to this equation and study its regularity. The method for finding a solution to \eqref{eq.Master1} is well-known: if we set 
\be\label{rep.U1}
U(t_0,x_0,x,m_0):= u(t_0,x)
\ee
where $(u,m)$  is the solution to \eqref{eq.MFGcoupled}, then $U$ is a solution to \eqref{eq.Master1}. 

In order to study the Major-Minor agents' problem, we also have to consider a linear master equation 
\be\label{eq.MasterL}
\left\{\begin{array}{l} 
 \ds -\partial_t U^{0}   -\int_{\R^d}{\rm Tr}(a(t,y)D^2_{ym}U^0(t,x_0,m,y))\ m(dy) \\
\ds \qquad + \int_{\R^d} D_mU^0(t,x_0,m,y)\cdot H_p(x_0,y, D_{x}U(t,x_0,y,m), m)dm(y)=0 \\ 
\ds U^0(T,x_0,m) = G^0(x_0,m) \qquad \text{in } \R^d \times \Pw,
\end{array}\right.
\ee
where $U$ is the solution to \eqref{eq.Master1}. In this case, we build the solution $U^0$ by the simple formula:
\be\label{rep.UL}
U^0(t_0,x_0,m_0) = G^0(x_0,m(T)),
\ee
where $(u,m)$ is also the solution to \eqref{eq.MFGcoupled}.

Our aim is to show that, if $G$ and $G^0$ are regular enough, then \eqref{eq.Master1} and \eqref{eq.MasterL} have classical solutions, given by the above representation formulas. Moreover, we show that the regularity of these solutions only deteriorate linearly in time. This last point is the key result in order to build later solutions to the second order master equation and to the master equation for the Major-Minor agents' problem. 

Throughout the section, the assumptions of Subsection \ref{subsec.Hyp} on $a$, $H$, $G$ and $G^0$ are in force. 

\subsection{First order differentiability of $U$ and $U^0$}

\begin{Proposition}\label{Prop.DerivU} For any $M>0$, there exists $T_M>0$ and $K_M>0$, depending on $C_0$ and $\gamma$ and $\|Da\|_\infty$, and there exists $C_M>0$, depending also on $n$,  $k\in \{2,\dots,n-1\}$, $
\sup_t \|a(t)\|_n$ and the regularity of $H$ such that, if 
\be\label{condkjahzlr}
 \|G\|_n
+ \left\| \frac{\delta G}{\delta m}\right\|_{n-1;k}  \leq M, 
\ee
and if $T\in (0,T_M]$, then  the map $U$ defined by \eqref{rep.U1} is a classical solution to \eqref{eq.Master1}, and  satisfies
$$
\sup_{ t\in [0,T]} \|U(t)\|_n \leq \|G\|_n+ C_MT
$$
Moreover, for any $|\alpha| \le n-1$, $\partial_x^\alpha \frac{\delta U}{\delta m}$ is of class $C^1$ in $m$, and for $k\in \{2, \dots, n-1\}$, 
$$
\sup_{ t\in [0,T]}  \left\|\frac{\delta U}{\delta m}(t)\right\|_{n-1;k} \leq 
 \left\|\frac{\delta G}{\delta m}\right\|_{n-1;k}+ C_MT.
$$
\end{Proposition}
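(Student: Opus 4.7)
The plan is to construct $U$ via the representation formula $U(t_0, x_0, x, m_0) := u(t_0, x)$, where $(u, m)$ is the unique short-time solution of the MFG system \eqref{eq.MFGcoupled} on $[t_0, T]$ with initial condition $m_0$ and parameter $x_0$. Existence, uniqueness and the bound $\|u(t)\|_n \leq \|G(x_0,\cdot,m(T))\|_n + C_M T$ are granted by Proposition \ref{Prop.LipDep} under condition \eqref{condkjahzlr} (using that ${\rm Lip}_{0,1}(G) \leq \|\delta G/\delta m\|_{1;1} \leq M$ is controlled). Taking the supremum over $(x_0, m_0)$ yields directly $\sup_t \|U(t)\|_n \leq \|G\|_n + C_M T$.

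To identify the $m$-derivative, fix $(t_0, x_0, x, m_0)$ and a signed measure $\rho_0 \in C^{-k}$, and let $(v, \rho)$ be the unique solution of the first-order linearized MFG system \eqref{MFG2lkjenze} associated with $(u,m)$ and with source terms $R_1 = R_2 = R_3 = 0$ and initial datum $\rho(t_0) = \rho_0$, furnished by Proposition \ref{prop.LipschDeltaU}. I would prove, by the standard interpolation argument used in Proposition \ref{Prop.1Mm}, that
\[
\frac{\delta U}{\delta m}(t_0, x_0, x, m_0)(\rho_0) = v(t_0, x).
\]
Concretely, for $m_0' \in \Pw$ one sets $m_0^s := (1-s) m_0 + s m_0'$, denotes by $(u^s, m^s)$ the corresponding MFG solution and shows, using the stability of the MFG system (Proposition \ref{Prop.LipDep}) and the well-posedness of \eqref{MFG2lkjenze}, that $s \mapsto u^s(t_0, x)$ is $C^1$ with derivative equal to the linearized solution around $(u^s, m^s)$ corresponding to $\rho_0 = m_0' - m_0$; integration in $s \in [0,1]$ combined with \eqref{zalekrnjdg} delivers the representation. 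Continuity of $(m_0, \rho_0) \mapsto v(t_0, x)$ (through stability of the linearized system) ensures the required continuity and boundedness of $\delta U / \delta m$. The bound $\|\delta U/\delta m(t)\|_{n-1;k} \leq \|\delta G/\delta m\|_{n-1;k} + C_M T$ then follows directly from estimate \eqref{lkqehsrmdBISBIS} of Proposition \ref{prop.LipschDeltaU}, specialized to $R_1 = R_2 = R_3 = 0$, after taking the supremum over $\|\rho_0\|_{-k} \leq 1$ and $(x_0, x)$.

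For the claim that $\partial_x^\alpha \delta U/\delta m$ is $C^1$ in $m$ for $|\alpha| \leq n-1$, I would iterate the same interpolation strategy using the second-order linearized system \eqref{MFG3lkjenze} with vanishing $\tilde R_i$: Proposition \ref{Prop.estiw} provides a unique solution $(w,\mu)$ depending continuously on $(m_0, \rho_0, \rho_0')$, and the identification $\delta^2 U/\delta m^2(t_0, x_0, x, m_0)(\rho_0, \rho_0') = w(t_0, x)$ together with its regularity gives the $C^1$ structure of $\delta U/\delta m$ in $m$.

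Finally, $U$ satisfies \eqref{eq.Master1} by virtue of the flow property $u(t, x) = U(t, x_0, x, m(t))$ for every $t \in [t_0, T]$, which follows from the uniqueness of the MFG system applied at later initial times. Differentiating this identity at $t = t_0$ via an It\^o-type chain rule in $\Pw$ along the Fokker--Planck flow \eqref{eq.MFGcoupled}-(ii) yields
\[
\partial_t u = \partial_t U + \int_{\R^d} D_m U \cdot H_p(x_0,y,D_x U,m_0) \, m_0(dy) - \int_{\R^d} {\rm Tr}(a(t_0,y) D^2_{ym} U) \, m_0(dy),
\]
and substituting the HJ equation satisfied by $u$ produces \eqref{eq.Master1}. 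The main obstacle in the whole plan is the $m$-differentiability: rigorously justifying the convergence of the difference quotients and the continuity of $\delta U/\delta m$ in the weak topology on $\Pw$ requires the sharp stability estimates for the coupled MFG system and its linearization established in Propositions \ref{Prop.LipDep} and \ref{prop.LipschDeltaU}, and these are precisely what forces the short-time restriction $T \leq T_M$.
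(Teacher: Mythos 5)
Your proposal follows the same route as the paper's proof: representation formula \eqref{rep.U1}, the sup-norm estimate from Proposition \ref{Prop.LipDep}, identification of $\delta U/\delta m$ via the linearized system \eqref{MFG2lkjenze} with vanishing sources, the $C^1$-criterion of Lemma \ref{lem.condC1}, and the $\|\cdot\|_{n-1;k}$ bound from Proposition \ref{prop.LipschDeltaU}. Two points deserve comment.

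First, on the claim that $\partial_x^\alpha \frac{\delta U}{\delta m}$ is "$C^1$ in $m$'': you reach for the second-order linearized system \eqref{MFG3lkjenze} and Proposition \ref{Prop.estiw}, i.e.\ you set out to prove that $\delta^2 U/\delta m^2$ exists. That is more than the Proposition asks for here, and it would require the extra hypothesis $\|\delta^2 G/\delta m^2\|_{n-2;k-1,k-1}\le M$ that only enters in Proposition \ref{Prop.DerivU2}; using it would make your proof depend on data not assumed in the present statement. The paper handles this point at the first-order level: since Lemma \ref{Lem.DerivU} gives $\sup_t\|u_h-u-hv\|_{n-1}=o(h)$ (a $C^{n-1}_b$-norm estimate), each $\partial_x^\alpha U$, $|\alpha|\le n-1$, is itself $C^1$ in $m$ with $\frac{\delta}{\delta m}(\partial_x^\alpha U)=\partial_x^\alpha v$, and no second-order linearization is needed. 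Keep the argument at that level so it stays within the stated hypotheses.

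Second, the verification that $U$ solves \eqref{eq.Master1} via an ``It\^o-type chain rule in $\Pw$'' is the right idea but should be made concrete the way the paper does: fix $m_0$ with a smooth density so that the Fokker--Planck flow is classical, compute the increment $u(t_0+h,x)-u(t_0,x)$ once via the HJ equation for $u$ and once via the identity $U(t,x_0,x,m(t))=u(t,x)$ combined with the representation \eqref{zalekrnjdg} and an integration by parts against equation \eqref{eq.MFGcoupled}-(ii), divide by $h$, let $h\to 0$, and finally remove the smoothness restriction on $m_0$ by density using the continuity of the right-hand side of \eqref{eq.Master1}. This avoids invoking a chain rule in the Wasserstein space that has not been established in the paper for this setting.

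Apart from these two points---the unnecessary second-order machinery and the informal treatment of the PDE verification---the rest of your proof (the choice of linearization, the use of Proposition \ref{prop.LipschDeltaU}, and the derivation of the quantitative $\|\cdot\|_{n-1;k}$ bound with linear-in-$T$ loss) is correct and matches the paper's argument.
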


\begin{Remark} We show in the proof the following representation:
\be\label{reformBIS}
\int_{\R^d} \frac{\delta U}{\delta m}(t_0,x_0,x,m_0,y)\rho_0(dy) = v(t_0,x)
\ee
where $(u,m)$ is the solution of the MFG system \eqref{eq.MFGcoupled} and $(v,\rho)$ is the solution of the linearized system \eqref{MFG2lkjenze} with  right-hand side $R_1=R_2=R_3=0$ and with initial condition $(t_0, \rho_0)$. Note that the normalization condition \eqref{eq.convention} is satisfied because, if one chooses $\rho_0=m_0$, then $(v,\rho)=(0,m)$. 
\end{Remark}

The proof relies  on the following lemma, in which we also provide estimates to obtain later one the differentiability of $U$ with respect to $x_0$.

\begin{Lemma}\label{Lem.DerivU} Under the assumptions of Proposition \ref{Prop.DerivU}, we fix $(t_0,m_0), (t_0,m_1)\in [0,T)\times \Pw$, $y_0, \xi \in \R^d$ with $|\xi| \le 1$.  Let $(u,m)$ be the solution to \eqref{eq.MFGcoupled} with $x_0=y_0$ and with initial condition $(t_0,m_0)$,  and, for $h\in (0,1)$, let $(u_h,m_h)$ be the solution to \eqref{eq.MFGcoupled} with $x_0=y_0 + \xi h$ and with initial condition $(t_0, (1-h)m_0+hm_1)$. Let also $(v,\rho)$ be the solution to \eqref{MFG2lkjenze} associated with $(u,m)$, $x_0=y_0$ and with 
\begin{equation}\label{Rchoi} \begin{split}
& R_1(t,x)=- H_{x_0}(y_0,x,Du(t,x),m(t))\cdot \xi \\
& R_2(t,x)=m(t,x) H_{x_0p}(y_0,x,Du(t,x),m(t))\cdot \xi\\
& R_3(t,x) = G_{x_0}(y_0,x,m(T))\cdot\xi,
\end{split} \end{equation} 
and initial condition $(t_0, m_1-m_0)$.  Then there exists  a constant $C$ (independent of $h$) such that
\be\label{lkaejnzred:1}
\sup_{t\in [t_0,T]}\|u_h(t)-u(t)-hv(t)\|_{n-1} \leq  Ch^2 
\ee
and
\be\label{lkaejnzred:2}
\ds \sup_{t\in [t_0,T]} \|m_h(t)-m(t)-h\rho(t)\|_{-k}\; \leq Ch^2\,.
\ee
\end{Lemma}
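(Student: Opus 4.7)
Define the remainders
\[
\tilde u := u_h - u - hv, \qquad \tilde m := m_h - m - h\rho,
\]
both vanishing at $t=t_0$ in the measure variable (since $m_h(t_0)-m(t_0)=h(m_1-m_0)=h\rho(t_0)$). The strategy is to derive the equations satisfied by $(\tilde u,\tilde m)$, observe that they fit into the linearized template \eqref{MFG2lkjenze} with explicit source terms, and then show those source terms are of size $O(h^2)$ so that Proposition \ref{prop.LipschDeltaU} delivers \eqref{lkaejnzred:1}--\eqref{lkaejnzred:2}.

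The first step is straightforward bookkeeping: subtract the HJ equation for $u$ from the one for $u_h$, then subtract $h$ times the HJ equation for $v$, and perform a second-order Taylor expansion of $H$ around $(y_0,Du,m)$. Writing $Du_h-Du = hDv + D\tilde u$ and $m_h-m = h\rho+\tilde m$, the linear terms in this expansion cancel exactly against the linearized drift and the source $R_1=-H_{x_0}\cdot\xi$, leaving
\[
-\partial_t\tilde u - {\rm Tr}(aD^2\tilde u) + H_p\cdot D\tilde u + \tfrac{\delta H}{\delta m}(\tilde m) = -r(t,x)
\]
with terminal condition $\tilde u(T,x) = \tfrac{\delta G}{\delta m}(y_0,x,m(T))(\tilde m(T)) - r_T(x)$, where $r$, $r_T$ are second-order Taylor remainders. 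An identical procedure on the Fokker--Planck equation, now subtracting the drift--and--source decomposition of $v$ inside the divergence, produces the companion equation
\[
\partial_t\tilde m - \textstyle\sum_{ij} D_{ij}(a_{ij}\tilde m) - \dive(\tilde m H_p) - \dive(m H_{pp}D\tilde u) - \dive\!\bigl(m \tfrac{\delta H_p}{\delta m}(\tilde m)\bigr) = \dive(\tilde R_2(t,x)),
\]
for a remainder $\tilde R_2$ of the same second-order nature. So $(\tilde u,\tilde m)$ solves \eqref{MFG2lkjenze} with data $(R_1,R_2,R_3)=(-r,\tilde R_2,-r_T)$ and zero initial condition.

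The heart of the proof is then the estimate $\|r\|_{n-1}+\|\tilde R_2\|_{-(k-1)}+\|r_T\|_{n-1}=O(h^2)$. For this I would rely on the a priori stability bounds of Proposition \ref{Prop.LipDep}, which give $\sup_t\|u_h(t)-u(t)\|_{n-1}\le Ch$ and $\sup_t {\bf d}_2(m_h(t),m(t))\le Ch$, together with the regularity assumptions on $H$ and $G$ collected in Subsection \ref{subsec.Hyp} (boundedness of the second derivatives $H_{x_0 x_0}$, $H_{x_0 p}$, $H_{pp}$, $\frac{\delta H_p}{\delta m}$, $\frac{\delta^2 H}{\delta m^2}$, and likewise for $G$). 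The quadratic Taylor remainder of $H(x_h,x,Du_h,m_h)-H(y_0,x,Du,m)$ is then pointwise controlled by a constant times $h^2+|Du_h-Du|^2+{\bf d}_2(m_h,m)^2$, and differentiating in $x$ up to order $n-1$ preserves the $O(h^2)$ bound thanks to the regularity of $H$ and of $(u,u_h,m,m_h)$. The divergence-structure remainder $\tilde R_2$ is handled analogously in the negative norm $C^{-(k-1)}$.

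Finally, applying Proposition \ref{prop.LipschDeltaU} to the system solved by $(\tilde u,\tilde m)$ with zero initial datum yields
\[
\sup_{t\in[t_0,T]} \|\tilde u(t)\|_{n-1} + \sup_{t\in[t_0,T]} \|\tilde m(t)\|_{-k} \le C_M\bigl(\|r\|_{n-1}+\|\tilde R_2\|_{-(k-1)}+\|r_T\|_{n-1}\bigr) \le Ch^2,
\]
which is precisely \eqref{lkaejnzred:1}--\eqref{lkaejnzred:2}. The main technical obstacle is the bookkeeping of the Taylor remainder in the correct functional norms—especially controlling $\tilde R_2$ in $C^{-(k-1)}$, where one must carefully exploit the duality definition of negative norms and the fact that $m_h-m$, although only small in ${\bf d}_2$, enters $\tilde R_2$ multiplicatively against smooth coefficients, so testing against a $C^{k-1}$ function reduces matters to a Lipschitz estimate in ${\bf d}_2$.
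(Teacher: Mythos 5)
Your proposal is correct and follows the same route as the paper: set $v_h=u_h-u-hv$, $\rho_h=m_h-m-h\rho$, observe that $(v_h,\rho_h)$ solves the linearized template \eqref{MFG2lkjenze} with zero initial datum and second-order Taylor remainders as data, bound those remainders in $C^{n-1}_b$ and $C^{-(k-1)}$ by $O(h^2)$ via the stability estimate of Proposition \ref{Prop.LipDep}, and conclude by Proposition \ref{prop.LipschDeltaU}. The only difference is that the paper writes the remainder terms $R_{h,1},R_{h,2},R_{h,3}$ out in full (which is where the bookkeeping lives) while you describe them structurally; and note that the parenthetical ``both vanishing at $t=t_0$'' should apply only to $\tilde m$, since $\tilde u$ carries a terminal condition at $t=T$.
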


\begin{Remark} The goal of this Lemma is to identify the first order derivatives $\frac{\delta U}{\delta m}$ and $D_{x_0}U$. The constant $C$ above will depend on the regularity of $H$ and $G$, as well as on $ \sup_{t\in [t_0,T]} \|u(t)\|_n$; however this is not detailed later since it will not be relevant; indeed, \eqref{lkaejnzred:1} and \eqref{lkaejnzred:2} are only used for letting $h\to 0$.
\end{Remark}

\begin{proof} We set 
$$
v_h(t,x)= u_h(t,x)-u(t,x)-hv(t,x), \quad\rho_h(t,x)=m_h(t,x)-m(t,x)-h\rho(t,x).
$$
Then the pair $(v_h,\rho_h)$ solves 
$$
\left\{ \begin{array}{l}
\ds -\partial_t v_h -{\rm Tr}(a(t,x)D^2v_h)+H_p(y_0,x,Du,m(t))\cdot Dv_h+ \frac{\delta H}{\delta m}(y_0,x,Du,m(t))(\rho_h(t))=R_{h,1}(t,x) \\
\ds \hspace{11cm} \qquad {\rm in}\; (t_0,T)\times \R^d\\
\ds \partial_t \rho_h -\sum_{i,j} D_{ij}(a_{i,j}\rho_h) -\dive( \rho_h H_p(x,Du,m(t))-\dive(mH_{pp}(x,Du,m(t))Dv_h)\\
\ds \hspace{4cm} \qquad -\dive(m \frac{\delta H_p}{\delta m}(x,Du,m(t))(\rho_h))=\dive(R_{h,2}(t,x)) \qquad {\rm in}\; (t_0,T)\times \R^d\\
\ds \rho_h(t_0)=0, \qquad v_h(T,x)= \frac{\delta G}{\delta m}(x,m(T))(\rho_h(T)) +R_{h,3}(x) \qquad {\rm in}\;\R^d
\end{array}\right.
$$
where 
\begin{align*}
\ds R_{h,1}(t,x) &= -\Bigl(H( y_0+ \xi h,x,Du_h(t,x),m_h(t))-H(y_0,x,Du(t,x),m(t))\\
& -H_p(y_0,x,Du(t,x),m(t))\cdot D(u_h(t,x)-u(t,x))- \frac{\delta H}{\delta m}(y_0,x,Du(t,x),m(t))(m_h(t)-m(t)) \\
& - h H_{x_0}(y_0,x,Du(t,x),m(t))\cdot \xi\Bigr), \\
\ds R_{h,2}(t,x)&= m_h(t,x)H_p( y_0+ \xi h,x,Du_h(t,x), m_h(t))-m(t,x)H_p(y_0,x,Du(t,x),m(t))\\
& \ds \qquad  -(m_h(t,x)-m(t,x))H_p(y_0,x,Du(t,x),m(t))\\
& \ds \qquad -m(t,x) H_{pp}(y_0,x,Du(t,x),m(t))D(u_h-u)(t,x)\\
& \ds \qquad -h m(t,x) H_{x_0p}(y_0,x,Du(t,x),m(t))\cdot \xi \\
& \ds \qquad - m(t,x)\frac{\delta H_p}{\delta m}(y_0,x,Du(t,x),m(t))(m_h(t)-m(t)), \\
 \ds R_{h,3}(x) &=G( y_0+ \xi h,x,m_h(T))-G(y_0,x,m(T))-\frac{\delta G}{\delta m}(y_0,x,m(T))(m_h(T)-m(T)) \\
& \ds \qquad -h G_{x_0}(y_0,x,m(T))\cdot\xi. 
\end{align*}
Next we estimate $R_{h,1}$, $R_{h,2}$ and $R_{h,3}$. As 
\begin{align*}
R_{h,1} = - \int_0^1 & \Bigl\{ (H_p(x_\tau, x,p_\tau(t,x),m_\tau(t))-H_p(y_0,x, Du(t,x),m(t))) \cdot D(u_h(t,x)-u(t,x))  \\
& +(H_{x_0}(x_\tau,x,p_\tau(t,x),m_\tau(t))-H_{x_0}(y_0, x,  Du(t,x),m(t))) \cdot h\xi  \\
& + \int_{\R^d} (\frac{\delta H}{\delta m}(x_\tau,x,p_\tau(t,x),m_\tau(t),y)- \frac{\delta H}{\delta m}(y_0,x,Du(t,x),m(t),y)) (m_h(t)-m(t))(dy)\Bigr\} \ d\tau
\end{align*}
where $x_\tau:=(1-\tau)y_0 + \tau  (y_0+ \xi h)$, $p_\tau := (1-\tau )Du(t,x)+\tau Du_h(t,x)$ and $m_\tau(t,x):= (1-\tau) m(t,x)+\tau m_h(t,x)$, we have 
$$
\|R_{h,1}(t)\|_{n-1}\leq C\big( \|u_h(t)-u(t)\|^2_n+ h^2 + \dw^2(m_h(t),m(t))\big),
$$
In the same way, 
\begin{align*}
\|R_{h,3}\|_{n-1} & \leq C(\dw^2(m_h(T),m(T)) + h^2)
\\
\m
& \leq C( \|u_h(T)-u(T)\|^2_n+ \dw^2(m_h(T),m(T)) + h^2). 
\end{align*}
Finally, for $k\geq 2$, we have
\begin{align*}
&\|R_{h,2}(t)\|_{-(k-1)}\\
&\qquad = \sup_{\|\phi\|_{k-1}\leq 1} \int_{\R^d} \phi(t,x)\Bigl( H_p(x_0,x,Du_h(t,x), m_h(t))-H_p(y_0,x,Du(t,x),m(t))\Bigr)(m_h(t,dx)-m(t,dx))\\
& \ds \qquad\ + \int_{\R^d} \phi(t,x)\Bigl(H_p(x_0,x,Du_h(t,x),m_h(t))-H_p(y_0,x,Du(t,x),m(t))\\
& \ds \qquad\qquad - H_{x_0p}(y_0,x,Du(t,x),m(t))\cdot h \xi \\
& \ds \qquad\qquad  - H_{pp}(y_0,x,Du(t,x),m(t))D(u_h-u)(t,x) - \frac{\delta H_p}{\delta m}(y_0,x,Du(t,x),m(t))(m_h(t)-m(t)) \Bigr)m(t,dx)\\
& \qquad \leq C(\|u_h-u\|^2_2+\dw^2(m_h(t),m(t)) + h^2).
\end{align*}
By Proposition \ref{prop.LipschDeltaU}, there exist constants $T_M,C_M>0$, depending on  $M$, $n$, $k$, $\sup_{t\in [0,T]} \|u\|_n$, such that, if $T\leq T_M$ and if \eqref{condkjahzlr} holds, 
then 
$$
\begin{array}{rl}
\ds  \sup_{t\in [0,T]}\|v_h(t)\|_{n-1} \; \leq & \ds (1+C_MT)\|R_{h,3}\|_{n-1} 
+C_MT\left(  \sup_t \|R_{h,1}(t)\|_{n-1}+ \sup_t \|R_{h,2}(t)\|_{-(k-1)}\right) \\
\leq & \ds C\left(\sup_t \|u_h(t)-u(t)\|^2_{n}+ \sup_t \dw^2(m_h(t),m(t)) + h^2\right).
\end{array}
$$
We then infer by Proposition \ref{Prop.LipDep} and the definition of $v_h$ that 
$$
\sup_{t\in [t_0,T]}\|u_h(t)-u(t)-hv(t)\|_{n-1} \leq C( \dw^2((1-h)m_0+hm_1,m_0) + h^2) \leq Ch^2. 
$$
The estimate of $\rho_h$ comes from Proposition \ref{prop.LipschDeltaU} in the same way. 
\end{proof}

\begin{proof}[Proof of Proposition \ref{Prop.DerivU}] Proposition \ref{Prop.LipDep} and the representation formula \eqref{rep.U1} imply the estimate on $\|U(t,\cdot,m)\|_n$. Let us now show that  the map $U$ given by \eqref{rep.U1} is  differentiable with respect to $m$. Fix $x_0\in\R^{d_0}$, $(t_0,m_0), (t_0,m_1)\in [0,T)\times \Pw$, let $(u,m)$, $(u_h,m_h)$ and  $(v,\rho)$ be as in Lemma \ref{Lem.DerivU} with $\xi = 0$, so $R_1 = R_2 = R_3 = 0$. Then 
$$
\sup_{t\in [t_0,T]}\|u_h(t)-u(t)-hv(t)\|_{n-1} \leq   o(h). 
$$
Taking $t=t_0$, this implies that 
$$
\left\| U(t_0,x_0, \cdot, (1-h)m_0+hm_1)-U(t_0,x_0, \cdot,m_0)-hv(t_0,\cdot)\right\|_{n-1} \leq o(h). 
$$

So, if we choose $m_1=\delta_y$ for a fixed $y\in \R^d$, we have just proved that the map $\hat U(h; m_0,y)= U(t_0,x_0, x, (1-h)m_0+h\delta_y)$ has a derivative at $h=0$ and that this derivative is given by $v(t_0,x)$. Note that the map $(m_0,y) \mapsto v(t_0,x; m_0,y)$ is continuous and bounded thanks to the estimates in Proposition \ref{prop.LipschDeltaU} and the uniqueness of the solution. So we can apply Lemma \ref{lem.condC1} which says that $U$ is $C^1$ in $m$ with 
$$
v(t_0,x)= \frac{\delta U}{\delta m}(t_0,x_0, x,m_0,y).
$$
Then by linearity and continuity one easily checks that \eqref{reformBIS} and   the  normalization condition \eqref{eq.convention} hold.  A similar argument applies to derivatives of $ \frac{\delta U}{\delta m}$ with respect to $x$.

Next we check that $U$ solves \eqref{eq.Master1}. Let us start with $m(t_0)=m_0$ with a smooth density. Then $(u,m)$ is a classical solution and, as 
$$
U(t,x_0, x,m(t))= u(t,x)\qquad \forall (t,x)\in [t_0,T]\times \R^d, 
$$
we have, for any $h>0$ and in view of the equation for $m$: 
\begin{align*} 
& u(t_0+h,x)-u(t_0,x) = U(t_0+h, x_0,x, m(t_0+h))-U(t_0,x_0,x, m(t_0))\\
 & = \int_{t_0}^{t_0+h} \int_{\R^d} \frac{\delta U}{\delta m}(t_0+h, x_0,x,m(t),y) \partial_t m(t,y)dydt +
U(t_0+h, x_0,x, m(t_0)) -U(t_0,x_0,x, m(t_0))\\
& = - \int_{t_0}^{t_0+h} \int_{\R^d} D_mU(t_0+h, x_0,x,m(t),y)\cdot H_p(x_0,y, D_xu(t,y),m(t)) m(t,y)dydt\\
& \qquad + \int_{t_0}^{t_0+h} \int_{\R^d}{\rm Tr}(a(t,y)D^2_{ym}U(t,x_0,x,m,y))\ m(dy)dt\\
& \qquad +
U(t_0+h, x_0,x, m(t_0)) -U(t_0,x_0,x, m(t_0)). 
 \end{align*}
 On the other hand, by the equation for $u$, 
\begin{align*}
& u(t_0+h,x)-u(t_0,x)=  \int_{t_0}^{t_0+h} \Bigl( -{\rm Tr}(a(t,x)D^2u(t,x))+H(x_0,x,Du(t,x),m(t))\Bigr)dt\\
 &\qquad  = \int_{t_0}^{t_0+h} \Bigl( -{\rm Tr}(a(t,x)D^2_{xx}U(t,x_0,x,m(t)))+H(x_0,x,D_xU(t,x_0,x,m(t)),m(t))\Bigr)dt.
 \end{align*}
 So 
 \begin{align*} 
& U(t_0+h, x_0,x,  m_0) -U(t_0,x_0,x, m_0)\\
& =  \int_{t_0}^{t_0+h} \int_{\R^d} D_mU(t_0+h, x_0,x,m(t),y)\cdot H_p(x_0,y, D_xU(t,x_0,y,m(t)),m(t)) m(t,y)dydt\\
& \qquad - \int_{t_0}^{t_0+h} \int_{\R^d}{\rm Tr}(a(t,y)D^2_{ym}U(t,x_0,x,m,y))\ m(dy)dt\\
 &\qquad  +\int_{t_0}^{t_0+h} \Bigl( -{\rm Tr}(a(t,x)D^2_{xx}U(t,x_0,x,m(t)))+H(x_0,x,D_xU(t,x_0,x,m(t)),m(t))\Bigr)dt.
 \end{align*}
Therefore $U$ has a time-derivative  at $(t_0,x_0,x, m_0)$ and 
 \begin{align*} 
 \partial_t U(t_0,x_0,x, m_0) &= \int_{\R^d} D_mU(t_0, x_0,x,m_0,y)\cdot H_p(x_0,y, D_xU(t_0,x_0,y,m_0),m_0)) m_0(y)dy\\
& \qquad -\int_{\R^d}{\rm Tr}(a(t_0,y)D^2_{ym}U(t_0,x_0,x,m,y))\ m(dy)\\
& \qquad  -{\rm Tr}(a(t_0,x)D^2_{xx}U(t_0,x_0,x,m_0))+H(x_0,x,D_xU(t,x_0,x,m_0),m_0).
 \end{align*}
This shows that $U$ satisfies \eqref{eq.Master1} at any point $(t_0,x_0,x,m_0)$ where $m_0$ has a smooth density. The general case can be treated by a density argument, since the right-hand side of the above equation is continuous in $(t_0,x_0,x,m_0)$. 

Let us now explain the estimates on $\frac{\delta U}{\delta m}$. 
In view of \eqref{condkjahzlr}, \eqref{reformBIS} and  Proposition \ref{prop.LipschDeltaU}, we have, for any $r\leq n-1$, 
\begin{align*}
&  \Bigl\| D_x^r  \frac{\delta U}{\delta m}(t_0,x_0,\cdot,m_0)(\rho_0)\Bigr\|_\infty =  \|D_x^r v(t_0,x_0,\cdot)\|_\infty \\
& \qquad \leq (1+C_MT) \|D_x^r \frac{\delta G}{\delta m}(x_0, \cdot, m(T),\cdot) \|_{0;k}\|\rho_0\|_{-k} +C_MT\|\rho_0\|_{-k}.
\end{align*}
Taking the sup over $\rho_0$ with $\|\rho_0\|_{-k}\leq 1$, $x_0\in \R^{d_0}$, summing over $r\leq n-1$  and then taking the sup over $t, m$ gives the estimate on  
$\frac{\delta U}{\delta m}$.  Notice that the estimate given by Proposition \ref{prop.LipschDeltaU} depends on $\sup_t\|u(t)\|_{n}$ (we use here that $k\leq n-1$); but  this latter term is estimated in terms  of  $M$ only, because of  Proposition \ref{Prop.LipDep} and since 
$\|G(x_0,\cdot,m)\|_n\leq M$.
\end{proof}

\begin{Proposition}\label{Prop.DerivUL}  Under the assumptions of  Proposition \ref{Prop.DerivU}, let   $M, T_M, C_M>0$ be given accordingly. Assume, in addition, that $T\in (0,T_M]$ and 
\be\label{hyphypGG0}
  \sup_{x_0, m}  \left|G^0(x_0 ,m)\right| + \left|D_{x_0} G^0(x_0 ,m)\right|+ \left\| \frac{\delta G^0}{\delta m}(x_0, m,\cdot)\right\|_{n-1; k}+  \|D_{x_0} G (x_0, \cdot, m)\|_{n-1}\leq M. 
\ee
Then, the map $U^0$ defined by \eqref{rep.UL} is a classical solution to \eqref{eq.MasterL}. In addition, $U^0$ and $U$ are differentiable with respect to $x_0$ and  satisfy
\begin{align}
\sup_{t} \Bigl\|(U^0,U)(t)\Bigr\|_n \le & \ \Bigl\|(G^0,G)\Bigr\|_n + C_MT.  \label{kjernzd} \\
\sup_{t} \Bigl\|D_{x_0} (U^0, U)(t)\Bigr\|_{n-1} \le & \  \Bigl\|(D_{x_0}G^0,D_{x_0}G)\Bigr\|_{n-1} + C_MT.  \label{kjernzd2}
\end{align} 
and
\begin{align}  \label{kjernzd3}
\sup_{t}  \Bigl\|\frac{\delta (U^0,U)}{\delta m}(t)\Bigr\|_{n-1;k} \le \Bigl\|\frac{\delta (G^0,G)}{\delta m}\Bigr\|_{n-1;k} + C_M T .
\end{align}
\end{Proposition}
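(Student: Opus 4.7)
The plan is to follow the three-step template of Proposition \ref{Prop.DerivU} but with the additional bookkeeping needed to handle both the new parameter $x_0$ and the scalar unknown $U^0$. First, I would verify that $U^0$ defined by \eqref{rep.UL} solves \eqref{eq.MasterL} classically. The key observation is the semigroup property of the MFG flow: if $(u,m)$ is the solution of \eqref{eq.MFGcoupled} starting from $(t_0,m_0)$, then for any $t\in[t_0,T]$ the restriction $(u,m)|_{[t,T]}$ is the solution starting from $(t,m(t))$; consequently $U^0(t,x_0,m(t))=G^0(x_0,m(T))$ is independent of $t$. Once $U^0$ is known to be $C^1$ in $m$ (Step 2), differentiating this identity in $t$ at $t=t_0$ (for $m_0$ with smooth density so that the Fokker-Planck equation for $m$ holds classically) and plugging in the Fokker-Planck equation yields exactly \eqref{eq.MasterL}; a density argument extends the identity to general $m_0$.

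Second, I would identify the derivatives via Lemma \ref{Lem.DerivU}. Taking $\xi=0$ and applying the estimate \eqref{lkaejnzred:2} together with a Taylor expansion of $G^0$ in its measure argument gives
\[
\int_{\R^d} \frac{\delta U^0}{\delta m}(t_0,x_0,m_0,y)\,\rho_0(dy) \;=\; \frac{\delta G^0}{\delta m}\bigl(x_0,m(T)\bigr)(\rho(T)),
\]
where $\rho$ solves the linearized system \eqref{MFG2lkjenze} with $R_1=R_2=R_3=0$ and initial condition $\rho_0$; Lemma \ref{lem.condC1} then upgrades this pointwise identity to $C^1$ regularity. Taking instead $m_1=m_0$ and $\xi\in\R^{d_0}$ of unit length gives, via \eqref{lkaejnzred:1} and the same Taylor expansion,
\[
D_{x_0}U^0(t_0,x_0,m_0)\cdot\xi \;=\; D_{x_0}G^0(x_0,m(T))\cdot\xi \;+\; \frac{\delta G^0}{\delta m}(x_0,m(T))(\rho(T)),
\]
together with $D_{x_0}U(t_0,x_0,x,m_0)\cdot\xi=v(t_0,x)$, where now $(v,\rho)$ solves \eqref{MFG2lkjenze} with zero initial condition and sources $R_1,R_2,R_3$ as in \eqref{Rchoi}.

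Third, I would derive the estimates by plugging each representation into Proposition \ref{prop.LipschDeltaU}. The bounds \eqref{kjernzd} and \eqref{kjernzd3} for the $U$ components are already supplied by Proposition \ref{Prop.DerivU}, while $|U^0(t_0,x_0,m_0)|=|G^0(x_0,m(T))|\le\|G^0\|_\infty$ takes care of the $U^0$ part of \eqref{kjernzd}. For \eqref{kjernzd2} on $D_{x_0}U$, I would apply \eqref{lkqehsrmdBISBIS} with $\rho_0=0$ and $R_i$ from \eqref{Rchoi}: the source terms $R_1,R_2$ are controlled uniformly by constants depending only on $M$ and on the regularity of $H$ in $x_0$, while $R_3=D_{x_0}G\cdot\xi$, giving the desired linear-in-$T$ loss with prefactor $\|D_{x_0}G\|_{n-1}$. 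For the $U^0$ part of \eqref{kjernzd2}, \eqref{lkqehsrmd2} yields $\|\rho(T)\|_{-k}\le C_MT$, so the second summand in the representation of $D_{x_0}U^0\cdot\xi$ contributes only $C_MT$. For the $U^0$ part of \eqref{kjernzd3}, \eqref{lkqehsrmd2} with $R_i=0$ gives $\|\rho(T)\|_{-k}\le(1+C_MT)\|\rho_0\|_{-k}$, and pairing with $\|\delta G^0/\delta m\|_{0;k}$ produces the announced bound.

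The main obstacle is making sure that the source terms $R_1,R_2,R_3$ generated by the $x_0$-perturbation lie in the right functional spaces uniformly in $x_0$ and $m$: this is precisely what the hypothesis \eqref{hyphypGG0} on $D_{x_0}G$ and $\delta G^0/\delta m$, together with the regularity of $D_{x_0}H$ provided by Subsection \ref{subsec.Hyp}, are designed to supply. Beyond that, most of the work is bookkeeping to reconcile the pair-norm conventions for $(U^0,U)$ introduced in Section \ref{s:sshj} with the separate scalar estimates one obtains for $U^0$ and for $U$ from the representation formulas.
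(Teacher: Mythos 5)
Your plan follows the paper's own route: verify the equation for $U^0$ via the semigroup property of the MFG flow, identify $\delta U^0/\delta m$, $D_{x_0}U^0$ and $D_{x_0}U$ by applying Lemma \ref{Lem.DerivU} with $\xi=0$ and with $m_1=m_0$ respectively, and then plug the representation formulas into the estimates of Section \ref{sec:ReguMFG}. The representations you write down are exactly those in the paper, so the identification step is correct.

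The gap is in how you propose to obtain the pair-norm estimates. You write ``$|U^0(t_0,x_0,m_0)|=|G^0(x_0,m(T))|\le\|G^0\|_\infty$ takes care of the $U^0$ part of \eqref{kjernzd}'' and ``the bounds \eqref{kjernzd} and \eqref{kjernzd3} for the $U$ components are already supplied by Proposition \ref{Prop.DerivU}'', dismissing the rest as bookkeeping. But $\Bigl\|(U^0,U)\Bigr\|_n$ does not split into a ``$U^0$ part'' and a ``$U$ part'': for each $r$ it is $\sup_{m}\sup_{x_0,x}\bigl(|U^0(t,x_0,m)|^2+|D^r_x U(t,x_0,x,m)|^2\bigr)^{1/2}$, and once you replace $|G^0(x_0,m(T))|$ by its global supremum $A$ and $\sup_x|D^r_x G(x_0,x,m(T))|$ by its global supremum $B$, you arrive at $(A^2+B^2)^{1/2}+C_MT$, which is in general \emph{strictly larger} than the target $\sup_{x_0,x,m'}\bigl(|G^0(x_0,m')|^2+|D^r G(x_0,x,m')|^2\bigr)^{1/2}+C_MT$ (a supremum of a sum of squares is only $\le$ the sum of squares of suprema). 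What actually closes the estimate is to stay pointwise in $(x_0,m)$: use the exact identity $U^0(t,x_0,m)=G^0(x_0,m(T))$ together with the pointwise estimate $\sup_x|D^r_x u(t,x)|\le\sup_x|D^r_x G(x_0,x,m(T))|+C_MT$ from Proposition \ref{prop.esti1MFG-0} (note: $G$ evaluated at the \emph{same} $(x_0,m(T))$), then invoke the elementary inequality $y^2+(x+z)^2\le\bigl((x^2+y^2)^{1/2}+z\bigr)^2$; only after this pairing does one take suprema. The same issue resurfaces in \eqref{kjernzd2}: the two summands of $D_{x_0}U^0\cdot\xi=D_{x_0}G^0(x_0,m(T))\cdot\xi+\frac{\delta G^0}{\delta m}(x_0,m(T))(\rho(T))$ must stay paired with $D^r_x D_{x_0}U$ at the same $(x_0,m(T))$, and since several directional derivatives $\partial^l_{x_0}U$ are summed in quadrature, one needs the sum-of-squares version of the HJ estimate from Proposition \ref{prop.highS} applied to the family $(v^l)_{|l|=1}$ (which share the same drift and diffusion), followed by $\bigl((x+z)^2+(y+z)^2\bigr)^{1/2}\le(x^2+y^2)^{1/2}+\sqrt{2}z$. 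Your outline would need to be corrected on this point before the argument is complete.
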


 As we will see in the proof, it is possible to estimate $U^0$ and $U$ separately. However we will need the specific form of the estimate in the analysis of the MFG problem with a major player. 

\begin{proof}  
Differentiability of $U$ with respect to $x_0$ can be checked as for its differentiability with respect to $m$: let $\xi$ be any unit vector of $\R^{d_0}$,  $(u,m)$, $(u_h,m_h)$ and  $(v,\rho)$ be as in Lemma \ref{Lem.DerivU} with $m_1 = m_0$. Then,  by Proposition \ref{prop.LipschDeltaU}  and the fact that
\begin{equation}\label{r1r2r3}
\sup_t\|R_1(t)\|_{n-1}+\sup_t\|R_2(t)\|_{-(k-1)} \le C, \qquad \|R_3\|_{n-1} \le \sup_{x_0,m}\|G_{x_0}(x_0,\cdot,m(T))\|_{n-1}
\end{equation}
one has 
$$
\left\| U(t_0,x_0 + h\xi,\cdot,  m_0)-U(t_0,x_0,\cdot, m_0)-hv(t_0,\cdot)\right\|_{n-1} \leq o(h),
$$
and so 
\be\label{Dx0U}
U_{x_0}(t_0,x_0,x,m_0) \cdot \xi= v(t_0,x).
\ee

To show the differentiability of $U^0$ with respect to $m$ we proceed as in the proof of Proposition \ref{Prop.DerivU}. Fix $x_0\in\R^d$, $(t_0,m_0), (t_0,m_1)\in [0,T)\times \Pw$, let $(u,m)$, $(u_h,m_h)$ and  $(v,\rho)$ be as in Lemma \ref{Lem.DerivU} with $\xi = 0$, so $R_1 = R_2 = R_3 = 0$. Then 
$$
\sup_{t\in [t_0,T]}\|\rho_h(t)\|_{-k} \leq   o(h),
$$
where $\rho_h(t,x)=m_h(t,x)-m(t,x)-h\rho(t,x)$. This inequality and Proposition \ref{Prop.LipDep} imply
\begin{multline}\label{eq1g0}
\Big| G^0(x_0,m_h(T))- G^0(x_0,m(T)) - h \frac{\delta G^0}{\delta m}(x_0,m(T))(\rho(T)) \Big| \le \Big|\frac{\delta G^0}{\delta m}(x_0,m(T))(\rho_h(T))\Big| \\
+ \Big| \int_0^1 \int_{\R^d} \left(\frac{\delta G^0}{\delta m}(x_0,(1-\tau)m(T)+\tau m_h(T),y) - \frac{\delta G^0}{\delta m}(x_0,m(T),y)\right) (m_h(t)-m(t))(dy) \ d\tau\Big| \\ \le
 o( \dw(m_h(T),m(T)) + h) \le o(h).
\end{multline}
For $y \in \R^d$ choose now $m_1 = \delta_y$, then
$$
\Big| U^0(t_0,x_0, (1-h)m_0+h\delta_y)-U^0(t_0,x_0,m_0)-h \frac{\delta G^0}{\delta m}(x_0,m(T))(\rho(T)) \Big| \leq o(h).
$$
Note that $\rho_0 \mapsto \rho(T)$ is linear and continuous as a map from $C^{-k}$ onto itself. Apply then Lemma \ref{lem.condC1} to get that $U^0$ is $C^1$ in $m$ with 
\be\label{deltaU0deltam}
\frac{\delta U^0}{\delta m}(t_0,x_0,m_0,y)= \frac{\delta G^0}{\delta m}(x_0,m(T))(\rho(T)).
\ee
 Moreover, one can check as in the proof of Proposition \ref{Prop.DerivU} that $U^0$ solves \eqref{eq.MasterL} (here it is even simpler, and based on the fact that by definition of $U^0$, $U^0(t_0+h, x_0, m(t_0+h))-U^0(t_0,x_0, m(t_0))= 0$). 

Concerning the differentiability of $U^0$ with respect to $x_0$, let $\xi$ be any unit vector of $\R^d$,  $(u,m)$, $(u_h,m_h)$ and  $(v,\rho)$ be as in Lemma \ref{Lem.DerivU} with $m_1 = m_0$. Then,
\begin{align*}
\Big| G^0 (y_0+ \xi h, m_h(T))  & - G^0(y_0, m(T)) - h G_{x_0}^0(y_0, m(T))\cdot\xi - h \frac{\delta G^0}{\delta m}(y_0,m(T))(\rho(T)) \Big| \le \\
& \left| G^0(y_0+ \xi h, m_h(T)) - G^0(y_0, m_h(T)) - h G_{x_0}^0(y_0, m_h(T))\cdot\xi \right| + \\
& h \left|  G_{x_0}^0(y_0, m_h(T)) - G_{x_0}^0(y_0, m(T)) \right| + \\
& \left| G^0(y_0, m_h(T)) - G^0(y_0, m(T)) - h \frac{\delta G^0}{\delta m}(y_0,m(T))(\rho(T)) \right|.
\end{align*}
The third term of this inequality can be treated as in \eqref{eq1g0}. Therefore,
\[
\Big| U^0(t_0,y_0+ \xi h,m_0) - U^0(t_0,y_0,m_0) - h G_{x_0}^0(y_0, m(T))\cdot\xi - h \frac{\delta G^0}{\delta m}(y_0,m(T))(\rho(T)) \Big| \le o(h),
\]
hence it follows that
\be\label{Dx0U0}
D_{x_0} U^0(t_0,x_0,m_0)\cdot \xi= G_{x_0}^0(x_0, m(T))\cdot \xi +\frac{\delta G^0}{\delta m}(x_0,m(T))(\rho(T))\,.
\ee
%

\medskip

We now prove the estimates. By Proposition \ref{prop.esti1MFG-0} and the representation formulas \eqref{rep.U1} and \eqref{rep.UL}, we have, for any $x_0\in \R^{d_0}$, $m\in \Pw$ and $r \le n$,  
\begin{align*}
& |U^0(t,x_0,m)|^2 + |D^r_x U(t,x_0, x,m)|^2  =  |G^0(x_0, m(T))|^2 + |D^r_x u(t,x)|^2 \\
& \qquad  \leq |G^0(x_0, m(T))|^2+  (\sup_{x} |D^r_x G(x_0, x, m(T))| +  C_MT)^2 \\
& \qquad \le \Big(\big(|G^0(x_0, m(T))|^2+  \sup_{x} |D^r_x G(x_0, x, m(T)|^2\big)^{1/2} +  C_MT\Big)^2 , 
\end{align*} 
(where we used that $x^2+(y+z)^2\leq ((x^2+y^2)^{1/2}+z)^2$ for nonnegative reals $x,y,z$) which gives \eqref{kjernzd}. Next we prove \eqref{kjernzd2}. For $|l|=1$, $l\in \N^{d_0}$, we  represent $\partial^l_{x_0}U^0$ and $\partial^l_{x_0}U$ by \eqref{Dx0U0} and \eqref{Dx0U} respectively, where $(v^l,\rho^l)$ is as in Lemma \ref{Lem.DerivU} with $\xi= e_l$, $m_1 = m_0$ (so that $\rho^l_0=0$). Then we have, for $r \le n-1$, 
 \begin{align*}
& \sum_{|l|=1} |\partial^l_{x_0}U^0(t,x_0,m)|^2 + | D^r_x \partial^l_{x_0} U(t,x_0, x, m) |^2 \\
& \qquad  = \sum_{|l|=1} \Big| \partial^l_{x_0} G^0(x_0, m(T)) +\frac{\delta G^0}{\delta m}(x_0,m(T))(\rho^l(T))\Big|^2 + |D^r_x v^l(t,x)|^2 
\end{align*}  
Note that $\sup_t\|\rho(t)\|_{-k}\leq C_MT$ by Proposition \ref{Prop.estiw}. As the $v^l$ solve HJ equations with the same diffusion and the same drift, Proposition \ref{prop.highS}, \eqref{hyphypGG0} and \eqref{r1r2r3} imply that 
\begin{align*}
& \sup_x \Bigl(\sum_{|l|=1}|D^r v^l|^2\Bigr)^{1/2}  \leq (1+CT) \sup_x \Bigl(\sum_{|l|=1}|D^r v^l(T)|^2\Bigr)^{1/2}+ C_MT\\
&  \leq (1+CT) \sup_x \Bigl(\sum_{|l|=1} (  \|D^r_x \frac{\delta G}{\delta m} (x_0,\cdot,m(T),\cdot) \|_{0; k}\|\rho(T)\|_{-k}  + |D^r_x \partial^l_{x_0} G(x_0,x,m(T)) |)^2\Bigr)^{1/2}+C_MT\\ 
&  \leq \sup_x \Bigl(\sum_{|l|=1} (  |D^r_x \partial^l_{x_0} G(x_0,x,m(T)) |+C_MT )^2 \Bigr)^{1/2}+C_MT
\leq \sup_x (\sum_{|l|=1} |D^r_x \partial^l_{x_0} G(x_0,x,m(T)) |^2 )^{1/2}+C_MT,
\end{align*}
while 
 \begin{align*}
&  \sum_{|l|=1} |\partial^l_{x_0}U^0(t,x_0,m)|^2  \leq \sum_{|l|=1} \Bigl( | \partial^l_{x_0} G^0(x_0, m(T))|+|\frac{\delta G^0}{\delta m}(x_0,m(T))(\rho^l(T))|\Bigr)^2\\
&\qquad  \leq  \sum_{|l|=1} ( | \partial^l_{x_0} G^0(x_0, m(T))+ C_M T )^2
\leq  \Bigl( \big( \sum_{|l|=1}  | \partial^l_{x_0} G^0(x_0, m(T))|^2\big)^{1/2}+ C_M T \Bigr)^2 .  
\end{align*}  
Using that $((x+z)^2+(y+z)^2)^{1/2}\leq (x^2+y^2)^{1/2}+\sqrt{2}z$, we obtain
 \begin{align*}
& \sup_x \left(\sum_{|l|=1} |\partial^l_{x_0}U^0(t,x_0,m)|^2 + | D^r_x \partial^l_{x_0} U(t,x_0, x, m) |^2\right)^{1/2} \\
& \qquad \leq \sup_x\Bigl(  \sum_{|l|=1}  \left| \partial^l_{x_0} G^0(x_0, m(T))\right|^2  + |D^r_x \partial^l_{x_0} G(x_0,x,m(T)) |^2  \Bigr)^{1/2}+C_MT,
\end{align*}  
from which we derive \eqref{kjernzd2},   by taking the $\sup$ over $x_0$,  summing over $r$  and finally taking the sup over $m$.

For \eqref{kjernzd3}, let $(v,\rho)$ be as in Lemma \ref{Lem.DerivU} with $m_1 - m_0 = \rho_0 \in C^{-k}$ and $\xi = 0$, as in \eqref{deltaU0deltam} and \eqref{reformBIS}. We have, for any $r\leq n-1$, 
$$
\Big|\frac{\delta U^0}{\delta m}(t, x_0, m)(\rho_0) \Big|^2+ \Big|D^r_x \frac{\delta U}{\delta m}(t, x_0, x, m)(\rho_0) \Big|^2 = \Big|\frac{\delta G^0}{\delta m}(x_0,m(T))(\rho(T))\Big|^2 + |D^r_x v(t, x)|^2.
$$
So again by Proposition \ref{prop.LipschDeltaU},
\begin{align*}
& \Big|\frac{\delta U^0}{\delta m}(t, x_0, m)(\rho_0) \Big|^2+ \Big|D^r_x \frac{\delta U}{\delta m}(t, x_0, x, m)(\rho_0) \Big|^2 \\
&  \le  \Big|\frac{\delta G^0}{\delta m}(x_0,m(T))(\rho(T))\Big|^2 + \Bigl(\sup_x \Big|D^r_x \frac{\delta G}{\delta m} (x_0,\cdot,m(T))(\rho(T))\Big| + C_MT \|\rho(T)\|_{-k}\Bigr)^2 \\
&  \le \left[ \sup_x \frac1{\|\rho(T)\|_{-k}} \left(\Big|\frac{\delta G^0}{\delta m}(x_0,m(T))(\rho(T))\Big|^2 +  \Big|D^r_x \frac{\delta G}{\delta m}(x_0,x,m(T))(\rho(T)) \Big|^2 \right)^{1/2}   +C_MT \right]^2 \|\rho(T)\|^2_{-k} \\
&  \le  (1+C_MT)^2    \left[ \sup_{x, \|\rho\|_{-k} = 1} \left(\Big|\frac{\delta G^0}{\delta m}(x_0,m(T))(\rho)\Big|^2 +  \Big|D^r_x \frac{\delta G}{\delta m}(x_0,x,m(T))(\rho) \Big|^2 \right)^{1/2}   +C_MT \right]^2 \|\rho_0\|^2_{-k},
\end{align*}     
This gives \eqref{kjernzd3}.
 \end{proof}

\subsection{Second order differentiability of $U$ and $U^0$} 

\begin{Proposition}\label{Prop.DerivU2} Let $U$ be the solution of \eqref{eq.Master1} given by \eqref{rep.U1}.  Let $n\geq 3$ and $k\in \{2,\dots,n-1\}$. Suppose, in addition to  the assumptions of Proposition \ref{Prop.DerivU},  that $G$ is of class $C^2$  and that $ \|\frac{\delta^2 G}{\delta m^2}(x_0, \cdot, m,\cdot,\cdot)\|_{n-2;k-1,k-1} \leq M$. 
 Then  there exists $T_M>0$  (depending on $M$ and on the data but not on $G$) such that, if $T\in (0,T_M]$, the map $U$ is $C^2$ with respect to the measure variable and the parameter $x_0$, and satisfies
$$
\ds  \sup_{t\in [0,T]} \|\frac{\delta^2 U}{\delta m^2}(t)\|_{n-2;k-1,k-1} \leq \ds \|\frac{\delta^2 G}{\delta m^2}\|_{n-2;k-1,k-1}+ C_MT .
$$
\end{Proposition}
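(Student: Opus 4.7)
The plan is to proceed by analogy with Proposition \ref{Prop.DerivU}, identifying the candidate second derivative via the second-order linearized MFG system \eqref{MFG3lkjenze}, and then verifying it through a perturbation argument of order $h^2$. Specifically, fix $(t_0,x_0,x,m_0)$ and $m_1 \in \Pw$, and let $(u,m)$ be the solution to the MFG system \eqref{eq.MFGcoupled} starting from $m_0$. For $\rho_0 \in C^{-(k-1)}$, let $(v,\rho)$ be the solution to the first-order linearized system \eqref{MFG2lkjenze} with $R_1=R_2=R_3=0$ and initial condition $\rho_0$, and similarly $(v',\rho')$ associated with initial condition $\rho_0' := m_1 - m_0$. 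The candidate for the second derivative is $w(t_0,x)$ where $(w,\mu)$ solves the second-order linearized system \eqref{MFG3lkjenze} with $\tilde R_1=\tilde R_2=\tilde R_3=0$; the formula
$$
\frac{\delta^2 U}{\delta m^2}(t_0,x_0,x,m_0)(\rho_0, m_1-m_0) = w(t_0,x)
$$
is the analogue of \eqref{reformBIS}.

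First I would verify this representation. Let $(u_h,m_h)$ be the MFG solution with initial condition $(1-h)m_0 + h m_1$, and let $(v_h,\rho_h)$ be the first-order linearization at $(u_h, m_h)$ with initial condition $\rho_0$ (and zero source terms). Defining $W_h := v_h - v - hw$ and $P_h := \rho_h - \rho - h\mu$, one computes that the pair $(W_h,P_h)$ solves a linearized system of the form \eqref{MFG2lkjenze} about $(u,m)$ with vanishing initial/terminal data and source terms that can be written as second-order Taylor remainders involving the differences $u_h - u - hv'$, $D(u_h-u-hv')$, and $m_h - m - h\rho'$. By Proposition \ref{Prop.DerivU} applied to $(u_h,m_h)$ and $(v',\rho')$, these differences are $O(h^2)$ in the appropriate norms, so the sources $R_1^h, R_2^h, R_3^h$ are bounded by $Ch^2$ in $C^{n-2}_b$, $C^{-(k-1)}$ and $C^{n-2}_b$ respectively. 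Applying the estimate \eqref{lkqehsrmd} of Proposition \ref{prop.LipschDeltaU} (with $n$ replaced by $n-1$ and $k$ by $k-1$) gives $\sup_t \|W_h(t)\|_{n-2} \le Ch^2$, and in particular $\|v_h(t_0) - v(t_0) - hw(t_0)\|_{n-2} \le Ch^2$. Since $v_h(t_0, \cdot) = \frac{\delta U}{\delta m}(t_0,x_0,\cdot, (1-h)m_0+hm_1)(\rho_0)$ and $v(t_0,\cdot)$ is the value at $m_0$, choosing $m_1 = \delta_y$, the pointwise convergence together with the continuity and boundedness of $w(t_0,x; m_0, y)$ in $(m_0,y)$ (by Corollary \ref{d2m} and uniqueness for \eqref{MFG3lkjenze}) lets us invoke Lemma \ref{lem.condC1} to conclude that $\frac{\delta U}{\delta m}$ is of class $C^1$ in $m$, so $U$ is $C^2$, with the announced formula; by linearity in $\rho_0'$ this extends from $\rho_0' = \delta_y - m_0$ to any $\rho_0' \in C^{-(k-1)}$, and a similar argument delivers $C^2$-regularity in $x_0$ using Corollaries \ref{d2mx0} and \ref{d2x0}.

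Once the representation is secured, the quantitative bound follows directly from Corollary \ref{d2m}: for any $r \le n-2$, $x_0 \in \R^{d_0}$, $m \in \Pw$ and $\rho_0, \rho_0' \in C^{-(k-1)}$ with $\|\rho_0\|_{-(k-1)} = \|\rho_0'\|_{-(k-1)} = 1$,
$$
\Bigl|D^r_x \frac{\delta^2 U}{\delta m^2}(t,x_0,x,m)(\rho_0,\rho_0')\Bigr| = |D^r_x w(t,x)| \le \sup_{x'} \Bigl|D^r_x \frac{\delta^2 G}{\delta m^2}(x_0,x',m(T))(\rho(T),\rho'(T))\Bigr| + C_M T,
$$
where the supremum is controlled by $\|\frac{\delta^2 G}{\delta m^2}\|_{n-2;k-1,k-1}$ (since $\rho(T), \rho'(T)$ remain bounded in $C^{-(k-1)}$ by \eqref{lkqehsrmd2}). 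Taking the supremum over $x, x_0, \rho_0, \rho_0'$ and summing over $r \le n-2$ yields the stated estimate.

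The main obstacle is the careful bookkeeping in step one: the system satisfied by $(W_h, P_h)$ involves residuals arising from expanding all of $H$, $H_p$, $\frac{\delta H}{\delta m}$, $\frac{\delta H_p}{\delta m}$, $H_{pp}$ and $G$ around $(Du, m)$ to second order, and one must verify that every remainder can be matched against a term of the form $u_h-u-hv'$ or $m_h-m-h\rho'$ in order to apply the first-order estimates of Proposition \ref{Prop.DerivU}. This is where having sufficient regularity hypotheses on the data ($k \ge 2$, $n \ge 3$, together with the bound on $\|\frac{\delta^2 G}{\delta m^2}\|_{n-2;k-1,k-1}$) is essential, since the Taylor remainders involve $\frac{\delta^2 H}{\delta m^2}$ and its mixed derivatives, all of which need to be tested against distributions in $C^{-(k-1)}$ to produce $o(h)$ error terms.
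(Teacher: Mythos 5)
Your proposal follows essentially the same approach as the paper's proof: identify the candidate $\delta^2 U/\delta m^2$ via the second-order linearized MFG system \eqref{MFG3lkjenze}, verify the representation by an $O(h^2)$ residual argument, invoke Lemma \ref{lem.condC1}, and obtain the quantitative bound from Corollary \ref{d2m}. One detail your sketch glosses over is the intermediate first-order bound $\sup_t\bigl(\|\tilde v_h(t)-v(t)\|_{n-2}+\|\tilde\rho_h(t)-\rho(t)\|_{-(k-1)}\bigr)\le Ch$ (recorded as \eqref{lehsfnjn} in the paper) which is indispensable to see that product terms such as $(H_p(Du_h,m_h)-H_p(Du,m))\cdot D\tilde v_h$ in the residual sources are $O(h^2)$ and not merely $O(h)$; also note that the second-order expansions you cite are the content of Lemma \ref{Lem.DerivU}, not of Proposition \ref{Prop.DerivU}.
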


\begin{proof}   Our first goal is to show that $\delta U/\delta m$ is differentiable with respect to $m$. Let $(t_0,m_0)\in [0,T)\times \Pw$, $y,y' \in \R^d$ and
\begin{itemize}
\item $(u,m)$ (respectively $(u_h,m_h)$) be the solution of the MFG system \eqref{eq.MFGcoupled} with initial condition $(t_0,m_0)$ (respectively $(t_0, (1-h)m_0+h\delta_{y'})$),
\item $(v,\rho)$ (respectively $(v', \rho')$) be the solution of the first order linearized system \eqref{MFG2lkjenze} with zero right-hand side, initial condition $(t_0, \delta_y)$ (respectively $(t_0, \delta_{y'})$) and where  the Hamiltonian and its derivatives are evaluated at $(x_0,x,Du(t,x),m(t))$,
\item $(\tilde v_h, \tilde \rho_h)$  be the solution to the first order  linearized system \eqref{MFG2lkjenze} with zero right-hand side, with initial condition $(t_0, \delta_y)$ and where the Hamiltonian and its derivatives are evaluated at $(x_0,x,Du_h(t,x),m_h(t))$,
\item $(w,\mu)$ be the solution to the second order linearized system \eqref{MFG3lkjenze} associated with $(u,m)$,  $(v, \rho), (v',\rho')$ and with right-hand side $0$.
\end{itemize}
Recall  (see \eqref{reformBIS}) that
\be\label{reprfor123}
\begin{aligned}
& \tilde v_h(t_0,x) = \frac{\delta U}{\delta m} (t_0,x_0,x, (1-h)m_0+h\delta_{y'}, y), \\
& v(t_0,x) = \frac{\delta U}{\delta m} (t_0,x_0,x, m_0 , y), \qquad \text{and} \quad 
v'(t_0,x) = \frac{\delta U}{\delta m} (t_0,x_0,x, m_0 , y')
\end{aligned}
\ee
so we expect $w(t_0,\cdot)$ to represent the derivative in $m$ of $\delta U/\delta m$, namely $\frac{\delta^2 U}{\delta m^2} (t_0,x_0,x, m_0 , y, y')$.

We consider 
$$
(\hat v_h, \hat \rho_h):= (\tilde v_h,\tilde \rho_h)-(v,\rho)-h (w,\mu). 
$$
Let us first note that, by Proposition \ref{Prop.LipDep}, we have 
\be\label{lehsfnjn0}
\sup_{t\in [t_0,T]} \Bigl( \|\tilde u_h(t,x)-u(t,x)\|_{n-1} + \dw(m_h(t),m(t)) \Bigr)\leq C\dw((1-h)m_0+h\delta_{y'},m_0)\leq C h. 
\ee
Next we claim that 
\be\label{lehsfnjn}
\sup_{t\in [t_0,T]} \|\tilde v_h(t,x)-v(t,x)\|_{n-2} + \|\tilde \rho_h(t)-\rho(t)\|_{-(k-1)} \leq Ch.
\ee
Indeed, the pair $(\tilde v_h,\tilde \rho_h)-(v,\rho)$ solves the  first order linearized system \eqref{MFG2lkjenze}, associated with $(u,m)$, initial condition $(t_0, 0)$ and with a right-hand side given by 
\begin{align*}
R_{h,1}(t,x)= & - \Bigl(\big(H_p(x_0,x,Du_h,m_h(t))-H_p(x_0,x,Du,m(t))\big)\cdot D\tilde v_h \\
& + \big(\frac{\delta H}{\delta m}(x_0,x,Du_h,m_h(t))-\frac{\delta H}{\delta m}(x_0,x,Du,m(t))\big)(\tilde \rho_h(t) ) \Bigr)\\
R_{h,2}(t,x) = & \tilde \rho_h(H_p(x_0,x,Du_h,m_h(t))-H_p(x_0,x,Du,m(t)))\\ 
&  +(m_hH_{pp}(x_0,x, Du_h,m_h)-mH_{pp}(x_0,x, Du,m))\cdot D\tilde v_h\\
& + \Big(m_h\frac{\delta H_{p}}{\delta m} (x_0,x, Du_h,m_h)-m\frac{\delta H_{p}}{\delta m} (x_0,x, Du,m)\Big)(\tilde \rho_h)\\
R_{h,3}(t,x) = & \left(\frac{\delta G}{\delta m}(x_0,x,m_h(T)) - \frac{\delta G}{\delta m}(x_0,x,m(T)) \right)(\tilde \rho_h(T)).
\end{align*}
 Applying Proposition  \ref{prop.LipschDeltaU} and using \eqref{lehsfnjn0} we infer that \eqref{lehsfnjn} holds. 
 
In view of the equations satisfied by $(\tilde v_h, \tilde \rho_h)$, $(v,\rho)$ and $(w,\mu)$, the pair $(\hat v_h, \hat \rho_h)$ solves the first order linearized system \eqref{MFG2lkjenze}, associated with $(u,m)$,  initial condition $(t_0, 0)$ and with 
\begin{align*}
R_{h,1}(t,x)= & - \Bigl[ \big(H_p(x_0,x,Du_h,m_h(t))-H_p(x_0,x,Du,m(t))\big)\cdot D \tilde v_h \\
& \qquad - hH_{pp}(x_0,x,Du,m(t)) D v\cdot Dv' - h\frac{\delta H_p}{\delta m}(x_0,x,Du,m(t))(\rho'(t))\cdot D v  \\
& \qquad + \Big(\frac{\delta H}{\delta m}(x_0,x,Du_h,m_h(t))-\frac{\delta H}{\delta m}(x_0,x,Du,m(t))\Big)(\tilde \rho_h(t)) \\
&  \qquad - h \frac{\delta^2 H}{\delta m^2}(x_0,x,Du,m(t)) (\rho(t),\rho'(t)) - h\frac{\delta H_p}{\delta m}(x_0,x,Du,m(t))( \rho(t))\cdot D v' \Bigr], 
\end{align*}
\begin{align*}
& R_{h,2}(t,x) =  \tilde \rho_h \Big(H_p(x_0,x,Du_h,m_h(t)) - H_p \Big) - h \rho \Big( H_{pp}D v' + \frac{\delta H_p}{\delta m}(\rho') \Big) + \\
& \qquad D \tilde v_h \cdot \Big( m_h H_{pp}(x_0,x,Du_h,m_h(t)) - m H_{pp} \Big) - h D  v \cdot \Big( \rho' H_{pp} + m \frac{\delta H_{pp}}{\delta m}(\rho') +  mH_{ppp}Dv' \Big) + \\
& \qquad  \Big(m_h \frac{\delta H_p}{\delta m}(x_0,x,Du_h,m_h(t)) - m \frac{\delta H_p}{\delta m} \Big)  (\tilde \rho_h) - h \Big(\rho' \frac{\delta H_p}{\delta m} + m Dv' \cdot \frac{\delta H_{pp}}{\delta m}\Big)  (\rho)  - hm  \frac{\delta^2 H_p}{\delta m^2}(\rho, \rho'),
\end{align*}
\begin{align*}
R_{h,3}(x) = & \frac{\delta G}{\delta m}(x_0,x,m_h(T))(\tilde \rho_h(T)) - \frac{\delta G}{\delta m}(x_0,x,m(T))(\tilde \rho_h(T)) - h \frac{\delta^2 G}{\delta m^2}(x_0,x,m(T))(\rho(T), \rho'(T))
\end{align*}
(for brevity, $H_p$ and its derivatives in $R_{h,2}$ are evaluated at $(x_0,x,Du,m(t))$, unless otherwise specified). Using
\be\label{primebounds}
\sup_{t\in [t_0,T]}\|u_h(t)-u(t)-hv'(t)\|_{n-2} \leq  Ch^2 , \quad 
\sup_{t\in [t_0,T]} \|m_h(t)-m(t)-h\rho'(t)\|_{-(k-1)}\; \leq Ch^2
\ee
(see \eqref{lkaejnzred:1} and \eqref{lkaejnzred:2} in Lemma \ref{Lem.DerivU}) as well as the above estimate \eqref{lehsfnjn0}, we have 
$$
\sup_{t} \left( \|R_{h,1}(t,\cdot)\|_{n-2} + \|R_{h,2}(t,\cdot)\|_{-(k-1)} + \|R_{h,3}(t,\cdot)\|_{n-2} \right) \leq Ch^2.
$$
Then Proposition  \ref{prop.LipschDeltaU} and the representation formula \eqref{reprfor123} implies that 
\begin{align*}
&  \|  \frac{\delta U}{\delta m} (t_0,x_0,\cdot, (1-h)m_0+h\delta_{y'}, y) -\frac{\delta U}{\delta m} (t_0,x_0,\cdot, m_0 , y) -h w(t_0,\cdot)\|_{n-2}
\\
& \qquad 
=  \|\tilde v_h(t_0,\cdot)-v(t_0,\cdot)-h w(t_0,\cdot)\|_{n-2}
\leq  \sup_{t} \|\hat v_h(t) \|_{n-2} \leq Ch^2. 
\end{align*}
Note that we also have
\be\label{tildehjhj}
\sup_{t\in [t_0,T]} \|\tilde \rho_h(t) - \rho(t) - h\mu(t)\|_{-k}\; \leq Ch^2.
\ee
Hence, we can apply Lemma \ref{lem.condC1} as in the proof of Proposition \ref{Prop.DerivU} and infer that $\delta U/\delta m$ has a derivative in $m$ given by $w$: 
$$
\frac{\delta^2 U}{\delta m^2} (t_0,x_0,x,  m_0,y,y') = w(t_0, x). 
$$
If, in general, $w$ is the solution to the second order linearized system \eqref{MFG3lkjenze} associated with $(v, \rho), (v',\rho')$ (having initial data $(t_0, \rho_0)$ and $(t_0, \rho'_0)$ respectively)  and with $R_i=0$, $\tilde R_i=0$, $i=1,\dots 3$, then by a linearity argument one may also conclude that
\be\label{reprD2Um}
\int_{\R^d} \frac{\delta^2 U}{\delta m^2}(t_0,x_0,x,m_0,y,y')\rho_0(dy)\rho'_0(dy') = w(t_0,x).
\ee
Thus, the estimate on $\frac{\delta^2 U}{\delta m^2}$ follows from Corollary \ref{d2m}, which gives
\begin{align}\label{elzjnfd,mlk,k}
&  \|\frac{\delta^2 U}{\delta m^2} (t_0, x_0, \cdot,m_0,\cdot, \cdot)\|_{n-2;k-1,k-1}
\leq \ds \|\frac{\delta^2 G}{\delta m^2} (x_0, \cdot,m(T), \cdot,\cdot)\|_{n-2;k-1,k-1}+ C_MT \, ,
\end{align}
using the fact that $\sup\limits_{t } \|\rho(t)\|_{-(k-1)}  \leq   \ds  \left(1+C_MT \right) \|\rho_0\|_{-(k-1)}$ and that the same holds for $\rho'$.
\end{proof}

Next we discuss the second order regularity of $U$ and $U^0$ with respect to $m$ and $x_0$. 

\begin{Proposition}\label{Prop.DerivU02} Let $U^0$ and $U$ be the solutions of  \eqref{eq.MasterL}  and \eqref{eq.Master1} respectively. Suppose, in addition to  the assumptions of Propositions  \ref{Prop.DerivUL} and \ref{Prop.DerivU2}, that we have  
\begin{align*}
 \Bigl\|D^2_{x_0}(G^0,G)\Bigr\|_{n-2}+  \Bigl\|D_{x_0} \frac{\delta (G^0,G)}{\delta m}\Bigr\|_{n-2;k-1} + \left\|\frac{\delta^2 (G^0,G)}{\delta m^2 }\right\|_{n-2;k-1,k-1} \leq M.
\end{align*}
Then there exists $T_M>0$  (depending on $M$ and on the data but not on $G$) such that, if $T\in (0,T_M]$, the maps $U^0$ and $U$ are $C^2$ with respect to the measure variable and $x_0$, and 
\begin{align*}
\sup_t \ \Bigl\|D^2_{x_0}(U^0,U)(t)\Bigr\|_{n-2} & \le  \Bigl\|(D^2_{x_0}G^0,D^2_{x_0}G)\Bigr\|_{n-2} + C_M T, \\
\sup_t \ \Bigl\|D_{x_0}\frac{\delta (U^0,U)}{\delta m}(t)\Bigr\|_{n-2;k-1} & \le \Bigl\|D_{x_0}\frac{\delta (G^0,G)}{\delta m}\Bigr\|_{n-2;k-1} + C_MT .
\end{align*}   
Moreover, 
\begin{align*}
\sup_t \ \left\|\frac{\delta^2 (U^0,U)}{\delta m^2 }(t)\right\|_{n-2;k-1,k-1} \le \left\|\frac{\delta^2 (G^0,G)}{\delta m^2 }\right\|_{n-2;k-1,k-1} + C_MT.
\end{align*} 

\end{Proposition}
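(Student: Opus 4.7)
The plan is to imitate the strategy of Propositions \ref{Prop.DerivU} and \ref{Prop.DerivU2}: identify each second order derivative of $U$ and $U^0$ as the evaluation at $t=t_0$ of an appropriate component of the second order linearized MFG system, then invoke Corollaries \ref{d2m}, \ref{d2mx0} and \ref{d2x0} to get the sharp bounds with only an additive $C_M T$ error.

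First I will establish the representation formulas. Fix $(t_0,m_0)\in [0,T)\times \Pw$ and $x_0\in\R^{d_0}$, and let $(u,m)$ be the solution of \eqref{eq.MFGcoupled}. For $|l|=|l'|=1$ and $\rho_0,\rho'_0\in C^{-(k-1)}$, let $(v,\rho)$ and $(v',\rho')$ be the solutions of the first order linearized system \eqref{MFG2lkjenze} with zero sources and initial data $\rho_0, \rho'_0$ respectively; let $(v^l,\rho^l)$ and $(v^{l'},\rho^{l'})$ be the solutions with zero initial data and sources given by \eqref{R}; finally let $(w,\mu)$, $(w^l,\mu^l)$, $(w^{l,l'},\mu^{l,l'})$ be the solutions of \eqref{MFG3lkjenze} with the matching sources from Corollaries \ref{d2m}, \ref{d2mx0}, \ref{d2x0}. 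By arguments entirely analogous to Lemma \ref{Lem.DerivU} (expanding $H$ and $G$ to second order with respect to both $x_0$ and $m$ and bounding the remainder via Proposition \ref{prop.LipschDeltaU}), one checks
\begin{align*}
\partial^{l+l'}_{x_0} U(t_0,x_0,x,m_0) & = w^{l,l'}(t_0,x), & \partial^l_{x_0}\tfrac{\delta U}{\delta m}(t_0,x_0,x,m_0)(\rho_0) & = w^l(t_0,x), \\
\tfrac{\delta^2 U}{\delta m^2}(t_0,x_0,x,m_0)(\rho_0,\rho'_0) & = w(t_0,x),
\end{align*}
where the last identity was already obtained in Proposition \ref{Prop.DerivU2}.

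For $U^0$, recall from \eqref{rep.UL} that $U^0(t_0,x_0,m_0) = G^0(x_0,m(T))$. Applying the chain rule, using \eqref{Dx0U0} and \eqref{deltaU0deltam}, and differentiating once more using the representation of $D_{x_0}m(T)$ and $\delta m(T)/\delta m$ in terms of $\rho^l(T)$ and $\rho(T)$, I expect the formulas
\begin{align*}
\partial^{l+l'}_{x_0} U^0(t_0,x_0,m_0) & = \partial^{l+l'}_{x_0} G^0(x_0,m(T)) + \partial^l_{x_0}\tfrac{\delta G^0}{\delta m}(\rho^{l'}(T)) \\
& \quad + \partial^{l'}_{x_0}\tfrac{\delta G^0}{\delta m}(\rho^l(T)) + \tfrac{\delta^2 G^0}{\delta m^2}(\rho^l(T),\rho^{l'}(T)) + \tfrac{\delta G^0}{\delta m}(\mu^{l,l'}(T)), \\
\partial^l_{x_0}\tfrac{\delta U^0}{\delta m}(t_0,x_0,m_0)(\rho_0) & = \partial^l_{x_0}\tfrac{\delta G^0}{\delta m}(\rho(T)) + \tfrac{\delta^2 G^0}{\delta m^2}(\rho(T),\rho^l(T)) + \tfrac{\delta G^0}{\delta m}(\mu^l(T)), \\
\tfrac{\delta^2 U^0}{\delta m^2}(t_0,x_0,m_0)(\rho_0,\rho'_0) & = \tfrac{\delta^2 G^0}{\delta m^2}(\rho(T),\rho'(T)) + \tfrac{\delta G^0}{\delta m}(\mu(T)),
\end{align*}
where all derivatives of $G^0$ are evaluated at $(x_0,m(T))$. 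Differentiability will follow from Lemma \ref{lem.condC1} after verifying by standard expansions that the remainders are $o(h)$ (using the bounds \eqref{lkaejnzred:1}, \eqref{lkaejnzred:2}, \eqref{primebounds} and \eqref{tildehjhj} for $m_h - m - h\rho - \tfrac{h^2}{2}\mu$ and analogous formulas).

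Given the representations, estimating each pair is straightforward. For each $(U^0,U)$ second derivative, group the $U$-component estimate from Corollary \ref{d2m}, \ref{d2mx0} or \ref{d2x0} with the $U^0$-component estimate, using Proposition \ref{prop.LipschDeltaU} to bound $\|\rho^l(T)\|_{-(k-1)}, \|\mu^l(T)\|_{-k}$ by $C_M T$ times initial data, and the fact that $\|\rho(T)\|_{-(k-1)} \leq (1+C_M T)\|\rho_0\|_{-(k-1)}$. The terms from $G^0$ produce the matching leading order, e.g.\ $|\partial^{l+l'}_{x_0} G^0(x_0,m(T))|$ in the $U^0$ slot paired with $|D^r_x \partial^{l+l'}_{x_0}G(x_0,x,m(T))|$ in the $U$ slot; all other terms are $O(T)$ by the vanishing $\rho^l$ or $\mu$ factors. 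Assembling these pairs inside the appropriate $(\,\cdot\,^2+\,\cdot\,^2)^{1/2}$ and using $((a+z)^2+(b+z)^2)^{1/2} \le (a^2+b^2)^{1/2} + \sqrt{2}\,z$, as in the proof of Proposition \ref{Prop.DerivUL}, delivers the claimed bounds after summing over $r\le n-2$, taking $\sup$ in $x_0,x$ and in $\|\rho_0\|_{-(k-1)},\|\rho'_0\|_{-(k-1)}\le 1$.

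The main technical obstacle is the rigorous justification of the second order expansion for $U^0$: because $U^0$ depends on $m_0$ only through $m(T)$, one needs a quadratic expansion $m_h(T) = m(T) + h \rho(T) + \tfrac{h^2}{2}\mu(T) + o(h^2)$ in the $C^{-k}$ norm, together with a $C^2$ Taylor expansion of $G^0$ around $(x_0, m(T))$. The first is obtained by refining the argument of Lemma \ref{Lem.DerivU} using \eqref{tildehjhj}, while the second is the assumed $C^2$ regularity of $G^0$ in $(x_0,m)$. Once these expansions are combined with a second-order Taylor formula for $H$ at the interior level, the remainder controls used in Proposition \ref{Prop.DerivU2} carry over essentially verbatim, and the concluding estimates then follow from the corollaries without further work.
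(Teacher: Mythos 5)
Your overall plan is sound and matches the paper's strategy: identify each second-order derivative of $(U^0,U)$ as the evaluation at $t=t_0$ of a component of the second-order linearized system, then invoke Corollaries \ref{d2m}, \ref{d2mx0}, \ref{d2x0} together with the $((a+z)^2+(b+z)^2)^{1/2}\le (a^2+b^2)^{1/2}+\sqrt2\,z$ trick from Proposition \ref{Prop.DerivUL}. The representation formulas you write for $\partial^{l+l'}_{x_0}U^0$, $\partial^l_{x_0}\frac{\delta U^0}{\delta m}$ and $\frac{\delta^2 U^0}{\delta m^2}$ are all correct and are exactly what the paper derives (the $\partial^{l+l'}_{x_0}U$ case additionally requires a polarization step, since \eqref{repUx_0x_0} only gives the diagonal $D^2_{x_0}U\,\xi\cdot\xi$, but that is minor).

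However, your account of the ``main technical obstacle'' for $U^0$ is off in a way that would lead you astray if you tried to make it rigorous. You claim one needs the quadratic expansion $m_h(T)=m(T)+h\rho(T)+\tfrac{h^2}{2}\mu(T)+o(h^2)$ in $C^{-k}$. This is not what is needed and not what the paper proves. Recall $\mu$ solves the second-order linearized system \eqref{MFG3lkjenze} with \emph{zero} initial condition; it is not the second time-derivative of $h\mapsto m_h(T)$. What the paper actually uses is: (i) the \emph{first}-order expansion \eqref{primebounds}, $\|m_h(T)-m(T)-h\rho'(T)\|_{-(k-1)}\le Ch^2$ (note: $\rho'$, not $\rho$ — the perturbation of the initial datum is $\delta_{y'}-m_0$); and (ii) the expansion \eqref{tildehjhj}, $\|\tilde\rho_h(T)-\rho(T)-h\mu(T)\|_{-k}\le Ch^2$, of the \emph{linearized density along the perturbed trajectory} $\tilde\rho_h$ (which represents $\frac{\delta m_h(T)}{\delta m_0}(\delta_y)$). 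Since $\frac{\delta U^0}{\delta m}(t_0,x_0,m_0,y)=\frac{\delta G^0}{\delta m}(x_0,m(T))(\rho(T))$ already carries a $\rho(T)$, differentiating it in $m_0$ produces two contributions: one from the $m(T)$-slot of $\delta G^0/\delta m$ (use (i) and $C^2$-regularity of $G^0$, giving $\frac{\delta^2 G^0}{\delta m^2}(\rho(T),\rho'(T))$) and one from the $\rho(T)$-slot (use (ii), giving $\frac{\delta G^0}{\delta m}(\mu(T))$). No second-order Taylor expansion of $m_h(T)$ in $h$, nor any ``second-order Taylor formula for $H$ at the interior level'' for $U^0$, enters: the whole argument for $U^0$ lives at the terminal-data level because $U^0(t_0,x_0,m_0)=G^0(x_0,m(T))$ is a composition, and the hard PDE work (controlling $\tilde\rho_h-\rho-h\mu$) was already done in Proposition \ref{Prop.DerivU2}. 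If you replace your claimed quadratic-in-$h$ expansion of $m_h(T)$ with this correct pair of first-order expansions, the rest of your argument goes through as you describe.
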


\begin{proof} \underline{Step 1.} The \underline{differentiability of $\delta U/\delta m$ with respect to $x_0$} can be achieved exactly as for its differentiability with respect to $m$ in Proposition \ref{Prop.DerivU2}. For any direction $\xi\in \R^{d_0}$, let
\begin{itemize}\label{itemi1}
\item $(u,m)$ (respectively $(u_h,m_h)$) be the solution of the MFG system \eqref{eq.MFGcoupled} with initial condition $(t_0,m_0)$ and parameters $x_0$ and $x_0 + h \xi$ respectively,
\item $(v,\rho)$ (respectively $(v', \rho')$) be the solution of the first order linearized system \eqref{MFG2lkjenze} with zero right-hand side (respectively right-hand side as in \eqref{Rchoi}), initial condition $(t_0, \delta_y)$ (respectively $(t_0, 0)$) and where  the Hamiltonian and its derivatives are evaluated at $(x_0,x,Du(t,x),m(t))$,
\item $(\tilde v_h, \tilde \rho_h)$  be the solution to the first order  linearized system \eqref{MFG2lkjenze} with zero right-hand side, with initial condition $(t_0, \delta_y)$ and where the Hamiltonian and its derivatives are evaluated at $(x_0+h\xi,x,Du_h(t,x),m_h(t))$,
\item $(w,\mu)$ be the solution to the second order linearized system \eqref{MFG3lkjenze} associated with $(v, \rho), (v',\rho')$ (and $(u,m)$), and with right-hand side
\begin{align*}
& \tilde R_1(t,x) = -H_{x_0p}(x_0,x,Du,m(t)) \xi \cdot Dv - \frac{\delta H_{x_0}}{\delta m}(x_0,x,Du,m(t))(\rho(t))\cdot \xi, \\
& \tilde R_{2}(t,x) =  \rho H_{x_0p} (x_0,x,Du,m(t)) \xi + mH_{x_0pp}(x_0,x,Du,m(t))\xi\ Dv + m \frac{\delta H_{x_0p}}{\delta m}(\rho)\xi , \\
& \tilde R_3(x) = \frac{\delta G_{x_0}}{\delta m}(x_0,x,m(T))(\rho(T))\cdot \xi,
\end{align*}
\end{itemize}
so that
\begin{align*}
& \tilde v_h(t_0,x) = \frac{\delta U}{\delta m} (t_0,x_0+h\xi, x, m_0, y), \\
& v(t_0,x) = \frac{\delta U}{\delta m} (t_0,x_0,x, m_0 , y), \qquad \text{and} \quad 
v'(t_0,x) = U_{x_0} (t_0,x_0,x, m_0)\cdot \xi.
\end{align*}
Then we find $\frac{\delta U_{x_0}}{\delta m} (t_0,x_0,x,m_0,y)\cdot \xi= w(t_0,x)$, and if one replaces $\delta_y$ by an arbitrary $\rho_0 \in C^{-(k-1)}$ as the initial datum for $\rho$, the following representation holds:
\be\label{repdeltaUx_0deltam}
\frac{\delta U_{x_0}}{\delta m} (t_0,x_0,x,m_0)(\rho_0) \cdot \xi= w(t_0,x) .
\ee

 \underline{Step 2.} The \underline{second order differentiability of $U$ with respect to $x_0$} can be checked in a similar way: let $(u,m)$ and $(u_h,m_h)$ be as before,
\begin{itemize}\label{itemi2}
\item $(v,\rho)$, $(\tilde v_h, \tilde \rho_h)$ be the solutions of the first order linearized system \eqref{MFG2lkjenze} with right-hand side as in \eqref{Rchoi}, initial condition  $(t_0, 0)$, and Hamiltonian and its derivatives evaluated at $(x_0,x,Du(t,x),m(t))$ and $(x_0+h\xi,x,Du_h(t,x),m_h(t))$ respectively,
\item  $(w,\mu)$ be the solution to the second order linearized system \eqref{MFG3lkjenze} associated with $(v, \rho), (v',\rho') = (v, \rho)$ (and $(u,m)$), and  with right-hand side $\tilde R_1, \tilde R_2, \tilde R_3$ given by \eqref{tilde2}.
\end{itemize}
Then we find 
\be\label{repUx_0x_0}
D^2_{x_0} U(t_0,x_0,x,m_0)\xi\cdot \xi = w(t_0,x) .
\ee

\underline{Step 3.} We now prove the regularity of $U^0$. To show that $\delta U^0/\delta m$ is differentiable with respect to $m$, let $(t_0,m_0)\in [0,T)\times \Pw$, $y,y' \in \R^d$ and
\begin{itemize}
\item $(u,m)$ (respectively $(u_h,m_h)$) be the solution of the MFG system \eqref{eq.MFGcoupled} with initial condition $(t_0,m_0)$ (respectively $(t_0, (1-h)m_0+h\delta_{y'})$),
\item $(v,\rho)$ (respectively $(v', \rho')$) be the solution of the first order linearized system \eqref{MFG2lkjenze} with zero right-hand side, initial condition $(t_0, \delta_y)$ (respectively $(t_0, \delta_{y'})$) and where  the Hamiltonian and its derivatives are evaluated at $(x_0,x,Du(t,x),m(t))$,
\item $(\tilde v_h, \tilde \rho_h)$  be the solution to the first order  linearized system \eqref{MFG2lkjenze} with zero right-hand side, with initial condition $(t_0, \delta_y)$ and where the Hamiltonian and its derivatives are evaluated at $(x_0,x,Du_h(t,x),m_h(t))$,
\item $(w,\mu)$ be the solution to the second order linearized system \eqref{MFG3lkjenze} associated with $(v, \rho), (v',\rho')$ (and $(u,m)$), and with right-hand side $0$,
\end{itemize}
as in the proof of differentiability of $\delta U/\delta m$ with respect to $m$ in Proposition \ref{Prop.DerivU2}. Note that
\begin{align*}
 & \frac{\delta U^0}{\delta m}(t_0,x_0,(1-h)m_0+ h \delta_{y'},y) = \frac{\delta G^0}{\delta m}(x_0,m_h(T))(\tilde \rho_h(T)), \\
 & \frac{\delta U^0}{\delta m}(t_0,x_0,m_0,y)= \frac{\delta G^0}{\delta m}(x_0,m(T))(\rho(T)).
\end{align*}
Therefore, using \eqref{primebounds} and \eqref{tildehjhj}
\begin{align*}
& \Big| \frac{\delta G^0}{\delta m}(x_0,m_h(T))(\tilde \rho_h(T)) - \frac{\delta G^0}{\delta m}(x_0,m(T))(\rho(T))  \\
& \qquad - h\Big( \frac{\delta^2 G^0}{\delta m^2}(x_0,m(T))(\rho(T), \rho'(T)) + \frac{\delta G^0}{\delta m}(x_0,m(T))(\mu(T)) \Big) \Big| \le Ch^2\,.
\end{align*}
Lemma \ref{lem.condC1} then implies that $\frac{\delta U^0}{\delta m}(t_0,x_0,\cdot,y)$ has a derivative, and by linearity, if $\mu$ is the solution to the second order linearized system \eqref{MFG3lkjenze} associated with $(v, \rho), (v',\rho')$ (that in turn have initial data $(t_0, \rho_0)$ and $(t_0, \rho'_0)$ respectively and with zero right-hand side), then
\be\label{diff2U0}
\int_{\R^d} \frac{\delta^2 U^0}{\delta m^2}(t_0,x_0,m_0,y,y')\rho_0(dy)\rho'_0(dy') = \frac{\delta^2 G^0}{\delta m^2}(x_0,m(T))(\rho(T), \rho'(T)) + \frac{\delta G^0}{\delta m}(x_0,m(T))(\mu(T)).
\ee
Hence, by the representation formula \eqref{reprD2Um} for $\delta^2 U /\delta^2 m$, Proposition \ref{prop.LipschDeltaU}, \ref{Prop.estiw} and Corollary \ref{d2m}, we have, for $r\leq n-2$, 
\begin{align*}
& \Big| \frac{\delta^2 U^0}{\delta m^2}(t,x_0,m_0)(\rho_0, \rho'_0) \Big|^2 + \Big| D^r_x \frac{\delta^2 U}{\delta m^2}(t,x_0,x,m_0)(\rho_0, \rho'_0) \Big|^2 \\
 & = \Big(\Big| \frac{\delta^2 G^0}{\delta m^2}(x_0,m(T))(\rho(T), \rho'(T))\Big| + \Big| \frac{\delta G^0}{\delta m}(x_0,m(T))(\mu(T)) \Big|\Bigr)^2 + | D^r_x w(t,x)|^2  \\
 &  \le  \Bigl(\Big| \frac{\delta^2 G^0}{\delta m^2}(x_0,m(T))(\rho(T), \rho'(T))\Big|+ C_M T  \|\rho_0\|_{-(k-1)}\|\rho'_0\|_{-(k-1)}\Bigr)^2 \\
& \qquad \qquad \qquad \qquad \qquad \qquad + 
\Bigl( \sup_x \Big|D_x \frac{\delta^2 G}{\delta m^2} (x_0,x,m(T))(\rho(T), \rho'(T)) \Big| + C_M T  \|\rho_0\|_{-(k-1)}\|\rho'_0\|_{-(k-1)}\Bigr)^2  \\
 & \le \Bigl\{ \sup_x \frac1{\|\rho(T)\|_{-(k-1)}\|\rho'(T)\|_{-(k-1)}} \Bigl(\Big| \frac{\delta^2 G^0}{\delta m^2}(x_0,m(T))(\rho(T), \rho'(T))\Big|^2 + \Big|D^r_x \frac{\delta^2 G}{\delta m^2} (x_0,x,m(T))(\rho(T), \rho'(T)) \Big|^2 \Bigr)^{1/2}  \\
& \qquad\qquad\qquad\qquad \times (1+C_MT) + C_M T \Bigr\}^2  \|\rho_0\|_{-(k-1)}^2\|\rho'_0\|_{-(k-1)}^2  ,
\end{align*} 
(where we use that $(x+z)^2+(y+z)^2\leq ((x^2+y^2)^{1/2}+2z)^2$, for $x,y,z\geq 0$). Taking the square root, then sup over $x_0$, $\rho_0$ and $\rho_0'$ and summing over $r\leq n-2$ gives the estimate on $\left\|\frac{\delta^2 (U^0,U)}{\delta m^2 }\right\|_{n-2;k-1,k-1}$.
\smallskip

 Differentiability of $\delta U^0/\delta m$ with respect to $x_0$ 
follows analogous lines: $(v,\rho)$, $(v', \rho')$, $(\tilde v_h, \tilde \rho_h)$ and $(w,\mu)$ have to be changed according to Step 1.
By \eqref{repdeltaUx_0deltam}, we have, using the notations of Corollary \ref{d2mx0} and for any $r\leq n-2$:  
\begin{multline*}
\sum_{|l|=1} \Big| \partial_{x_0}^l\frac{\delta U^0}{\delta m}(t_0,x_0,m_0,y)(\rho_0)  \Big|^2 + \Big|D^r_x\partial_{x_0}^l \frac{\delta U}{\delta m} (t,x_0,x,m_0)(\rho_0) \Big |^2 = \\
\sum_{|l|=1}  \Big| \partial_{x_0}^l \frac{\delta G^0}{\delta m}(\rho(T)) +  \frac{\delta^2 G^0}{\delta m^2}(\rho(T), \rho^l(T)) + \frac{\delta G^0}{\delta m}(\mu^l(T)) \Big|^2  + |D^r_x w^l(t,x)|^2,
\end{multline*}
where $G^0$ and its derivatives are all evaluated at $(x_0,m(T))$. We obtain the bounds on $(\frac{\delta U^0_{x_0}}{\delta m}$, $\frac{\delta U_{x_0}}{\delta m})$  by using Propositions \ref{prop.LipschDeltaU}, \ref{Prop.estiw} and Corollary \ref{d2mx0}.

Finally, second order differentiability of $U^0$ with respect to $x_0$, and the corresponding bound,  can be obtained similarly: let $l,l'\in \R^{d_0}$ with $|l|=|l'|=1$, $(v^l,\rho^l)$, $(v^{l'}, \rho^{l'})$ and $(w^{l,l'},\mu^{l,l'})$ be as in Corollary \ref{d2x0}. Note that
$$
\partial^{l+l'}_{x_0} U^0(t_0,x_0,m_0) =  \partial^{l+l'}_{x_0} G^0 + \partial^{l}_{x_0}\frac{\delta G^0_{x_0}}{\delta m}(\rho^{l'}(T))  +
\partial^{l'}_{x_0}\frac{\delta G^0_{x_0}}{\delta m}(\rho^l(T))+  \frac{\delta^2 G^0}{\delta m^2}(\rho^l(T), \rho^{l'}(T)) + \frac{\delta G^0}{\delta m}(\mu^{l,l'}(T)),
$$
while $\partial^{l+l'}_{x_0} U^0(t_0,x_0,m_0)$ is given by polarizing the representation formula \eqref{repUx_0x_0}. We can then conclude by Propositions \ref{prop.LipschDeltaU}, \ref{Prop.estiw} and Corollary \ref{d2x0}.
\end{proof}

\subsection{Uniform continuity estimates on  second order derivatives}

\begin{Proposition} \label{prop:rlipdelta2U} Let $U$ be the solution of \eqref{eq.Master1} given by \eqref{rep.U1} and  $n \ge 4$, $k\in \{3, \dots, n-1\}$. Suppose, in addition to  the assumptions of  Proposition \ref{Prop.DerivU2},  that
\be\label{iukzbdu}
{\rm Lip}_{n-3;k-2,k-2} \left(\frac{\delta^2 G}{\delta m^2}\right)\leq M. 
\ee
Then there exists $T_M>0$ (depending on $M$ and on the data but not on $G$), such that
\begin{align*}
&  \sup_{t} {\rm Lip}_{n-3;k-2,k-2} \left(\frac{\delta^2 U}{\delta m^2}(t)\right) 
\leq 
\sup_{ x_0} {\rm Lip}_{n-3;k-2,k-2} \left(\frac{\delta^2 G}{\delta m^2}\right)
+ C_MT.
\end{align*}
\end{Proposition}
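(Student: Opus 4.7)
The plan is to argue by comparison, mimicking the method used in Proposition \ref{Prop.3Mm} for the major player case. Fix $m_0^1, m_0^2 \in \Pw$, $x_0 \in \R^{d_0}$, and $\rho_0, \rho_0' \in C^{-(k-2)}$ with $\|\rho_0\|_{-(k-2)}, \|\rho_0'\|_{-(k-2)} \leq 1$. For $i=1,2$, let $(u^i, m^i)$ be the solution of the MFG system \eqref{eq.MFGcoupled} with initial condition $(t_0, m_0^i)$; let $(v^i, \rho^i)$ (resp.\ $((v')^i, (\rho')^i)$) be the solution of the linearized system \eqref{MFG2lkjenze} associated with $(u^i, m^i)$, with vanishing right-hand sides $R_1 = R_2 = R_3 = 0$ and initial condition $(t_0, \rho_0)$ (resp.\ $(t_0, \rho_0')$); and let $(w^i, \mu^i)$ be the solution of the second order linearized system \eqref{MFG3lkjenze} associated with these data and with $\tilde R_1 = \tilde R_2 = \tilde R_3 = 0$. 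By the representation formula \eqref{reprD2Um}, we have $w^i(t_0, x) = \int \frac{\delta^2 U}{\delta m^2}(t_0, x_0, x, m_0^i, y, y')\rho_0(dy)\rho_0'(dy')$, so the goal is to estimate $z(t_0) := w^2(t_0) - w^1(t_0)$ in $C^{n-3}_b$ by $C_M \, \dw(m_0^1, m_0^2)$.

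First, I would collect the Lipschitz dependence of the auxiliary objects on the initial datum. From Proposition \ref{Prop.LipDep},
$\sup_t \dw(m^1(t), m^2(t)) + \sup_t \|u^1(t) - u^2(t)\|_{n-1} \le C_M \dw(m_0^1, m_0^2)$.
The pair $(v^2 - v^1, \rho^2 - \rho^1)$ solves a linearized system of the form \eqref{MFG2lkjenze} driven by $(u^1, m^1)$, whose right-hand sides consist entirely of differences of coefficients (namely $H_p$, $\delta H / \delta m$, $H_{pp}$, $\delta H_p/\delta m$) evaluated at $(u^2, m^2)$ versus $(u^1, m^1)$, multiplied by bounded terms $(v^2, \rho^2)$; applying Proposition \ref{prop.LipschDeltaU} with $n-1$ and $k-1$, one obtains
$\sup_t \|v^2 - v^1\|_{n-2} + \sup_t \|\rho^2 - \rho^1\|_{-(k-1)} \le C_M \, \dw(m_0^1,m_0^2)\|\rho_0\|_{-(k-2)}$,
with an analogous estimate for the primed quantities.

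Next, I would subtract the equations satisfied by $(w^i, \mu^i)$ to see that $(z, \zeta) := (w^2 - w^1, \mu^2 - \mu^1)$ solves a first order linearized system of the form \eqref{MFG2lkjenze} associated with $(u^1, m^1)$, with zero initial condition, with right-hand sides $R_1, R_2$ comprising differences of every term appearing in \eqref{MFG3lkjenze} (evaluated at $(u^i, m^i, v^i, \rho^i, (v')^i, (\rho')^i)$), and with terminal condition
\begin{align*}
z(T, x) = & \ \tfrac{\delta^2 G}{\delta m^2}(x_0, x, m^2(T))(\rho^2(T), (\rho')^2(T)) - \tfrac{\delta^2 G}{\delta m^2}(x_0, x, m^1(T))(\rho^1(T), (\rho')^1(T)) \\ & + \tfrac{\delta G}{\delta m}(x_0, x, m^2(T))(\mu^2(T)) - \tfrac{\delta G}{\delta m}(x_0, x, m^1(T))(\mu^1(T)).
\end{align*}
The leading contribution of $z(T)$ is
$[\frac{\delta^2 G}{\delta m^2}(m^2(T)) - \frac{\delta^2 G}{\delta m^2}(m^1(T))](\rho^2(T), (\rho')^2(T))$,
which by hypothesis \eqref{iukzbdu} is bounded in $C^{n-3}_b$ by ${\rm Lip}_{n-3;k-2,k-2}(\delta^2 G/\delta m^2) \cdot \dw(m^1(T), m^2(T))$; the remaining pieces of $z(T)$ and the source terms $R_1, R_2$ are all products of Lipschitz differences (controlled by Step~1 and by Propositions \ref{Prop.DerivU2} and \ref{prop.LipschDeltaU} applied to $\mu^i$ via \eqref{lkajervlsf}) times uniformly bounded quantities, so they contribute $C_M T \dw(m_0^1, m_0^2)$. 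Applying Proposition \ref{prop.LipschDeltaU} (with $R_3$ playing the role of $z(T)$ and the parameters $n-2$, $k-1$) then yields
\[
\sup_t \|z(t)\|_{n-3} \le (1 + C_M T) \, {\rm Lip}_{n-3;k-2,k-2}(\tfrac{\delta^2 G}{\delta m^2}) \, \dw(m_0^1, m_0^2) + C_M T \, \dw(m_0^1, m_0^2),
\]
and dividing by $\dw(m_0^1, m_0^2)$ and taking the supremum over $x_0, \rho_0, \rho_0'$ finishes the proof. The main difficulty is the bookkeeping of the many coefficient differences appearing in the second order linearized system \eqref{MFG3lkjenze} (some of them multiplied by second derivatives of $H$ in $m$, which necessitates the interpolated Lipschitz estimates on $\rho^i - \rho^1$ in $C^{-(k-1)}$ rather than in $C^{-(k-2)}$); however each such difference fits into the same scheme and no new idea beyond that of Proposition \ref{Prop.3Mm} is required.
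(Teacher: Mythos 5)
Your proposal is correct and follows essentially the same route as the paper's proof: fix two initial measures, represent $\frac{\delta^2 U}{\delta m^2}$ via the solutions $w^i$ of the twice-linearized MFG system, subtract, and estimate the difference using Proposition \ref{prop.LipschDeltaU} once the right-hand sides and terminal data of the equation for $w^2-w^1$ have been bounded — with the Lipschitz hypothesis \eqref{iukzbdu} controlling the only contribution not carrying a factor $T$, and the shift to the $C^{-(k-1)}$ norm on $\rho^2-\rho^1$ being precisely the bookkeeping the paper records in \eqref{estirhorho}. The only difference is that the paper's proof simultaneously tracks the dependence on the parameter $x_0$ (establishing a stronger estimate in both $\dw(m_1,m_2)$ and $|x_0^1-x_0^2|$, \eqref{ieukbfdn}) because it is reused in Proposition \ref{prop:rlipdelta2U0}; this is not needed for the statement at hand, and your version without the $x_0$-bookkeeping is perfectly adequate for it.
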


\begin{proof} 
%
We establish for later use a slightly stronger estimate involving the dependence with respect to $x_0$. This is used in Proposition \ref{prop:rlipdelta2U0} below. 
Let $(t_0, m_1, m_2)\in [0,T] \times \Pw^2$ and  $x_0^1, x_0^2 \in \R^{d_0}$ be fixed. We use the representation formula \eqref{reprD2Um} for $\delta^2 U/\delta m^2(t_0, x^1_0, m_1)$ and $\delta^2 U/\delta m^2(t_0, x_0^2, m_2)$. In particular we let, for $i = 1, 2$,
\begin{itemize}
\item $(u^i,m^i)$ be the solution to the MFG system \eqref{eq.MFGcoupled} starting from $m_i$ at time $t_0$ with $H$ (and $G$) evaluated at $(x_0^i,x,Du^i(t,x),m^i(t))$ (and $(x_0^i,x,m^i(T))$)\,,
\item $(v_i,\rho_i)$ (respectively $(v'_i, \rho'_i)$) be the solution of the first order linearized system \eqref{MFG2lkjenze} with zero right-hand side, initial condition $(t_0, \rho_0)$ (respectively $(t_0,\rho'_0)$) and where  the Hamiltonian and its derivatives are evaluated at $(x_0^i,x,Du^i(t,x),m^i(t))$,
\item $(w^i,\mu^i)$ be the solution to the second order linearized system \eqref{MFG3lkjenze} associated with $(v_i, \rho_i), (v'_i,\rho'_i)$ (and $x_0^i, u^i, m^i$), and with zero right-hand side.
\end{itemize}

We aim at estimating $(\bar w, \bar \mu):= (w^1-w^2, \mu^1-\mu^2)$, since
\be\label{rbarw}
\bar w(t_0,x)= \frac{\delta^2 U}{\delta m^2}(t_0,x^1_0,x,m_1)(\rho_0,\rho'_0) - \frac{\delta^2 U}{\delta m^2}(t_0,x_0^2,x,m_2)(\rho_0,\rho'_0).
\ee

We first set $ (\bar v, \bar \rho):= (v_1-v_2,\rho_1-\rho_2)$ and 
$ (\bar v', \bar \rho'):= (v'_1-v'_2,\rho'_1-\rho'_2)$. The pair $(\bar v, \bar \rho)$ solves the first order linearized system \eqref{MFG2lkjenze} with zero initial datum, $H$ and its derivatives evaluated at $(x_0^1,x,Du^1(t,x),m^1(t))$, and right-hand side
\begin{align*}
R_1(t,x)& = -(H_p^1-H_p^2)\cdot Dv_2 - (\frac{\delta H^1}{\delta m}-\frac{\delta H^2}{\delta m})(\rho_2(t)), \\
R_2(t,x)& =   \rho_2 (H_p^1-H_p^2)+ (m^1H_{pp}^1-m^2H_{pp}^2)Dv_2 +(m^1 \frac{\delta H_p^1}{\delta m}-m^2 \frac{\delta H_p^2}{\delta m})(\rho_2), \\
R_3(x)& =  (\frac{\delta G^1}{\delta m}-  \frac{\delta G^2}{\delta m}) (\rho_2(T)),
\end{align*}
where $H^i$ and its derivatives correspond to $H$ and its derivatives evaluated at $(x_0^i,x,Du^i(t,x),m^i(t))$.

By Proposition \ref{prop.LipschDeltaU} we have 
\be\label{estivivi}
\sup_{t\in [t_0,T]}\|v_i(t)\|_{n-1} \leq  C\|\rho_0\|_{-(k-2)}, \qquad \sup_{t\in [t_0,T]} \|\rho_i(t)\|_{-(k-2)} \leq  \left(1+CT \right)\|\rho_0\|_{-(k-2)} ,
\ee
where $C$ depends on the regularity of $\delta G /\delta m$, $H_{x_0}$, $H_{x_0p}$, $m^i$ and  $\sup_t \|u^i\|_{n}$. Note that, by the above estimates and Proposition \ref{Prop.LipDep}, 
$$
 \sup_t \|R_1(t)\|_{n-2} + \sup_{t}\|R_2(t)\|_{-(k-2)}+  \|R_3\|_{n-2}  \leq C\big(\dw(m_1,m_2) + |x_0^1 - x_0^2|\big) \|\rho_0\|_{-(k-2)}  , 
$$
and therefore by Proposition \ref{prop.LipschDeltaU} (applied to $n-1\geq 2$ and $k-2\geq 1$) we obtain
\begin{align}
& \sup_{t}\|\bar v(t)\|_{n-2} \leq C\,  T\big(\dw(m_1,m_2) + {|x_0^1 - x_0^2|}\big) \|\rho_0\|_{-(k-2)},   \label{estivv} \\
& \sup_{t} \|\bar \rho(t)\|_{-(k-1)} \leq   CT\big(\dw(m_1,m_2) + {|x_0^1 - x_0^2|}\big) \|\rho_0\|_{-(k-2)}. \label{estirhorho}
\end{align}
Completely analogous estimates hold for $v'_i, \rho'_i$ and their differences $\bar v', \bar \rho'$.

We now proceed by estimating $(\bar w, \bar \mu)$, which solves the first order linearized system with zero initial datum, $H$ and its derivatives evaluated at $(x_0^1,x,Du^1(t,x),m^1(t))$,  and right-hand side
\begin{align*}
\overline R_1(t,x) & := -\Bigl(  (H_p^1-H_p^2)\cdot Dw^2 + (\frac{\delta H^1}{\delta m}-\frac{\delta H^2}{\delta m})(\mu^2(t)) \\
 &
 +\frac{\delta^2 H^1}{\delta m^2}(\rho_1(t),\rho'_1(t))-  \frac{\delta^2 H^2}{\delta m^2}(\rho_2(t),\rho'_2(t)) 
+H_{pp}^1 Dv_1\cdot Dv'_1 - H_{pp}^2 Dv_2\cdot Dv'_2 \\
& \qquad  + \frac{\delta H_p^1}{\delta m}(\rho_1)\cdot Dv'_1-\frac{\delta H_p^2}{\delta m}(\rho_2)\cdot Dv'_2 
+ \frac{\delta H_p^1}{\delta m}(\rho'_1)\cdot Dv_1-\frac{\delta H_p^2}{\delta m}(\rho'_2)\cdot Dv_2 
\Bigr),
\end{align*}
\begin{align*}
\overline R_2(t,x) & := \mu^2 (H_p^1-H_p^2)+ (m^1H_{pp}^1-m^2H_{pp}^2)Dw^2
+(m^1 \frac{\delta H_p^1}{\delta m}-m^2 \frac{\delta H_p^2}{\delta m})(\mu^2) +\rho_1 H_{pp}^1Dv'_1
\\
&
- \rho_2 H_{pp}^2Dv'_2 +\rho'_1 H_{pp}^1Dv_1- \rho'_2 H_{pp}^2Dv_2
+ m^1 H_{ppp}^1Dv_1Dv'_1- m^2 H_{ppp}^2Dv_2Dv'_2\\
&\qquad \qquad + m^1 \frac{\delta^2 H_p^1}{\delta m^2}(\rho_1,\rho'_1)- m^2 \frac{\delta^2 H_p^2}{\delta m^2}(\rho_2,\rho'_2) + \rho_1 \frac{\delta H_p^1}{\delta m}(\rho'_1)-\rho_2 \frac{\delta H_p^2}{\delta m}(\rho'_2)\\
&\qquad \qquad  +\rho'_1 \frac{\delta H_p^1}{\delta m}(\rho_1)-\rho'_2 \frac{\delta H_p^2}{\delta m}(\rho_2)
 + m^1  \frac{\delta H_{pp}^1}{\delta m} (\rho'_1)Dv_1 \\
 &\qquad \qquad\qquad \qquad- m^2  \frac{\delta H_{pp}^2}{\delta m} (\rho'_2)Dv_2 + m^1  \frac{\delta H_{pp}^1}{\delta m} (\rho_1)Dv'_1-m^2  \frac{\delta H_{pp}^2}{\delta m} (\rho_2)Dv'_2
\end{align*}
and 
\begin{align*}
\overline R_3(x)  :=& \frac{\delta^2 G^1}{\delta m^2}(\rho_1(T),\rho_1'(T))- \frac{\delta^2 G^2}{\delta m^2}(\rho_2(T),\rho_2'(T))
+ (\frac{\delta G^1}{\delta m}- \frac{\delta G^2}{\delta m})(\mu^2(T)).
\end{align*}
Recall also that Proposition \ref{Prop.estiw}  and Remark \ref{rr'} (applied to $n-1$ and $k-1$) yield 
\be\label{fasdkjd2}
\sup_t \|w^i(t)\|_{n-3}   \leq  (1+CT) \|\rho_0\|_{-(k-2)}\|\rho_0'\|_{-(k-2)}, \quad
\sup_t \|\mu^i(t)\|_{-(k-1)}   \leq CT \|\rho_0\|_{-(k-2)}\|\rho_0'\|_{-(k-2)}.
\ee
By the previous inequalities, \eqref{estivivi}, \eqref{estivv} and \eqref{estirhorho} we get
\begin{align*}
\sup_t\|\overline R_1(t)\|_{n-3}+\sup_t \|\overline R_2(t)\|_{-(k-1)} \leq C T(\dw(m_1,m_2) + |x_0^1 - x_0^2|)\|\rho_0\|_{-(k-2)}\|\rho'_0\|_{-(k-2)}.
\end{align*}  
Similarly, using also the Lipschitz regularity of $\delta G/\delta m$, 
\begin{align*}
\|\overline R_3\|_{n-3}  \leq & (1+CT)  \Bigl\| \frac{\delta^2 G}{\delta m^2}(x_0^2, m^2(T)) - \frac{\delta^2 G}{\delta m^2}(x_0^1,m^1(T))\Bigr\|_{n-3;k-2,k-2}\|\rho_0\|_{-(k-2)}\|\rho'_0\|_{-(k-2)} \\
& + CT(\dw(m_1,m_2) + |x_0^1 - x_0^2|)\|\rho_0\|_{-(k-2)}\|\rho'_0\|_{-(k-2)},
\end{align*} 
Then, recalling that $\bar w = w^1-w^2$ satisfies \eqref{rbarw}, we obtain in view of \eqref{lkqehsrmdBISBIS} in Proposition \ref{prop.LipschDeltaU} and for any $r\leq n-3$, 
\begin{align}\label{ieukbfdn}
& \Bigl\| D^r_x \frac{\delta^2 U}{\delta m^2}(t_0,x_0^2,  m_2) - D^r_x\frac{\delta^2 U}{\delta m^2}(t_0,x_0^1, m_1)\Bigr\|_{0;k-2,k-2} \\
& \; \leq (1+C_MT)  \Bigl\|D^r_x \frac{\delta^2 G}{\delta m^2}(x_0^2, m_2(T)) -D^r_x \frac{\delta^2 G}{\delta m^2}(x_0^1,m_1(T)))\Bigr\|_{0;k-2,k-2} 
+ C_MT(\dw(m_1,m_2) + |x_0^1 - x_0^2|). \notag
\end{align}
Choosing $x^1_0=x^2_0$, summing over $r\leq n-3$ and recalling Proposition \ref{Prop.LipDep} and \eqref{iukzbdu} then gives the claim. 

Note that we have also the following inequality for $\bar \mu = \mu^1 - \mu^2$, that will be useful in the next proposition:
\begin{equation}\label{muest}
\ds \sup_{t\in [t_0,T]} \|\mu^1(t) - \mu^2(t)\|_{-k}\; \leq CT (\dw(m_1,m_2) + |x_0^1 - x_0^2|)\|\rho_0\|_{-(k-2)}\|\rho'_0\|_{-(k-2)}.
\end{equation}
\end{proof}

 Finally we establish the Lipschitz regularity of the second order derivatives of $G^0$ and $G$ with respect to $x_0$ and $m$. 

\begin{Proposition} \label{prop:rlipdelta2U0} Let $U$ be the solution of \eqref{eq.Master1} given by \eqref{rep.U1} and $U^0$ be the solution to \eqref{eq.MasterL} given by \eqref{rep.UL}.   Suppose that the assumptions of Proposition \ref{prop:rlipdelta2U} hold and that in addition:
\begin{align*}
& {\rm Lip}_{n-3;k-2,k-2} (\frac{\delta^2 G^0}{\delta m^2}, \frac{\delta^2 G}{\delta m^2})  
+ {\rm Lip}_{n-3;k-2} (\frac{\delta G^0_{x_0}}{\delta m}, \frac{\delta G_{x_0}}{\delta m}) + {\rm Lip}_{n-3} (D^2_{x_0} G^0, D^2_{x_0} G)  
\le M
\end{align*}
and 
\begin{align*}
& {\rm Lip}_{n-3;k-2,k-2}^{x_0} (\frac{\delta^2 G^0}{\delta m^2}, \frac{\delta^2 G}{\delta m^2})  
+ {\rm Lip}_{n-3;k-2}^{x_0} (\frac{\delta G^0_{x_0}}{\delta m}, \frac{\delta G_{x_0}}{\delta m}) + {\rm Lip}_{n-3}^{x_0} (D^2_{x_0} G^0, D^2_{x_0} G)  
\le M\,,
\end{align*}
for some $n\geq 4$ and $k \in \{3, \ldots, n-1\}$. 
Then
\begin{align*}
& \sup_t {\rm Lip}_{n-3;k-2,k-2} (\frac{\delta^2 U^0(t)}{\delta m^2}, \frac{\delta^2 U(t)}{\delta m^2})  \leq 
{\rm Lip}_{n-3;k-2,k-2} (\frac{\delta^2 G^0}{\delta m^2}, \frac{\delta^2 G}{\delta m^2})   +C_MT, 
\end{align*}
\begin{align*}
& \sup_t {\rm Lip}_{n-3;k-2,k-2}^{x_0} (\frac{\delta^2 U^0(t)}{\delta m^2}, \frac{\delta^2 U(t)}{\delta m^2})  \leq 
{\rm Lip}_{n-3;k-2,k-2}^{x_0} (\frac{\delta^2 G^0}{\delta m^2}, \frac{\delta^2 G}{\delta m^2})   +C_MT, 
\end{align*}
\begin{align*}
& \sup_t {\rm Lip}_{n-3;k-2} (\frac{\delta U^0_{x_0}(t)}{\delta m}, \frac{\delta U_{x_0}(t)}{\delta m}) \leq 
{\rm Lip}_{n-3;k-2} (\frac{\delta G^0_{x_0}}{\delta m}, \frac{\delta G_{x_0}}{\delta m}) +C_MT,
\end{align*}
\begin{align*}
& \sup_t {\rm Lip}_{n-3;k-2}^{x_0} (\frac{\delta U^0_{x_0}(t)}{\delta m}, \frac{\delta U_{x_0}(t)}{\delta m}) \leq 
{\rm Lip}_{n-3;k-2}^{x_0} (\frac{\delta G^0_{x_0}}{\delta m}, \frac{\delta G_{x_0}}{\delta m}) +C_MT, 
\end{align*}
and
\begin{align*}
& {\rm Lip}_{n-3} (D^2_{x_0} U^0(t), D^2_{x_0} U(t))  \leq  {\rm Lip}_{n-3} (D^2_{x_0} G^0, D^2_{x_0} G)  +C_MT,
\end{align*}
\begin{align*}
& {\rm Lip}_{n-3}^{x_0} (D^2_{x_0} U^0(t), D^2_{x_0} U(t))  \leq  {\rm Lip}_{n-3}^{x_0} (D^2_{x_0} G^0, D^2_{x_0} G)  +C_MT.
\end{align*}
\end{Proposition}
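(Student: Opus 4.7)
The plan is to mimic the argument for Proposition \ref{prop:rlipdelta2U}, but now to track simultaneously the $U^0$ and the $U$ components, as well as to handle differences in the $x_0$ variable in addition to differences in $m$. We exploit systematically the representation formulas \eqref{reprD2Um}, \eqref{diff2U0}, \eqref{repdeltaUx_0deltam}, \eqref{repUx_0x_0} derived in Sections 6.2 and 6.3, together with the stability estimates \eqref{estivivi}--\eqref{estirhorho}, \eqref{fasdkjd2} and \eqref{muest} already established in the proof of Proposition \ref{prop:rlipdelta2U}.

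Fix $(t_0, x_0^1, m_1), (t_0, x_0^2, m_2)\in [0,T]\times \R^{d_0}\times \Pw$. For $i=1,2$, introduce as in the proof of Proposition \ref{prop:rlipdelta2U} the MFG solution $(u^i,m^i)$ starting from $m_i$ at time $t_0$ with parameter $x_0^i$, the first order linearized solutions $(v_i,\rho_i), (v'_i,\rho'_i)$ corresponding to test inputs $\rho_0, \rho'_0\in C^{-(k-2)}$, and the second order linearization $(w^i,\mu^i)$ with zero right-hand side. By \eqref{diff2U0},
\[
\int_{\R^{2d}}\Bigl[\frac{\delta^2 U^0}{\delta m^2}(t_0,x_0^1,m_1)-\frac{\delta^2 U^0}{\delta m^2}(t_0,x_0^2,m_2)\Bigr](\rho_0,\rho'_0)
= A_1 + A_2,
\]
where $A_1:=\frac{\delta^2 G^0}{\delta m^2}(x_0^1,m^1(T))(\rho_1(T),\rho'_1(T))-\frac{\delta^2 G^0}{\delta m^2}(x_0^2,m^2(T))(\rho_2(T),\rho'_2(T))$ and $A_2:=\frac{\delta G^0}{\delta m}(x_0^1,m^1(T))(\mu^1(T))-\frac{\delta G^0}{\delta m}(x_0^2,m^2(T))(\mu^2(T))$. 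Split $A_1$ as the telescoping sum of (a) replacing the pair $(x_0^2,m^2(T))$ by $(x_0^1,m^1(T))$ in the outer $\frac{\delta^2 G^0}{\delta m^2}$, controlled by ${\rm Lip}_{\cdot;k-2,k-2}(\delta^2 G^0/\delta m^2)$ (or its $x_0$ counterpart) times $\dw(m_1,m_2)+|x_0^1-x_0^2|$, using Proposition \ref{Prop.LipDep}, and (b) the residual differences $\rho_1(T)-\rho_2(T)$ and $\rho'_1(T)-\rho'_2(T)$, bounded by \eqref{estirhorho} by $CT(\dw(m_1,m_2)+|x_0^1-x_0^2|)\|\rho_0\|_{-(k-2)}\|\rho'_0\|_{-(k-2)}$. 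Similarly, $A_2$ splits into a piece controlled by the Lipschitz continuity of $\delta G^0/\delta m$ in $(x_0,m)$ and $\sup_t\|\mu^i\|_{-(k-1)}$ from \eqref{fasdkjd2}, and a piece using $\|\mu^1-\mu^2\|_{-(k-1)}$ controlled by \eqref{muest}; both contribute only an $O(T)$ correction. Taking the supremum over $\|\rho_0\|_{-(k-2)}=\|\rho'_0\|_{-(k-2)}=1$ and combining with the analogous estimate \eqref{ieukbfdn} on the difference for $U$ (which already includes the $|x_0^1-x_0^2|$ term), one obtains the two Lipschitz bounds on $\delta^2(U^0,U)/\delta m^2$ by choosing respectively $x_0^1=x_0^2$ or $m_1=m_2$.

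The bounds on ${\rm Lip}_{n-3;k-2}(\cdot)(\frac{\delta U^0_{x_0}}{\delta m},\frac{\delta U_{x_0}}{\delta m})$ follow exactly the same pattern using the representation obtained by polarization of \eqref{repdeltaUx_0deltam} and \eqref{deltaU0deltam} (for $U^0$ this gives $\partial^l_{x_0}\frac{\delta G^0}{\delta m}(\rho(T))+\frac{\delta^2 G^0}{\delta m^2}(\rho(T),\rho^l(T))+\frac{\delta G^0}{\delta m}(\mu^l(T))$), where now the auxiliary $(v^l,\rho^l)$ is the first order linearization with right-hand side \eqref{R} associated with the $\partial^l_{x_0}$ direction. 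The differences between the two configurations decompose again into: Lipschitz dependence of $G^0,G$ and their derivatives in $(x_0,m)$ evaluated at $m^i(T)$, plus stability estimates \eqref{estivv}, \eqref{estirhorho} on $\rho^l_1-\rho^l_2$ and \eqref{muest} on $\mu^{l,1}-\mu^{l,2}$, which are $O(T(\dw(m_1,m_2)+|x_0^1-x_0^2|))$. Finally, the bound on ${\rm Lip}_{n-3}(D^2_{x_0}(U^0,U))$ is obtained by polarizing \eqref{repUx_0x_0} and the analogous formula for $\partial^{l+l'}_{x_0}U^0$ written in the proof of Proposition \ref{Prop.DerivU02}, and repeating the same decomposition with the source terms $\tilde R_1^{l,l'},\tilde R_2^{l,l'},\tilde R_3^{l,l'}$ of \eqref{tilde2}, whose differences between the two configurations are again $O(\dw(m_1,m_2)+|x_0^1-x_0^2|)$ by the regularity hypotheses on $H,H^0,G,G^0$ and Proposition \ref{Prop.LipDep}.

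The main technical step is the careful bookkeeping of all crossover terms in the decompositions of $A_1,A_2$ (and their analogues for the other derivatives), so as to ensure that the leading coefficient on the right-hand side is $1$ and not some $(1+C_MT)$ factor multiplying the Lipschitz norm of the data: this is achieved, as in the proof of Proposition \ref{prop:rlipdelta2U}, by writing the representation formulas in the form (datum evaluated at the endpoints) + (residual), and estimating the residual by $C_MT(\dw(m_1,m_2)+|x_0^1-x_0^2|)$ through \eqref{estivv}--\eqref{muest}. Once this is done, dividing by $\dw(m_1,m_2)$ (respectively $|x_0^1-x_0^2|$) and taking the appropriate suprema yields all six claimed estimates. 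Since the argument is a straightforward, if lengthy, adaptation of Proposition \ref{prop:rlipdelta2U}, we omit the remaining details.
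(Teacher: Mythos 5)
Your plan is correct and follows essentially the same route as the paper: set up the two configurations $(u^i,m^i)$, $(v_i,\rho_i)$, $(v'_i,\rho'_i)$, $(w^i,\mu^i)$, invoke the representation formulas \eqref{diff2U0}, \eqref{reprD2Um}, \eqref{repdeltaUx_0deltam}, \eqref{repUx_0x_0}, and control the differences via \eqref{estivivi}--\eqref{estirhorho}, \eqref{fasdkjd2}, \eqref{muest} and \eqref{ieukbfdn}, then specialize to $x_0^1=x_0^2$ or $m_1=m_2$ to separate the two Lipschitz directions. One small remark: the ``bookkeeping'' you worry about in the last paragraph --- that the leading coefficient multiplying the Lipschitz norm of the data must be exactly $1$ --- is not actually required; the paper's proof produces a factor $(1+CT)$ in front of the squared difference of the data (see the final display of the proof of Proposition \ref{prop:rlipdelta2U0}), and this is harmless because the data Lipschitz norms are bounded by $M$ by hypothesis, so $(1+CT)^{1/2}\,{\rm Lip}(\cdot) \le {\rm Lip}(\cdot) + C_M T$ after taking the square root. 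Also, the leading $\int_{\R^{2d}}$ in your formula for the $U^0$-difference is superfluous: the right-hand side of \eqref{diff2U0} is already the quantity $\frac{\delta^2 U^0}{\delta m^2}(\rho_0,\rho'_0)$ with the test functions absorbed. Neither issue affects the correctness of your argument.
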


\begin{proof}  We will detail only the proof of Lipschitz estimates of $(\delta^2 U^0/\delta m^2,\delta^2 U/\delta m^2)$. Lipschitz regularity of $\delta U^0_{x_0}/\delta m$ and $D^2_{x_0} U^0$, $\delta U_{x_0}/\delta m$ and $D^2_{x_0} U$ can be proven by following identical lines using the representation formulas that appear in the proof of Proposition \ref{Prop.DerivU02}.

Let us start with $\delta^2 U^0/\delta m^2$. Let $(t_0, m_1, m_2)\in [0,T] \times \Pw^2$ and  $x_0^1, x_0^2 \in \R^d$ be fixed. Let also, as in the proof of the previous Proposition \ref{Prop.DerivU02}, for $i = 1, 2$
\begin{itemize}
\item $(u^i,m^i)$ be the solution to the MFG system \eqref{eq.MFGcoupled} starting from $m_i$ at time $t_0$ with $H$ (and $G$) evaluated at $(x_0^i,x,Du^i(t,x),m^i(t))$ (and $(x_0^i,x,m^i(T))$\,),
\item $(v_i,\rho_i)$ (respectively $(v'_i, \rho'_i)$) be the solution of the first order linearized system \eqref{MFG2lkjenze} with zero right-hand side, initial condition $(t_0, \rho_0)$ (respectively $(t_0,\rho'_0)$) and where  the Hamiltonian and its derivatives are evaluated at $(x_0^i,x,Du^i(t,x),m^i(t))$,
\item $(w^i,\mu^i)$ be the solution to the second order linearized system \eqref{MFG3lkjenze} associated with $(v_i, \rho_i), (v'_i,\rho'_i)$ (and $(u^i,m^i)$), and with zero right-hand side.
\end{itemize}
Recall that \eqref{diff2U0} provides a representation formula for $\delta^2 U^0/\delta m^2$, that is
$$
\frac{\delta^2 U^0}{\delta m^2}(t_0,x^i_0,m_i)(\rho_0, \rho'_0) = \frac{\delta^2 G^0}{\delta m^2}(x^i_0,m^i(T))(\rho_i(T), \rho_i'(T)) + \frac{\delta G^0}{\delta m}(x_0^i,m^i(T))(\mu^i(T)),
$$
and $\frac{\delta^2 U}{\delta m^2}(t_0,x^i_0,x,m_i)(\rho_0, \rho'_0) = w^i(t_0, x)$.
Let us recall the following inequalities
\begin{align*}
& \sup_{t\in [t_0,T]} \dw(m^1(t),m^2(t)) \leq (1+CT) \dw(m_0^1, m_0^2) + CT |x_0^1 - x_0^2|, \\
& \sup_{t\in [t_0,T]} \|\rho_i(t)\|_{-(k-2)} \leq  \left(1+CT \right)\|\rho_0\|_{-(k-2)}, \\
& \sup_{t\in [t_0,T]} \|\rho'_i(t)\|_{-(k-2)} \leq  \left(1+CT \right)\|\rho'_0\|_{-(k-2)}, \\
& \sup_{t\in [t_0,T]} \|\rho_1(t) - \rho_2(t)\|_{-(k-1)} \leq  CT\big(\dw(m_1, m_2) + {|x_0^1 - x_0^2|}\big) \|\rho_0\|_{-(k-2)},\\
& \sup_{t\in [t_0,T]} \|\rho'_1(t) - \rho'_2(t)\|_{-(k-1)} \leq  CT\big(\dw(m_1, m_2) + {|x_0^1 - x_0^2|}\big) \|\rho'_0\|_{-(k-2)},\\
& \sup_{t\in [t_0,T]} \|\mu^i(t)\|_{-(k-1)} \leq CT \|\rho_0\|_{-(k-2)}\|\rho_0'\|_{-(k-2)}, \\
& \sup_{t\in [t_0,T]} \|\mu^1(t) - \mu^2(t)\|_{-k} \leq CT (\dw(m_1,m_2) + |x_0^1 - x_0^2|)\|\rho_0\|_{-(k-2)}\|\rho'_0\|_{-(k-2)},
\end{align*}
that are consequences of Proposition \ref{Prop.LipDep}, \eqref{estivivi}, \eqref{estirhorho}, \eqref{fasdkjd2} and \eqref{muest} respectively. Setting
$$
\theta_T:= CT(\dw(m_1,m_2) + |x_0^1 - x_0^2|)\|\rho_0\|_{-(k-2)}\|\rho'_0\|_{-(k-2)}, 
$$
we obtain, using \eqref{ieukbfdn} also, for any $r\leq n-3$, 
\begin{align*}
& \ds \Big|\Big(\frac{\delta^2 U^0}{\delta m^2}(t,x^1_0,m_1) - \frac{\delta^2 U^0}{\delta m^2}(t,x^2_0,m_2)\Big)(\rho_0, \rho'_0) \Big|^2 + \sup_x\Big|D^r_x\Big(\frac{\delta^2 U}{\delta m^2}(t,x^1_0,x,m_1) - \frac{\delta^2 U}{\delta m^2}(t,x^2_0,x,m_2)\Big)(\rho_0, \rho'_0) \Big|^2 \\
&  \leq (1+CT) \Bigl\{ \Big| \frac{\delta^2 G^0}{\delta m^2}(x^1_0,m^1(T))(\rho_1(T), \rho_1'(T)) - \frac{\delta^2 G^0}{\delta m^2}(x^2_0,m^2(T))(\rho_1(T), \rho_1'(T)) \Big| +\theta_T \Bigr\}^2 \\
& + (1+CT)\Bigl\{ \sup_x  \Big|D^r_x \frac{\delta^2 G}{\delta m^2}(x^1_0,x,m^1(T))(\rho_1(T),\rho_1'(T))- D^r_x \frac{\delta^2 G}{\delta m^2}(x^2_0,x,m^2(T))(\rho_1(T),\rho_1'(T)) \Big|+\theta_T  \Bigr\}^2.
\end{align*} 
Choosing $m_1=m_2=m$ and rearranging gives the Lipschitz  estimates in $x_0$: 
\begin{align*}
& \ds \Big|\Big(\frac{\delta^2 U^0}{\delta m^2}(t,x^1_0,m) - \frac{\delta^2 U^0}{\delta m^2}(t,x^2_0,m)\Big)(\rho_0, \rho'_0) \Big|^2 + \sup_x\Big|D^r_x\Big(\frac{\delta^2 U}{\delta m^2}(t,x^1_0,x,m) - \frac{\delta^2 U}{\delta m^2}(t,x^2_0,x,m)\Big)(\rho_0, \rho'_0) \Big|^2 \\
&  \leq (1+CT) \Bigl\{ \Bigl( \Big| \frac{\delta^2 G^0}{\delta m^2}(x^1_0,m^1(T))(\rho_1(T), \rho_1'(T)) - \frac{\delta^2 G^0}{\delta m^2}(x^2_0,m^1(T))(\rho_1(T), \rho_1'(T)) \Big|^2 + \\
& \qquad\qquad\qquad  \sup_x  \Big|D^r_x \frac{\delta^2 G}{\delta m^2}(x^1_0,x,m^1(T))(\rho_1(T),\rho_1'(T))- D^r_x \frac{\delta^2 G}{\delta m^2}(x^2_0,x,m^1(T))(\rho_1(T),\rho_1'(T)) \Big|^2 \Bigr)^{1/2}
+ \\
& \qquad\qquad+ CT |x_0^1 - x_0|\|\rho_0\|_{-(k-2)}\|\rho'_0\|_{-(k-2)}\Bigr\}^2,
\end{align*} 	
while the choice $x_0^1=x_0^2$ gives similarly the Lipschitz estimates in $m$. 
\end{proof}

\appendix
\section{Estimates for solutions to HJ equations}\label{sec.HJ}

\subsection{Main estimates}\label{Append.main}

In this section, we assume that the data $a$, $h$ and $g$ are smooth and we are looking for a priori estimates on the smooth and globally bounded solution  $u$ to the HJ equation 
\be\label{eq.HJ}
\left\{ \begin{array}{l}
-\partial_t u(t,x) -{\rm Tr}(a(t,x)D^2u(t,x))+h(t,x,Du(t,x))=0\qquad {\rm in}\; (0,T)\times \R^d\\
u(T,x)= g(x) \qquad {\rm in}\; \R^d
\end{array}\right.
\ee
We always assume below that there exists $C_0>0$ and $\gamma\geq 1$ such that 
$$
a(t,x)\geq C_0^{-1}I_d, \qquad \|Da\|_\infty\leq C_0
$$
and  
$$
|D_xh(t,x,p)|\leq C_0(1+|p|^\gamma)\,
$$
 for every $(t,x,p)\in (0,T)\times \R^{d}\times \R^d$.

\begin{Proposition}\label{prop.LipEstiBernstein} (Lipschitz estimates.) For any $M>0$ there exists $T_M, C_M>0$, depending on $M$, $C_0$ and $\gamma$, such that, if $T\in (0,T_M)$ and $\|Dg\|_\infty\leq M$, then 
$$
 \sup_{t\in[0,T]}\|Du(t)\|_\infty\leq \|Dg\|_\infty+C_MT 
$$
\end{Proposition}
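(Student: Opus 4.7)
The plan is a classical Bernstein-type argument: derive a parabolic inequality for $w:=\tfrac12|Du|^2$, apply the maximum principle, and obtain a scalar differential inequality for $\Phi(t):=\|Du(t,\cdot)\|_\infty$ which, integrated backward from $T$, yields the desired short-time bound.

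\textbf{Step 1 (Equation for $w$).} Differentiating \eqref{eq.HJ} with respect to $x_l$, multiplying by $u_{x_l}$, and summing over $l$, I would obtain, after using the identity $a_{ij}\,u_{x_l x_i x_j} u_{x_l} = a_{ij} w_{x_i x_j} - a_{ij} u_{x_l x_i} u_{x_l x_j}$,
$$
-\partial_t w - a_{ij} w_{x_i x_j} + h_p(t,x,Du)\cdot Dw + a_{ij} u_{x_l x_i} u_{x_l x_j}
= (a_{ij})_{x_l} u_{x_l} u_{x_i x_j} - h_{x_l}(t,x,Du) u_{x_l}.
$$
The uniform ellipticity gives $a_{ij} u_{x_l x_i} u_{x_l x_j} \geq C_0^{-1}|D^2 u|^2$, while by Young's inequality the first term on the right-hand side can be absorbed: $|(a_{ij})_{x_l} u_{x_l} u_{x_i x_j}| \leq \tfrac{1}{2C_0}|D^2 u|^2 + C\,|Du|^2$.

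\textbf{Step 2 (Scalar inequality).} Using the growth assumption $|D_x h(t,x,p)|\leq C_0(1+|p|^\gamma)$, one has $|h_{x_l} u_{x_l}|\leq C_0|Du|+C_0|Du|^{\gamma+1}$. Combining with Step 1,
$$
-\partial_t w - a_{ij} w_{x_i x_j} + h_p\cdot Dw \;\leq\; C\bigl(|Du|^2 + |Du| + |Du|^{\gamma+1}\bigr).
$$
At a (would-be) maximum point $x^\ast(t)$ of $w(t,\cdot)$ one has $Dw=0$ and $-a_{ij}w_{x_ix_j}\geq 0$, so with $\Phi(t):=\|Du(t,\cdot)\|_\infty=\sqrt{2\sup_x w(t,x)}$ one deduces formally
$$
-\dot\Phi(t) \;\leq\; C\bigl(1+\Phi(t)^\gamma\bigr).
$$

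\textbf{Step 3 (Backward ODE and short-time bound).} Since $\Phi(T)=\|Dg\|_\infty\leq M$, continuity and comparison with the scalar ODE $-\dot\varphi = C(1+\varphi^\gamma)$, $\varphi(T)=M$, ensure that there exist $T_M>0$ and $K_M>0$, depending only on $M$, $C_0$, $\gamma$, such that $\Phi(t)\leq K_M$ for all $t\in[T-T_M,T]$. On this interval the right-hand side of the scalar inequality is bounded by $C_M:=C(1+K_M^\gamma)$, and integrating from $t$ to $T$ gives
$$
\Phi(t) \leq \Phi(T) + C_M(T-t) \leq \|Dg\|_\infty + C_M T,
$$
which is the claim.

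\textbf{Main obstacle.} Steps 1--2 are formal because the supremum of $w$ need not be attained on the unbounded domain $\R^d$. The standard fix is to perturb by a small penalty, e.g.\ replace $w$ by $w_\eta(t,x):=w(t,x)-\eta e^{\beta(T-t)}(1+|x|^2)^{1/2}$ with $\eta>0$ small and $\beta$ large enough that the extra lower-order terms produced by the penalty are absorbed into the parabolic operator. The perturbed function attains its maximum at an interior point (using the global boundedness of $u$, $Du$ ensured by smoothness of the solution), the formal argument becomes rigorous there, and letting $\eta\to 0$ yields the scalar inequality for $\Phi$. Once this is in place, the rest of the argument is routine.
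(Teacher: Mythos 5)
Your proof is correct and uses the same Bernstein-type strategy as the paper; the only real difference is how the scalar information is extracted. You derive a pointwise-in-time differential inequality $-\dot\Phi\leq C(1+\Phi^\gamma)$ for $\Phi(t)=\|Du(t)\|_\infty$ (by evaluating at the spatial maximum of $|Du|^2$) and then close it by ODE comparison, while the paper applies the parabolic maximum principle once over the whole time slab $Q_T$ to get an algebraic inequality $\|Du\|^2_{L^\infty(Q_T)}\leq\|Dg\|^2_\infty+CT\,\|Du\|_{L^\infty(Q_T)}(1+\|Du\|^\gamma_{L^\infty(Q_T)})$ and then closes it by a continuity-in-$T$ bootstrap followed by completing the square. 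The two routes give the same $T_M,C_M$ up to constants; the paper's version sidesteps the (standard but slightly delicate) issue of differentiating $t\mapsto\sup_x w(t,x)$ which your envelope-theorem step tacitly relies on, while your ODE-comparison makes the mechanism that produces the linear-in-$T$ bound more transparent. Your remark on the penalization $w-\eta e^{\beta(T-t)}(1+|x|^2)^{1/2}$ to attain the sup on $\R^d$ is a legitimate way to make the maximum-principle step rigorous; the paper leaves this implicit, relying on the a priori assumption that $u$ and its derivatives are globally bounded.
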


\begin{proof} We use a standard Bernstein method. 
Let $v(t,x)=\sum_i u_i^2(t,x)$. Then 
$$
\partial_t v(t,x)= 2 \sum_i u_i(t,x) u_{i,t}(t,x), \qquad 
v_j(t,x)= 2 \sum_i u_i(t,x) u_{ij}(t,x), 
$$
$$ 
v_{jk}(t,x)= 2 \sum_i (u_{ik}(t,x)u_{ij}(t,x)+ u_i(t,x)u_{ijk}(t,x)). 
$$
Thus 
$$
\begin{array}{l}
\ds -\partial_t v-{\rm Tr}(a(t,x)D^2v(t,x))  \\
\ds = 
- 2 \sum_i u_i(t,x) u_{i,t}(t,x) - 2\sum_{i,j,k}a_{jk}(t,x) (u_{ik}(t,x)u_{ij}(t,x)+ u_i(t,x)u_{ijk}(t,x) \\
\ds = - 2\sum_{i,j,k}a_{jk}(t,x) u_{ik}(t,x)u_{ij}(t,x) -2 \sum_i u_i(t,x) D_i\left(\partial_t u+{\rm Tr}(a(t,x)D^2u(t,x))\right)\\
\ds \qquad \qquad + \sum_{i,j,k} u_i(t,x)(a_{jk})_i(t,x)u_{jk}(t,x)
\end{array}
$$
where $(a_{jk})_i$ denotes  the $x_i$-derivative of the element $a_{jk}$ of the matrix $a(t,x)$.

Using the equation for $u$ we find 
\be\label{base}
\begin{array}{l}
\ds -\partial_t v-{\rm Tr}(a(t,x)D^2v(t,x))   \\
\ds = - 2\sum_{i,j,k}a_{jk}(t,x) u_{ik}(t,x)u_{ij}(t,x) -2 \sum_i u_i(t,x) \left( h_i(t,x,Du(t,x))+h_p(t,x,Du(t,x))\cdot Du_i(t,x)\right) \\
\ds \qquad \qquad + \sum_{i,j,k} u_i(t,x)(a_{jk})_i(t,x)u_{jk}(t,x).
\end{array}
\ee
Using our assumptions on $a$ and $h$, we infer that 
$$
\begin{array}{l}
\ds -\partial_t v-{\rm Tr}(a(t,x)D^2v(t,x))+ h_p(t,x,Du(t,x))\cdot Dv(t,x) \\
\m
\qquad \ds \leq -2C_0^{-1}|D^2u|^2 + 2C_0|Du|(1+|Du|^\gamma) +  \|Da\|_\infty |Du| \, |D^2u| \\
\m
\qquad \ds \leq 2C_0|Du|(1+|Du|^\gamma) +c_d \|Da\|_\infty^2C_0  |Du|^2 
\end{array}
$$
for some constant $c_d$ only depending on the dimension $d$. In particular, by maximum principle we estimate 
\be\label{maxT}
\| v\|_{L^\infty(Q_T)} \leq \|Dg\|_{L^\infty(Q_T)}^2+ T \left[2C_0 \|Du\|_{L^\infty(Q_T)} (1+\|Du\|_{L^\infty(Q_T)}^\gamma) +c_d \|Da\|_\infty^2C_0 \|Du\|_{L^\infty(Q_T)}^2\right]
\ee
which implies
\be\label{TM}
\begin{array}{l}
\ds
\| v\|_{L^\infty(Q_T)} \leq \|Dg\|_{L^\infty(Q_T)}^2+ 4T^2C_0^2 + \frac14 \|Du\|_{L^\infty(Q_T)}^2 
\\
\qquad \qquad \ds + \hat C \, T\, \|Du\|_{L^\infty(Q_T)}^2 \left[ \|Du\|_{L^\infty(Q_T)}^{\gamma-1}  +1 \right]
\end{array}
\ee
for some $\hat C$ only depending on $d$ and $C_0$. 
Recall that $\| v\|_{L^\infty(Q_T)}= \|Du\|_{L^\infty(Q_T)}^2$, and define $T_M$ as
$$
T_M= \min \left\{ \frac1{2C_0}M, \frac1{4\hat C(1+(2M)^{\gamma-1})}\right\}\,.
$$
Then it is easy to see that  
\be\label{TM2}
\|Du\|_{L^\infty(Q_T)} \leq 2M \qquad \forall T\leq T_M\,.
\ee
Indeed,  for $T < T_M$ and $\|Du\|_{L^\infty(Q_T)}\leq 2M$, \eqref{TM} implies
$$
\begin{array}{l}
\ds
\|Du\|_{L^\infty(Q_T)}^2 \leq 
\|Dg\|_{L^\infty(Q_T)}^2+ 4T_M^2C_0^2 + \frac14 \|Du\|_{L^\infty(Q_T)}^2 
\\
\qquad \qquad \ds + \hat C \, T_M\, \|Du\|_{L^\infty(Q_T)}^2 \left[ (2M)^{\gamma-1}  +1 \right]
\\
\ds < 
\|Dg\|_{L^\infty(Q_T)}^2+ M^2 + \frac12 \|Du\|_{L^\infty(Q_T)}^2  
\end{array}
$$
hence
$$
\|Du\|_{L^\infty(Q_T)} < 2M
$$
whenever $T < T_M$ and $\|Du\|_{L^\infty(Q_T)}\leq 2M$.   A continuity argument implies that 
$$
\sup\,  \{ T\,:\, \|Du\|_{L^\infty(Q_T)}\leq 2M\} = T_M
$$
so  \eqref{TM2} holds true. Using this information, we deduce from \eqref{maxT} that
$$
\|Du\|_{L^\infty(Q_T)}^2 \leq \|Dg\|_{L^\infty(Q_T)}^2+  C_M \, T\, \|Du\|_{L^\infty(Q_T)} 
$$
where $C_M= 2C_0  (1+(2M)^\gamma) +c_d \|Da\|_\infty^2C_0\, 2M$. Hence
$$
\left(\|Du\|_{L^\infty(Q_T)}- \frac12 C_M\, T\right)^2 \leq \|Dg\|_{L^\infty(Q_T)}^2 + \frac14 C_M^2 \, T^2
$$
which implies
$$
\|Du\|_{L^\infty(Q_T)} \leq   C_M\, T + \|Dg\|_{L^\infty(Q_T)} \,.
$$
\end{proof}

\begin{Proposition}\label{prop.LipEstiBernsteinL} (Lipschitz estimates, linear case.) We now assume that $T\leq 1$ and that
$$
|D_xh(t,x,p)|\leq C_1+C_2|p|\qquad \forall (t,x,p)\in (0,T)\times \R^{d}\times \R^d\,,
$$
for some constants $C_1,C_2>0$. Then there exists a constant $C$, depending on $C_0$, $C_2$ and $\|Da\|_\infty$ only, such that 
$$
\begin{array}{rl}
\ds    \sup_{t\in[0,T]}\|Du(t)\|_\infty \; \leq & \ds  \|Dg\|_\infty(1+CT) + CC_1T.
\end{array}
$$
\end{Proposition}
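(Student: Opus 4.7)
The plan is to repeat the Bernstein computation of Proposition \ref{prop.LipEstiBernstein} for $v:=|Du|^2=\sum_i u_i^2$, but to exploit the \emph{linear} growth of $D_xh$ in $p$ to obtain a sharper closure of the differential inequality. Starting from identity \eqref{base}, the uniform ellipticity gives the absorption term $-2C_0^{-1}|D^2u|^2$; Young's inequality bounds the cross-term arising from $u_i(a_{jk})_iu_{jk}$ by $C_0^{-1}|D^2u|^2+\tfrac{C_0}{4}\|Da\|_\infty^2|Du|^2$, half of which is absorbed by the ellipticity term; and the new assumption $|D_xh(t,x,p)|\le C_1+C_2|p|$ yields $|2u_ih_i|\le 2C_1\sqrt{v}+2C_2 v$. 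Collecting everything, one obtains
\[
-\partial_t v-\mathrm{Tr}(a(t,x)D^2v)+h_p(t,x,Du)\cdot Dv\;\le\;2C_1\sqrt{v}+C\,v
\]
with a constant $C$ depending only on $C_0$, $C_2$ and $\|Da\|_\infty$.

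I would then produce an explicit \emph{spatially constant} supersolution to this backward inequality. Let $\phi$ solve the linear ODE
\[
-\phi'(t)=C_1+\tfrac{C}{2}\,\phi(t),\qquad \phi(T)=\|Dg\|_\infty,
\]
whose solution is $\phi(t)=e^{C(T-t)/2}\|Dg\|_\infty+\tfrac{2C_1}{C}\bigl(e^{C(T-t)/2}-1\bigr)$, and set $\psi(t):=\phi(t)^2$. A direct computation gives $-\psi'(t)=2C_1\sqrt{\psi(t)}+C\psi(t)$ and $\psi(T)=\|Dg\|_\infty^2$, so $\psi$ is a spatially constant supersolution with matching terminal data. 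Parabolic comparison then yields $v(t,x)\le\psi(t)$, i.e.\ $\|Du(t)\|_\infty\le\phi(t)$. Since $T\le 1$, the elementary bound $e^{Cs/2}-1\le C's$ valid for $s\in[0,1]$ converts this into $\|Du(t)\|_\infty\le(1+C'(T-t))\|Dg\|_\infty+C''C_1(T-t)$, which is the asserted estimate after renaming the constant.

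The one delicate point in this plan is the comparison step: the source $2C_1\sqrt{v}+Cv$ fails to be Lipschitz at $v=0$, so the maximum principle does not apply verbatim. The standard remedy is to work with the regularisation $v_\delta:=v+\delta$ and the corresponding ODE solution $\phi_\delta$ starting from $(\|Dg\|_\infty^2+\delta)^{1/2}$ at time $T$: the nonlinearity becomes Lipschitz on $[\delta,+\infty)$, comparison applies for each fixed $\delta>0$, and passing to the limit $\delta\downarrow 0$ recovers the inequality $v\le\psi$. Apart from this routine regularisation, every step is explicit and avoids any smallness condition on $T$ beyond the stated $T\le 1$.
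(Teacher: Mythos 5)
Your Bernstein computation leading to the inequality
\[
-\partial_t v-\mathrm{Tr}(a\,D^2v)+h_p(t,x,Du)\cdot Dv\;\le\;2C_1\sqrt{v}+C\,v
\]
is exactly the one in the paper (both proofs start from \eqref{base}, absorb the $|D^2u|^2$ terms via ellipticity and Young, and use the linear growth of $D_xh$), so up to that point you and the authors agree. What differs is the closure. You keep the nonlinear source $2C_1\sqrt{v}+Cv$ and compare $v$ against the explicit spatially constant supersolution $\psi=\phi^2$, where $\phi$ solves the \emph{linear} ODE $-\phi'=C_1+\tfrac{C}{2}\phi$; this is a neat trick since the substitution $\psi=\phi^2$ turns the $\sqrt{\cdot}$ term into something exactly matched by the linear ODE, and it gives $\|Du(t)\|_\infty\le\phi(t)$ directly. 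The paper instead first replaces $\sqrt{v}$ by the constant $\|v\|_{L^\infty(Q_T)}^{1/2}$, so the comparison is against a \emph{linear-in-$v$} forced ODE — entirely Lipschitz, no regularisation needed — and this yields the a priori bound $\|v\|_\infty\le e^{\lambda T}\bigl(2C_1T\|v\|_\infty^{1/2}+\|Dg\|_\infty^2\bigr)$, which is then solved as a quadratic inequality in $\|v\|_\infty^{1/2}$. Your route buys an explicit pointwise supersolution and avoids the algebraic step of inverting the quadratic, but at the price of the non-Lipschitz source at $v=0$; you correctly flag this and the $\delta$-shift remedy you sketch does work (after the shift the source is Lipschitz on $[\delta,\infty)$ and the shifted supersolution stays $\ge\delta$), though you should note that $v_\delta=v+\delta$ still satisfies the same differential inequality because $\sqrt{v}\le\sqrt{v_\delta}$ and $Cv\le Cv_\delta$. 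The paper's route is slightly shorter precisely because it sidesteps the non-Lipschitz issue before invoking the maximum principle. Both give the same linear-in-$T$ estimate for $T\le 1$ after expanding the exponentials, so your proof is correct and constitutes a valid, modestly different approach.
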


\begin{proof} Our starting point is inequality \eqref{base} in the previous proof. Using our assumptions on $a$ and $h$ we get: 
$$
\begin{array}{l}
\ds -\partial_t v-{\rm Tr}(a(t,x)D^2v(t,x))+ h_p(t,x,Du(t,x))\cdot Dv(t,x) \\
\m
\qquad \ds \leq -2C_0^{-1}|D^2u|^2 + 2 |Du|(C_1+C_2\, |Du|) +  \|Da\|_\infty |Du| \, |D^2u| \\
\m
\qquad \ds \leq 2 |Du|(C_1+C_2\, |Du|) +c_d \|Da\|_\infty^2C_0  |Du|^2 
\end{array}
$$
which implies
$$
 -\partial_t v-{\rm Tr}(a(t,x)D^2v(t,x))+ h_p(t,x,Du(t,x))\cdot Dv(t,x)   \leq \lambda \, v + 2\, C_1\,  v^{1/2}
$$
where $\lambda = 2 C_2+ c_d \|Da\|_\infty^2C_0$.  By the maximum principle we get
$$
\begin{array}{l}
\ds \|v\|_{L^\infty(Q_T)} \leq  e^{\lambda T} \left( 2 C_1 \, T \|v\|_{L^\infty(Q_T)}^{1/2} + \| Dg\|_\infty^2\right), 
\end{array}
$$
from which we derive that 
$$
\|v\|_{L^\infty(Q_T)}^{1/2} \leq  2\, C_1 \, T\, e^{\lambda T}  + e^{\lambda T/2}\, \| Dg\|_\infty.
$$
Since $T\leq 1$ (and so $e^{\lambda T/2} \leq 1+ c_\lambda T$), the conclusion follows.
\end{proof}

\vskip1em

\begin{Proposition}\label{prop.LipEstiBernstein2} (Second order estimate.)  Assume that $h$ and $a$ are of class $C^2_b$. 
Then, for any $M>0$, there are constants $T_M,C_M>0$, depending on $M$ and on
\be\label{larebzrnesdkl}
\sup_{t\in[0,T]}\|a(t)\|_2+ \sup_{|p|\leq \|Du\|_\infty}\|D^2_{xp}h(\cdot,\cdot,p)\|_\infty
\ee
such that, if $\|D^2 g\|_\infty\leq M$ and $T\in (0,T_M)$, then 
$$
 \sup_{t\in[0,T]}\|D^2u(t)\|_\infty  \leq \|D^2g\|_\infty+ C_MT.
$$
If, in addition, $h$ is affine in $p$, then there is a constant $C$, depending only on $C_0$, $\sup_{t\in[0,T]}\|a(t)\|_2$ and on $\|D^2_{xp}h\|_\infty$, such that, for any $T\in (0,1]$, 
$$
 \sup_{t\in[0,T]} \|D^2u(t) \|_\infty \leq (1+CT)\|D^2 g\|_\infty+ CT\sup_{|p|\leq \|Du\|_\infty}\| D^2_{xx}h(\cdot, \cdot,p)\|_\infty.
$$
\end{Proposition}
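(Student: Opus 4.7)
The plan is to adapt the Bernstein method used in the proof of Proposition \ref{prop.LipEstiBernstein}, this time applied to the auxiliary function
$$
v(t,x) := \sum_{k,l} u_{kl}(t,x)^2 = |D^2 u(t,x)|^2.
$$
First I would derive the PDE satisfied by $v$ by differentiating the equation for $u$ twice. Differentiating $-\partial_t u - a_{ij} u_{ij} + h(t,x,Du) = 0$ successively with respect to $x_k$ and $x_l$ yields, after a direct computation,
$$
-\partial_t u_{kl} - a_{ij} u_{ijkl} + h_{p_m} u_{mkl} = (a_{ij})_l u_{ijk} + (a_{ij})_k u_{ijl} + (a_{ij})_{kl} u_{ij} - h_{kl} - h_{kp_m} u_{ml} - h_{lp_m} u_{mk} - h_{p_m p_n} u_{nl} u_{mk}.
$$
Multiplying by $2 u_{kl}$, summing over $k,l$, and using the identities $\partial_t v = 2 u_{kl} \partial_t u_{kl}$, $v_m = 2 u_{kl} u_{klm}$ and $a_{ij} v_{ij} = 2 a_{ij} u_{kli} u_{klj} + 2 a_{ij} u_{kl} u_{klij}$, I would obtain the inequality
$$
-\partial_t v - a_{ij} v_{ij} + h_p \cdot Dv \leq -2 C_0^{-1} |D^3 u|^2 + C\|Da\|_\infty |D^2 u|\,|D^3 u| + \Phi(v),
$$
where $\Phi$ collects the remaining algebraic terms. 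Using the ellipticity bound to absorb the gradient-of-$D^2 u$ term via Young's inequality, this reduces to
\begin{equation}\label{eq.bernsteinv}
-\partial_t v - a_{ij} v_{ij} + h_p \cdot Dv \leq A\, v + B\, v^{1/2} + D\, v^{3/2},
\end{equation}
with $A$ depending on $\|Da\|_\infty^2 C_0$, $\|D^2 a\|_\infty$ and $\|D^2_{xp} h\|_\infty$; $B = 2\sup_{|p|\leq \|Du\|_\infty}\|D^2_{xx} h(\cdot,\cdot,p)\|_\infty$; and $D$ depending on $\|D^2_{pp} h\|_\infty$ (evaluated on $|p|\leq \|Du\|_\infty$).

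For the first assertion, I would apply the maximum principle to \eqref{eq.bernsteinv} and run a bootstrap argument analogous to \eqref{TM}--\eqref{TM2} in the proof of Proposition \ref{prop.LipEstiBernstein}: choose $T_M$ small enough (depending on $M$ and the quantities in \eqref{larebzrnesdkl}) so that the a priori assumption $\|D^2 u\|_{L^\infty(Q_T)} \leq 2M$ is propagated; then feeding this back into \eqref{eq.bernsteinv}, the right-hand side becomes bounded by a constant $C_M$, so Gronwall together with the terminal condition $v(T,\cdot) \leq \|D^2 g\|_\infty^2$ yields
$$
\|D^2 u\|_{L^\infty(Q_T)}^2 \leq \|D^2 g\|_\infty^2 + C_M T\,\|D^2 u\|_{L^\infty(Q_T)},
$$
which gives the desired bound after rearranging as in the proof of Proposition \ref{prop.LipEstiBernstein}.

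For the second assertion, if $h$ is affine in $p$ then $h_{pp} \equiv 0$, so the nonlinear term $D v^{3/2}$ disappears in \eqref{eq.bernsteinv} and we obtain the linear inequality
$$
-\partial_t v - a_{ij} v_{ij} + h_p \cdot Dv \leq A\, v + B\, v^{1/2},
$$
with $A,B$ independent of $\|Du\|_\infty$. This is exactly the situation treated in Proposition \ref{prop.LipEstiBernsteinL} (applied now at the level of $v$), and the same maximum-principle argument based on Gronwall's lemma (using $T\leq 1$ to turn $e^{A T/2}$ into $1+CT$) yields
$$
\sup_t \|D^2 u(t)\|_\infty \leq (1+CT)\|D^2 g\|_\infty + CT\,\sup_{|p|\leq \|Du\|_\infty}\|D^2_{xx}h(\cdot,\cdot,p)\|_\infty.
$$
The main obstacle is the $D v^{3/2}$ term in \eqref{eq.bernsteinv}, which genuinely forbids a purely linear-in-time estimate in the general quasilinear case: handling it requires the short-time bootstrap on the quadratic quantity $v$ and is the reason why the first assertion only produces a $C_M T$ remainder rather than the sharper multiplicative constant $(1+CT)$ available in the affine case.
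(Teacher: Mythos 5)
Your proposal is correct and follows essentially the same route as the paper: Bernstein's method applied to $\sum_{k,l} u_{kl}^2$, with the ellipticity used to absorb the $D^3u$ terms via Young's inequality, followed by the maximum-principle bootstrap (as in Proposition \ref{prop.LipEstiBernstein}) in the quasilinear case and by the linear Bernstein estimate (as in Proposition \ref{prop.LipEstiBernsteinL}) once $h_{pp}\equiv 0$. The identification of the $v^{3/2}$ term coming from $h_{pp}Du_i\cdot Du_j$ as the only obstruction to an additive-linear-in-time estimate is exactly the point the paper exploits.
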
 

\begin{proof}  We use the Bernstein method again. Let $w(t,x)=\sum_{i,j}u_{ij}^2$. Then 
$$
\begin{array}{l}
\ds -\partial_t w-{\rm Tr}(a(t,x)D^2w(t,x))  \\
\ds = - 2\sum_{i,j,k,l}a_{kl}(t,x) u_{ijk}(t,x)u_{ijl}(t,x) -2 \sum_{i,j} u_{ij}(t,x) D_{i,j}\left(\partial_t u+\sum_{k,l}a_{kl}u_{kl}\right)\\
\ds \qquad \qquad + 2\sum_{i,j,k,l} u_{ij}(t,x)\left( (a_{kl})_i(t,x)u_{jkl}(t,x)+(a_{kl})_j(t,x)u_{ikl}(t,x)+(a_{kl})_{ij}u_{kl} \right).
\end{array}
$$
So
\be\label{oajlrenztrektj:d}
\begin{array}{l}
\ds -\partial_t w-{\rm Tr}(a(t,x)D^2w(t,x))  \\
\ds = - 2\sum_{i,j,k,l}a_{kl} u_{ijk}u_{ijl} -2 \sum_{i,j} u_{ij} \left(h_{ij}+h_{i,p}\cdot Du_j+h_{j,p}\cdot Du_i+ h_{pp}Du_i\cdot Du_j +h_pDu_{ij}\right)\\
\ds \qquad \qquad + 2\sum_{i,j,k,l} u_{ij}(t,x)
\left( (a_{kl})_i(t,x)u_{jkl}(t,x)+(a_{kl})_j(t,x)u_{ikl}(t,x)+(a_{kl})_{ij}u_{kl} \right)
\end{array}
\ee
which yields, using the ellipticity of $a(t,x)$,
$$
\begin{array}{l}
\ds -\partial_t w-{\rm Tr}(a(t,x)D^2w(t,x))+ h_p(t,x,Du(t,x))\cdot Dw(t,x)  \\
\ds \leq  - 2C_0^{-1}|D^3u|^2  + C_h |D^2u| \left(1+|D^2u|+ |D^2u|^2\right)
+C |D^2u|\left(\|a\|_1\, |D^3u|+ \|a\|_2\, |D^2u| \right)\,
\end{array}
$$
for some constant $C_h$ depending on $\sup_{|p|\leq \|Du\|_\infty}\|D^2_{x,p}h(\cdot,\cdot,p)\|_\infty$. Young's inequality leads to
$$
\begin{array}{l}
\ds -\partial_t w-{\rm Tr}(a(t,x)D^2w(t,x))+ h_p(t,x,Du(t,x))\cdot Dw(t,x)  \\
\ds \leq   C |D^2u| \left(1+|D^2u|+ |D^2u|^2\right)\,
\end{array}
$$
where now $C$ depends on $\|a\|_2$ as well.
We conclude using maximum principle as in the proof of Proposition \ref{prop.LipEstiBernstein}. 
\vskip1em

If $h$ is affine in $p$, then with the same estimates we deduce from \eqref{oajlrenztrektj:d}:  
$$
\begin{array}{l}
\ds  -\partial_t w-{\rm Tr}(a(t,x)D^2w(t,x))+ h_p(t,x,Du(t,x))\cdot Dw(t,x) \\
\m
\ds \leq |D^2u| \left( 2 \|D_{xx} h\|_\infty + C |D^2u|\right)
\\
\m
\ds \leq C w + 2\, \|D_{xx} h\|_\infty\, |D^2 u|\,,
\end{array}
$$
where $C$ depends  on $\|a\|_{2}$, $C_0$ and $\sup_{|p|\leq \|Du\|_\infty}\|D^2_{x,p}h(\cdot,\cdot,p)\|_\infty$. The conclusion follows as in Lemma \ref{prop.LipEstiBernsteinL}.
\end{proof}

\vskip1em

\begin{Proposition} (Third order estimate) Assume that $h$ and $a$ (and the solution $u$) are of class $C^3_b$. 
Then there is a constant $C$, depending on $\|D^2u\|_\infty$, on $\|Da\|_\infty+\|D^2a\|_\infty+\|D^3a\|_\infty$ and on
$$
 \sup_{|p|\leq \|Du\|_\infty} \left\{\|D^3_{(x,p)}h(\cdot,\cdot,p)\|_\infty+\|h_{pp}(\cdot,\cdot,p)\|_\infty\right\}, 
$$
 such that,  for any $T\in (0,1]$, 
$$
 \sup_{t\in[0,T]}\|D^3u(t)\|_\infty \leq (1+CT)\|D^3g\|_\infty+ CT,
$$
\end{Proposition}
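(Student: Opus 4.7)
The plan is to adapt the Bernstein method already used for Propositions \ref{prop.LipEstiBernstein} and \ref{prop.LipEstiBernstein2} to the quantity $w(t,x) = \sum_{i,j,k} u_{ijk}^2(t,x)$. Differentiating the identity $-\partial_t u - \mathrm{Tr}(aD^2u) + h(t,x,Du) = 0$ three times in space and multiplying by $2u_{ijk}$ and summing, one obtains, as in \eqref{oajlrenztrektj:d}, an equation of the form
$$
-\partial_t w - \mathrm{Tr}(a\,D^2 w) + h_p(t,x,Du)\cdot Dw = -2\sum_{i,j,k,l,m} a_{lm}\, u_{ijkl}\, u_{ijkm} + \mathcal{N}(t,x),
$$
where $\mathcal{N}$ collects the zeroth, first and second order terms in $w$ coming from (a) the successive $x$-derivatives of the coefficient $a$ (which produce linear combinations of $u_{ijk}$ against $u_{ijkl}$, $u_{jl}$, etc. with coefficients from $Da$, $D^2a$, $D^3a$), and (b) the chain rule applied to $h(t,x,Du)$, which produces the terms
$$
-2\sum_{i,j,k} u_{ijk}\Bigl(\partial^3_{ijk,x}h + h_{pp}(Du_{ij},Du_{k}) + h_{pp}(Du_{ik},Du_{j}) + h_{pp}(Du_{jk},Du_{i}) + h_{ppp}(Du_i,Du_j,Du_k) + \text{lower}\Bigr).
$$

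The first key step is to absorb every occurrence of the fourth derivative $D^4u$ via Young's inequality against the strictly negative term $-2C_0^{-1}|D^4u|^2$ coming from uniform ellipticity of $a$. This takes care of all cross terms of the form $|D^3 u|\,|D^4 u|\,\|Da\|_\infty$ appearing from the derivatives of $a$, leaving a harmless contribution bounded by $C\,w$. The second key step is to estimate the remaining quantities without $D^4u$. Using the hypothesis that $\|D^2u\|_\infty$, $\|Du\|_\infty$, the norms of $a$ up to order three, and  $\|h_{pp}\|_\infty$ and $\|D^3_{(x,p)}h\|_\infty$ on the relevant compact set of $p$'s are all under control, each of the terms listed above is dominated by a constant times $|D^3u|$ or $|D^3u|^2$; thus
$$
-\partial_t w - \mathrm{Tr}(aD^2w) + h_p(t,x,Du)\cdot Dw \le C\, w + 2\, \kappa \, w^{1/2},
$$
where $\kappa$ depends on $\|D^2u\|_\infty$, $\|Du\|_\infty$, $\|h_{pp}\|_\infty$ and $\|D^3_{(x,p)}h\|_\infty$ (note that the $h_{ppp}(Du_i,Du_j,Du_k)u_{ijk}$ and $\partial^3_{ijk,x}h\, u_{ijk}$ terms are precisely of the form $\kappa\, w^{1/2}$).

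The third step is to apply the maximum principle to this linear-type differential inequality, exactly as was done in Proposition \ref{prop.LipEstiBernsteinL}. This gives
$$
\|w\|_{L^\infty(Q_T)} \le e^{CT}\bigl( 2\kappa\,T\,\|w\|_{L^\infty(Q_T)}^{1/2} + \|D^3g\|_\infty^2\bigr),
$$
from which, taking square roots and using $T\le 1$, the estimate $\|D^3u\|_\infty \le (1+CT)\|D^3g\|_\infty + CT$ follows after renaming constants.

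The main obstacle is purely bookkeeping: one needs to verify that every term generated by the three spatial differentiations of $h(t,x,Du)$ and of $\mathrm{Tr}(aD^2u)$ falls into one of the two acceptable categories (either is quadratic in $D^4u$ and absorbable by ellipticity, or is controlled polynomially by $w^{1/2}$ and $w$ with coefficients depending only on the listed quantities). In particular, care is needed to confirm that no term of the form $h_{ppp}(Du_{i},Du_{j},Du_{k})\,u_{ijk}$ or $h_{pp}(Du_{ij},Du_{k})\,u_{ijk}$ forces a dependence on $\|D^3u\|_\infty$ itself in the constant $C$; both are in fact controlled by $\kappa$ and $\|D^2u\|_\infty$ respectively, and contribute only to the $w^{1/2}$ or the $w$ term.
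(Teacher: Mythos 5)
Your proposal is correct and follows essentially the same route as the paper: Bernstein's method applied to $w=\sum_{i,j,k}u_{ijk}^2$, absorption of the fourth-order terms by the ellipticity of $a$ via Young's inequality, control of the remaining terms by $Cw + \kappa w^{1/2}$, and then the maximum-principle argument exactly as in Proposition \ref{prop.LipEstiBernsteinL}.
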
 

\begin{proof} Let $w=\sum_{ijk} u_{ijk}^2$. Then 
$$
\begin{array}{l}
\ds -\partial_t w-{\rm Tr}(a(t,x)D^2w(t,x))  \\
\ds = - 2\sum_{i,j,k,l,m}a_{lm}(t,x) u_{ijkl}(t,x)u_{ijkm}(t,x) -2 \sum_{i,j} u_{ijk}(t,x) D_{i,j,k}\left(\partial_t u+\sum_{l,m}a_{lm}u_{lm}\right)\\
\ds  \qquad + 2\sum_{i,j,k,l,m} u_{ijk}(t,x)\Bigl( (a_{lm})_{ijk} u_{lm}+(a_{lm})_{ij}u_{klm}+(a_{lm})_{ik}u_{jlm}+(a_{lm})_{jk}u_{ilm} \\
 \qquad \qquad\qquad\qquad +(a_{lm})_{i}u_{jklm}+ (a_{lm})_{j}u_{iklm}+ (a_{lm})_{k}u_{ijlm}\Bigr)
\end{array}
$$
So
\be\label{fourth}
\begin{array}{l}
\ds -\partial_t w-{\rm Tr}(a(t,x)D^2w(t,x))  \\
\ds = - 2\sum_{i,j,k,l,m}a_{lm}(t,x) u_{ijkl}(t,x)u_{ijkm}(t,x)  -2 \sum_{i,j} u_{ijk}(t,x) D_{i,j,k} \left\{h\right\}\\
\ds  \qquad + 2\sum_{i,j,k,l,m} u_{ijk}(t,x)\Bigl( (a_{lm})_{ijk} u_{lm}+(a_{lm})_{ij}u_{klm}+(a_{lm})_{ik}u_{jlm}+(a_{lm})_{jk}u_{ilm} \\
 \qquad \qquad\qquad\qquad +(a_{lm})_{i}u_{jklm}+ (a_{lm})_{j}u_{iklm}+ (a_{lm})_{k}u_{ijlm}\Bigr)\,.
\end{array}
\ee
As before, the coercivity of $a$ implies
$$
- 2\sum_{i,j,k,l,m}a_{lm}(t,x) u_{ijkl}(t,x)u_{ijkm}(t,x) \leq - 2C_0^{-1} |D^4u|^2\,,
$$
whereas last term in \eqref{fourth} is estimated as
$$
\begin{array}{l}
 2\sum_{i,j,k,l,m} u_{ijk}(t,x)\Bigl( (a_{lm})_{ijk} u_{lm}+(a_{lm})_{ij}u_{klm}+(a_{lm})_{ik}u_{jlm}+(a_{lm})_{jk}u_{ilm} \\
 \qquad \qquad\qquad\qquad +(a_{lm})_{i}u_{jklm}+ (a_{lm})_{j}u_{iklm}+ (a_{lm})_{k}u_{ijlm}\Bigr)
 \\
 \m
 \ds \qquad \leq  C_0^{-1} |D^4u|^2 +   |D^3 u| \left( 2 \| D^3 a\|_\infty\, |D^2 u |+ C \, |D^3 u|\right)
 \,,
\end{array}
$$
for some $C$ depending on $C_0$ and $\|D^2 a\|_\infty$. Finally,  a direct computation of $D_{i,j,k} \left\{h\right\}$ and a straightforward estimate of all terms involved imply
$$
\begin{array}{l}
-2 \sum_{i,j} u_{ijk}(t,x) D_{i,j,k} \left\{h\right\} \leq - h_p(t,x,Du(t,x))\cdot Dw(t,x) 
\\
\m
\ds \qquad +
C\, |D^3 u| \left[    \|D^2 h\|_\infty\,  |D^3 u| (1+ |D^2 u|)+  \|D^3 h\|_\infty
(1+ |D^2 u|^3 ) 
 \right]\,.
 \end{array}
$$
Hence, putting all together we deduce from \eqref{fourth}:
$$
\begin{array}{l}
\ds -\partial_t w-{\rm Tr}(a(t,x)D^2w(t,x)) +h_p(t,x,Du(t,x))\cdot Dw(t,x) \\
\m
\ds \leq C\, |D^3 u| \left[    \|D^2 h\|_\infty\,  |D^3 u| (1+ |D^2 u|)+  \|D^3 h\|_\infty
(1+ |D^2 u|^3 ) \right]
\\
\m
\ds
\qquad +   |D^3 u| \left( 2 \| D^3 a\|_\infty\, |D^2 u |+ C \, |D^3 u|\right)
\\
\m
\ds
\qquad
\leq  C |D^3 u|^2 + C |D^3 u|
 \,,
\end{array}
$$
where $C$ depends also on $\|D^ku\|_\infty$ for $k\leq 2$. We conclude as in   Proposition \ref{prop.LipEstiBernsteinL}.
\end{proof}

\begin{Lemma} \label{lem.High} (Higher order estimate) Let  $n\in \N$ with $n\geq 3$ and assume that $h$ and $a$  (and the solution $u$) are of class $C^n_b$. 
There is a constant $C$, depending on $n$, $d$, $\sup_t \|u(t)\|_{n-1}$,  $\sup_t \|a(t)\|_n$ and on 
\be\label{kjaetbzrlef}
 \sup_{|p|\leq \|Du\|_\infty} \sum_{k=0}^n \|D^k_{(x,p)}h(\cdot,\cdot,p)\|_\infty, 
\ee
 such that,  for any $T\in (0,1]$, 
 $$
 \sup_t \|D^nu(t)\|_\infty\leq (1+CT)\|D^ng\|_\infty+ CT.
 $$
\end{Lemma}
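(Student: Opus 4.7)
\medskip

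\noindent\textbf{Proof plan for Lemma \ref{lem.High}.}
The plan is to iterate the Bernstein technique used in the previous three propositions. Set
$$
w(t,x) := \sum_{|\alpha|=n} (\partial^\alpha u(t,x))^2,
$$
so that $\sup_t \|D^n u(t)\|_\infty^2 = \sup_t \|w(t)\|_\infty$ up to a dimensional constant. As in the proofs of Propositions \ref{prop.LipEstiBernstein2} and the third order estimate, one computes
$$
-\partial_t w - \mathrm{Tr}(a(t,x)D^2 w) = -2\sum_{|\alpha|=n}\sum_{k,l}a_{kl}\,\partial^{\alpha+e_k}u\,\partial^{\alpha+e_l}u
- 2\sum_{|\alpha|=n}\partial^\alpha u\,\partial^\alpha\!\bigl(\partial_t u + \mathrm{Tr}(a D^2 u)\bigr) + \mathcal{E}_a,
$$
where $\mathcal{E}_a$ collects the commutator terms involving derivatives of $a$. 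The strict ellipticity gives $-2\sum_{|\alpha|=n}\sum_{k,l}a_{kl}\partial^{\alpha+e_k}u\,\partial^{\alpha+e_l}u \leq -2C_0^{-1}|D^{n+1}u|^2$, and using the HJ equation one replaces $\partial_t u + \mathrm{Tr}(aD^2u)$ by $h(t,x,Du)$.

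The main step is to expand $\partial^\alpha [h(t,x,Du(t,x))]$ via a Faà di Bruno / multivariate chain rule. Every resulting term is a product of one factor $(\partial^\beta_x \partial^r_p h)(t,x,Du)$ with $r$ factors of the form $\partial^{\gamma_j}(Du)$ where $|\beta|+\sum_j|\gamma_j|=n$. The single contribution containing a derivative of $u$ of order $n+1$ corresponds to $r=1$, $|\beta|=0$, $\gamma_1=\alpha$, giving exactly $h_p(t,x,Du)\cdot D\partial^\alpha u$; when summed against $2\partial^\alpha u$ over $|\alpha|=n$ this produces $h_p\cdot Dw$, which we move to the left as a drift. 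For every remaining term, each $u$-derivative factor has order at most $n$, and if one factor has order exactly $n$ (this only happens in the $r=2$ case with one $|\gamma_j|=n-1$ and the other $|\gamma_i|=0$), the remaining factors involve only derivatives of $u$ of order $\leq n-1$. By the hypothesis that $\sup_t\|u(t)\|_{n-1}$ is bounded, and by the bound \eqref{kjaetbzrlef} on the derivatives of $h$ evaluated for $|p|\leq \|Du\|_\infty$, all these remaining terms can be pointwise estimated by $C(|D^n u| + |D^n u|^2) = C(w^{1/2}+w)$. A parallel analysis of $\mathcal{E}_a$ uses the control on $\sup_t\|a(t)\|_n$: the commutator produces factors of the form $(\partial^\delta a_{kl})\,\partial^{\alpha+\eta}u$ with $|\delta|+|\eta|=2$; the only term containing $D^{n+1}u$ corresponds to $|\delta|=1$ and is absorbed into the dissipation $-2C_0^{-1}|D^{n+1}u|^2$ via Young's inequality, while the $|\delta|=2$ contribution is bounded by $C(w+w^{1/2})$.

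Combining everything, one obtains a differential inequality of the form
$$
-\partial_t w - \mathrm{Tr}(a(t,x)D^2 w) + h_p(t,x,Du)\cdot Dw \leq C\,w + C\,w^{1/2}
$$
on $(0,T)\times\R^d$, where $C$ depends on $n$, $d$, $\sup_t\|u(t)\|_{n-1}$, $\sup_t\|a(t)\|_n$ and the quantity in \eqref{kjaetbzrlef}. From here the conclusion follows by the same maximum-principle argument as in Proposition \ref{prop.LipEstiBernsteinL}: the comparison with a spatially constant ODE yields
$$
\|w\|_{L^\infty(Q_T)}^{1/2} \leq e^{CT/2}\|D^n g\|_\infty + 2C T e^{CT},
$$
and for $T\leq 1$ this gives the desired bound $\sup_t\|D^n u(t)\|_\infty \leq (1+CT)\|D^n g\|_\infty + CT$.

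The main bookkeeping difficulty, and the step I expect to take most care, is the combinatorial Faà di Bruno expansion: one must verify that, besides the absorbed drift $h_p\cdot Dw$ and the absorbed commutator with $Da$, every remaining term genuinely involves only a product of one factor of order at most $n$ and further factors of order at most $n-1$, so that the induction hypothesis $\sup_t\|u(t)\|_{n-1}\leq\text{const}$ makes all such terms linear (or sublinear) in $w^{1/2}$. Once this bookkeeping is done, the rest follows from the already-developed Bernstein/maximum-principle machinery.
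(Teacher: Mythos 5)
Your proof is correct and follows essentially the same Bernstein argument as the paper: same choice of $w=\sum_{|\alpha|=n}(\partial^\alpha u)^2$, same identification of the absorbed drift $h_p\cdot Dw$, same absorption of the $Da\cdot D^{n+1}u$ commutator into the dissipation by Young's inequality, and same conclusion via the maximum principle as in Proposition \ref{prop.LipEstiBernsteinL}. There is a minor bookkeeping slip in the Faà di Bruno enumeration --- the order-$n$ factor in $u$ also arises in the $r=1$ case with $|\beta|=1$ (giving $D_{x,p}h\cdot\partial^\xi Du$ with $|\xi|=n-1$), and in the $r=2$ case the companion index satisfies $|\gamma_2|\geq 1$ rather than $|\gamma_2|=0$ since the chain rule never produces a bare $Du$ factor --- but because $n\geq 3$ every companion factor still has order at most $n-1$, so the pointwise bound $C(w+w^{1/2})$ and the rest of the argument go through unchanged.
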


\begin{proof}  Let $\ds w:= \sum_{|k|=n} u^2_k$ where the multi-index $k=(k_1,\dots, k_d)$ belongs to $\N^d$ and $|k|=\sum_i k_i$. Then 
$$
\begin{array}{l}
\ds -\partial_t w-{\rm Tr}(a(t,x)D^2w(t,x))  \\
\ds = - 2\sum_{|k|=n}\, \sum_{i,j}a_{ij}(t,x) u_{k,i}(t,x)u_{k,j}(t,x) -2 \sum_{|k|=n} u_{k}(t,x) D_{k}\left\{\partial_t u+{\rm Tr}(aD^2u)\right\}\\
\ds  \qquad + 2\sum_{|k|=n}u_k\left(D_k({\rm Tr}(aD^2u))-{\rm Tr}(aD^2u_k)\right).
\end{array}
$$
As  $n\geq 3$, a simple induction argument shows that $D_k\{h\}$ is of the form 
$$
D_{k}\left\{h\right\}= f_k+g_k\cdot D^nu +h_p\cdot Du_k
$$
where  the map
$$
f_k= f_k(t,x,Du(t,x), \dots, D^{n-1}u(t,x)) 
$$
is a polynomial function of the derivatives of $u$ up to order $n-1$ with coefficients involving derivatives of $h$ with respect to $(x,p)$ up to order $n$  computed at $(t,x,Du(t,x))$, while
$$
g_k\cdot D^nu= \sum_{|\xi|=n-1} \sum_{z+\xi=k}\, D_{z,p}h(t,x,Du(t,x))Du_\xi+ h_{pp}(t,x,Du(t,x)) Du_{z}Du_\xi\,,
$$
where $\xi$ is any multi-index of length $n-1$, $z$ is a multi-index of length $1$ ($z=e_j$ for some $j\in \{1,\dots,d\}$) and $\xi+ z= k$.

Therefore 
$$
\begin{array}{l}
\ds -\partial_t w-{\rm Tr}(a(t,x)D^2w(t,x)) + h_p\cdot Dw \\
\m
\ds = - 2\sum_{i,j}\sum_{|k|=n}a_{ij}(t,x) u_{k,i}(t,x)u_{k,j}(t,x) -2 \sum_{|k|=n} u_{k}(t,x) \left(f_k+g_k\cdot D^nu \right)\\
\m
\ds  \qquad + 2\sum_{|k|=n}u_k\left(D_k({\rm Tr}(aD^2u))-{\rm Tr}(aD^2u_k)\right)
\\
\m \ds 
\leq - 2C_0^{-1}\sum_{|k|=n} |Du_k|^2 + C |u_k| (1+ |u_k|)
\\
\m
\ds  \qquad + 2\sum_{|k|=n}u_k\left(D_k({\rm Tr}(aD^2u))-{\rm Tr}(aD^2u_k)\right)
\end{array}
$$
where $C$ depends on $\sup_t \|u(t)\|_{n-1}$ and the quantity in \eqref{kjaetbzrlef}. 
Last term can be estimated as before: the higher order quantity involves $Du_k$, so we have  through  Young's inequality 
$$
2\sum_{|k|=n}u_k\left(D_k({\rm Tr}(aD^2u))-{\rm Tr}(aD^2u_k)\right)
\leq 2C_0^{-1}\sum_{|k|=n} |Du_k|^2 + C\, |u_k|(1+ |u_k|),
$$
for some  $C$ depending on $\sup_t \|a(t)\|_n$ and $\sup_t \|u(t)\|_{n-1}$. Finally, we conclude with maximum principle, as in Lemma \ref{prop.LipEstiBernsteinL}.
\end{proof}

\begin{Proposition}\label{prop.High} (Higher order estimate, further informations) Let  $n\in \N$ with $n\geq 3$ and assume that $h$ and $a$  (and the solution $u$) are of class $C^n_b$. For any $M>0$, there are constants $K_M,T_M>0$, depending on $M$, $C_0$ and $\gamma$,  and a constant $C_M>0$ depending on 
$$
\sup_{t\in [0,T_M]} \|a(t)\|_n + \sup_{|p|\leq K_M} \sum_{k=0}^n \|D^k_{(x,p)}h(\cdot,\cdot,p)\|_\infty,
$$
 such that, if $\|g\|_n \le M$, then, for any $T\in (0,T_M)$ and any $r\leq n$, 
we have 
$$
 \sup_{t\in[0,T]} \|D^r_xu(t)\|_\infty \leq \|D^r_xg\|_\infty+  C_M T
 $$
 and therefore
\be\label{unest}
 \sup_{t\in[0,T]} \|u(t)\|_n \leq \|g\|_n+  C_M T.
\ee
\end{Proposition}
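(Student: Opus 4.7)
The plan is to establish the estimate by induction on $r$, with the tools already in place: Propositions \ref{prop.LipEstiBernstein} and \ref{prop.LipEstiBernstein2} handle the base cases $r=1$ and $r=2$, while Lemma \ref{lem.High} handles $r \geq 3$. Each of these gives a bound of the form $\sup_t \|D^r u(t)\|_\infty \leq (1+CT)\|D^r g\|_\infty + CT$, with constants depending on lower-order norms of $u$; once $\|g\|_n \leq M$ is used together with the inductive control of $\|u\|_{r-1}$, such a bound collapses to the desired linear-in-$T$ deterioration $\|D^r g\|_\infty + C_M T$.

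First, I would invoke Proposition \ref{prop.LipEstiBernstein} with the bound $\|Dg\|_\infty \leq M$ to obtain, on some time horizon $T_M^{(1)}$, the gradient bound $\sup_{t\in[0,T]}\|Du(t)\|_\infty \leq M + C_M T \leq K_M$; this fixes $K_M$ and localizes the $p$-variable of $h$ in a compact set, making all subsequent suprema of the form $\sup_{|p|\leq \|Du\|_\infty} \cdots$ quantitative in $M$ alone. The $r=0$ bound then follows by the parabolic comparison principle applied directly to equation \eqref{eq.HJ}: $|u(t,x)| \leq \|g\|_\infty + T \sup_{|p|\leq K_M}\|h(\cdot,\cdot,p)\|_\infty$. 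The $r=1$ bound is exactly the output of Proposition \ref{prop.LipEstiBernstein}.

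Next, for $r=2$ I would apply Proposition \ref{prop.LipEstiBernstein2}: because $\|Du\|_\infty \leq K_M$, the quantity in \eqref{larebzrnesdkl} is bounded by $M$-dependent data, so we get $\sup_t\|D^2 u(t)\|_\infty \leq \|D^2 g\|_\infty + C_M T$ on a (possibly smaller) horizon $T_M^{(2)}$. For the inductive step, assume that for some $3 \leq r \leq n$ we already have $\sup_t \|u(t)\|_{r-1} \leq M + C_M T \leq 2M$. Applying Lemma \ref{lem.High} with $n$ replaced by $r$ gives $\sup_t\|D^r u(t)\|_\infty \leq (1+CT)\|D^r g\|_\infty + CT$, where the constant $C$ is controlled in terms of $\sup_t\|u\|_{r-1}$ (bounded by induction), $\sup_t\|a\|_r$, and $\sup_{|p|\leq K_M}\sum_{k=0}^{r}\|D^k_{(x,p)}h(\cdot,\cdot,p)\|_\infty$ (all bounded by hypothesis). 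Using $\|D^r g\|_\infty \leq M$, we absorb the extra $CT\|D^r g\|_\infty$ into $C_M T$, obtaining $\sup_t\|D^r u(t)\|_\infty \leq \|D^r g\|_\infty + C_M T$.

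The proof is essentially bookkeeping: the main (and only) difficulty is ensuring the constants remain consistent across the induction. Taking $T_M$ to be the minimum of the thresholds $T_M^{(r)}$ produced at each step $r=1,\ldots,n$, and $C_M$ to be the maximum of the constants, the induction closes. Summing the individual estimates for $r=0,1,\ldots,n$ and recalling that $\|u(t)\|_n = \sum_{r=0}^n \|D^r u(t)\|_\infty$ yields \eqref{unest}, completing the proof.
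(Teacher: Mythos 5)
Your proof is correct and matches the paper's approach: the paper merely states that the result is ``a straightforward combination of Propositions \ref{prop.LipEstiBernstein}, \ref{prop.LipEstiBernstein2} and Lemma \ref{lem.High}'', and your inductive argument fills in exactly the bookkeeping that this one-line proof leaves implicit. In particular, first fixing $K_M$ via the Bernstein gradient bound so that all $p$-suprema become quantitative, then climbing up in $r$ with the constant in Lemma \ref{lem.High} controlled by the already-established bound on $\|u\|_{r-1}$, is the intended structure.
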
 

\begin{proof}
The proof is a straightforward combination of Propositions \ref{prop.LipEstiBernstein}, \ref{prop.LipEstiBernstein2} and Lemma \ref{lem.High}.
\end{proof}

We finally address the same issue for (uncoupled)  systems of linear parabolic equations: let $(u^l)_{l=1, \dots, k}$ solve the system 
$$
\left\{\begin{array}{l}
-\partial_t u^l -{\rm Tr}(a(t,x)D^2u^l)+V(t,x)\cdot Du^l +f^l(t,x)=0 \qquad {\rm in}\; (0,T)\times \R^d\\
u^l(T,x)= g^l(x) \qquad {\rm in}\;  \R^d
\end{array}\right.
$$
 where 
$a$, $V$ and the $f^l$ are bounded in $C^n_b$ independently of $t\in[0,1]$, for some $n\in \N^*$. Note that the diffusion and the drift terms are independent of $l$. 

\begin{Proposition}[Higher order estimate, systems of affine equations]  \label{prop.highS} There is a constant $C$, depending on $k$, $d$, $\sup_t \|a(t)\|_n$ and on $\sup_t \|V(t)\|_n$, such that, for any $T\in (0,1]$ and for any $r \le n$, 
$$
 \sup_{t, x} \left(\sum_{l=1}^k |D^r_x u^l(t, x)|^2\right)^{1/2} \leq (1+CT) \sup_{x} \left(\sum_{l=1}^k |D^r_x g^l(x)|^2\right)^{1/2}+  CT \sup_l(\|g^l\|_{r} + \sup_t \|f^l(t)\|_r).
 $$
 In particular, if $k=1$,   for any $r \le n$
 $$
 \sup_{t\in [0,T]} \|D^r_x u(t)\|_\infty \leq (1+CT) \|D^r_x g\|_\infty+ CT \sup_t \|D^r_x f(t)\|_\infty.
 $$ 
\end{Proposition}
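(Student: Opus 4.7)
I will use a Bernstein-type method adapted to the vector-valued setting, exploiting the crucial structural fact that the diffusion $a$ and drift $V$ are common to all components $u^l$ so the system decouples from the point of view of the differential operator. The key quantity is
$$w(t,x) := \sum_{l=1}^k \sum_{|\alpha|=r}(\partial^\alpha u^l(t,x))^2,$$
whose square root is exactly the $\ell^2$ norm one wants to control.

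\textbf{Step 1 (differentiate the equation).} For each multi-index $\alpha$ with $|\alpha|=r$, applying $\partial^\alpha$ to the equation for $u^l$ yields
$$-\partial_t \partial^\alpha u^l - \mathrm{Tr}(aD^2 \partial^\alpha u^l) + V\cdot D\partial^\alpha u^l = -\partial^\alpha f^l - C^l_\alpha,$$
where the commutator $C^l_\alpha = [\partial^\alpha, a_{ij}\partial_{ij}]u^l + [\partial^\alpha, V_i\partial_i]u^l$ expands, by Leibniz, into a sum of terms of the form $(\partial^\beta a)(D^2\partial^{\alpha-\beta}u^l)$ for $1\leq|\beta|\leq r$ and $(\partial^\beta V)(D\partial^{\alpha-\beta}u^l)$ for $1\leq|\beta|\leq r$. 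Only the $|\beta|=1$ piece of the $a$-commutator involves a derivative of $u^l$ of the dangerous order $r+1$; every other piece is of order $\leq r$ in $u^l$.

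\textbf{Step 2 (Bernstein computation).} Using the symmetry of $a$ and the equations from Step 1, a direct computation gives
$$-\partial_t w - \mathrm{Tr}(aD^2 w) + V\cdot Dw = -2\sum_{l,|\alpha|=r}\mathrm{Tr}(a\,D\partial^\alpha u^l\otimes D\partial^\alpha u^l) - 2\sum_{l,|\alpha|=r}\partial^\alpha u^l\,(\partial^\alpha f^l + C^l_\alpha),$$
exactly as in the scalar Bernstein calculations of Propositions~\ref{prop.LipEstiBernstein}--\ref{prop.LipEstiBernstein2}, now with an extra summation over $l$. The cross-terms between distinct $l$'s vanish because $a$, $V$ are independent of $l$.

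\textbf{Step 3 (absorb the top-order commutator; bound the rest).} The single $|\beta|=1$ piece in $C^l_\alpha$ is bounded pointwise by $c_{r,d}\|Da\|_\infty\,|D\partial^\alpha u^l|$. By Young's inequality, $2|\partial^\alpha u^l|\cdot c_{r,d}\|Da\|_\infty|D\partial^\alpha u^l|$ is absorbed into the coercive term $2C_0^{-1}|D\partial^\alpha u^l|^2$ at the cost of a $C\,w$ remainder. Every other term in $C^l_\alpha$ is bounded pointwise by $C\sum_{s\leq r}|D^s u^l|$, so by Cauchy-Schwarz,
$$2\sum_{l,|\alpha|=r}|\partial^\alpha u^l|\,(|C^l_\alpha|^{\mathrm{low}} + |\partial^\alpha f^l|) \leq 2w^{1/2}\bigl(F_r(t,x) + C_1\sup_l\|u^l(t)\|_r\bigr),$$
where $F_r(t,x) := \bigl(\sum_{l,|\alpha|=r}(\partial^\alpha f^l)^2\bigr)^{1/2}$. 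To control $\sup_l\|u^l(t)\|_r$ I apply the single-equation case ($k=1$) of the Proposition, which is an immediate variant of Lemma~\ref{lem.High} for affine equations and yields $\sup_{l,t}\|u^l(t)\|_r\leq C_0(\sup_l\|g^l\|_r + \sup_{l,t}\|f^l(t)\|_r)$ for $T\leq 1$.

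\textbf{Step 4 (parabolic comparison).} Combining Steps 2--3,
$$-\partial_t w - \mathrm{Tr}(aD^2w) + V\cdot Dw \leq C w + 2w^{1/2}\bigl(F_r(t,x) + \widetilde C\bigr),$$
with $\widetilde C$ a constant multiple of $\sup_l(\|g^l\|_r + \sup_t\|f^l(t)\|_r)$. Setting $\psi(t):=\sqrt{\sup_x w(t,\cdot)}$, the function $\psi$ is dominated via the maximum principle by the solution to $-\phi' = (C/2)\phi + \sup_x F_r(t,\cdot) + \widetilde C$, $\phi(T)=\psi(T)^2$, whose square root satisfies $\psi(t)\leq e^{CT/2}\psi(T) + e^{CT/2}T(\sup_{s,x}F_r(s,x) + \widetilde C)$. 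Since $T\leq 1$, $e^{CT/2}\leq 1+C'T$, and $\sup_{s,x}F_r(s,x)\leq C\sup_l\sup_t\|f^l(t)\|_r$, giving
$$\sup_{t,x}\sqrt{w(t,x)} \leq (1+CT)\sup_x\sqrt{w(T,x)} + CT\sup_l\bigl(\|g^l\|_r + \sup_t\|f^l(t)\|_r\bigr),$$
which is exactly the desired inequality. The $k=1$ statement at the end is the specialization to a single component.

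\textbf{Main obstacle.} The delicate point is Step 3: one must isolate the single order-$(r+1)$ piece of $C^l_\alpha$ (the only one that can compete with the coercivity term) and show that all remaining commutator pieces, though numerous, are of order $\leq r$ in $u^l$ and hence absorbable into the $w^{1/2}\cdot(\sup_l\|u^l\|_r)$ factor without any loss in the order of derivatives of $g^l$ and $f^l$ on the right-hand side. The bootstrap via the scalar case of the statement is what makes the estimate close cleanly with the sharp norms $\|g^l\|_r$ and $\|f^l\|_r$ rather than something larger.
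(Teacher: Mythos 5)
Your proof is correct and follows the same Bernstein strategy as the paper: differentiate the equation, form the $\ell^2$ sum of squares $w=\sum_{l,|\alpha|=r}(\partial^\alpha u^l)^2$ over components and multi-indices (the crucial point being that $a$ and $V$ are common to all $l$, so $w$ satisfies a scalar parabolic inequality with no cross-terms), absorb the one order-$(r+1)$ commutator piece via the coercivity of $a$ and Young's inequality, and close with the maximum principle. The paper's own proof merely sketches the case $r=0$ and invokes the earlier Bernstein computations for higher $r$; your proposal supplies exactly the details that are being referenced.
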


 The only small point here is that the supremum over $x$ is outside the sum (and not inside  as it would be given by simply applying to each $u^l$ the previous Propositions). 

\begin{proof} The proof runs exactly along the same lines as before and so we just explain briefly the idea for $r=0$. Let us consider $v(t,x)=\sum_{l=1}^k (u^l(t,x))^2$. Then $v$ solves 
$$
-\partial_t v -{\rm Tr}(aD^2v)+V\cdot Dv = - 2 \sum_{l=1}^k u^l f^l -\sum_{i,j,l} a_{ij} u^l_iu^l_j
$$
We infer the result by using the positivity of $a$ and the maximum principle. 

\subsection{Systems with parameters}

In this section we revisit the above estimates for specific systems of Hamilton-Jacobi equations involving a parameter $y$. The motivation for the specific form of the system is the analysis of the MFG problems with a major player. Note that here the variables-parameter couple $(x; y)$ plays the role of $(x_0; x)$ in the HJ system \eqref{syst.2bis} analyzed throughout Section \ref{s:sshj}. As usual, we discuss linear and nonlinear systems separately.

\subsubsection{Nonlinear systems}

Here we consider the system consisting in a coupling of a non-linear HJ equation with a linear one:  
\be\label{eq.HJsyst}
\left\{ \begin{array}{l}
-\partial_t u^0(t,x) -\Delta u^0(t,x)+h^0(t,x,Du^0(t,x))=0\qquad {\rm in}\; (0,T)\times \R^d,\\
-\partial_t u(t,x;y)-\Delta u(t,x;y) + h^0_p(t,x,Du^0(t,x))\cdot Du(t,x;y) +f(t,x;y)= 0 \; {\rm in}\; (0,T)\times \R^d,\\
u^0(T,x)= g^0(x), \; u(T,x)= g(x;y) \qquad {\rm in}\; \R^d,
\end{array}\right.
\ee
where $h^0:[0,T]\times \R^d \times \R^d\to \R$ and $f:[0,T]\times \R^d\times \R^{d_1}\to \R$ ($d_1$ being the space parameter of the variable $y$) are smooth maps satisfying in addition the bounds: 
\be\label{eq.concsyst}
 |D_{x,p}h^0(t,x,p)| + |D^2_{x,p}h^0(t,x,p)|  \leq C_0(|p|^\gamma+1), 
\ee
for some $\gamma>0$ and $C_0>0$.

\begin{Proposition}\label{prop.HighSyst}
Let  $r, n\in \N$ and assume (in addition to \eqref{eq.concsyst}) that $h^0, h^0_p$ are of class $C^{r}_b$ and that $f$ is bounded in $C^{r,n}_b$ independently of $t\in[0,1]$ for some $n \in \N$. For any $M>0$, there are constants $K_M,T_M>0$, depending on $M$, $C_0$ and $\gamma$ in \eqref{eq.concsyst},  and a constant $C_M>0$ depending on 
$$
 \sup_{|p|\leq K_M} \sum_{k=0}^r \|D^k_{(x,p)}h^0(\cdot,\cdot,p)\|_\infty + \sum_{k=0}^r \|D^k_{(x,p)}h_p^0(\cdot,\cdot,p)\|_\infty, \quad \sup_t\|f(t)\|_{r,n}
$$
 such that, if $\|g^0\|_r+ \|g\|_{r,n}\leq M$ and $T\in (0,T_M)$, and if $(u^0,u)$ is the solution to \eqref{eq.HJsyst}, then we have, for $l \le n$,
$$
\sup_{t,x,y }\Bigl(|D^r u^0(t,x)|^2+|D_x^r D^l_y u(t,x;y)|^2\Bigr)^{1/2} \leq  \sup_{x,y} \Bigl(|D^r g^0(x)|^2+|D_x^r D^l_y g(x;y)|^2\Bigr)^{1/2}+  C_M T.
$$
\end{Proposition}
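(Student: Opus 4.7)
My plan is to reduce the statement to a joint Bernstein-type estimate for a family of \emph{linear} parabolic equations sharing a common drift, in the spirit of Proposition~\ref{prop.highS}. The key observation is that, once $u^0$ is under control, both $\partial^\alpha u^0$ (with $|\alpha|=r$) and $\partial^\alpha_x\partial^\beta_y u$ (with $|\alpha|=r$, $|\beta|=l$) satisfy linear parabolic equations sharing the same drift $V(t,x):=h^0_p(t,x,Du^0(t,x))$ and diffusion $-\Delta$, with source terms of strictly lower order. The $y$-dependence being just a parameter (the drift and diffusion are independent of $y$), the resulting inequality will be uniform in $y$, which is what produces the sup-outside-the-sum form of the estimate.

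\smallskip\noindent\textbf{Step 1 (bounding $u^0$).}
First I apply Propositions~\ref{prop.LipEstiBernstein} and~\ref{prop.High} to the nonlinear HJ equation for $u^0$ to obtain, for $T_M$ small,
\[
\sup_{t\in[0,T]}\|u^0(t)\|_r\leq\|g^0\|_r+C_MT,\qquad \sup_{t,x}|Du^0(t,x)|\leq K_M,
\]
with $T_M,K_M,C_M$ depending only on the data and $M$. This in turn guarantees that $V(t,x):=h^0_p(t,x,Du^0(t,x))$ is uniformly bounded in $C^{r-1}_b$.

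\smallskip\noindent\textbf{Step 2 (differentiated system with common drift).}
For every $\alpha\in\N^d$ with $|\alpha|=r$ and every $\beta\in\N^{d_1}$ with $|\beta|=l\leq n$, the Leibniz and chain rules applied to \eqref{eq.HJsyst} yield
\[
-\partial_t\partial^\alpha u^0-\Delta\partial^\alpha u^0+V\cdot D\partial^\alpha u^0+R_\alpha=0,
\]
\[
-\partial_t\partial^\alpha_x\partial^\beta_y u-\Delta\partial^\alpha_x\partial^\beta_y u+V\cdot D\partial^\alpha_x\partial^\beta_y u+S_{\alpha,\beta}=0,
\]
where $R_\alpha=R_\alpha(t,x)$ and $S_{\alpha,\beta}=S_{\alpha,\beta}(t,x;y)$ are explicit polynomial expressions in the derivatives of $u^0$ of order at most $r$, the derivatives of $u$ of order at most $(r,l)$, and the derivatives of $h^0$, $h^0_p$, $f$. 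Crucially, the top-order contribution produced by differentiating $h^0(\cdot,Du^0)$ is absorbed exactly into $V\cdot D\partial^\alpha u^0$, so that no derivative of $u^0$ of order $r+1$ remains in $R_\alpha$. Combining this with Step~1 and with a direct application of Proposition~\ref{prop.highS} to the linear equations satisfied by $\partial^\beta_y u$ (all sharing the drift $V$ and having source $\partial^\beta_y f\in C^r_b$), one obtains a uniform bound $\|R_\alpha\|_\infty+\|S_{\alpha,\beta}\|_\infty\leq C_M$.

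\smallskip\noindent\textbf{Step 3 (joint Bernstein estimate).}
For each fixed $y$, I set
\[
W(t,x;y):=\sum_{|\alpha|=r}(\partial^\alpha u^0(t,x))^2+\sum_{|\alpha|=r,\,|\beta|=l}(\partial^\alpha_x\partial^\beta_y u(t,x;y))^2,
\]
so that $W^{1/2}$ is exactly the quantity to estimate. Multiplying each PDE of Step~2 by the corresponding solution, summing over $\alpha$ and $\beta$, and using the coercivity of $-\Delta$ exactly as in the proof of Proposition~\ref{prop.highS}, one finds
\[
-\partial_t W-\Delta W+V\cdot D_x W\leq 2C_M W^{1/2}.
\]
The maximum principle together with the square-root trick of Proposition~\ref{prop.LipEstiBernsteinL} then gives, uniformly in $y$,
\[
\sup_{x}W(t,x;y)^{1/2}\leq \sup_x W(T,x;y)^{1/2}+C_MT,
\]
and taking the supremum over $y\in\R^{d_1}$ yields the claimed inequality. \textbf{The main obstacle} is the algebraic bookkeeping in Step~2: one has to verify that $R_\alpha$ and $S_{\alpha,\beta}$ really involve derivatives of $u^0$ only up to order $r$ (and derivatives of $u$ only up to order $(r,l)$). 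This cancellation of the would-be $D^{r+1}u^0$ contribution is exactly what ensures that the sources are controlled a priori by Step~1 and that the sup-outside-the-sum structure of Proposition~\ref{prop.highS} survives.
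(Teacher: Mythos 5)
There is a genuine gap in Step~2, and it propagates to Step~3. Your claim that $R_\alpha$ and $S_{\alpha,\beta}$ involve ``derivatives of $u^0$ of order at most $r$'' is correct for $R_\alpha$ (the top-order contribution from $\partial^\alpha_x h^0(\cdot,Du^0)$ really is the drift term $V\cdot D\partial^\alpha u^0$), but it is \emph{false} for $S_{\alpha,\beta}$. When you apply $\partial^\alpha_x\partial^\beta_y$ with $|\alpha|=r$ to the linear term $h^0_p(t,x,Du^0)\cdot Du$, the Leibniz rule produces a term where all $r$ spatial derivatives hit the coefficient, namely $\bigl(\partial^\alpha_x\,h^0_p(\cdot,Du^0)\bigr)\cdot\partial^\beta_y Du$, and by the chain rule the leading part of $\partial^\alpha_x\,h^0_p(\cdot,Du^0)$ contains $h^0_{pp}(\cdot,Du^0)\,D^{r+1}u^0$. (This is visible already for $r=1$, where the source is $(h^0_{p,x_i}+h^0_{pp}Du^0_i)\cdot\partial^\beta_y Du$, and $Du^0_i$ is a \emph{second} derivative of $u^0$.) Thus $S_{\alpha,\beta}$ contains $D^{r+1}u^0$, which is \emph{not} controlled by your Step~1 bound $\sup_t\|u^0(t)\|_r\le M+C_MT$ (nor could it be, since the data $g^0$ is only assumed to lie in $C^r_b$). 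Consequently the asserted bound $\|R_\alpha\|_\infty+\|S_{\alpha,\beta}\|_\infty\le C_M$ does not hold, and the linear differential inequality $-\partial_t W-\Delta W+V\cdot DW\le 2C_MW^{1/2}$ of Step~3 is not justified; in particular Proposition~\ref{prop.highS} cannot be invoked directly for the linear $u$-equation with the source term as if it were a priori bounded.

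The missing ingredient is a Young absorption against the dissipation, which is exactly what the paper's argument (and Propositions~\ref{prop.LipEstiBernstein}--\ref{prop.LipEstiBernsteinL}) uses. In the Bernstein computation, $-\Delta$ acting on $\sum_{|\alpha|=r}(\partial^\alpha u^0)^2$ produces the good term $-2\sum_{|\alpha|=r}|D\partial^\alpha u^0|^2\sim -2|D^{r+1}u^0|^2$. The problematic contribution $\partial^\alpha_x\partial^\beta_y u\cdot h^0_{pp}\,D^{r+1}u^0\cdot\partial^\beta_y Du$ must then be split via Young's inequality as $\le\varepsilon|D^{r+1}u^0|^2 + C_\varepsilon\,|\partial^\alpha_x\partial^\beta_y u|^2|\partial^\beta_y Du|^2$, the first piece being absorbed into the dissipation. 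After this absorption the RHS for $W$ is \emph{not} linear in $W^{1/2}$: for $r=1$ one already gets something of order $W^{1/2}(1+W^\theta)$ (the factor $|\partial^\beta_y Du|^2$ is itself part of $W$), so one must run the nonlinear bootstrap of Proposition~\ref{prop.LipEstiBernstein} (pre-bound $W$ for small $T$, then refine) rather than the linear square-root trick of Proposition~\ref{prop.LipEstiBernsteinL}. For $r\ge 2$ the factor $|\partial^\beta_y Du|^2$ is of lower order and can be handled by induction on $r$, after which the linear inequality does apply. Your overall strategy — joint Bernstein for $W=\sum(\partial^\alpha u^0)^2+\sum(\partial^\alpha_x\partial^\beta_y u)^2$ in order to get the sup-outside-the-sum estimate — is the same as the paper's, and is correct; it is the bookkeeping at the level of $S_{\alpha,\beta}$, and the treatment of the $D^{r+1}u^0$ term, that needs to be repaired.
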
 

Let us recall that $D^r_x D^l_yu = (\partial^\beta_x\partial^\alpha_y u)_{|\beta| = r, |\alpha|=l}$, hence $|D^r_x D^l_y u|^2 = \sum_{|\beta| = r, |\alpha|=l} (\partial^\beta_x \partial^\alpha_y u)^2$. Let us also point out that the main difference compared to Proposition \ref{prop.High} is that we need to estimate $u^0$ and $u$ at the same time. 

\begin{proof} The proof uses  the same technique as for a single Hamilton-Jacobi equation without parameter. We explain only the main changes.
We first prove the result for $l=0$.

By the maximum principle we can first bound $|u^0|^2+|u|^2$ by $\|(g^0)^2+g^2\|_\infty+CT$. Next  we address the Lipschitz estimate.  We claim that, for any $M>0$ and any $n\in\N$, if $\|Dg^0\|_\infty+\|D_x g\|_\infty \leq M$, then there exists $T_M$ and $C_M$ (depending on $M$, $C_0$, $n$ and $\gamma$ in \eqref{eq.concsyst} only) such that 
$$
\sup_{t,x,y }\Bigl(|D u^0(t,x)|^2+|D_x  u(t,x;y)|^2\Bigr)^{1/2} \leq  \sup_{x,y} \Bigl(|D g^0(x)|^2+|D_x g(x;y)|^2\Bigr)^{1/2}+ C_MT(1+\sup_t\|D_x f(t)\|_\infty).
$$
To this aim, let us set:     
$\ds
v(t,x)= \sum_{i=1}^d ((u^0_i)^2 + (u_i)^2).
$
Then following the computation in the proof of Proposition \ref{prop.LipEstiBernstein}, we find: 
\begin{align*}
& -\partial_t v-\Delta v(t,x) \\
& \ds = 
- 2 \sum_i \left( u^0_i D_{x_i} (\partial_t u^0+\Delta u^0) + u_iD_{x_i} (\partial_t u+\Delta u)\right) - 2(|D^2u^0|^2+|D^2u|^2) \\
& \ds = 
- 2 \sum_i \left( u^0_i ( h^0_{x_i}+h^0_p\cdot Du^0_i) + u_i(h^0_{p,x_i}\cdot Du+ h^0_{pp}Du^0_i\cdot Du+ h^0_p \cdot Du_i+f_i)\right) - 2(|D^2u^0|^2+|D^2u|^2), 
\end{align*}
so that 
\begin{align*}
& -\partial_t v-\Delta v(t,x) + h^0_p\cdot Dv \\
& \ds = 
- 2 \sum_i \left( u^0_i  h^0_{x_i} + u_i(h^0_{p,x_i}\cdot Du+ h^0_{pp}Du^0_i\cdot Du+f_i)\right) - 2(|D^2u^0|^2+|D^2u|^2).
\end{align*}
Using our assumption on $h^0$ we get
\begin{align*}
& -\partial_t v-\Delta v(t,x) + h^0_p\cdot Dv \leq  C v^{1/2} (|v|^{\theta}+1+\|D_xf\|_\infty),
\end{align*}
for some $C>0$ and $\theta>0$ which depend on $C_0$ and $\gamma$ only. We derive from this the Lipschitz estimate thanks to the maximum principle exactly as in the proof of Proposition \ref{prop.LipEstiBernstein}. 

The higher order estimates can be checked exactly as in Propositions \ref{prop.LipEstiBernstein2} and Lemma \ref{lem.High}, so we omit the proof. Note that higher order estimates on $D^r u^0$ and $D^r u$ depend on $D^{r-1} u^0$ and $D^{r-1} u$, but this dependance affects the constant $C_M$ only.

Let us finally explain how to handle the derivative with respect to $y$: we note that $\partial_y^\alpha u$ satisfies the  same linear equation as $u$ with $f$ replaced by $\partial_y^\alpha f$, and the final datum $g$ is replaced by $\partial_y^\alpha g$. So, in order to estimate $D_xD^l_y(u^0,u)$ for instance,  we just set $c_l = (\sum_{|\alpha|=l}1)^{-1}$, $v^\alpha = \sum_{i=1}^d (c_l (u^0_i)^2+ (\partial^\alpha_y u_i)^2)$ and $w=\sum_{|\alpha|=l} v^\alpha$. As above,
\begin{align*}
& -\partial_t v^\alpha-\Delta v^\alpha(t,x) + h^0_p\cdot Dv^\alpha \leq  C (v^\alpha)^{1/2} (|v^\alpha|^{\theta}+1+\|D_x \partial_y^\alpha f \|_\infty) \leq  C w^{1/2} (|w|^{\theta}+1+\|D_x D^l_y f \|_\infty),
\end{align*}
and summing up one concludes the desired inequality, noting that $w=\sum_{|\beta|=1}  (\partial^\beta_x u^0)^2+ \sum_{|\beta|=1, |\alpha|=l} (\partial^\beta_x \partial^\alpha_y u)^2$. 

\end{proof}

\subsubsection{Linear systems}

We also need to quantify the regularity of linear systems of the form
\be\label{eq.HJsystL}
\left\{ \begin{array}{l}
\ds -\partial_t u^0(t,x) -\Delta u^0(t,x)+V^0(t,x)\cdot Du^0(t,x)+f^0(t,x)=0\qquad {\rm in}\; (0,T)\times \R^d,\\
\ds -\partial_t u(t,x;y)-\Delta u(t,x;y) + V^0(t,x)\cdot Du(t,x;y)+V(t,x;y)\cdot Du^0(t,x) \\
\ds \qquad \qquad \qquad\qquad\qquad\qquad\qquad\qquad\qquad\qquad +f(t,x;y)= 0 \qquad {\rm in}\; (0,T)\times \R^d,\\
\ds u^0(T,x)= g^0(x), \; u(T,x)= g(x;y) \qquad {\rm in}\; \R^d
\end{array}\right.
\ee

\begin{Proposition}\label{prop.prop.HighSystL} Assume that, independently on $t \in (0,1]$,  $V^0, f^0$ are bounded in $C^r$, and $V, f$ are bounded $C^{r,n}_b$ for some $r,n\geq 0$. Then, if $(u^0,u)$ is a solution of \eqref{eq.HJsystL} which is bounded in $C^{r}_b \times C^{r,n}_b$ and if $\|g^0\|_{r}+ \|g\|_{r,n}\leq M$, we have, for any $T \in (0, 1]$,  $l \le n$,
\begin{align*}
& \sup_{t,x,y} \Bigl(|D_x^r u^0(t,x)|^2 + |D^r_x D^l_y u(t,x;y)|^2\Bigr)^{1/2} \\
&\qquad \leq  \sup_{x,y}\Bigl(|D^r_x  g^0(x)|^2+ |D^r_x D^l_y g(x;y)|^2\Bigr)^{1/2}  +C_M T,
\end{align*}
where $C_M$ depends on $M$, the bounds on $V^0, f^0$ and $V, f$ in $C^r$ and $C^{r,n}_b$  respectively. 

In addition, for $r = 0$ and $l \le n$, we have
\begin{multline*}
\sup_{t,x,y} \Bigl(|u^0(t,x)|^2 + | D^l_y u(t,x;y)|^2\Bigr)^{1/2}\leq (1+CT)\sup_{x,y} \Bigl(|g^0(x)|^2+ |D^l_y g(x;y)|^2\Bigr)^{1/2}  +CT( \| f^0\|_\infty+\|D^l_y f\|_\infty),
\end{multline*}
where $C$ depends just on the bound of $V^0$ and $V$.
\end{Proposition}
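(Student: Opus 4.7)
The proof will follow the same Bernstein-type template already used for Proposition \ref{prop.HighSyst}, adapted to the linear structure of \eqref{eq.HJsystL}. The essential new feature --- compared with scalar estimates applied separately to $u^0$ and to each derivative of $u$ --- is that the $\sup$ stays outside the sum $|D^r_x u^0|^2 + |D^r_x D^l_y u|^2$. This is achieved by working directly with the quantity $w := c_l |D^r_x u^0|^2 + \sum_{|\alpha|=l} |D^r_x \partial^\alpha_y u|^2$ (with an appropriate combinatorial constant $c_l$) and deriving a single scalar parabolic inequality for it.

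\textbf{Step 1: reduction in the $y$ variable.} Since $V^0$, $f^0$ and the field $V^0$ in the second equation do not depend on $y$, for any multi-index $\alpha$ with $|\alpha| = l$ the derivative $\partial^\alpha_y u$ satisfies a linear equation of the same form as that for $u$, with $V$ and $f$ replaced by $\partial^\alpha_y V$ and $\partial^\alpha_y f$, and terminal datum $\partial^\alpha_y g$; the equation for $u^0$ is unchanged. Hence, up to enlarging the source terms in a controlled way, it suffices to prove the estimate for $l=0$ and to sum the resulting inequalities weighted so that $u^0$ is counted only once.

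\textbf{Step 2: the base case $r=l=0$.} Set $v := (u^0)^2 + u^2$. A direct computation using the two equations gives
\[
-\partial_t v - \Delta v + V^0 \cdot D v = -2(|Du^0|^2 + |Du|^2) - 2 u^0 f^0 - 2 u (V\cdot Du^0 + f).
\]
Dropping the nonpositive gradient term and using Cauchy--Schwarz with $\|V\|_\infty < \infty$, one gets
\[
-\partial_t v - \Delta v + V^0\cdot Dv \;\le\; C\,v + C\big((f^0)^2 + f^2\big),
\]
with $C$ depending only on the bounds of $V^0$ and $V$. A standard application of the maximum principle (after the change $\tilde v = e^{-Ct}v$) then yields
\[
\sup_{t,x,y}\big(|u^0(t,x)|^2+|u(t,x;y)|^2\big)^{1/2} \le (1+CT)\sup_{x,y}\big(|g^0|^2+|g|^2\big)^{1/2} + CT(\|f^0\|_\infty + \|f\|_\infty),
\]
which, combined with Step 1, gives the sharper last inequality of the statement.

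\textbf{Step 3: general order $r$.} For $r\ge 1$ and a multi-index $\beta$ with $|\beta|=r$, apply $\partial^\beta_x$ to the two equations. The result is the same system for $(\partial^\beta_x u^0, \partial^\beta_x u)$, modulo ``commutator'' source terms of the form $\sum_{\beta'<\beta} (\partial^{\beta-\beta'}_x V^0)\cdot D\partial^{\beta'}_x u^0$ and analogous expressions involving $V$, which are bounded in $L^\infty$ by a constant depending on the $C^r$-norms of $V^0, V$ and on $\sup_t \|u^0(t)\|_r + \sup_t \|u(t)\|_{r,n}$. Forming $w = \sum_{|\beta|=r} c_r (\partial^\beta_x u^0)^2 + \sum_{|\beta|=r,|\alpha|=l}(\partial^\beta_x \partial^\alpha_y u)^2$ and repeating the calculation of Step 1, one obtains
\[
-\partial_t w - \Delta w + V^0 \cdot Dw \;\le\; C\, w^{1/2}\big(w^{1/2} + \|D^r_x f^0\|_\infty + \|D^r_x D^l_y f\|_\infty + K\big),
\]
where $K$ gathers the lower-order commutator contributions bounded via the a priori assumption on $(u^0,u)$ in $C^r_b\times C^{r,n}_b$. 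The scalar maximum principle (as in Lemma \ref{lem.High}, via the substitution $z = w^{1/2}$) gives the announced bound with an additive $C_M T$.

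\textbf{Main obstacle.} The only subtle point is keeping the supremum outside the combined sum while passing from the $(1+CT)$ constant of Step 2 to the purely additive $C_M T$ of Step 3: in the higher order step the zero-order term on the right-hand side of the inequality for $w$ is not linear in $w$ but contains the source $K$ arising from commutators of $\partial^\beta_x$ with the drifts, which is bounded only via the a priori $C^r_b$/$C^{r,n}_b$ norms of the solution. This is why the constant $C_M$ in the first inequality of the proposition depends on $M$, whereas the refined $r=0$ estimate --- where no commutator appears --- involves only the bounds on $V^0$ and $V$.
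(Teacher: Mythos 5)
Your proposal follows the paper's proof essentially line by line: the $y$-reduction of Step 1, the Bernstein quantity $v=(u^0)^2+u^2$ and the maximum principle in Step 2, and the higher-order iteration in Step 3 all match what the paper does.

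There is one technical slip in Step 2 that does not produce the stated estimate. From $-\partial_t v - \Delta v + V^0\cdot Dv \le Cv + C\big((f^0)^2 + f^2\big)$, the comparison principle gives $\sup v \le e^{CT}\sup v(T) + CT\big(\|f^0\|_\infty^2+\|f\|_\infty^2\big)$, and taking the square root yields a $\sqrt{T}\,(\|f^0\|_\infty+\|f\|_\infty)$ term, not $T(\|f^0\|_\infty+\|f\|_\infty)$. To get the linear-in-$T$ dependence of the statement you must keep the $v^{1/2}$ linearity of the source, i.e.\ bound $-2u^0f^0 - 2uf \le 2 v^{1/2}(\|f^0\|_\infty+\|f\|_\infty)$ rather than squaring, and then conclude exactly as in the paper's Proposition \ref{prop.LipEstiBernsteinL}: from $-\partial_t v-\Delta v+V^0\cdot Dv\le \lambda v + 2C_1 v^{1/2}$ one obtains $\|v\|_\infty^{1/2}\le e^{\lambda T/2}\|v(T)\|_\infty^{1/2}+2C_1 T e^{\lambda T}$, whence $(1+CT)$ and $CT$. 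The same device is needed at every order $r$.

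One further remark on Step 3: you bound the commutator source $K$ "by a constant depending on $\sup_t\|u^0(t)\|_r + \sup_t\|u(t)\|_{r,n}$". Since $u^0, u$ are the unknowns, this is circular unless you make explicit that $K$ only involves derivatives of order strictly below the one being estimated (after absorbing the top-order commutator products into $Cw$ and the negative $|D^{r+1}|^2$ term), so that $K$ is controlled by the already-proven estimates at orders $<r$, which themselves are $\le M + C_MT\le C_M$ for $T\le 1$. This is the cascade the paper carries out explicitly for $r=1,2$.
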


\begin{proof} We first note that the derivatives of $u$ with respect to the parameter $y$ solve a system which has the same structure as the one for $u$: so we just need to check the result for $n=0$, and proceed as in the proof of Proposition \ref{prop.HighSyst} for $n > 0$.

Let us start with the $L^\infty$ bounds: We consider $\tilde v:= (u^0)^2+u^2$. Then $v$ satisfies 
\begin{align*}
-\partial_t \tilde v-\Delta \tilde v & = -2u^0 \left( \partial_t u^0+\Delta u^0\right)-2u\left(\partial_t u+\Delta u\right) -2(|Du^0|^2+|Du|^2)\\ 
& = -2u^0 \Bigl( V^0(t,x)\cdot Du^0(t,x)+f^0(t,x)\Bigr)-2u\Bigl(V^0(t,x)\cdot Du(t,x) \\
& \qquad \qquad +V(t,x;y)\cdot Du^0(t,x) +f(t,x;y)\Bigr)-2(|Du^0|^2+|Du|^2)\\
& \leq C\tilde v +\tilde v^{1/2}(\|f^0\|_\infty+\|f\|_\infty), 
\end{align*}
where $C$ depends on $\|V^0\|_\infty$ and $\|V\|_\infty$ only. This implies the result for $r=n=0$. 

We now check the $C^1$ estimate. Let us set as usual $v(t,x)= \sum_{i=1}^d ((u^0_i)^2 + (u_i)^2)$. 
Then  
\begin{align*}
& -\partial_t v-\Delta v(t,x) 
= 
- 2 \sum_i \left( u^0_i D_{x_i} (\partial_t u^0+\Delta u^0) + u_iD_{x_i} (\partial_t u+\Delta u)\right) - 2(|D^2u^0|^2+|D^2u|^2) \\
& \ds = 
- 2 \sum_i \Bigl( u^0_i ( V^0_{x_i}\cdot Du^0+ V^0\cdot Du^0_i+f^0_i) + u_i(V^0_{x_i}\cdot Du+V^0\cdot Du_i +V_{x_i}\cdot Du^0+ V\cdot Du^0_i+f_i)\Bigr)\\
& \qquad \qquad \qquad  - 2(|D^2u^0|^2+|D^2u|^2)\\
&  \leq Cv +v^{1/2}(\|Df^0\|_\infty+ \|D_xf\|_\infty),  
\end{align*}
where $C$ depends on the $C^1$ bound on $V^0$ and on $V$ and on $d$ only. This implies the estimate for $r=1$ and $n=0$. 

As for the $C^2$ estimate, let us set as usual $w(t,x)= \sum_{i,j=1}^d ((u^0_{ij})^2 + (u_{ij})^2)$. 
Then  
$$
-\partial_t w-\Delta w(t,x) \leq Cw +C w^{1/2}(1+\|D^2 f^0\|_\infty+ \|D^2_x f\|_\infty + \|D u^0\|_\infty+ \|D_x u\|_\infty),  
$$
where $C$ depends on the $C^1$ bound on $V^0$ and on $V$ and on $d$ only. We then get the estimate for $r=2$ and $n=0$ by the maximum principle and using the previous bounds for $D u^0, D u$. 

The estimate on higher order derivatives can be checked in a way similar and we omit the proof. 
\end{proof}

\section{Functions on $\Pw$}\label{sec.diffUm}

\subsection{A criterium of differentiability}

Here we introduce a simple criterium for a map $U$, depending of the measure, to be of class $C^1$. 

\begin{Lemma}\label{lem.condC1} Let $U:\Pw\to \R$ be continuous. For $(s,m,y)\in [0,1]\times \Pw\times \R^d$ we set 
$$
\hat U(s; m,y):= U((1-s)m+s\delta_y).
$$ 
If the map $s\to \hat U(s; m,y)$ has a derivative at $s=0$ and if its derivative at $0$, $\ds \frac{d}{ds}_{|_{s=0}} \hat U:\Pw\times \R^d \to \R$ is continuous and bounded, then $U$ is of class $C^1$ with 
$$
\frac{\delta U}{\delta m}(m,y)= \frac{d}{ds}\hat U(0; m,y).
$$
\end{Lemma}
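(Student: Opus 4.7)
The plan is to set $V(m,y) := \tfrac{d}{ds}\big|_{s=0^+}\hat U(s;m,y)$---continuous and bounded on $\Pw\times\R^d$ by hypothesis---and verify the defining integral identity of $C^1$-regularity,
\begin{equation}\label{lemgoal}
U(m') - U(m) = \int_0^1 \int_{\R^d} V(m_s, y)(m'-m)(dy)\,ds, \qquad m_s := (1-s)m+sm',
\end{equation}
for all $m, m' \in \Pw$. Since $m'-m$ has zero total mass, adding to $V$ an arbitrary function of $m$ alone leaves the right-hand side of \eqref{lemgoal} unchanged; the normalization convention \eqref{eq.convention} can thus be restored afterwards, and the conclusion $\delta U/\delta m = V$ is to be read modulo this $m$-dependent additive constant.

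I would reduce \eqref{lemgoal} to the assertion that $\phi(s) := U(m_s)$ is continuously differentiable on $[0,1]$ with $\phi'(s) = \int V(m_s, y)(m'-m)(dy)$. Using the affine rewriting
\[
m_{s_0+h} = (1-\alpha)\, m_{s_0} + \alpha\, m', \qquad \alpha := h/(1-s_0) \quad (s_0<1,\; h\,\text{small}),
\]
the existence of the right-derivative $\phi'_+(s_0)$ reduces---after relabelling $m := m_{s_0}$---to the claim
\begin{equation}\label{lemstar}
\lim_{\alpha \to 0^+}\frac{U\bigl((1-\alpha)m + \alpha m'\bigr) - U(m)}{\alpha} \;=\; \int_{\R^d} V(m, y)(m'-m)(dy),
\end{equation}
for arbitrary $m, m' \in \Pw$. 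Continuity of $V$ and of $s \mapsto m_s$ in $\dw$ then makes $\phi'_+$ continuous on $[0,1)$, so $\phi \in C^1([0,1])$, and integration over $[0,1]$ delivers \eqref{lemgoal}.

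The heart of the argument is therefore \eqref{lemstar}. For $m' = \delta_{y_0}$, modulo the additive $m$-ambiguity noted above, \eqref{lemstar} is precisely the hypothesis. To extend to arbitrary $m' \in \Pw$, I would approximate $m'$ in $\dw$ by atomic measures $m'_N = \tfrac{1}{N}\sum_{i=1}^N \delta_{y_i}$ (with the $y_i$ uniformly bounded, possible since $m' \in \Pw$), and decompose the transition $m \leadsto (1-\alpha)m + \alpha m'_N$ as a telescoping chain of $N$ elementary $(\alpha/N)$-sized Dirac moves through the interpolants
\[
\mu_k := \Bigl(1 - \tfrac{\alpha k}{N}\Bigr)\, m + \tfrac{\alpha}{N}\sum_{i=1}^{k}\delta_{y_i} = \Bigl(1 - \tfrac{\alpha}{N}\Bigr)\,\nu_k + \tfrac{\alpha}{N}\,\delta_{y_k}, \qquad k=0,\ldots, N,
\]
with $\nu_k := \mu_{k-1} + \tfrac{\alpha/N}{1-\alpha/N}(\mu_{k-1}-m) \in \Pw$ (non-negativity of the coefficients holds whenever $\alpha \le 1$). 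Applying the hypothesis at each base point $\nu_k$ in the direction $\delta_{y_k}$ gives $U(\mu_k) - U(\nu_k) = (\alpha/N)\,V(\nu_k, y_k) + (\alpha/N)\,\eta_k(\alpha/N)$ with $\eta_k \to 0$ pointwise; summing, sending $N\to\infty$ (using continuity of $U$ to pass $\mu_N \to (1-\alpha)m + \alpha m'$ and of $V$ to identify the Riemann-sum $\tfrac{1}{N}\sum_k V(\nu_k, y_k)$ with an integral against $(1-\alpha)m+\alpha m'$), and finally dividing by $\alpha$ and letting $\alpha \to 0^+$ yields \eqref{lemstar}.

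The principal technical obstacle is controlling the sum of the pointwise-vanishing remainders $\eta_k(\alpha/N)$: one needs uniform smallness over the compact subset of $\Pw \times \R^d$ swept out by the $(\nu_k, y_k)$ (the $\nu_k$ lie at $\dw$-distance $O(\alpha)$ from $m$, and the $y_k$ in a bounded set). This uniformity is to be extracted from the pointwise convergence of the continuous difference quotients $s^{-1}[U((1-s)\nu+s\delta_y)-U(\nu)]$ to the continuous limit $V(\nu, y)$ on this compact set via a standard compactness/equicontinuity argument, after which the telescoping bound closes the proof.
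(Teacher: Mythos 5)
Your overall plan—reduce to one-sided differentiability of $\phi(s)=U(m_s)$, then decompose the step $m\leadsto(1-\alpha)m+\alpha m'_N$ into Dirac moves, telescope, and pass to limits—is essentially the paper's route, so what matters is whether the telescoping actually closes. It does not, as stated. You pick the straight-line interpolants $\mu_k=(1-\tfrac{\alpha k}{N})m+\tfrac{\alpha}{N}\sum_{i\le k}\delta_{y_i}$ and then \emph{adjust the base point} to $\nu_k\neq\mu_{k-1}$ so that each step has the required form $(1-\tfrac{\alpha}{N})\nu_k+\tfrac{\alpha}{N}\delta_{y_k}$. The sum $\sum_k\bigl(U(\mu_k)-U(\nu_k)\bigr)$ is therefore not $U(\mu_N)-U(\mu_0)$: there is a residual $\sum_k\bigl(U(\nu_k)-U(\mu_{k-1})\bigr)$. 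Each $\nu_k-\mu_{k-1}=\tfrac{\alpha/N}{1-\alpha/N}(\mu_{k-1}-m)$ is a signed measure of total variation $O(\alpha^2/N)$, so $\dw(\nu_k,\mu_{k-1})=O(\alpha/\sqrt{N})$ (the square root is unavoidable for $\dw$). With only \emph{continuity} of $U$ available, the residual sum is bounded by $N\,\omega\bigl(C\alpha/\sqrt{N}\bigr)$, which does not vanish as $N\to\infty$ for a general modulus $\omega$. The paper sidesteps this entirely by taking $m_k:=(1-\alpha_k)m_{k-1}+\alpha_k\delta_{y_k}$ with the specific weights $\alpha_k=\tfrac{s}{N-(N-k)s}$ chosen so that $m_N=(1-s)m+sm^N_{\bf y}$ \emph{exactly}, keeping the base of each Dirac move equal to the running state; the telescoping then has no residual. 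You should adopt these weights (or an equivalent exact chain), not the linear $\mu_k$ with adjusted $\nu_k$.

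Two secondary points. First, the claim that for $m'=\delta_{y_0}$ the identity \eqref{lemstar} ``is precisely the hypothesis, modulo the additive $m$-ambiguity'' is not right: the hypothesis gives the left side equal to $V(m,y_0)$, while the right side of \eqref{lemstar} equals $V(m,y_0)-\int V(m,z)\,m(dz)$, and \emph{both} sides of \eqref{lemstar} are unchanged under $V\mapsto V+c(m)$ (the left side is a concrete limit, the right side integrates against the zero-mass $m'-m$), so the ambiguity cannot reconcile the discrepancy. The normalisation $\int V(m,z)\,m(dz)=0$ must be \emph{derived}—e.g.\ by running the (corrected) telescoping and then choosing $m'=m$, as the paper does—before your \eqref{lemstar} in the form stated can hold. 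Second, to control the $\eta_k$ uniformly you appeal to a ``compactness/equicontinuity argument'' for the family of difference quotients; equicontinuity of that family is not a given. The cleaner device, used implicitly in the paper, is to note that $\tau\mapsto\hat U(\tau;\nu,y)$ is differentiable on all of $[0,1)$ (reparametrising $(1-\tau)\nu+\tau\delta_y$ back to a one-sided derivative at $0$), with derivative $V\bigl((1-\tau)\nu+\tau\delta_y,y\bigr)/(1-\tau)$; then each step is an exact integral of this continuous derivative, and uniform continuity of the \emph{already-continuous} $V$ on the compact set containing the $(\nu_k,y_k)$ yields the uniform bound directly.
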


\begin{proof}
We have  to show that, for any $m_0,m_1\in \Pw$, we have 
\begin{align*}
U(m_1)-U(m_0) & =\int_0^1 \int_{\R^d} \frac{d}{ds}\hat U(0; (1-s)m_0+sm_1,y)(m_1-m_0)(dy).
\end{align*}
 Before starting the proof,  let  us  note that the continuity assumption of $\frac{d}{ds}\hat U$ at $s=0$ implies its continuity at any $s\in [0,1]$, replacing $m$ by  $(1-s)m+s\delta_y$.

Let us start by considering the case where $m_0$ is fixed and $m_1$ is an empirical measure: $m_1= m^N_{\bf y}:=\frac{1}{N} \sum_{k=1}^N \delta_{y_k}$ for some $N\in \N$, $N\geq 1$, $y_k\in \R^d$. The general case will be treated next by approximation. 

All the measures we are going to manipulate in the next lines belong to the set 
$$
K:= \{ \alpha_0m_0+\sum_{k=1}^{N} \alpha_k \delta_{y_k}, \; \alpha_k\geq 0, \sum_{k=0}^N \alpha_k=1\}
$$
which is compact in $\Pw$. So, by continuity of $\frac{d}{ds}\hat U$, if we fix $\ep>0$, there exists $\delta\in (0,1/2)$ such that, if $m',m''\in K$ with $\dw(m,m')<\delta$ and $s\in [0,\delta]$, then 
\be\label{ahkerzbsln}
\sup_k \left| \frac{d}{ds} \hat U(s;m,y_k)- \frac{d}{ds} \hat U(0; m', y_k)\right|\leq \ep. 
\ee
Our first step consists in showing that, for $s>0$ small enough (to be defined below) and for  any $m\in K$, we have 
\begin{align}\label{ehsbfxvnck}
& \left| U((1-s)m+sm^N_{\bf y})-U(m)  - s\int_{\R^d} \frac{d}{ds} \hat U(0; m, y)m^N_{\bf y}(dy) \right| \leq C(\ep s+s^2) ,
 \end{align}
 where $C$ depends on the sup norm of $\frac{d}{ds} \hat U$ on $[0,1]\times K\times \{y_k, \; k=1,\dots, N\}$. 
In order to prove \eqref{ehsbfxvnck}, we define $\alpha_k=\frac{s}{N-(N-k)s}$ for $k=0, \dots, N$ and note that
\be\label{stars}
\prod_{l=k}^N (1-\alpha_l)= 1-\frac{(N+1-k)s}{N}\,. 
\ee
We now define by induction 
\be\label{lkejznredg,fmv}
m_0= m, \qquad m_k= (1-\alpha_k)m_{k-1}+ \alpha_k \delta_{y_k}
\ee
and using \eqref{stars} we get  
\begin{align*}
m_N & = \prod_{k=1}^N (1-\alpha_k) m+   \alpha_n\delta_{y_N}+  \sum_{k=1}^{N-1} \alpha_k\delta_{y_k} \prod_{l=k+1}^N (1-\alpha_l) \\
& = (1-s) m + \sum_{k=1}^N \delta_{y_k}\frac{s}{N-(N-k)s} (1-\frac{(N-k)s}{N}) = (1-s)m+s m^N_{\bf y}.
 \end{align*}
So, by the definition of $m_{k+1}$ in function of $m_k$ in \eqref{lkejznredg,fmv},  
\begin{align*}
& U((1-s)m+sm^N_{\bf y})-U(m)   = \sum_{k=0}^{N-1} U(m_{k+1})-U(m_k) \\ 
& = \sum_{k=0}^{N-1}  \hat U(\alpha_{k+1}; m_k, y_{k+1})-\hat U(0; m_k, y_{k+1}) 
 = \sum_{k=0}^{N-1}  \int_0^{\alpha_{k+1}} \frac{d}{ds} \hat U(\tau; m_k, y_{k+1})d\tau\,.
 \end{align*}
Let us assume that $s\in (0,\delta)$. As $s<1/2$, we have  $\alpha_k\leq 2s/N$ for any $k$, and thus 
$$
\dw(m_k, m) \leq C s
$$
for a constant $C$ which depends on $m_0$ and on the $y_k$ (but not on $m\in K$ nor on $s\in (0,\delta)$). We now require that $s$ is so small that $Cs<\delta$. Then, for any $k$ and any $\tau\in (0,\alpha_k)$,  we have by \eqref{ahkerzbsln}:
$$
\left| \frac{d}{ds} \hat U(\tau; m_k, y_{k+1})-  \frac{d}{ds} \hat U(0; m, y_{k+1})\right| \leq \ep. 
$$
We infer from this that 
\begin{align*}
& \left| U((1-s)m+sm^N_{\bf y})-U(m)  - \sum_{k=0}^{N-1}  \alpha_{k+1} \frac{d}{ds} \hat U(0; m, y_{k+1})\right| \leq C\ep \sum_{k=0}^{N-1} \alpha_{k+1} .
 \end{align*}
As $|\alpha_k- s/N|\leq Cs^2/N$, we conclude that \eqref{ehsbfxvnck} holds. 
 
The next step in the proof consists in showing that 
\begin{align}\label{ehsbfxvnckekznrsf}
U(e^{-1}m_0+(1-e^{-1})m^N_{\bf y})-U(m_0)&  = \int_0^{e^{-1}}  \int_{\R^d} \frac{d}{ds} \hat U(0; (1-\tau)m_0+ \tau m^N_{\bf y}, y)m^N_{\bf y}(dy) \frac{d\tau}{1-\tau}. 
 \end{align}
For this, let us now choose $T\in \N$ large and let 
 $$
 m_n= \left(1-\frac{1}{T}\right)^n m_0+ \left(1-\left(1-\frac{1}{T}\right)^n\right) m^N_{\bf y} \qquad n\in \{0, \dots, T\}.
 $$
 We have
 $$
m_{n+1}= \left(1-\frac{1}{T}\right) m_n+ \frac{1}{T} m^N_{\bf y} \qquad n\in \{0, \dots, T\}.
$$
So, by \eqref{ehsbfxvnck}, 
\begin{align*}
& \left| U(m_{T})-U(m_0) - T^{-1}\sum_{n=0}^{T-1}  \int_{\R^d} \frac{d}{ds} \hat U(0; m_n, y)m^N_{\bf y}(dy) \right| \\
& \leq \sum_{n=0}^{T-1} \left| U((1-1/T)m_n+(1/T) m^N_{\bf y})-U(m_n) -   T^{-1} \int_{\R^d} \frac{d}{ds} \hat U(0; m_n, y)m^N_{\bf y}(dy) \right| \\
& \leq C \sum_{n=0}^{T-1} (\ep/T+(1/T)^2) \leq C(\ep+T^{-1}). 
 \end{align*}
We let $T\to+\infty$ and then $\ep\to0$ to conclude by continuity of $U$ and of $\frac{d}{ds}\hat U$ that 
\begin{align*}
U(e^{-1}m_0+(1-e^{-1})m^N_{\bf y})-U(m_0)&  = \int_0^{1} \int_{\R^d} \frac{d}{ds} \hat U(0; e^{-s}m_0+ (1-e^{-s})m^N_{\bf y}, y)m^N_{\bf y}(dy) ds\\
& = \int_0^{e^{-1}}  \int_{\R^d} \frac{d}{ds} \hat U(0; (1-\tau)m_0+ \tau m^N_{\bf y}, y)m^N_{\bf y}(dy) \frac{d\tau}{1-\tau}.
 \end{align*}
 This is \eqref{ehsbfxvnckekznrsf}. 
 
By continuity of $U$ and of $\frac{d}{ds} \hat U$ and by density of the empirical measures, one obtains from \eqref{ehsbfxvnckekznrsf} that, for any measure $m_0,m_1\in \Pw$: 
\be\label{abelsdkjnezdc}
U(e^{-1}m_0+(1-e^{-1})m_1)-U(m_0) = \int_0^{e^{-1}}  \int_{\R^d} \frac{d}{ds} \hat U(0; (1-\tau)m_0+ \tau m_1, y)m_1(dy) \frac{d\tau}{1-\tau}. 
 \ee
 Choosing $m_1=m_0$ then implies the normalization convention 
 $$
  \int_{\R^d} \frac{d}{ds} \hat U(0; m_0, y)m_0(dy)=0  
 $$
  for any $m_0 \in \Pw$. In particular, this yields
 $$
 \int_{\R^d} \frac{d}{ds} \hat U(0; (1-\tau)m_0+ \tau m_1, y)m_1(dy)= (1-\tau) \int_{\R^d} \frac{d}{ds} \hat U(0; (1-\tau)m_0+ \tau m_1, y)(m_1-m_0)(dy)\,.
 $$
 Inserting this relation in \eqref{abelsdkjnezdc} gives the more standard form: 
 \begin{align*}
 U(e^{-1}m_0+(1-e^{-1})m_1)-U(m_0) & = \int_0^{e^{-1}}  \int_{\R^d} \frac{d}{ds} \hat U(0; (1-\tau)m_0+ \tau m_1, y)(m_1-m_0)(dy) d\tau. 
 \end{align*}  
Using again the continuity of $U$ and of $\frac{d}{ds} \hat U$, one easily deduce from this the desired equality.  
\end{proof}

\subsection{Interpolation and Ascoli Theorem in $\Pw$} \label{sec.interpo}

In the proof of Lemma \ref{lem.estiUN}, we have used two interpolation Lemmas. The first one is standard (see, for instance, \cite[Lemma II.3.1]{LSU}): we recall it because we need a specific setting. The second one is an adaptation to $\Pw$ of the same techniques. 

\begin{Lemma}\label{lem.interp0} Let $W:[0,1]\times \R^{d_1} \to \R^{d_2}$ be  Holder continuous in time locally uniformly in space: for any $R>0$, there exists $C_{0,R}>0$ and $\alpha_R>0$ such that 
$$
|W(t,y)-W(s,y)|\leq C_{0,R} |t-s|^\alpha \qquad \forall (s,t,y)\in [0,1]\times [0,1]\times \R^{d_1}\; {\rm with }\; |y|\leq R\;{\rm and}\; |t-s|\leq \alpha_R,
$$
and such that  $D_y W$ is  Holder continuous in space uniformly in time: there exists $C_1>0$ such that 
$$
\left|D_yW(t,y_0)-D_yW(t,y_1)\right|\leq C_1|y_0-y_1|^\delta \qquad \forall (t,y_1,y_2)\in [0,1]\times \R^{d_1}\times \R^{d_1}.
$$
 Then $D_y W$ is  Holder continuous in time locally uniformly in space: 
 \begin{align*}
& \left|D_yW(t,y)-D_yW(s,y)\right| 
\leq C_R \,  |t-s|^{\frac{\alpha  \delta}{(1+\delta)}} \\
& \qquad  \qquad \forall (s,t,y)\in [0,1]\times [0,1]\times \R^{d_1}\; {\rm with}\; |y|\leq R \;{\rm and}\; |t-s|\leq \alpha_R',
\end{align*}
for some constants $C_R>0$ and $\alpha_R'$ only depending on $C_{0,R+1}$, $\alpha_{R+1}$, $C_1, \alpha$ and $\delta$.  
\end{Lemma}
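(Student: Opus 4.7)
The plan is to use the classical interpolation trick based on approximating $D_y W$ by a finite difference and optimizing in the step size.

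First, I would fix $R > 0$ and work with $|y| \leq R$. For any unit vector $e \in \R^{d_1}$ and any $h \in (0, 1]$, the fundamental theorem of calculus gives
\[
\frac{W(t, y+he) - W(t, y)}{h} - D_y W(t, y) \cdot e = \frac{1}{h}\int_0^h \bigl(D_y W(t, y + se) - D_y W(t, y)\bigr) \cdot e \ ds.
\]
Using the hypothesis that $D_y W$ is $\delta$--Hölder in space uniformly in $t$, the right-hand side is bounded by $C_1 h^\delta / (1+\delta)$. Note that $|y + se| \leq R+1$ so we will actually apply the time-regularity hypothesis at the level $R+1$.

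Next, for any $s, t \in [0, 1]$ with $|t-s| \leq \alpha_{R+1}$, I would subtract the analogous identity at time $s$ and invoke the time-Hölder assumption at points $y$ and $y + he$ (both of norm $\leq R+1$), obtaining
\[
\bigl|(D_y W(t, y) - D_y W(s, y)) \cdot e\bigr| \leq \frac{2 C_{0, R+1}}{h} |t-s|^\alpha + \frac{2 C_1}{1+\delta} h^\delta.
\]
The key step is then to optimize the right-hand side in $h$: choosing $h = |t-s|^{\alpha/(1+\delta)}$ (which is $\leq 1$ as soon as $|t-s| \leq 1$) balances the two terms and gives a bound of order $|t-s|^{\alpha \delta/(1+\delta)}$. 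Since $e$ was an arbitrary unit vector, this yields the claimed Hölder estimate on $D_y W$ in time, with constants $C_R$ and $\alpha_R'$ depending only on $C_{0, R+1}$, $\alpha_{R+1}$, $C_1$, $\alpha$ and $\delta$; the threshold $\alpha_R'$ is obtained by additionally requiring $|t-s|^{\alpha/(1+\delta)} \leq 1$ together with $|t-s| \leq \alpha_{R+1}$.

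There is no serious obstacle here: the argument is a standard interpolation inequality, and the only care required is to track that the norm of the shifted point $y + he$ stays bounded by $R+1$, which forces the use of the time-regularity constants at level $R+1$ rather than at level $R$. This is exactly why the statement is formulated with a locally uniform (rather than globally uniform) time regularity.
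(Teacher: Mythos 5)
Your proof is correct and follows essentially the same argument as the paper: express the error between $D_yW\cdot e$ and a finite difference of $W$, bound the time increment of the finite difference by $C_{0,R+1}|t-s|^\alpha/h$ using the Hölder hypothesis on $W$ at radius $R+1$, bound the approximation error by $C_1 h^\delta$ using the spatial Hölder hypothesis on $D_yW$, and then optimize over $h$. The only cosmetic difference is that the paper writes the identity $\int_0^1 \bigl(D_yW(t,y_\tau)-D_yW(s,y_\tau)\bigr)\cdot(y_1-y_0)\,d\tau = W(t,y_1)-W(t,y_0)-W(s,y_1)+W(s,y_0)$ directly and then adds and subtracts, whereas you bound the error $\frac{W(t,y+he)-W(t,y)}{h}-D_yW(t,y)\cdot e$ explicitly before differencing in $t$; the two computations are the same up to rearrangement.
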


\begin{Remark}{\rm The proof below also shows that, if in addition $W$ is Holder continuous in time  uniformly in space (i.e., $C_{0,R}$ and $\alpha_R$ do not depend on $R$) and if $D_yW$ is bounded, then $D_y W$ is also Holder continuous in time  uniformly in space. 
}\end{Remark}

\begin{proof} Fix $y_0,y_1\in \R^d$ with $|y_0|\leq R$ and $|y_1|\leq R+1$. Let $y_\tau= (1-\tau)y_0+\tau y_1$ for $\tau\in [0,1]$. We have 
\begin{align*}
&\left|\int_0^1 (D_yW(t,y_\tau)-D_yW(s,y_\tau))\cdot (y_1-y_0)d\tau \right|
\\
\m &\qquad = \left| W(t,y_1)- W(t,y_0)- W(s,y_1)+  W(s,y_0) \right|
\leq 2C_{0,R+1} \, |t-s|^\alpha\,.
\end{align*}
 So 
\begin{align*}
& \left| (D_yW(t,y_0)-D_yW(s,y_0))\cdot (y_1-y_0)\right|  
\leq \left| \int_0^1 (D_yW(t,y_0)-D_yW(t,y_\tau))\cdot (y_1-y_0)d\tau\right| 
\\
&  
+ \left|\int_0^1 (D_yW(t,y_\tau)-D_yW(s,y_\tau))\cdot (y_1-y_0)d\tau\right|
+ \left| \int_0^1 (D_yW(s,y_\tau)-D_yW(s,y_0))\cdot (y_1-y_0)d\tau\right|\\
&\qquad \leq 2  C_{0,R+1} |t-s|^{\alpha}  +  2C_1|y_1-y_0|^{1+\delta},
\end{align*}
using also the H\"older continuity of $D_y W$.
Choosing $y_1= y_0+ h v $, with $|v|=1$, we get
$$
 \left |[D_yW(t,y)-D_yW(s,y) ]\cdot v  \right| \leq \frac{2C_{0,R+1}}{|h|} |t-s|^\alpha + 2C_1 |h|^{\delta}\,.
$$
Optimizing with respect to $h\in (0,\alpha_{R+1}]$ and $|v|=1$, we find the result for  $|t-s|\leq \alpha_R'$ for a suitable constant $\alpha_R'$ depending on $C_{0,R+1}$, $\alpha$, $C_1$ and $\delta$. 
\end{proof}

\begin{Lemma}\label{lem.interp} Let $W:[0,1]\times \Pw\to \R^{d_2}$ be  Holder  continuous, locally in time and uniformly in measure: there exists $\alpha\in (0,1]$ and, for any $R>0$ there exists $C_{0,R}>0$ such that
$$
|W(t,m)-W(s,m)|\leq C_{0,R} |t-s|^\alpha \qquad \forall m\in \Pw\; {\rm with}\; M_2(m)\leq R, \; \forall s,t\in [0,1], 
$$
(where $M_2(m)=(\int_{\R^d}|y|^2m(dy))^{1/2}$) and such that  $\frac{\delta W}{\delta m}$ and $D_mW$ are   bounded and  $D_mW$ is Holder continuous with respect to the measure uniformly in time: there exists $\gamma,\delta \in (0,1]$ and  $C_{1}>0$ such that
$$
\left|D_mW(t,m_0,y_0)-D_mW(t,m_1,y_1)\right|\leq C_1\left( \dw^{\gamma}(m_0,m_1) +|y_0-y_1|^\delta\right)
$$
for any $t\in [0,1]$ and any $(m_i,y_i)\in \Pw\times \R^d$. 
 Then $D_m W$ is  Holder continuous in time locally uniformly in $(m,y)\in \Pw\times \R^d$: for any $R>0$ , there exists a  constant $C_R>0$, depending on $R$, $\|D_mW\|_\infty$, $C_{0,R+1}$, $C_1$, $\alpha,\gamma$ and $\delta$, such that 
 \begin{align*}
& \left|D_mW(t,m,y)-D_mW(s,m,y)\right| 
\leq C_R |t-s|^{\alpha\gamma/((2+\gamma)(1+\delta))}  , 
\end{align*}
for any $s,t\in [0,1]$ and any $(m,y)\in \Pw\times \R^d$ with $|y|\leq R$ and  $M_2(m)\leq R$.
\end{Lemma}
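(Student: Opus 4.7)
The plan is to adapt the interpolation argument of Lemma \ref{lem.interp0} to the setting of $\Pw$, using a two-parameter perturbation of $m$ in order to extract $D_m W(t,m,y_0)$ from finite differences of $W$ in the measure variable. Fix $t_1,t_2 \in [0,1]$, a probability $m$ with $M_2(m) \le R$, a point $y_0 \in \R^d$ with $|y_0| \le R$, and a unit vector $v \in \R^d$. For parameters $\eta, h \in (0,1]$ to be chosen, I would introduce
\begin{equation*}
m^*_0 := (1-\eta)m + \eta \delta_{y_0}, \quad m^*_h := (1-\eta)m + \eta \delta_{y_0+hv}, \quad m_\tau := (1-\tau) m^*_0 + \tau m^*_h .
\end{equation*}
The natural coupling (leave $(1-\eta)m$ fixed, send the $\eta$-part of $m$ to the appropriate Dirac) gives $\dw(m_\tau, m) \le C_R \sqrt{\eta}$ uniformly in $\tau \in [0,1]$ for $h$ bounded, while $M_2(m_\tau)$ remains bounded by some $R'$ depending only on $R$.

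The first step is to represent the increment $W(t,m^*_h)-W(t,m^*_0)$ in terms of $D_m W$. Applying the integral definition of $\delta W/\delta m$ along $\tau \mapsto m_\tau$, using $m^*_h - m^*_0 = \eta(\delta_{y_0+hv} - \delta_{y_0})$, and then integrating in the space variable via $D_m W = D_y \frac{\delta W}{\delta m}$, yields
\begin{equation*}
W(t,m^*_h) - W(t,m^*_0) = \eta \int_0^1 \int_0^h D_m W\bigl(t, m_\tau, y_0 + \sigma v\bigr) \cdot v \, d\sigma\, d\tau .
\end{equation*}
Subtracting this identity at $t=t_2$ from the one at $t=t_1$, the time-Holder assumption on $W$ bounds the left-hand side by $2 C_{0,R'}|t_1-t_2|^\alpha$. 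On the right-hand side, I would add and subtract $D_m W(t_i, m, y_0) \cdot v$ inside the integrand and use the Holder continuity of $D_m W$ in $(m,y)$, together with $\dw(m_\tau, m) \le C_R \sqrt{\eta}$ and $|\sigma v| \le h$, to control the replacement error by $C(\eta^{\gamma/2} + h^\delta)$ pointwise in $(\tau,\sigma)$.

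Combining the two estimates and dividing by $\eta h$ leads to
\begin{equation*}
\bigl|\bigl(D_m W(t_1,m,y_0) - D_m W(t_2,m,y_0)\bigr) \cdot v\bigr| \le \frac{2 C_{0,R'}|t_1-t_2|^\alpha}{\eta h} + C\bigl(\eta^{\gamma/2} + h^\delta\bigr) .
\end{equation*}
Taking the supremum over $|v|=1$ and optimizing over $\eta, h \in (0,1]$ (balancing the two error terms, e.g.\ via $\eta^{\gamma/2} = h^\delta$, then matching against the $|t_1-t_2|^\alpha$ term) produces a time-Holder estimate of the stated form, with constant depending on $R$, $\|D_m W\|_\infty$, $C_{0,R+1}$, $C_1$, $\alpha$, $\gamma$, $\delta$.

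The main obstacle is the loss factor $\sqrt{\eta}$ in the coupling estimate $\dw(m^*_0, m) \le C_R \sqrt{\eta}$: adding a Dirac of weight $\eta$ only costs $\sqrt{\eta}$ in $\dw$, so the Holder-in-$m$ contribution appears as $\eta^{\gamma/2}$ rather than $\eta^\gamma$, and this is exactly what forces the two-scale optimization in $(\eta,h)$ and leads to an exponent of the form $\alpha\gamma/((2+\gamma)(1+\delta))$. A minor technical point is keeping $\eta$ and $h$ uniformly small so that all intermediate measures $m_\tau$ lie in a fixed $M_2$-ball (say of radius $R+1$), ensuring that the time-Holder constant $C_{0,R'}$ can be chosen once and for all; this is guaranteed by taking $\eta \le 1$ and $h$ bounded a priori.
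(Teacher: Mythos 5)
Your approach is correct in its essential structure, but it is a genuinely different route from the paper's, and it is worth being precise about the exponent you actually obtain. The paper argues in two decoupled steps: first it proves time-H\"older regularity of $\delta W/\delta m$ by perturbing $m$ with a single mass $\theta\,\delta_{y_0}$ and optimizing over $\theta$ alone (this uses only H\"older continuity of $D_m W$ in $m$, giving exponent $\alpha\gamma/(2+\gamma)$), and then it feeds this into Lemma~\ref{lem.interp0} applied to $W' = \delta W/\delta m$ in the $y$-variable, which multiplies the exponent by $\delta/(1+\delta)$ and thus delivers $\alpha\gamma\delta/\big((2+\gamma)(1+\delta)\big)$. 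Your approach combines both interpolations into a single two-parameter perturbation: you split off mass $\eta$ and displace it from $y_0$ to $y_0+hv$, and optimize over $(\eta,h)$ simultaneously. This is more direct, and in fact it is slightly more efficient: carrying out the balancing $\eta^{\gamma/2}\sim h^\delta\sim |t-s|^\alpha/(\eta h)$ gives the exponent
\[
\frac{\alpha\gamma\delta}{\gamma + 2\delta + \gamma\delta}\,,
\]
whose denominator is exactly $2$ smaller than $(2+\gamma)(1+\delta) = 2 + 2\delta + \gamma + \gamma\delta$, so your exponent is strictly larger than what the paper's two-step argument produces. In short, the decoupling in the paper wastes a little because the intermediate object $\delta W/\delta m$ is estimated at a suboptimal scale and then re-interpolated.

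One caveat you should flag for yourself: you write that the optimization ``produces a time-H\"older estimate of the stated form,'' but if you actually carry out the optimization you will not recover the literal exponent $\alpha\gamma/\big((2+\gamma)(1+\delta)\big)$ printed in the statement. This is not a flaw in your argument: that printed exponent appears to be a misprint, since the paper's own proof --- applying Lemma~\ref{lem.interp0} with input time-exponent $\alpha\gamma/(2+\gamma)$ and space-exponent $\delta$ --- yields $\alpha\gamma\delta/\big((2+\gamma)(1+\delta)\big)$, i.e.\ with an extra $\delta$ in the numerator. Your one-step argument gives the (slightly larger) value above, which in turn dominates the paper's corrected exponent, so the conclusion of the lemma holds. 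The remaining bookkeeping you mention --- keeping $\eta,h\le 1$ so that all the intermediate measures $m_\tau$ stay in a fixed $M_2$-ball, and handling the regime where the optimal $(\eta,h)$ would exceed~$1$ by falling back on the global bound $\|D_m W\|_\infty$ --- is exactly the same device used at the end of the paper's proof and is harmless.
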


\begin{proof} Let $R\geq 1$. Fix  $m_0,m_1\in \Pw$ with $M_2(m_i)\leq R$ and set $m_\tau= (1-\tau)m_0+\tau m_1$. Then 
\begin{align*}
& \left|\int_0^1\int_{\R^d} \left(\frac{\delta W}{\delta m}(t, m_\tau, y)-\frac{\delta W}{\delta m}(s, m_\tau, y)\right) (m_1-m_0)(dy) d\tau\right|\\
\m &\qquad  =\left| W(t,m_1)-W(t,m_0) - W(s,m_1)+W(s,m_0)\right|
\\
\m & \qquad \leq 2 C_{0,R} |t-s|^\alpha\,.
\end{align*} 
As 
\begin{align*}
& \left| \int_{\R^d} \left(\frac{\delta W}{\delta m}(t, m_0, y)-\frac{\delta W}{\delta m}(s, m_0, y)\right) (m_1-m_0)(dy) \right| \\
& \qquad \leq \left| \int_0^1 \int_{\R^d} \left(\frac{\delta W}{\delta m}(t, m_\tau, y)-\frac{\delta W}{\delta m}(s, m_\tau, y)\right) (m_1-m_0)(dy) d\tau\right| \\
& \qquad \qquad + \left|  \int_0^1\int_{\R^d} \left(\frac{\delta W}{\delta m}(t, m_\tau, y)-\frac{\delta W}{\delta m}(t, m_0, y)\right) (m_1-m_0)(dy) d\tau\right|\\
& \qquad \qquad +  \left| \int_0^1\int_{\R^d} \left(\frac{\delta W}{\delta m}(s, m_\tau, y)-\frac{\delta W}{\delta m}(s, m_0, y)\right) (m_1-m_0)(dy) d\tau\right| \,,
\end{align*}
we obtain, by our Holder continuity assumption on $D_mW$: 
\begin{align*}
& \left| \int_{\R^d} \left(\frac{\delta W}{\delta m}(t, m_0, y)-\frac{\delta W}{\delta m}(s, m_0, y)\right) (m_1-m_0)(dy) \right| \\
& \qquad \leq 2 C_0 |t-s|^\alpha 
+ \sup_{\tau\in[0,1]}\left\|   D_mW(t, m_\tau, \cdot)-D_mW(t, m_0, \cdot)\right\|_\infty\dk(m_0,m_1)\\
& \qquad \qquad + \sup_{\tau\in[0,1]} \left\|  D_mW(s, m_\tau, \cdot)-D_mW(s, m_0, \cdot)\right\|_\infty\dk(m_0,m_1) \\
& \qquad \leq 2C_0|t-s|^\alpha+ 2C_1 \dw^{\gamma}(m_0,m_1)\dk(m_0,m_1).
\end{align*}
For any  $y_0\in\R^d$ with $|y_0|\leq R$, let  $m_1=(1-\theta)m_0+ \theta \delta_{y_0}$ for some $\theta\in (0,1]$ to be chosen below.
Note that
$$
\dk(m_1,m_0)\leq \theta \int_{\R^d} |y_0-x|m_0(dx)\leq  \theta (|y_0|+(M_2(m_0))^{1/2} ) \leq 2\theta R,
$$
(since $R\geq 1$) while 
$$
\dw (m_1,m_0) \leq (\theta \int_{\R^d} |y_0-x|^2m_0(dx))^{1/2}\leq  (2\theta)^{1/2}  (|y_0|^2+ M_2^2(m_0))^{1/2}\leq 2\theta^{1/2}R.
$$
We get, by the convention on the derivative and our previous  estimates: 
\begin{align*}
& \left| \frac{\delta W}{\delta m}(t, m_0, y_0)-\frac{\delta W}{\delta m}(s, m_0, y_0)\right|  
=   \frac{1}{\theta} \left| \int_{\R^d} \left(\frac{\delta W}{\delta m}(t, m_0, y)-\frac{\delta W}{\delta m}(s, m_0, y)\right) (m_1-m_0)(dy) \right|\\
&\qquad  \leq \frac{1}{\theta}\left[ 2C_{0,R}|t-s|^\alpha+ cC_1 R^{1+\gamma}\theta^{1+\gamma/2}\right]\,,
\end{align*}
where $c$ is universal. If $|t-s|$ is small enough such that $C_{0,R}|t-s|^\alpha/(cC_1R^{1+\gamma})\leq 1$, then we choose $\theta^{1+\gamma/2} := C_{0,R}|t-s|^\alpha/(cC_1R^{1+\gamma})$ and obtain 
\begin{align}\label{liaekzlred}
& \left| \frac{\delta W}{\delta m}(t, m_0, y_0)-\frac{\delta W}{\delta m}(s, m_0, y_0)\right|  
\leq c C_{0,R}^{\gamma/(2+\gamma)}C_1^{1/(1+\gamma/2)}  R^{2(1+\gamma)/(2+\gamma)} |t-s|^{\alpha\gamma/(2+\gamma)}\,,
\end{align}
where $c$ is another universal constant. 

To show the regularity in time of $D_mW$, we just need to apply Lemma \ref{lem.interp0} to $\delta W/\delta m$ since, by \eqref{liaekzlred}, $\delta W/\delta m$ is  locally Holder in time locally uniformly in space (the constant depending also on the measure) and  $D_y\delta W/\delta m=D_mW$ is globally bounded and  Holder in $y$ uniformly in time by assumption. We can  remove the smallness restriction on $|t-s|$ by using the fact that $D_mW$ is globally bounded.
\end{proof}

In the proof of Theorem \ref{theo.ShortTime} we also used the following version of Arzela-Ascoli Theorem. 

\begin{Lemma}\label{lem.Arzela-Ascoli}  Let $(X,d)$ be a locally compact space and $W^N:X\times \Pw\to \R$ be a family of uniformly bounded and  locally uniformly continuous maps: there exists $x_0\in X$ such that, for any $R>0$, there exists a continuous nondecreasing modulus $\omega_R: [0,+\infty)\to [0,+\infty)$ with $\omega_R(0)=0$ such that 
\be\label{hyp.reguWN}
|W^N(x,m)-W^N(x',m')|\leq \omega_R(d(x,x')+ \dw(m,m')),
\ee
for any $x,x'\in X$ and $m,m'\in \Pw$ with $d(x,x_0)\leq R$, $d(x',x_0)\leq R$, $M_2(m)\leq R$, $M_2(m')\leq R$. 

Then there exists a continuous map $W :X\times \Pw\to \R$ and a subsequence (denoted in the same way) such that $(W^N)$ converges  to $W$ pointwisely in $m$ and locally uniformly in $x$: for any $R>0$ and any $m\in \Pw$, 
\be\label{Cvpointwise+unif}
\lim_{N\to+\infty} \sup_{d(x,x_0)\leq R} |W^N(x,m)-W(x,m)| = 0\,.
\ee
\end{Lemma}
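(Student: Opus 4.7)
The plan is to combine a Cantor diagonal extraction with the separability of $(\Pw,\dw)$. First, since $(\Pw,\dw)$ is Polish, I fix a countable dense subset $\{m_k\}_{k\in\N}\subset\Pw$. For each fixed $k$, the family $\{W^N(\cdot,m_k)\}_N$ is uniformly bounded by hypothesis and, taking $m=m'=m_k$ in \eqref{hyp.reguWN}, locally equicontinuous in $x$. The space $X$ being locally compact (and, in the intended application $X=\R^d$, proper), the classical Arzela--Ascoli theorem on closed balls of $X$ centered at $x_0$ yields a subsequence converging locally uniformly in $x$ to some limit $W(\cdot,m_k)$. A standard diagonal argument produces a single subsequence, still denoted $(W^N)$, such that $W^N(\cdot,m_k)\to W(\cdot,m_k)$ locally uniformly in $x$ for \emph{every} $k$.

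Next I would extend the pointwise-in-$m$ convergence to an arbitrary $m\in\Pw$. Fix such $m$, set $R:=\max(M_2(m),d(x_0,x_0))+1$, and let $\varepsilon>0$. Using density, choose $m_k$ with $\dw(m,m_k)$ so small that $\omega_{R+1}(\dw(m,m_k))<\varepsilon/3$; by the general inequality $|M_2(m_k)-M_2(m)|\leq \dw(m,m_k)$, this also forces $M_2(m_k)\leq R+1$. Applying \eqref{hyp.reguWN} on the ball of radius $R$, one obtains, uniformly for $d(x,x_0)\leq R$,
$$
|W^N(x,m)-W^N(x,m_k)|\leq \omega_{R+1}(\dw(m,m_k))<\varepsilon/3,
$$
and similarly for the index $N'$. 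Since $(W^N(\cdot,m_k))$ converges locally uniformly in $x$, it is Cauchy in $N$ locally uniformly, so the triangle inequality shows $(W^N(x,m))_N$ is Cauchy locally uniformly in $x$ and therefore converges to a limit $W(x,m)$, with convergence locally uniform in $x$. This establishes \eqref{Cvpointwise+unif}.

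Continuity of the limit $W$ on $X\times\Pw$ is then obtained by passing to the limit in \eqref{hyp.reguWN}: for any $R>0$ and any $(x,m),(x',m')$ with $d(x,x_0),d(x',x_0)\leq R$ and $M_2(m),M_2(m')\leq R$, one has
$$
|W(x,m)-W(x',m')|\leq \omega_R(d(x,x')+\dw(m,m')),
$$
which yields joint continuity.

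The main subtlety is the interaction between the \emph{local} equicontinuity estimate in \eqref{hyp.reguWN} (controlled by $\omega_R$, requiring $M_2\leq R$) and the need to approximate arbitrary $m\in\Pw$ by members of the countable dense set; this is neatly handled by the $1$-Lipschitz dependence of $M_2$ on $\dw$ recalled above, which prevents the second moments of the approximating $m_k$'s from blowing up. A secondary technical point is the applicability of the classical Arzela--Ascoli theorem on a general locally compact $X$: the argument works as stated when $X$ is proper (closed balls compact), as in the paper's setting $X=\R^d$; for a general locally compact $X$ one would implicitly use $\sigma$-compactness to exhaust $X$ by compact sets before invoking Arzela--Ascoli.
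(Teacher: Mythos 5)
Your proof is correct and is essentially a reorganization of the paper's argument, both being variants of diagonal extraction over a countable dense set combined with the modulus \eqref{hyp.reguWN}. Where you fix a countable dense set $\{m_k\}\subset\Pw$ and for each $m_k$ invoke the classical Arzela--Ascoli theorem on balls of $X$ (so you already have locally-uniform-in-$x$ convergence \emph{before} passing to arbitrary $m$), the paper instead fixes a countable dense subset of the product $X\times\Pw$, extracts a subsequence converging merely pointwise on it via Bolzano--Weierstrass, then proves pointwise convergence for every $(x,m)$ by density, and only at the very end upgrades to local uniformity in $x$ by choosing a finite $\eta$-net in $B_X(x_0,R)$. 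Your variant front-loads the compactness of balls and buys a cleaner Cauchy argument for the extension to arbitrary $m$; the paper's variant decomposes Arzela--Ascoli into its elementary ingredients and defers the net argument. One point you rightly flag and the paper leaves implicit: both arguments use that closed balls $\{d(\cdot,x_0)\le R\}$ are totally bounded, which does not follow from local compactness of $X$ alone --- properness (Heine--Borel, as holds for $X=[0,T]\times\R^d$ in the application) is what is really needed. Your use of the $1$-Lipschitz bound $|M_2(m)-M_2(m_k)|\le\dw(m,m_k)$ to keep the approximating $m_k$ inside the relevant ball is a nice explicit detail; the paper achieves the same effect by padding $R$ by a margin.
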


The only (very small) issue in the result is that $\Pw$ is not locally compact, so that the standard Arzela-Ascoli Theorem cannot be applied. 

\begin{proof} Let $D$ be an enumerable dense family of $X\times \Pw$. By a diagonal argument we can find a subsequence (denoted in the same way) such that, for any $(x,m)\in D$, $(W^N(x,m))$ converges to some $W(x,m)$. Let us note that, by our regularity assumption \eqref{hyp.reguWN} and using the fact that $X\times \Pw$ is complete, $W$ can be extended to the whole space $X\times \Pw$ into a continuous map which satisfies 
\be\label{hyp.reguWNbis}
|W(x,m)-W(x',m')|\leq \omega_R(d(x,x')+ \dw(m,m')),
\ee
for any $x,x'\in X$ and $m,m'\in \Pw$ with $d(x,x_0)\leq R$, $d(x',x_0)\leq R$, $M_2(m)\leq R$, $M_2(m')\leq R$. 

We claim that, for any $(x,m)\in X\times \Pw$, $(W^N(x,m))$ converges to  $W(x,m)$. Indeed, fix $\ep>0$, $R= 2(1+d(x,x_0)+M_2(m))$.  Then there is $(x',m')\in D$ such that $d(x',x_0)\leq R$, $M_2(m')\leq R$ and $\omega_R((d(x,x')+ \dw(m,m'))\leq \ep/3$. Let also $N_0$ be so large that 
$|W^N(x',m')-W(x',m')|\leq \ep/3$ for $N\geq N_0$. Then, for $N\geq N_0$, we have  
\begin{align*}
& |W^N(x,m)-W(x,m)|  \\ 
& \qquad \leq |W^N(x,m)-W^N(x',m')|+ |W^N(x',m')-W(x',m')|+ |W(x',m')-W(x,m)| \leq \ep\,, 
\end{align*}
where we used \eqref{hyp.reguWN} and \eqref{hyp.reguWNbis} in the last inequality. 

It remains to show that \eqref{Cvpointwise+unif} holds. Fix $\ep>0$ and let $\eta>0$ be such that $\omega(\eta)\leq \ep/3$. As $X$ is locally compact, we can find $x_1, \dots x_n$ such that any point $x\in B_X(x_0,R)$ is at a distance at most $\eta$ from one of the $(x_i)_{i=1, \dots, n}$. Let $N_0$ be so large  that $|W^N(x_i,m)-W(x_i,m)|\leq \ep/3$ for any $i=1,\dots, n$. Then, for any $x\in B_X(x_0,R)$ and any $N\geq N_0$, we have (for $i$ such that $d(x,x_i)\leq \eta$, so that $\omega_R(d(x,x_i))\leq \ep/3$): 
\begin{align*}
& |W^N(x,m)-W(x,m)|  \\ 
& \qquad \leq |W^N(x,m)-W^N(x_i,m)|+ |W^N(x_i,m)-W(x_i,m)|+ |W(x_i,m)-W(x,m)| \leq \ep\,, 
\end{align*}
where we used again \eqref{hyp.reguWN} and \eqref{hyp.reguWNbis} in the last inequality. This shows  \eqref{Cvpointwise+unif}. 
\end{proof}



\begin{thebibliography}{abc99xyz}
\bibitem{ALLRS} Achdou, Y., Lasry, J.-M., Lions, P.-L., Rostand, A., Scheinkman, J.,  
{\it Models for the Economy of Oil}, in preparation. 

\bibitem{Ah} Ahuja, S. (2016). {\it Wellposedness of mean field games with common noise under a weak monotonicity condition.} SIAM Journal on Control and Optimization, 54(1), 30-48.

\bibitem{AKR} Albeverio, S., Kondratiev, Y. G., and Röckner, M. (1998). {\it Analysis and geometry on configuration spaces.} Journal of functional analysis, 154(2), 444-500.

\bibitem{AGS}
Ambrosio, L.,  Gigli, N., Savaré, G.
{\sc Gradient flows in metric spaces and in the space of probability measures.}
 Second edition. Lectures in Mathematics ETH Zürich. Birkhäuser Verlag, Basel, 2008.
 
 \bibitem{BCY15} Bensoussan, A., Chau, M. H. M., Yam, S. C. P. (2015). {\it Mean field Stackelberg games: Aggregation of delayed instructions.} SIAM Journal on Control and Optimization, 53(4), 2237-2266.

\bibitem{BCY16} Bensoussan, A., Chau, M. H. M., Yam, S. C. P. (2016). {\it Mean field games with a dominating player.} Applied Mathematics \& Optimization, 74(1), 91-128.


\bibitem{BLP14} Buckdahn, R., Li, J., Peng, S. (2014). {\it Nonlinear stochastic differential games involving a major player and a large number of collectively acting minor agents.} SIAM Journal on Control and Optimization, 52(1), 451-492.

\bibitem{CK14} Caines, P. E., Kizilkale, A. C. (2014). {\it Mean field estimation for partially observed LQG systems with major and minor agents.} IFAC Proceedings Volumes, 47(3), 8705-8709.

\bibitem{CDLL} Cardaliaguet P., Delarue F.,  Lasry J.-M., Lions P.-L. (2019). {\sc The Master Equation and the Convergence Problem in Mean Field Games.} (AMS-201) (Vol. 381). Princeton University Press.

\bibitem{CCP} Cardaliaguet P., Cirant M., Porretta A. 
{\it Remarks on Nash equilibria in mean field game models with a major player.} Preprint. 

\bibitem{CaDebook} Carmona R., Delarue F. {\sc Probabilistic Theory of Mean Field Games with Applications I \& II}. 2017 - Springer Verlag.
DOI 10.1007/978-3-319-58920-6

\bibitem{CDLcn} Carmona, R., Delarue, F., and Lacker, D. (2016). {\it Mean field games with common noise}. The Annals of Probability, 44(6), 3740-3803.

\bibitem{CW16} Carmona, R., Wang, P. (2016). Finite state mean field games with major and minor players. arXiv preprint arXiv:1610.05408.

\bibitem{CW17} Carmona, R.,  Wang, P. (2017). An alternative approach to mean field game with major and minor players, and applications to herders impacts. Applied Mathematics \& Optimization, 76(1), 5-27.

\bibitem{CZ16} Carmona, R.,  Zhu, X. (2016). {\it A probabilistic approach to mean field games with major and minor players.} The Annals of Applied Probability, 26(3), 1535-1580.



\bibitem{CgCrDe} Chassagneux, J. F., Crisan, D., and Delarue, F. (2014). {\it Classical solutions to the master equation for large population equilibria. }arXiv preprint arXiv:1411.3009.
 
\bibitem{GS14-2}
Gangbo, W., Swiech A. (2015) {\it Existence of a solution to an equation arising from the theory of mean field games}.  Journal of Differential Equations, 259(11), 6573-6643.


\bibitem{GT}
Gilbarg, D. and Trudinger, N.S. {\sc
Elliptic partial differential equations of second order.}
Springer, 2015.

\bibitem{HCMieeeAC06} Huang, M., Malham\'e, R.P., Caines, P.E.  (2006). 
{\it Large population stochastic dynamic games: closed-loop McKean-Vlasov systems and the Nash certainty equivalence
principle.} Communication in information and systems. Vol. 6, No. 3, pp. 221-252. 

\bibitem{H10} Huang, M. (2010). {\it Large-population LQG games involving a major player: the Nash certainty equivalence principle.} SIAM Journal on Control and Optimization, 48(5), 3318-3353.

 

\bibitem{LW} Lacker, D. and Webster, K. (2015). {\it Translation invariant mean field games with common noise.} Electronic Communications in Probability, 20.

\bibitem{LSU}
Lady\v{z}enskaja O.A., Solonnikov V.A. and Ural'ceva N.N. {\sc
Linear and quasilinear equations of parabolic type.}
 Translations of Mathematical Monographs, Vol. 23 American Mathematical Society, Providence, R.I. 1967.
  
\bibitem{LL06cr1} Lasry, J.-M., Lions, P.-L. {\it Jeux \`a champ moyen. I. Le cas stationnaire.}
C. R. Math. Acad. Sci. Paris  343  (2006),  no. 9, 619-625.

\bibitem{LL06cr2} Lasry, J.-M., Lions, P.-L. {\it Jeux \`a champ moyen. II. Horizon fini et contr\^ole optimal.}
C. R. Math. Acad. Sci. Paris  343  (2006),  no. 10, 679-684.

\bibitem{LL07mf} Lasry, J.-M., Lions, P.-L. {\it Mean field games.}  Jpn. J. Math.  2  (2007),  no. 1, 229--260.

\bibitem{LL18} Lasry, J. M., Lions, P. L. (2018). {\it Mean-field games with a major player.} C. R. Math. Acad. Sci. Paris. 

\bibitem{LLperso} Lions, P.L.  Cours au Coll\`{e}ge de France. 
www.college-de-france.fr.

\bibitem{May} Mayorga S. {\it Short time solution to the master equation of a first order mean field game}. arXiv:1811.08964, 2018


\bibitem{LLperso2} Lions P.-L. {\it Estimées nouvelles pour les équations quasilinéaires.} Seminar in Applied
Mathematics at the Collège de France. http://www.college-de-france.fr/site/pierrelouis-lions/seminar-2014-11-14-11h15.htm, 2014.

%
\bibitem{RaRu98}
  Rachev, S.T.,  and Schendorf, L. R. {\sc Mass Transportation problems.} Vol. I: Theory; Vol. II : Applications. Springer-Verlag, 1998.
 
\bibitem{villani}
Villani, C. {\sc Optimal transport, old and new.} Springer-Verlag, 2008. 

\end{thebibliography}
\end{document}